\newcommand\codim{\text{codim}}
\newcommand\coker{\operatorname{coker}\nolimits}
\newcommand\Ext{\operatorname{Ext}\nolimits}
\newcommand{\Hilb}{\operatorname{Hilb}\nolimits}
\newcommand\Hom{\operatorname{Hom}\nolimits}
\newcommand\im{\operatorname{Im}\nolimits}
\newcommand\Ker{\operatorname{Ker}\nolimits}
\newcommand\rk{\text{rk}}
\newcommand\Span{\text{Span}}
\newcommand\Supp{{\operatorname{Supp}\nolimits}}
\newtheorem{theorem}{Theorem}[section]
\newtheorem{proposition}[theorem]{Proposition}
\newtheorem{lemma}[theorem]{Lemma}
\newtheorem{corollary}[theorem]{Corollary}
\newtheorem{sub}[subsection]{}
\theoremstyle{definition}
\newtheorem{definition}[theorem]{Definition}
\newtheorem{proposition-definition}[theorem]{Proposition-Definition}
\newtheorem{remark}[theorem]{Remark}
\begin{document}

%\begin{flushright}
%Dedicated to the memory of Andrei Nikolaevich Tyurin
%\end{flushright}

\vspace{1cm}

\title{Moduli of mathematical instanton vector bundles with odd $c_2$ on projective space}

\author[Tikhomirov]{\;Alexander S.~Tikhomirov}

\address{
Department of Mathematics\\
State Pedagogical University\\
Respublikanskaya Str. 108
\newline 150 000 Yaroslavl, Russia}
\email{astikhomirov@mail.ru}

\maketitle

\thispagestyle{empty}

\begin{abstract}

We study the problem of irreducibility of the moduli space $I_n$ of rank-2
mathematical instanton vector bundles with second Chern class $n\ge1$ on
the projective space $\mathbb{P}^3$. The irreducibility
of $I_n$ was known for small values of n: for $n=1$ it was proved by Barth (1977), for $n=2$
by Hartshorne (1978), for $n=3$ by Ellingsrud and Str{\o}mme (1981), for $n=4$ by Barth (1981),
for $n=5$ by Coanda, Tikhomirov and Trautmann (2003). In this paper we prove the
irreducibility of $I_n$ for an arbitrary odd $n\ge1$.

Bibliography: 22 items.

\keywords{Keywords: vector bundles, mathematical instantons, moduli space.}
\end{abstract}

\section{Introduction}\label{sec0}

By a {\it mathematical $n$-instanton vector bundle} (shortly, a {\it
$n$-instanton}) on 3-dimensional projective space $\mathbb{P}^3$ we understand a rank-2 algebraic
vector bundle $E$ on $\mathbb{P}^3$ with Chern classes
\begin{equation}\label{Chern classes}
c_1(E)=0,\ \ \ c_2(E)=n,\ \ \ n\ge1,
\end{equation}
satisfying the vanishing conditions
\begin{equation}\label{vanishing condns}
h^0(E)=h^1(E(-2))=0.
\end{equation}
Denote by $I_n$ the set of isomorphism classes of $n$-instantons. This space is nonempty for any
$n\ge1$ - see, e.g., \cite{BT}, \cite{NT}. The condition $h^0(E)=0$ for
a $n$-instanton $E$ implies that $E$ is stable in the sense of Gieseker-Maruyama.
Hence $I_n$ is a subset of the moduli scheme $M_{\mathbb{P}^3}(2;0,2,0)$ of semistable rank-2
torsion-free sheaves on $\mathbb{P}^3$ with Chern classes $c_1=0,\ c_2=n,\ c_3=0$.
The condition $h^1(E(-2))=0$ for $[E]\in I_n$ (called the {\it instanton condition})
implies by semicontinuity that $I_n$ is a Zariski open subset
of $M_{\mathbb{P}^3}(2;0,2,0)$, i.e. $I_n$ is a quasiprojective scheme. It is called the
{\it moduli scheme of mathematical $n$-instantons}.

In this paper we study the problem of the irreducibility of the scheme $I_n$. This problem has
an affirmative solution for small values of $n$, up to $n=5$.
Namely, the cases $n=1,3,3,4$ and 5 were settled in papers \cite{B1}, \cite{H}, \cite{ES},
\cite{B3} and \cite{CTT}, respectively.
The aim of this paper is to prove the following result.
\begin{theorem}\label{Irreducibility}
For each $n=2m+1,\ \ m\ge0$, the moduli scheme $I_n$ of mathematical $n$-instantons is an integral scheme of dimension $8n-3$.
\end{theorem}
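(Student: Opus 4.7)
The proof I propose proceeds by induction on $m$ with $n=2m+1$. The base case $n=1$ is Barth's classical theorem, so the task is to carry out the inductive step. The basic strategy is to use an elementary transformation $\elm$ along a suitable line $L\subset\mathbb{P}^3$ to reduce an $n$-instanton to an $(n-2)$-instanton; the fact that this operation changes $c_2$ by $2$ (rather than by $1$) is precisely why the argument preserves parity and so works cleanly only for odd $n$.

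For the deformation-theoretic input, every $n$-instanton $E$ fits into the display of a self-dual monad
\begin{equation*}
0 \to \mathcal{O}_{\mathbb{P}^3}(-1)^n \to \mathcal{O}_{\mathbb{P}^3}^{2n+2} \to \mathcal{O}_{\mathbb{P}^3}(1)^n \to 0,
\end{equation*}
and one verifies, using the vanishing conditions \refeq{vanishing condns} and Beilinson's theorem, that $\Ext^2(E,E)=0$ on a dense open subscheme of $I_n$; on this locus $\dim_{[E]} I_n = \Ext^1(E,E)=8n-3$. Thus the dimension formula $8n-3$ follows automatically once irreducibility is proved, and it remains to establish irreducibility alone.

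For that, I would introduce an open subscheme $I_n^{\circ}\subset I_n$ parametrizing those $E$ for which there exists a line $L$ with balanced restriction $E|_L\simeq 2\mathcal{O}_L$ and a suitable transversality condition against the scheme of jumping lines of $E$. For such pairs $(E,L)$ the elementary transformation $\elm_L(E)$, after an appropriate normalizing twist, gives a sheaf that one checks to lie in $I_{n-2}$. Assembling this into the incidence variety
\begin{equation*}
\widetilde{I}_n=\{(E,L)\ :\ [E]\in I_n^{\circ},\ L\text{ admissible}\},\qquad I_n^{\circ}\xleftarrow{\ p_1\ }\widetilde{I}_n\xrightarrow{\ p_2\ }I_{n-2},
\end{equation*}
one analyzes the two projections. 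The fibers of $p_1$ over a fixed $E$ are open subsets of a Grassmannian of lines in $\mathbb{P}^3$, hence irreducible of the expected dimension. The fibers of $p_2$ over a fixed $[E']\in I_{n-2}$ parametrize the inverse construction (a choice of line together with the extension data realizing $E$ from $\elm_L(E)=E'$) and their irreducibility follows from a $\Hom$/$\Ext$ computation on the monad of $E'$. Together with the inductive irreducibility of $I_{n-2}$, this forces $\widetilde{I}_n$, and hence $I_n^{\circ}$, and hence its closure $I_n$, to be irreducible.

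The principal obstacle is to prove that $I_n^{\circ}$ is actually dense in \emph{every} irreducible component of $I_n$: one must show that the generic instanton of any putative component admits an admissible line $L$, so that the incidence construction covers all of $I_n$ rather than a proper subvariety. This requires a careful analysis of the scheme of (multi-)jumping lines of a generic $n$-instanton, together with the verification that $\elm_L(E)$ truly lies in $I_{n-2}$ -- i.e., that the transformation preserves both Gieseker stability and the instanton vanishing $h^1(\elm_L(E)(-2))=0$. The supporting computation of the fiber dimensions of $p_1$ and $p_2$ (so that the incidence diagram has the correct total dimension $8n-3$) is a further technical but routine ingredient that must be carried out on the monad level.
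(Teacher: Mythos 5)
Your proposal takes a genuinely different route from the paper's (which works entirely on the linear-algebra side: Barth's nets of quadrics, a decomposition $H_{2m+1}\simeq H_{m+1}\oplus H_m$, and an induction on an auxiliary determinantal scheme $Z_m$ related to t'Hooft instantons), but as written it has gaps that are not merely technical. First, the reduction step is not sound. For a surjection $E\twoheadrightarrow\mathcal{O}_L$ onto a sheaf supported on a line (codimension $2$), the kernel $\elm_L(E)$ is torsion-free but \emph{not} locally free along $L$ (locally it is $\mathcal{I}_{L,\mathbb{P}^3}\oplus\mathcal{O}$), and its second Chern class changes by $1$, not $2$; so a single elementary transformation does not land in $I_{n-2}$, and whatever two-step or double-dual variant you substitute must be shown to preserve both stability and the instanton vanishing $h^1(\cdot(-2))=0$, none of which is automatic. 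The parity heuristic is also misleading: the reason oddness of $n$ enters in the paper is that for $n=2m+1$ the block $A_1(\xi)$ of the instanton net is a skew-symmetric form on the even-dimensional space $H_{m+1}\otimes V$ of dimension $4m+4=2n+2$, which can be (and generically is) invertible, and this invertibility drives the whole reduction; no such mechanism is visible in an $\elm$-based induction.

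Second, the two statements you defer are exactly the hard content of the theorem, and you give no argument for either. The claim that $\Ext^2(E,E)=0$ on a dense open subset of \emph{every} irreducible component of $I_n$ is not known a priori; deformation theory only gives the lower bound $\dim_{[E]}I_n\ge 8n-3$, and the matching upper bound is obtained in the paper only as a consequence of the irreducibility argument, via the count of $2n^2-5n+3$ equations cutting out $MI_n$ inside $\mathbf{S}_n$. Likewise, the density of $I_n^{\circ}$ in every putative component --- that a generic member of an arbitrary component admits an admissible line --- is precisely the obstruction that has historically defeated this kind of approach; naming it as ``the principal obstacle'' without a mechanism to overcome it leaves the proof without its essential step. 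The paper's answer is to replace the moduli problem by the affine scheme $Z_m$ cut out by $5m(m-1)$ explicit equations, so that every component automatically has dimension at least $4m(m+2)$, and then to exhibit one component of exactly that dimension dominating $(\mathbf{S}_m^\vee)^0$ and to exclude any other component by a fibre-dimension comparison over the complete family of t'Hooft sheaves. Nothing in your outline plays this role, so the argument as proposed does not close.
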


A guide to the paper is as follows. In section \ref{general} we recall a well-known relation
between mathematical $n$-instantons and nets of quadrics in a fixed $n$-dimensional vector
space $H_n$ over $\mathbf{k}$. The nets of quadrics are considered as vectors of the space
$\mathbf{S}_n=S^2H_n^\vee\otimes \wedge^2V^\vee$,
where
$V=H^0(\mathcal{O}_{\mathbb{P}^3}(1))^\vee$,
and those nets which correspond to $n$-instantons (we call them $n$-instanton nets) satisfy the
so-called Barth's conditions - see definition (\ref{space of nets}). These nets constitute a
locally closed subset $MI_n\subset$ of $\mathbf{S}_n$ which has a structure of a $GL(n)/\{\pm1\}$-bundle
over $I_n$. Thus the irreducibility of the moduli space $I_n$ of $n$-instantons reduces to the
irreducibility of the space $MI_n$ of $n$-instanton nets of quadrics.

Section \ref{decomp} is a study of some linear algebra related to a direct sum decomposition
$\xi:H_{m+1}\oplus H_m\overset{\sim}\to H_{2m+1}$
giving the above embedding $H_{m+1}\hookrightarrow H_{2m+1}$. Using one result
of section \ref{Appendix} we obtain here the relation (\ref{xi3(A)}) which is a key instrument for
our further considerations.
Also, the decomposition $\xi$ enables us to relate $(2m+1)$-instantons $E$ to rank-$(2m+2)$
symplectic vector bundles $E_{2m+2}$ on $\mathbb{P}^3$ satisfying the vanishing conditions
$h^0(E_{2m+2})=h^2(E_{2m+2}(-2))=0$.

In section \ref{Space Xm} we introduce a new set $X_m$ as a locally closed subset of the vector space
$\mathbf{S}_{m+1}\oplus\mathbf{\Sigma}_{m+1}$,
where
$\mathbf{\Sigma}_{m+1}=\Hom(H_m,H_{m+1}^\vee\otimes\wedge^2V^\vee)$,
defined by linear algebraic data somewhat similar to Barth's conditions.
We prove that $X_m$, is isomorphic to a certain dense open subset
$MI_{2m+1}(\xi)$ of $MI_{2m+1}$ determined by the choice of the direct sum decomposition
$\xi$ above, where both $X_m$ and $MI_{2m+1}(\xi)$ are understood as reduced schemes. This reduces the problem of the
irreducibility of $I_{2m+1}$ to that of $X_m$.

The last ingredient in the proof of Theorem \ref{Irreducibility} is a scheme $Z_m$ introduced
in section \ref{Scheme Zm} as a locally closed subscheme of the affine space
$\mathbf{S}_m^\vee\times\Hom(H_m,H_m^\vee\otimes\wedge^2 V^\vee)$ defined by explicit equations
(see (\ref{tildeZm})). In section \ref{Scheme Zm} we reduce the proof of Theorem \ref{Irreducibility}
to the fact that $Z_m$ is an integral locally complete intersection subscheme of the above mentioned
affine space. This and other properties of $Z_m$ are formulated in Theorem \ref{Irreducibility of Zm}.
The rest of the paper is devoted to the proof of Theorem  \ref{Irreducibility of Zm}.

In section \ref{Study Zm} we start the proof of this Theorem by induction on $m$ and prove a part of the
induction step - see Proposition \ref{part of inductn step}. The proof of it contains explicit computations
in linear algebra. These computations seem to be somewhat cumbersome, and Remark \ref{computations} at the end of this
section gives an explanation why these computations could not be essentially simplified.

Proposition \ref{part of inductn step} enables us then in section \ref{Zm and tH} to
relate $Z_m$ to the so-called t'Hooft instantons. As a result, in section \ref{gen pos} we finish the induction
step in the proof of Theorem  \ref{Irreducibility of Zm}.

In Appendix (section \ref{Appendix}) we prove two results of general position for nets of quadrics, which are used in
the text.

\vspace{0.3cm}
\textbf{Acknowledgement.} The author acknowledges the support and hospitality of the Max Planck
Institute for Mathematics in Bonn where this paper was started during the author's stay there in
Winter 2008.

\vspace{0.5cm}
\section{Notation and conventions}\label{notation}
\vspace{0.5cm}

Our notations are mostly standard. The base field $\mathbf{k}$ is assumed to be
algebraically closed of characteristic 0. We identify vector bundles with locally free
sheaves. If $\mathcal{F}$ is a sheaf of $\mathcal{O}_X$-modules on an algebraic
variety or scheme $X$, then $n\mathcal{F}$ denotes a direct sum of $n$ copies of the
sheaf $\mathcal{F}$, $H^i(\mathcal{F})$ denotes the $i^{th}$ cohomology group of
$\mathcal{F}$, $h^i(\mathcal{F}):=\dim H^i(\mathcal{F})$, and $\mathcal{F}^\vee$
denotes the dual to $\mathcal{F}$ sheaf, i.e. the sheaf $\mathcal{F}^\vee:=
\mathcal{H}om_{\mathcal{O}_X} (\mathcal{F},\mathcal{O}_X)$. If $Z$ is a subscheme of
$X$, by $\mathcal{I}_{Z,X}$ we denote the ideal sheaf corresponding to a subscheme $Z$. If
$X=\mathbb{P}^r$ and $t$ is an integer, then by $\mathcal{F}(t)$ we denote the sheaf
$\mathcal{F}\otimes\mathcal{O}_{\mathbb{P}^r}(t)$. $[\mathcal{F}]$ will denote the
isomorphism class of a sheaf $\mathcal{F}$. For any morphism of
$\mathcal{O}_X$-sheaves $f:\mathcal{F}\to\mathcal{F}'$ and any $\mathbf{k}$-vector space $U$
(respectively, for any homomorphism $f:U\to U'$ of $\mathbf{k}$-vector spaces) we will denote,
for short, by the same letter $f$ the induced morphism of sheaves
$id\otimes f:U\otimes\mathcal{F}\to U\otimes\mathcal{F}'$
(respectively, the induced morphism
$f\otimes id:U\otimes\mathcal{F}\to U'\otimes\mathcal{F}$).

Everywhere in the paper $V$ will denote a fixed vector space of dimension 4 over $\mathbf{k}$ and
we set $\mathbb{P}^3:=P(V)$. Also everywhere below we will reserve the letters $u$ and $v$ for
denoting the two morphisms in the Euler exact sequence
$0\to\mathcal{O}_{\mathbb{P}^3}(-1)\overset{u}\to V^\vee\otimes\mathcal{O}_{\mathbb{P}^3}
\overset{v}\to T_{\mathbb{P}^3}(-1)\to0$.
For any $\mathbf{k}$-vector spaces $U$ and $W$ and any vector
$\phi\in{\rm Hom}(U,W\otimes\wedge^2V^\vee)\subset{\rm Hom}(U\otimes V,W\otimes V^\vee)$
understood as a homomorphism $\phi:U\otimes V\to W\otimes V^\vee$ or, equivalently,
as a homomorphism
${}^\sharp\phi:U\to W\otimes\wedge^2V^\vee$, we will denote by $\widetilde{\phi}$ the composition
$U\otimes\mathcal{O}_{\mathbb{P}^3}\overset{{}^\sharp\phi}\to
W\otimes\wedge^2V^\vee\otimes\mathcal{O}_{\mathbb{P}^3}\overset{\epsilon}\to
W\otimes\Omega_{\mathbb{P}^3}(2)$, where $\epsilon$ is the induced morphism in the exact triple
$0\to\wedge^2\Omega_{\mathbb{P}^3}(2)\overset{\wedge^2v^\vee}\to\wedge^2V^\vee\otimes
\mathcal{O}_{\mathbb{P}^3}\overset{\epsilon}\to\Omega_{\mathbb{P}^3}(2)\to0$ obtained by passing
to the second wedge power in the dual Euler exact sequence.
Also, shortening the notation, we will omit sometimes the subscript ${\mathbb{P}^3}$ in the
notation of sheaves on $\mathbb{P}^3$, e.g., write $\mathcal{O},\ \Omega$ etc., instead of
$\mathcal{O}_{\mathbb{P}^3},\ \Omega_{\mathbb{P}^3}$ etc., respectively.

Next, as above, for any integer $n\ge1$ by $H_n$ we understand a fixed $n$-dimensional vector space over $\mathbf{k}$.
(E. g., one can take $\mathbf{k}^n$ for $H_n$.)

Everywhere in the paper for $m\ge1$ we denote by $\mathbf{S}_m$ the vector space
$S^2H_m^\vee\otimes \wedge^2V^\vee$, respectively, by $\mathbf{\Sigma}_{m+1}$ the vector space
$\Hom(H_m,H_{m+1}^\vee\otimes\wedge^2V^\vee)$. For a given $\mathbf{k}$-vector space $U$ (respectively,
a direct sum $U\oplus U'$ of two $\mathbf{k}$-vector spaces) we will, abusing notations, denote by the same letter $U$
(respectively, by $U\oplus U'$) the corresponding affine space $\mathbf{V}(U^\vee)={\rm Spec}(Sym^*U^\vee)$ (respectively,
the direct product of affine spaces $\mathbf{V}(U^\vee)\times\mathbf{V}(U'^\vee)$).

All the schemes considered in the paper are Noetherian. By an irreducible scheme we understand a scheme whose underlying
topological space is irreducible. By an integral scheme we
understand an irreducible reduced scheme. Also, by the dimension of a given scheme we understand below the
maximum of dimensions of its irreducible components. By a general point of an irreducible (but not necessarily reduced)
scheme $\mathcal{X}$ we mean any closed point belonging to some dense open subset of $\mathcal{X}$. An irreducible
scheme is called generically reduced if it is reduced at a general point.

\vspace{0.5cm}
\section{Some generalities on instantons. Set $MI_n$}\label{general}
\vspace{0.5cm}

In this Section we recall some well known facts about mathematical instanton bundles -
see, e.g., \cite{CTT}.

For a given $n$-instanton $E$, the conditions (\ref{Chern classes}),
(\ref{vanishing condns}), Riemann-Roch and Serre duality imply
\begin{equation}\label{dimension1}
h^1(E(-1))=h^2(E(-3))=n,\ \ \ h^1(E\otimes\Omega^1_{\mathbb{P}^3})=
h^2(E\otimes\Omega^2_{\mathbb{P}^3})=2n+2,\ \ \
\end{equation}
$$
h^1(E)=h^2(E(-4))=2n-2.
$$
\begin{equation}\label{dimension2}
h^i(E)=h^i(E(-1))=h^{3-i}(E(-3))=h^{3-i}(E(-4))=0,\ \ i\ne1,\ \ \ h^i(E(-2))=0,\ \ i\ge0.
\end{equation}

Furthermore, the condition $c_1(E)=0$ yields an isomorphism
$\wedge^2E\overset{\simeq}\to\mathcal{O}_{\mathbb{P}^3}$, hence a symplectic isomorphism
$j:E\overset{\simeq}\to E^\vee$ defined uniquely up to a scalar.
Consider a triple $(E,f,j)$ where $E$ is an $n$-instanton, $f$ is an isomorphism
$H_n\overset{\simeq}\to H^2(E(-3))$
and $j:E\overset{\simeq}\to E^\vee$ is a symplectic structure on $E$.
Note that, since $E$ as a stable rank-2 bundle, it is a simple bundle,
i. e. any automorphism $\varphi$ of $E$
has the form $\varphi=\lambda{\rm id}$ for some $\lambda\in\mathbf{k}^*$.
Imposing the condition that $\varphi$ is compatible with the symplectic structure $j$,
i. e. $\varphi^\vee\circ j\circ\varphi=j$, we obtain $\lambda=\pm1$. This leads to the
following definition of equivalence of triples $(E,f,j)$. We call two such triples
$(E,f,j)$ and $(E'f',j')$ equivalent if there is an isomorphism
$g:E\overset{\simeq}\to E'$ such that $g_*\circ f=\lambda f'$ with $\lambda\in\{1,-1\}$ and
$j=g^\vee\circ j'\circ g$,
where
$g_*:H^2(E(-3))\overset{\simeq}\to H^2(E'(-3))$
is the induced isomorphism. We denote by $[E,f,j]$ the equivalence class of a triple $(E,f,j)$.
From this definition one easily deduces that the set $F_{[E]}$ of all equivalence classes
$[E,f,j]$ with given $[E]$ is a homogeneous space of the group $GL(H_n)/\{\pm{\rm id}\}$.

Each class $[E,f,j]$ defines a point
\begin{equation}\label{hypernet}
A=A([E,f,j])\in S^2H_n^\vee\otimes \wedge^2V^\vee
\end{equation}
in the following way. Consider the exact sequences
\begin{equation}\label{Koszul triples}
0\to\Omega^1_{\mathbb{P}^3}\overset{i_1}\to V^\vee\otimes\mathcal{O}_{\mathbb{P}^3}(-1)\to
\mathcal{O}_{\mathbb{P}^3}\to0,
\end{equation}
$$
0\to\Omega^2_{\mathbb{P}^3}\to\wedge^2V^\vee\otimes\mathcal{O}_{\mathbb{P}^3}(-2)\to
\Omega^1_{\mathbb{P}^3}\to0,
0\to\wedge^4V^\vee\otimes\mathcal{O}_{\mathbb{P}^3}(-4)\to
\wedge^3V^\vee\otimes\mathcal{O}_{\mathbb{P}^3}(-3)\overset{i_2}\to\Omega^2_{\mathbb{P}^3}\to0,
$$
induced by the Koszul complex of
$V^\vee\otimes\mathcal{O}_{\mathbb{P}^3}(-1)\overset{ev}
\twoheadrightarrow\mathcal{O}_{\mathbb{P}^3}$.
Twisting these sequences by $E$ and passing to cohomology in view of (\ref{vanishing condns})-(\ref{dimension2})
gives the equalities
$0=h^0(E\otimes\Omega_{\mathbb{P}^3})=h^3(E\otimes\Omega^2_{\mathbb{P}^3})=h^2(E\otimes\Omega_{\mathbb{P}^3})$
and the diagram with exact rows
\begin{equation}\label{A'}
\xymatrix{0\ar[r] &
H^2(E(-4))\otimes\wedge^4V^\vee\ar[r] &
H^2(E(-3))\otimes\wedge^3V^\vee\ar[r]^{\ \ \ \ i_2}\ar[d]^{A'} &
H^2(E\otimes\Omega^2_{\mathbb{P}^3})\ar[r]& 0 \\
0 & H^1(E))\ar[l] & H^1(E(-1))\otimes V^\vee\ar[l] &
H^1(E\otimes\Omega_{\mathbb{P}^3})\ar[l]_{\ \ \ \ i_1}\ar[u]^{\cong}_{\partial}& 0,\ar[l]}
\end{equation}
where $A':=i_1\circ\partial^{-1}\circ i_2$. The Euler exact sequence (\ref{Koszul triples})
yields a canonical isomorphism
$\omega_{\mathbb{P}^3}\overset{\simeq}\to\wedge^4V^\vee\otimes\mathcal{O}_{\mathbb{P}^3}(-4)$,
and fixing an isomorphism
$\tau:\mathbf{k}\overset{\simeq}\to\wedge^4V^\vee$
induces isomorphisms
$\tilde{\tau}:V\overset{\simeq}\to\wedge^3V^\vee$
and
$\hat{\tau}:\omega_{\mathbb{P}^3}\overset{\simeq}\to\mathcal{O}_{\mathbb{P}^3}(-4)$.
Now the point $A$ in (\ref{hypernet}) is defined as the composition
\begin{equation}\label{An}
A:H_n\otimes V\overset{\tilde{\tau}}{\overset{\simeq}\to}
H_n\otimes\wedge^3V^\vee\overset{f}{\overset{\simeq}\to}
H^2(E(-3))\otimes\wedge^3V^\vee\overset{A'}\to H^1(E(-1))\otimes V^\vee
\overset{j}{\overset{\simeq}\to}
\end{equation}
$$
\overset{j}{\overset{\simeq}\to} H^1(E^\vee(-1))\otimes V^\vee
\overset{SD}{\overset{\simeq}\to} H^2(E(1)\otimes\omega_{\mathbb{P}^3})^\vee\otimes V^\vee
\overset{\hat{\tau}}{\overset{\simeq}\to} H^2(E(-3))^\vee\otimes V^\vee
\overset{f^\vee}{\overset{\simeq}\to}H_n^\vee\otimes V^\vee,
$$
where $SD$ is the Serre duality isomorphism. One checks that $A$ is a skew symmetric map
depending only on the class $[E,f,j]$ and not depending on the choice of $\tau$, and that this
point $A\in\wedge^2(H_n^\vee\otimes V^\vee)$ lies in the direct summand
$\mathbf{S}_n=S^2H_n^\vee\otimes \wedge^2V^\vee$
of the canonical decomposition
\begin{equation}\label{can decomp}
\wedge^2(H_n^\vee\otimes V^\vee)=S^2H_n^\vee\otimes \wedge^2V^\vee\oplus
\wedge^2H_n^\vee\otimes S^2V^\vee.
\end{equation}
Here $\mathbf{S}_n$ is the space of nets of quadrics in $H_n$. Following \cite{B3},
\cite{Tju2} and \cite{Tju1} we call $A$ the $n$-{\it instanton net of quadrics} corresponding to
the data $[E,f,j]$.

Denote $W_A:=H_n\otimes V/\ker A$. Using the above chain of isomorphisms we
can rewrite the diagram (\ref{A'}) as
\begin{equation}\label{qA}
\xymatrix{0\ar[r] & \ker A\ar[r] &
H_n\otimes V\ar[r]^{\ \ c_A}\ar[d]^{A} & W_{A}\ar[r]\ar[d]_{\cong}^{q_{A}} & 0 \\
0 & \ker A^\vee\ar[l] & H_n^\vee\otimes V^\vee\ar[l] &
W_{A}^\vee\ar[l]_{\ \ \ \ \ c_A^\vee}& 0.\ar[l]}
\end{equation}
Here in view of (\ref{dimension1}) $\dim W_{A}=2n+2$ and
$q_{A}:W_{A}\overset{\simeq}\to W_{A}^\vee$
is the induced skew-symmetric isomorphism. An important property of $A=A([E,f,j])$ is that
the induced morphism of sheaves
\begin{equation}\label{an^vee}
a_A^\vee:{W}^\vee_{A}\otimes\mathcal{O}_{\mathbb{P}^3}
\overset{c_A^\vee}\to
H_n^\vee\otimes V^\vee\otimes\mathcal{O}_{\mathbb{P}^3}
\overset{ev}\to H_n^\vee\otimes\mathcal{O}_{\mathbb{P}^3}(1)
\end{equation}
is an epimorphism such that the composition
$H_n\otimes\mathcal{O}_{\mathbb{P}^3}(-1)\overset{a_A}\to
W_{A}\otimes\mathcal{O}_{\mathbb{P}^3}\overset{q_A}\to
W^\vee_{A}\otimes\mathcal{O}_{\mathbb{P}^3}
\overset{a^\vee_{A}}\to H_n^\vee\otimes\mathcal{O}_{\mathbb{P}^3}(1)$
is zero, and $E=\ker(a_A^\vee\circ q_{A})/\im a_A$. Thus $A$ defines a monad
\begin{equation}\label{Monad An}
\mathcal{M}_{A}:\ \ 0\to H_n\otimes\mathcal{O}_{\mathbb{P}^3}(-1)\overset{a_A}\to
W_{A}\otimes\mathcal{O}_{\mathbb{P}^3}\overset{a_A^\vee\circ q_{A}}
\to H_n^\vee\otimes\mathcal{O}_{\mathbb{P}^3}(1)\to0
\end{equation}
with the cohomology sheaf $E$,
\begin{equation}\label{coho sheaf}
E=E(A):=\ker(a_A^\vee\circ q_{A})/\im a_A.
\end{equation}
Note that passing to cohomology in the monad $\mathcal{M}_{A}$ twisted by
$\mathcal{O}_{\mathbb{P}^3}(-3)$ and using (\ref{coho sheaf}) yields the isomorphism
$f:H_n\overset{\simeq}\to H^2(E(-3))$.
Furthermore, the simplecticity of the form $q_{A}$ in the monad $\mathcal{M}_{A}$ implies
that there is a canonical isomorphism of $\mathcal{M}_{A}$ with its dual monad, and this isomorphism induces the
symplectic isomorphism $j:E\overset{\simeq}\to E^\vee$. Thus, the data $[E,f,j]$ are recovered
from the net $A$. This leads to the following description of the moduli space $I_n$. Consider
the {\it set of $n$-instanton nets of quadrics}
\begin{equation}\label{space of nets}
MI_n:=\left\{A\in \mathbf{S}_n\ \left|\
\begin{matrix}
(i)\ \rk(A:H_n\otimes V\to H_n^\vee\otimes V^\vee)=2n+2,
\ \ \ \ \ \ \ \ \ \ \ \ \ \ \ \ \cr
(ii)\ {\rm the\ morphism}\ a_A^\vee:W^\vee_{A}\otimes\mathcal{O}_{\mathbb{P}^3}\to
H_n^\vee\otimes\mathcal{O}_{\mathbb{P}^3}(1)\ \ \ \ \ \ \cr
{\rm defined\ by}\ A\ {\rm in}\ (\ref{an^vee})\ {\rm is\ surjective},
\ \ \ \ \ \ \ \ \ \ \ \ \ \ \ \ \ \ \ \cr
\ (iii)\ h^0(E_2(A))=0,\ {\rm where}\  E_2(A):=
\ker(a^\vee_A\circ q_{A})/\im a_A\ \cr
\ {\rm and}\ q_{A}:W_{A}\overset{\simeq}\to W_{A}^\vee\ {\rm is\ a\ symplectic\
isomorphism} \cr
{\rm defined\ by}\ A\ {\rm in}\ (\ref{qA})\ \ \ \ \ \ \ \ \ \ \ \ \ \ \ \ \ \ \ \ \
\ \ \ \ \ \ \ \ \ \ \ \ \ \ \ \ \ \cr
\end{matrix}\right.
\right\}
\end{equation}
The conditions (i)-(iii) here are called {\it Barth's coditions}. These conditions show that
$MI_n$ is naturally endowed with a structure of a locally closed subscheme of the vector space
$\mathbf{S}_n$. Moreover, the above description shows that there is a morphism
$\pi_n: MI_n\to I_n:\ A\mapsto[E(A)]$, and it is known that this morphism is a principal
$GL(H_n)/\{\pm{\rm id}\}$-bundle in the \'etale topology - cf. \cite{CTT}. Here by construction
the fibre $\pi_n^{-1}([E])$ over an arbitrary point $[E]\in I_n$ coincides with the homogeneous
space $F_{[E]}$ of the group $GL(H_n)/\{\pm{\rm id}\}$ described above. Hence the
irreducibility of $(I_n)_{red}$ is equivalent to the irreducibility of the scheme $(MI_n)_{red}$.

The definition (\ref{space of nets}) yields the following.
\begin{theorem}\label{rank condns}
For each $n\ge1$, the space of $n$-instanton nets of quadrics $MI_n$ is a locally closed subscheme
of the vector space $\mathbf{S}_n$ given locally at any point
$A\in MI_n$ by
\begin{equation}\label{eqns(i)}
\binom{2n-2}{2}=2n^2-5n+3
\end{equation}
equations obtained as the rank condition (i) in (\ref{space of nets}).
\end{theorem}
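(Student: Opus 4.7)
The plan is to separate Barth's conditions (i)--(iii) into their contributions: conditions (ii) and (iii) are open conditions on $\mathbf{S}_n$, so locally at $A_0\in MI_n$ the scheme $MI_n$ is cut out in $\mathbf{S}_n$ by condition (i) alone, and a standard Schur-complement argument for skew-symmetric forms then shows that (i) contributes exactly $\binom{2n-2}{2}$ equations.

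For the openness of (ii), observe that on the open locus in $\mathbf{S}_n$ where $\rk A=2n+2$, the vector spaces $W_A=H_n\otimes V/\ker A$ fit into a locally free sheaf, so the morphisms $a_A^\vee$ of \refeq{an^vee} vary in a flat family of morphisms of coherent sheaves on $\mathbb{P}^3$; surjectivity in such a family is open because the cokernel has closed support after projection to the base. For the openness of (iii), on the open locus where (i) and (ii) hold the monad $\mathcal{M}_A$ has constant cohomology ranks, so the sheaves $E_2(A)$ form a flat family and the condition $h^0(E_2(A))=0$ is open by the semicontinuity theorem.

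For the analysis of (i), I view $A\in\mathbf{S}_n\subset\wedge^2(H_n^\vee\otimes V^\vee)$ as a skew-symmetric homomorphism $A:H_n\otimes V\to H_n^\vee\otimes V^\vee$ of $4n$-dimensional spaces. At $A_0$ with $\rk A_0=2n+2$, choose a splitting $H_n\otimes V=U_1\oplus U_2$ with $\dim U_1=2n+2$ such that $A_0|_{U_1}$ is a non-degenerate skew form. In a Zariski neighbourhood of $A_0$ in $\mathbf{S}_n$ the $(2n+2)\times(2n+2)$ block $B$ of $A$ on $U_1$ remains invertible, so writing
\[
A=\begin{pmatrix}B & C\\ -C^{T} & D\end{pmatrix},
\]
the Schur-complement identity $\rk A=\rk B+\rk(D+C^{T}B^{-1}C)$ shows that $\rk A\le 2n+2$ is locally equivalent to the vanishing of the $(2n-2)\times(2n-2)$ matrix $D+C^{T}B^{-1}C$. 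Since $(B^{-1})^{T}=-B^{-1}$, this matrix is skew-symmetric of size $2n-2$, so its vanishing amounts to exactly $\binom{2n-2}{2}=\tfrac{(2n-2)(2n-3)}{2}=2n^{2}-5n+3$ equations. The complementary condition $\rk A\ge 2n+2$ is automatically open at $A_0$ by lower semicontinuity of rank, so combined with the openness of (ii) and (iii) this shows that $MI_n$ is locally cut out in $\mathbf{S}_n$ by the displayed system of $\binom{2n-2}{2}$ equations, as claimed. The only delicate point to keep in mind is that these Schur-complement equations are defined on the ambient space $\wedge^2(H_n^\vee\otimes V^\vee)$ and then restricted to the linear subspace $\mathbf{S}_n$; since restriction of polynomial equations to a linear subspace preserves the set-theoretic zero locus, the count is unchanged.
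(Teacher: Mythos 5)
Your proof is correct and follows exactly the argument the paper leaves implicit (the theorem is stated there as an immediate consequence of definition (\ref{space of nets})): conditions (ii) and (iii) are open, and the rank condition (i) is expressed locally, via the Schur complement of an invertible $(2n+2)\times(2n+2)$ skew block, as the vanishing of a skew-symmetric $(2n-2)\times(2n-2)$ matrix, giving $\binom{2n-2}{2}$ equations. No gaps; this is the standard determinantal/Pfaffian count the author intends.
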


Note that from (\ref{eqns(i)}) it follows that
\begin{equation}\label{dim MIn ge...}
\dim_{[A]}MI_n\ge\dim \mathbf{S}_n-(2n^2-5n+3)=n^2+8n-3
\end{equation}
at any point $A\in MI_n$. On the other hand, by deformation theory for any $n$-instanton $E$ we
have $\dim_{[E]}I_n\ge8n-3$. This agrees with (\ref{dim MIn ge...}), since
$MI_n\to I_n$ is a principal $GL(H_n)/\{\pm{\rm id}\}$-bundle in the \'etale topology.

Let $\mathcal{S}_n=\{[E]\in I_n|$ there exists a line $l\in\mathbb{P}^3$ of maximal jump for
$E$, i.e. such a line $l$ that $h^0(E(-n)|_l)\ne0\}$. It is known \cite{S} that $\mathcal{S}_n$
is a closed subset of $I_n$ of dimension $6n+2$, and $I_n$ is smooth along $\mathcal{S}_n$.
Thus, since $\dim_{[E]}I_n\ge8n-3$ at any $[E]\in I_n$, it follows that
\begin{equation}\label{def of I'n}
I'_n:=I_n\smallsetminus\mathcal{S}_n
\end{equation}
is an open subset of $I_n$ and $(I'_n)_{red}$ is dense open in $(I_n)_{red}$; respectively,
\begin{equation}\label{def of MI'n}
MI'_n:=\pi_n^{-1}(I'_n)
\end{equation}
is an open subset of $MI_n$ and we have a dense open embedding
\begin{equation}\label{MI'n dense}
\xymatrix{(MI'_n)_{red}\ar@{^{(}->}[rr]^-{\rm dense\ open} & & (MI_n)_{red}}.
\end{equation}
For technical reasons we will below restrict ourselves to $MI'_n$ instead of $MI_n$.
\begin{remark}\label{smooth pts}
There exist smooth points of $I_n$ - see, e.g., \cite{NT}. Hence, there exist smooth points in $MI_n$.
\end{remark}

\vspace{0.5cm}

%\vspace{1cm}

\section{Decomposition $H_{2m+1}\simeq H_{m+1}\oplus H_m$ and related constructions}\label{decomp}

\vspace{0.5cm}

\begin{sub}{\bf One result of general position for $(2m+1)$-instanton nets.}
\label{gen pos1}
\rm

Fix a positive integer $m\ge3$ and, for a given $(2m+1)$-instanton vector bundle
$[E]\in I'_{2m+1}$, fix an isomorphism $f:H_{2m+1}\overset{\simeq}\to H^2(E(-3))$
and set
\begin{equation}\label{H4k+4}
H_{4m}:=H^2(E(-4)),\ \ \ W_{4m+4}:=H^1(E\otimes\Omega_{\mathbb{P}^3})^\vee.
\end{equation}
(Here we keep in mind the equalities (\ref{dimension1}) for $n=2m+1$.)
In this notation, the lower exact triple in (\ref{A'}) can be rewritten as:
%The Euler exact sequence induces the exact triple
%$0\to E\otimes\Omega_{\mathbb{P}^3}\to V^\vee\otimes E(-1)\to E\to0$
%which gives a natural multiplication map in the first cohomology:
%\begin{equation}\label{mult1}
%H^\vee_{2m+1}\otimes V^\vee\overset{mult}\to H^\vee_{4m}\to
%H^2(E\otimes\Omega_{\mathbb{P}^3}).
%\end{equation}
%Passing to cohomology of the exact triple $0\to E\otimes\Omega^2_{\mathbb{P}^3}\to
%\wedge^2 V^\vee\otimes E(-2)\to E\otimes\Omega_{\mathbb{P}^3}\to0$
%and using standard equalities
%$0=h^2(E(-2)),\ h^3(E\otimes\Omega^2_{\mathbb{P}^3})=
%h^0(E\otimes\Omega_{\mathbb{P}^3})\le h^0(E(-1)\otimes V^\vee)=0$ for the instanton
%bundle $E$ we obtain: $H^2(E\otimes\Omega_{\mathbb{P}^3})=0$. Hence (\ref{mult1}) gives the
%exact triple
\begin{equation}\label{mult2}
0\to W^\vee_{4m+4}\to H^\vee_{2m+1}\otimes V^\vee\overset{mult}\to H^\vee_{4m}\to0
\end{equation}

We formulate now the following result of general position for $(2m+1)$-instanton nets
of quadrics which will be important for further study.

\begin{theorem}\label{generic Vm}
Let $m\ge3$ and let $E$ be a $(2m+1)$-instanton, $[E]\in I'_{2m+1}$, supplied with an
isomorphism $f:H_{2m+1}\overset{\simeq}\to H^2(E(-3))$
and set $W_{4m+4}=H^1(E\otimes\Omega_{\mathbb{P}^3})^\vee$, so that there is the injection
$W^\vee_{4m+4} \hookrightarrow H^\vee_{2m+1}\otimes V^\vee$ defined in (\ref{mult2}).
Then for a generic $m$-dimensional subspace $V_m$ of $H^\vee_{2m+1}$ one has
$$
W^\vee_{4m+4}\cap V_m\otimes V^\vee=\{0\}.
$$
\end{theorem}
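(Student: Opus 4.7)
The intersection $W^\vee_{4m+4}\cap V_m\otimes V^\vee$ is nonzero iff some nonzero $w\in W^\vee_{4m+4}\subset\Hom(V,H^\vee_{2m+1})$ has $\im(w)\subset V_m$. So the goal is to show that the locus
$$
Z\;:=\;\{V_m\in G\mid W^\vee_{4m+4}\cap V_m\otimes V^\vee\ne 0\}
$$
is a proper closed subset of $G:=\text{Gr}(m,H^\vee_{2m+1})$, which has dimension $m(m+1)$.

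The plan is a standard incidence-correspondence dimension count. Set
$$
I\;:=\;\{(V_m,[w])\in G\times\mathbb{P}(W^\vee_{4m+4})\mid\im(w)\subset V_m\},
$$
so $Z$ is the image of the first projection $p_1\colon I\to G$, and it suffices to prove $\dim I<m(m+1)$. I compute $\dim I$ via the second projection $p_2\colon I\to\mathbb{P}(W^\vee_{4m+4})$, stratifying by the rank $r$ of $w\colon V\to H^\vee_{2m+1}$: for $\rk(w)=r$ the fiber of $p_2$ is the Schubert cell $\{V_m\supset\im(w)\}\cong\text{Gr}(m-r,\,2m+1-r)$, of dimension $(m-r)(m+1)$. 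Writing $\Sigma_r:=\{[w]\mid\rk(w)\le r\}\subset\mathbb{P}(W^\vee_{4m+4})$, the required bound becomes $\dim\Sigma_r<r(m+1)$ for each $r\in\{1,2,3,4\}$.

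The case $r=4$ is immediate since $\dim\Sigma_4=4m+3<4(m+1)$. For $r=3$, the condition $\rk(w)\le 3$ yields $v\in V\smallsetminus 0$ with $w(v)=0$, so $w\in\ker(\ev_v)$; Barth's condition (ii) in (\ref{space of nets}) --- the fiberwise surjectivity of $a_A^\vee$ --- forces $\ev_v$ to be surjective, whence $\dim\ker(\ev_v)=2m+3$. Sweeping $v$ over $\mathbb{P}(V)$ gives $\dim\Sigma_3\le 2m+5<3(m+1)$ for $m\ge 3$. For $r\le 2$, $\dim\ker(w)\ge 4-r$, so $w\in\ker(\ev_\Pi)$ for some $\Pi\in\text{Gr}(4-r,V)$, and $\dim\ker(\ev_\Pi)=h^0(K|_{\mathbb{P}(\Pi)})$ with $K:=\ker(a_A^\vee)$. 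The exact triple $0\to H_{2m+1}\otimes\mathcal{O}(-1)\to K\to E\to 0$ extracted from the monad (\ref{Monad An}) reduces this to $h^0(E|_{\mathbb{P}(\Pi)})$: for $r=2$, $h^0(E|_\ell)=a(\ell)+1$ where $a(\ell)$ is the splitting jump of $E$ on the line $\ell=\mathbb{P}(\Pi)$, and the hypothesis $[E]\in I'_{2m+1}$ forces $a(\ell)\le 2m$; the $r=1$ case is analogous on a $2$-plane $H\subset\mathbb{P}^3$.

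The main obstacle will be to combine the bound $a(\ell)\le 2m$ with sharp enough control of the jumping-line locus $\mathcal{L}_a:=\{\ell\mid a(\ell)\ge a\}\subset\text{Gr}(2,V)$ (and its analogue for $2$-planes in $\mathbb{P}^3$) in order to deduce $\dim\Sigma_2<2(m+1)$ and $\dim\Sigma_1<m+1$. The crude bound $\dim\mathcal{L}_a\le\dim\mathcal{L}_1=3$ --- from $\mathcal{L}_1$ being a divisor of degree $n$ in the Pl\"ucker embedding --- falls short by exactly one; closing this gap requires instanton-specific refinements of the form $\dim\mathcal{L}_a\le 4-a$, obtainable via Grauert--M\"ulich-type restrictions on the splitting types of $E$, or through the auxiliary symplectic bundle $E_{2m+2}$ introduced in \refsec{decomp}. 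This is also the most likely source of the hypothesis $m\ge 3$. Once these sharpenings are in place, summing the contributions over $r=1,\dots,4$ yields $\dim I<m(m+1)$ and the theorem follows.
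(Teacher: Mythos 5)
Your incidence-correspondence setup is exactly the one the paper uses in the Appendix: stratify $P(W^\vee_{4m+4})$ by the rank $r$ of $w$, note that the fibre of the first projection over a rank-$r$ point is $G(m-r,H^\vee_{2m+1}/U_r)$ of dimension $(m-r)(m+1)$, and reduce to the bound $\dim\Sigma_r^{(=r)}<r(m+1)$ for $r=1,\dots,4$. Your treatments of $r=4$ and $r=3$ are correct and coincide with the paper's (your use of Barth's condition (ii) to get $\dim\ker(\ev_v)=2m+3$ is the same computation the paper performs via the diagrams \refeq{Di1}--\refeq{Di3}).

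The genuine gap is the $r=2$ case, which you correctly identify as the main obstacle but do not close, and the refinement you propose is not the right one. What is actually needed is only a bound on the locus of lines of \emph{top} jump: for a line $\ell$ of jump $k\ge1$ the rank-$2$ elements with column space $V_2(\ell)$ form a $k$-dimensional projective space, so the crude bound $\dim J^*_k\le3$ already suffices for all $k\le 2m-2$ (giving $k+3\le 2m+1<2(m+1)$); the count fails only for $k=2m-1,2m$, where one must show $\dim J_{2m-1}(E)\le1$. Your suggested refinement $\dim\mathcal{L}_a\le 4-a$ would force $J_{2m-1}(E)=\emptyset$ for $m\ge3$, which is false in general, and Grauert--M\"ulich gives no control beyond the generic splitting type, hence nothing past $\dim J^*_1\le3$. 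The paper's Lemma \ref{jumps}(1) proves $\dim J_{2m-1}(E)\le1$ by a bespoke argument: assuming a surface of such lines, one extracts a curve of degree $\ge5$ in $G(1,3)$, invokes Coand\u{a}'s lemma to produce two \emph{incident} high-jump lines $\ell_1,\ell_2$, and plays the upper bound $h^0(E|_{\ell_1\cup\ell_2})\le 2m+2$ (from $h^1(E(-2))=0$ restricted to the plane they span) against the lower bound $4m-2$ coming from the two jumps, contradicting $m\ge3$. Without this lemma (or a substitute), your dimension count for $\Sigma_2$ does not go through. Your $r=1$ sketch is fine in outline once one knows $h^0(E|_{P(V_1)})\le1$, which follows from $h^0(E(-1))=0$ as in \refeq{tripleB}--\refeq{tripleC}.
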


The proof of this Theorem has rather technical character, and we leave it to the end of the paper - see Appendix
(section \ref{Appendix}).

\end{sub}

\begin{sub}{\bf Decomposition $H_{2m+1}\simeq H_{m+1}\oplus H_m$.}
\label{decomp xi}
\rm

Fix an isomorphism
\begin{equation}\label{xi}
\xi:H_{m+1}\oplus H_m\overset{\simeq}\to H_{2m+1}
\end{equation}
and let
\begin{equation}\label{injns}
H_{m+1}\overset{i_{m+1}}\hookrightarrow H_{m+1}
\oplus H_m\overset{i_m}\hookleftarrow H_m
\end{equation}
be the injections of direct summands.
For a given $(2m+1)$-instanton vector bundle $E$, $[E]\in I'_{2m+1}$, fix an isomorphism
$f:H_{2m+1}\overset{\simeq}\to H^2(E(-3))$ and a symplectic structure
$j:E\overset{\simeq}\to E^\vee$. The data $[E,f,j]$ define a net of quadrics
$A\in MI'_{2m+1}$ (see section \ref{general}), and the exact triple (\ref{mult2}) is
naturally identified with the dual to the triple
$0\to\ker A\to H_{2m+1}\otimes V\to W_{A}\to0$ and fits in diagram
(\ref{qA}) for $n=2m+1$
\begin{equation}\label{qA 2m+1}
\xymatrix{0\ar[r] & \ker A\ar[r] &
H_{2m+1}\otimes V\ar[r]^{\ \ c_A}\ar[d]^{A} & W_{A}
\ar[r]\ar[d]_{\cong}^{q_{A}} & 0 \\
0 & \ker A^\vee\ar[l] & H_{2m+1}^\vee\otimes V^\vee\ar[l] &
W_{A}^\vee\ar[l]_{\ \ \ \ \ c_A^\vee}& 0.\ar[l]}
\end{equation}
Consider the composition
\begin{equation}\label{j xi}
j_{\xi,A}:H_{m+1}\otimes V\overset{i_{m+1}}\hookrightarrow
H_{m+1}\otimes V\oplus H_m\otimes V
\overset{\overset{\xi}\simeq}\to H_{2m+1}\otimes V\overset{c_A}\to W_A.
\end{equation}
Under these notations Theorem \ref{generic Vm} can be reformulated in the following way:

\vspace{0.2cm}
(*) {\it Assume $m\ge3$ and let $A$ be an arbitrary $(2m+1)$-net from $MI'_{2m+1}$. Then for a
generic isomorphism
$\xi:H_{2m+1}\overset{\simeq}\to H_{m+1}\oplus H_m$
one has
\begin{equation}\label{ker A cap...=0}
\ker A\cap(\xi\circ i_{m+1})(H_{m+1}\otimes V)=\{0\}.
\end{equation}
Equivalently, $j_{\xi,A}:\ H_{m+1}\otimes V\to W_{A}$ is an isomorphism.}
\vspace{0.2cm}

Consider the direct sum decomposition corresponding to the isomorphism (\ref{xi})
\begin{equation}\label{direct sum}
\widetilde{\xi}: \mathbf{S}_{m+1}\ \ \oplus\ \ \mathbf{\Sigma}_{m+1}\ \ \oplus\ \ \mathbf{S}_m\overset{\sim}\to\mathbf{S}_{2m+1}
\end{equation}
and let
\begin{equation}\label{xi 1,2,3}
\mathbf{S}_{2m+1}\twoheadrightarrow\mathbf{S}_{m+1}:A\mapsto A_1(\xi),\ \ \ \
\mathbf{S}_{2m+1}\twoheadrightarrow\mathbf{\Sigma}_{m+1}:A\mapsto A_2(\xi),\ \ \ \
\mathbf{S}_{2m+1}\twoheadrightarrow\mathbf{S}_m:A\mapsto A_3(\xi)
\end{equation}
be the projections onto direct summands. By definition, $A_1(\xi)$ considered as a skew-symmetric
homomorphism
$H_{m+1}\otimes V\to H_{m+1}^\vee\otimes V^\vee$
coincides with the composition
\begin{equation}\label{xi 1}
A_1(\xi):\ H_{m+1}\otimes V\overset{j_{\xi,A}}\to W_{A}
\overset{q_{A}}{\underset{\simeq}\to}W^\vee_{A}
\overset{j^\vee_{\xi,A}}\to H_{m+1}^\vee\otimes V^\vee.
\end{equation}
This means that assertion (*) can be reformulated as:

\vspace{0.2cm}
(**) {\it Assume $m\ge3$ and let $A$ be an arbitrary $(2m+1)$-net from $MI'_{2m+1}$.
Then for a generic isomorphism $\xi$ in (\ref{xi}) the skew-symmetric homomorphism
$A_1(\xi):\ H_{m+1}\otimes V\to H_{m+1}^\vee\otimes V^\vee$
is invertible.}
\vspace{0.2cm}

Now, using the notation (\ref{xi 1,2,3}), we can represent the net $A\in\mathbf{S}_{2m+1}$ considered as a homomorphism
$A:H_{m+1}\otimes V\oplus H_m\otimes V\to H_{m+1}^\vee\otimes V^\vee\oplus H_m^\vee\otimes V^\vee$
by the $(8m+4)\times(8m+4)$-matrix of homomorphisms
$$
A=\left(\begin{array}{cc}
A_1(\xi) & A_2(\xi) \\
-A_2(\xi)^\vee & A_3(\xi)
\end{array}\right).
$$
This matrix is of rank $4m+4$ according to Barth's condition (i) in (\ref{space of nets}).
On the other hand, by (**) we have $\rk A_1(\xi)=4m+4$, i.e.
ranks of $A$ and of its submatrix $A_1(\xi)$ coincide. This yields, after multiplying the matrix $A$
by the invertible matrix of homomorphisms
$$
\left(\begin{array}{cc}
A_1(\xi)^{-1} & \mathbf{0} \\
A_2(\xi)^\vee\circ A_1(\xi)^{-1} & {\rm id}_{H_m^\vee\otimes V^\vee}
\end{array}\right)
$$
from the left, the following relation between the matrices $A_1(\xi),\ A_2(\xi)$ and $A_3(\xi)$:
\begin{equation}\label{xi3(A)}
A_3(\xi)=-A_2(\xi)^\vee\circ A_1(\xi)^{-1}\circ A_2(\xi),
\end{equation}
\begin{remark} This relation means that $A_3(\xi)$ is uniquely determined by $A_1(\xi)$ and $A_2(\xi)$. We will use this important
observation systematically in the sequel.
\end{remark}

For $m\ge1$ let ${\rm Isom}_{2m+1}$ be the set of all isomorphisms $\xi$ in (\ref{xi})
and set
\begin{equation}\label{MI2m+1(xi)}
MI_{2m+1}(\xi):=\{A\in MI'_{2m+1}\ |\ {\rm the\ skew-symmetric\ homomorphism}\ A_1(\xi)
\ {\rm in}\ (\ref{xi 1})\
\end{equation}
$$
{\rm is\ invertible}\},\ \ \ \xi\in{\rm Isom}_{2m+1}.
$$
In these notations we have the following result.
\vspace{0.3cm}
\noindent
\begin{theorem}\label{Theorem 4.2'}
For $m\ge3$ the following statements hold.

(i) There exists a dense subset ${\rm Isom}_{2m+1}^0$ of ${\rm Isom}_{2m+1}$
such that the sets $MI_{2m+1}(\xi),\ \xi\in{\rm Isom}_{2m+1}^0$, constitute an open cover of
$MI'_{2m+1}$ .

(ii) There exists a dense open subset ${\rm Isom}_{2m+1}^{00}$ of ${\rm Isom}_{2m+1}$ contained in ${\rm Isom}_{2m+1}^0$
such that the sets $MI_{2m+1}(\xi),\ \xi\in{\rm Isom}_{2m+1}^{00}$, are dense open subsets of
$MI'_{2m+1}$.

(iii) For any $\xi\in{\rm Isom}_{2m+1}^0$ and any $A\in MI_{2m+1}(\xi)$ the relation (\ref{xi3(A)})
is true.
\end{theorem}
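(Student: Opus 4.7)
The plan is to dispatch parts (i) and (iii) as near-immediate consequences of the setup already established before the statement, and to prove part (ii) by a standard incidence-variety argument built on the reformulation (**) of Theorem \ref{generic Vm}. First, for every $\xi \in {\rm Isom}_{2m+1}$, the subset $MI_{2m+1}(\xi) \subset MI'_{2m+1}$ is Zariski open: it is cut out by the non-vanishing of $\det A_1(\xi)$, a polynomial in the coordinates of $A \in \mathbf{S}_{2m+1}$. Moreover, for any such $\xi$ and any $A \in MI_{2m+1}(\xi)$, the derivation of (\ref{xi3(A)}) recorded immediately before the theorem statement applies verbatim: Barth's rank condition in (\ref{space of nets}) together with the invertibility of $A_1(\xi)$ give $\rk A = \rk A_1(\xi) = 4m+4$, and left-multiplication of the $(8m+4) \times (8m+4)$ block matrix $A$ by the block lower-triangular invertible matrix with diagonal $(A_1(\xi)^{-1},\,{\rm id})$ and lower-left block $A_2(\xi)^\vee \circ A_1(\xi)^{-1}$ forces the second block row of the product to vanish. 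This yields (iii), and it does so for \emph{any} choice of ${\rm Isom}_{2m+1}^0$. Part (i) then follows at once: by (**), for every $A \in MI'_{2m+1}$ the locus of $\xi$ with $A \in MI_{2m+1}(\xi)$ is a non-empty (in fact dense) open subset of ${\rm Isom}_{2m+1}$; hence the family $\{MI_{2m+1}(\xi)\}_{\xi \in {\rm Isom}_{2m+1}}$ is an open cover of $MI'_{2m+1}$, and one may take ${\rm Isom}_{2m+1}^0 := {\rm Isom}_{2m+1}$, which is dense in itself.

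For (ii) the task is to refine $\xi$ so that $MI_{2m+1}(\xi)$ is not merely non-empty but dense open in $MI'_{2m+1}$. Since $MI'_{2m+1}$ is Noetherian, it has finitely many irreducible components $Y_1, \ldots, Y_r$; pick a closed point $A_\alpha \in Y_\alpha$ for each $\alpha$ and set
$$
U_\alpha := \{\xi \in {\rm Isom}_{2m+1} \ : \ A_\alpha \in MI_{2m+1}(\xi)\}.
$$
By (**), each $U_\alpha$ is a non-empty open subset of the irreducible variety ${\rm Isom}_{2m+1}$, hence dense open; the finite intersection ${\rm Isom}_{2m+1}^{00} := \bigcap_\alpha U_\alpha$ is therefore dense open in ${\rm Isom}_{2m+1}$ and, by our choice of ${\rm Isom}_{2m+1}^0$, is contained in it. For any $\xi \in {\rm Isom}_{2m+1}^{00}$ and each $\alpha$, the intersection $MI_{2m+1}(\xi) \cap Y_\alpha$ is a non-empty open subset of the irreducible scheme $Y_\alpha$, hence dense in $Y_\alpha$; taking the union over $\alpha$ shows that $MI_{2m+1}(\xi)$ is dense open in $MI'_{2m+1}$.

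The substantive ingredient is the reformulation (**) of Theorem \ref{generic Vm}, whose proof is postponed to the Appendix; once granted, the remaining arguments reduce to the open-subscheme observation, the block-matrix manipulation, and the usual generic-point formalism over a Noetherian scheme. The hypothesis $m \ge 3$ in (i) and (ii) enters only through the hypothesis of Theorem \ref{generic Vm}, so the only genuine obstacle is the general-position statement itself rather than anything in the current argument.
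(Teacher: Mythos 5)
Your proof is correct and follows essentially the same route as the paper: both parts rest on the reformulation (**) of Theorem \ref{generic Vm}, on the openness of the condition that $A_1(\xi)$ be invertible (in $A$ for fixed $\xi$, and in $\xi$ for fixed $A$), and on the irreducibility of ${\rm Isom}_{2m+1}$ and of the components of $MI'_{2m+1}$; your part (ii) merely replaces the paper's incidence variety $U_i\subset M_i\times{\rm Isom}_{2m+1}$ by the choice of one closed point per component, a harmless simplification, and your observation that (iii) requires only $A\in MI_{2m+1}(\xi)$ rather than genericity of $\xi$ is accurate. One caveat: taking ${\rm Isom}_{2m+1}^0={\rm Isom}_{2m+1}$ does satisfy the literal statement of (i), but it differs from the paper's choice ${\rm Isom}_{2m+1}^0=\{\xi\ |\ MI_{2m+1}(\xi)\ne\emptyset\}$, and the latter is what is actually needed downstream --- for instance Theorem \ref{Xm isom MI2m+1} asserts $(MI_{2m+1}(\xi))_{red}\simeq X_m$ for \emph{every} $\xi\in{\rm Isom}_{2m+1}^0$, which would fail for any $\xi$ with $MI_{2m+1}(\xi)=\emptyset$ --- so you should at least discard from your ${\rm Isom}_{2m+1}^0$ those $\xi$ for which $MI_{2m+1}(\xi)$ is empty.
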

\begin{proof}
(i)-(ii) Let $MI'_{2m+1}=M_1\cup...\cup M_s$ be a decomposition of $MI'_{2m+1}$ into irreducible components. Consider
the set $U:=\{(A,\xi)\in MI'_{2m+1}\times{\rm Isom_{2m+1}}\ |\ A_1(\xi):\ H_{m+1}\otimes V\to
H_{m+1}^\vee\otimes V^\vee$ is invertible $\}$ with
projections $MI'_{2m+1}\overset{p}\leftarrow U\overset{q}\to{\rm Isom_{2m+1}}$, and let
$U_i:=U\cap M_i\times{\rm Isom_{2m+1}}$ with the induced projections
$M_i\overset{p_i}\leftarrow U_i\overset{q_i}\to{\rm Isom_{2m+1}},\ \ i=1,...,s.$
By definition, $U$ is open in $ MI'_{2m+1}\times{\rm Isom_{2m+1}}$, hence each $U_i$ is open in
$M_i\times{\rm Isom_{2m+1}}$. Moreover, the property (**) implies that $p_i(U_i)=M_i$, so that $U_i$ is nonempty,
hence dense in $M_i\times{\rm Isom_{2m+1}}$ since both $M_i$ and ${\rm Isom_{2m+1}}$ are irreducible. (Note that
${\rm Isom_{2m+1}}$ is irreducible as a principal homogeneous space of the group $GL(2m+1)$.) Hence $q_i(U_i)$ contains
a dense open subset, say, $W_i$ of ${\rm Isom_{2m+1}}$. Set ${\rm Isom}_{2m+1}^0:=\underset{1\le i\le s}\cup q_i(U_i)$
and ${\rm Isom}_{2m+1}^{00}:=\underset{1\le i\le s}\cap W_i$. By construction, the sets
$MI_{2m+1}(\xi)\simeq q^{-1}(\xi),\ \xi\in{\rm Isom}_{2m+1}^0$, constitute an open cover of $MI'_{2m+1}$.
Respectively, for any $\xi\in{\rm Isom}_{2m+1}^{00}$ and each $i$, $1\le i\le s$, the set $q_i^{-1}(\xi)$ is nonempty
open, hence dense subset in $M_i$. This yields that, for $\xi\in{\rm Isom}_{2m+1}^{00}$, the set
$MI'_{2m+1}(\xi))\simeq q^{-1}(\xi)=\underset{1\le i\le s}\cup q_i^{-1}(\xi)$
is dense open in $MI'_{2m+1}$.

(iii) This follows from (\ref{xi3(A)}) and (**).
\end{proof}

\vspace{0.3cm}
We will need below the following lemma.

\begin{lemma}\label{lemma on B,C}
For $\xi\in{\rm Isom}_{2m+1}^0$ and $A\in MI_{2m+1}(\xi)$, set
\begin{equation}\label{B,C}
B:=A_1(\xi),\ \ \ C:=A_2(\xi).
\end{equation}
Then the following statements hold.

(i) Consider a subbundle morphism
\begin{equation}\label{alpha xi,A}
\alpha_{\xi,A}:=j^{-1}_{\xi,A}\circ a_A\circ\xi:
(H_{m+1}\oplus H_m)\otimes\mathcal{O}_{\mathbb{P}^3}(-1)\to
H_{m+1}\otimes V\otimes\mathcal{O}_{\mathbb{P}^3}.
\end{equation}
Then there exists an epimorphism
\begin{equation}\label{lambda xi,A}
\lambda_{\xi,A}:\coker(B\circ\alpha_{\xi,A})\twoheadrightarrow
H_{m+1}^\vee\otimes\mathcal{O}_{\mathbb{P}^3}(1).
\end{equation}
making commutative the diagram
\begin{equation}\label{diag lambda}
\xymatrix{
H_{m+1}^\vee\otimes V^\vee\otimes\mathcal{O}_{\mathbb{P}^3} \ar[r]^-{can}
\ar[rd]_{u^\vee} & \coker(B\circ\alpha_{\xi,A})\ar[d]^{\lambda_{\xi,A}} \\
& H_{m+1}^\vee\otimes\mathcal{O}_{\mathbb{P}^3}(1),
}
\end{equation}
where $can$ is the canonical surjection.

(ii) Consider the commutative diagram
\begin{equation}\label{diag tau}
\xymatrix{
& H_m\otimes\mathcal{O}_{\mathbb{P}^3}(-1) & & & \\
0\ar[r] & (H_{m+1}\oplus H_m)\otimes\mathcal{O}_{\mathbb{P}^3}(-1)
\ar[r]^-{B\circ\alpha_{\xi,A}}\ar@{->>}[u] &
H_{m+1}^\vee\otimes V^\vee\otimes\mathcal{O}_{\mathbb{P}^3}\ar[r]^-{can} &
\coker(B\circ\alpha_{\xi,A}) \ar[r] & 0 \\
0\ar[r] & H_{m+1}\otimes\mathcal{O}_{\mathbb{P}^3}(-1)
\ar[r]^{B\circ u\ \ \ }\ar@{>->}[u]_{i_{m+1}} &
H_{m+1}^\vee\otimes V^\vee\otimes\mathcal{O}_{\mathbb{P}^3}
\ar[r]^{\ \ \ v\circ B^{-1}}\ar@{=}[u] &
H_{m+1}\otimes T_{\mathbb{P}^3}(-1) \ar[r]\ar@{->>}[u]_{\epsilon_{\xi,A}} & 0 \\
 & & & H_m\otimes\mathcal{O}_{\mathbb{P}^3}(-1),\ar@{>->}[u]_{\tau_{\xi,A}}
}
\end{equation}
where $\tau_{\xi,A}$ and $\epsilon_{\xi,A}$ are the induced morphisms. Then the morphism
$\tau_{\xi,A}$ is a subbundle morphism fitting in a commutative diagram
\begin{equation}\label{diag C,tau}
\xymatrix{
H_{m+1}^\vee\otimes V^\vee\otimes\mathcal{O}_{\mathbb{P}^3}\ar[r]^{\ \ v\circ B^{-1}} &
H_{m+1}\otimes T_{\mathbb{P}^3}(-1) \\
H_m\otimes\mathcal{O}_{\mathbb{P}^3}(-1)\ar[u]_{C\circ u}\ar@{=}[r]
& H_m\otimes\mathcal{O}_{\mathbb{P}^3}(-1).\ar@{>->}[u]_{\tau_{\xi,A}}
}
\end{equation}

\end{lemma}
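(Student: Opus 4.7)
The plan is to obtain both statements by unpacking the definitions on the two summands $H_{m+1}$ and $H_m$, reducing part (i) to a well-known vanishing coming from the net structure and part (ii) to a direct application of the snake lemma. The central computation is the identification
$$B\circ\alpha_{\xi,A}\circ(i_{m+1}\otimes\id) = B\circ u, \qquad B\circ\alpha_{\xi,A}\circ(i_m\otimes\id) = C\circ u,$$
which I would establish first. The first equality is immediate from (\ref{j xi}): the restriction of $\alpha_{\xi,A} = j^{-1}_{\xi,A}\circ a_A\circ\xi$ to $H_{m+1}\otimes\mathcal{O}(-1)$ is precisely $j^{-1}_{\xi,A}\circ j_{\xi,A}\circ u = u$. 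For the second, I would use the factorization $A = c_A^\vee\circ q_A\circ c_A$ from diagram (\ref{qA 2m+1}) together with (\ref{xi 1}), so that $B\circ\alpha_{\xi,A}$ restricted to $H_m$ equals $j^\vee_{\xi,A}\circ q_A\circ c_A\circ\xi\circ i_m$ composed with $u$; reading off the $(H_{m+1}^\vee, H_m)$-block of the matrix expression of $A$ given right after (\ref{xi 1,2,3}) identifies this map with $C\circ u$.

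For part (i), with these two identifications in hand, the composition $u^\vee\circ(B\circ\alpha_{\xi,A})$ splits as $u^\vee\circ B\circ u$ on the first summand and $u^\vee\circ C\circ u$ on the second. Both vanish: since $B\in \mathbf{S}_{m+1}=S^2 H_{m+1}^\vee\otimes\wedge^2 V^\vee$ and $C\in\mathbf{\Sigma}_{m+1}=\Hom(H_m,H_{m+1}^\vee\otimes\wedge^2V^\vee)$, their $V^\vee\otimes V^\vee$-components factor through $\wedge^2 V^\vee$, which is annihilated by the symmetric composition $u^\vee\circ(\cdot)\circ u\colon\mathcal{O}(-1)\to\mathcal{O}(1)$ (this is the same Koszul-type vanishing that underlies the monad condition in (\ref{Monad An})). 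By the universal property of the cokernel, $u^\vee$ then factors uniquely as $\lambda_{\xi,A}\circ\mathrm{can}$, giving the commutativity of diagram (\ref{diag lambda}); and $\lambda_{\xi,A}$ is surjective because $u^\vee$ is.

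For part (ii), I would first note that both rows of diagram (\ref{diag tau}) are short exact sequences of locally free sheaves. The bottom row is obtained by applying the isomorphism $B$ to the Euler sequence tensored with $H_{m+1}$. The top row is exact because $B\circ\alpha_{\xi,A}$ is a subbundle morphism: $a_A$ is a subbundle morphism as the monomorphism part of the monad (\ref{Monad An}), and composition with the isomorphisms $j_{\xi,A}^{-1}$, $\xi$ and $B$ preserves this property. Applied to the morphism of short exact sequences with left vertical arrow $i_{m+1}$ (of cokernel $H_m\otimes\mathcal{O}(-1)$) and middle vertical arrow the identity, the snake lemma produces a surjection $\epsilon_{\xi,A}$ and a canonical identification $\ker\epsilon_{\xi,A}\simeq H_m\otimes\mathcal{O}(-1)$; this identification is the morphism $\tau_{\xi,A}$, which is therefore a subbundle morphism because $\coker(B\circ\alpha_{\xi,A})$ is locally free and $\epsilon_{\xi,A}$ is surjective.

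It remains to verify commutativity of diagram (\ref{diag C,tau}). Tracing through the snake lemma on a local section $h$ of $H_m\otimes\mathcal{O}(-1)$: lifting to $(0,h)$ in $(H_{m+1}\oplus H_m)\otimes\mathcal{O}(-1)$ and applying $B\circ\alpha_{\xi,A}$ produces $C\circ u(h)$ by the key identification above; lifting further along the surjection $v\circ B^{-1}$ gives $\tau_{\xi,A}(h) = v\circ B^{-1}\circ C\circ u(h)$, which is precisely (\ref{diag C,tau}). The main obstacle in the whole argument is the block-matrix identification $B\circ\alpha_{\xi,A}|_{H_m} = C\circ u$; once this is in hand, everything else is a formal application of the snake lemma and the universal property of cokernels.
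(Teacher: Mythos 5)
Your proposal is correct and follows essentially the same route as the paper: the paper's proof of (i) reads the factorization $u^\vee=i_{m+1}^\vee\circ\alpha_{\xi,A}^\vee$ and the monad identity $\alpha_{\xi,A}^\vee\circ B\circ\alpha_{\xi,A}=0$ off diagram (\ref{diag with 2 monads}), which is exactly the vanishing $u^\vee\circ(B\circ\alpha_{\xi,A})=0$ you verify summand by summand, and the paper's ``standard diagram chasing'' for (ii) is precisely your snake-lemma argument built on the block identification $B\circ\alpha_{\xi,A}\circ i_m=C\circ u$. Your write-up simply supplies the details the paper leaves to the reader.
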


\begin{proof}
(i) Consider the commutative diagram
\begin{equation}\label{diag with 2 monads}
\xymatrix{
H_{2m+1}\otimes\mathcal{O}(-1)\ \ar@{>->}[r]^{a_A} &
W_A\otimes\mathcal{O}\ar[r]^{q_A}_{\simeq} & W_A^\vee\otimes\mathcal{O}
\ar@{->>}[r]^{a_A^\vee} \ar[d]^{j_{\xi,A}^\vee}_{\simeq}
& H_{2m+1}^\vee\otimes\mathcal{O}(1) \ar[d]^{\xi^\vee}_{\simeq}\\
(H_{m+1}\oplus H_m)\otimes\mathcal{O}(-1)\
\ar@{>->}[r]^{\ \ \ \alpha_{\xi,A}} \ar[u]^{\xi}_{\simeq}&
H_{m+1}\otimes V\otimes\mathcal{O}\ar[r]^{B}_{\simeq} \ar[u]^{j_{\xi,A}}_{\simeq}&
H_{m+1}^\vee\otimes V^\vee\otimes\mathcal{O}
\ar@{->>}[r]^{\alpha_{\xi,A}^\vee}\ar[dr]_{u^\vee} &
(H_{m+1}\oplus H_m)^\vee\otimes\mathcal{O}(1)
\ar@{->>}[d]^{i_{m+1}^\vee}\\
H_{m+1}\otimes\mathcal{O}(-1)\ar@{>->}[u]^{i_{m+1}}\ar[ur]_u & & &
H_{m+1}^\vee\otimes\mathcal{O}(1)
}
\end{equation}
Here the upper triple is the monad (\ref{Monad An}) for $n=2m+1$. Whence the statement (i)
follows.

(ii) Standard diagram chasing using (\ref{xi3(A)}), (\ref{B,C}) and diagram (\ref{diag tau}).
\end{proof}

\vspace{0.5cm}

\end{sub}

\begin{sub}\label{RemTH}{\bf Remarks on t'Hooft instantons.}
\label{THinstantons}
\rm

Consider the set
$$
I^{tH}_{2m+1}:=\{[E]\in I_{2m+1}\ |\ h^0(E(1))\ne0\},
$$
of {\it t'Hooft instanton bundles} and the corresponding set of {\it t'Hooft instanton nets}
$$
MI^{tH}_{2m+1}:=\pi_n^{-1}(I^{tH}_{2m+1}).
$$
We collect some well-known facts about $I^{tH}_{2m+1}$ in the following Lemma - see \cite{BT},
\cite{NT}, \cite[Prop. 2.2]{Tju1}.

\begin{lemma}\label{tHooft}
Let $m\ge1$. Then the following statements hold.

(i) $I^{tH}_{2m+1}$ is an irreducible $(10m+9)$-dimensional subvariety of $I_{2m+1}$.
Respectively, $MI^{tH}_{2m+1}$ is an irreducible $(4m^2+14m+10)$-dimensional subvariety of
$I_{2m+1}$.

(ii) $I^{tH*}_{2m+1}:=I^{tH}_{2m+1}\cap I'_{2m+1}$ is a smooth dense open subset of
$I^{tH}_{2m+1}$ and
\begin{equation}\label{h0=1}
h^0(E(1))=1,\ \ \ \ [E]\in I^{tH*}_{2m+1}.
\end{equation}
(iii) $MI^{tH*}_{2m+1}$ is a smooth dense open subset of the set
$$
TH_{2m+1}:=\{A\in\mathbf{S}_{2m+1}|A=\sum_{i=1}^{2m+2}h^2\otimes w,\
{\rm where}\ h\in H_{2m+1}^\vee,w\in\wedge^2V^\vee,\ w\wedge w=0\}.
$$
\end{lemma}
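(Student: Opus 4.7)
The plan is to parameterize t'Hooft instantons via the Serre construction from configurations of disjoint lines, and then transfer dimension and irreducibility to $MI^{tH}_{2m+1}$ via the principal $GL(H_{2m+1})/\{\pm1\}$-bundle structure of $\pi_{2m+1}\colon MI_{2m+1}\to I_{2m+1}$.

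For (i) and most of (ii), a nonzero $s\in H^0(E(1))$ has zero scheme $Y=(s)_0$ of pure dimension one and degree $c_2(E(1))=2m+2$, and for $[E]\in I^{tH*}_{2m+1}$ the absence of a line of maximal jump forces $Y$ to be a disjoint union of $2m+2$ skew lines $L_1,\dots,L_{2m+2}\subset\mathbb{P}^3$. Conversely, every such configuration together with a suitable extension class produces a t'Hooft instanton via
\[
0\to\mathcal{O}\to E(1)\to\mathcal{I}_Y(2)\to 0.
\]
The space of unordered $(2m+2)$-tuples of disjoint lines is an irreducible open subset of $G(2,V)^{2m+2}$ of dimension $4(2m+2)=8m+8$, and for each such $Y$ one computes by Serre duality
\[
\Ext^1(\mathcal{I}_Y(2),\mathcal{O})\cong H^1(\mathcal{O}_Y(-2))^\vee,
\]
which has dimension $2m+2$; the extensions defining a locally free $E$ form a dense open subset of the corresponding $\mathbb{P}^{2m+1}$. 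Summing, $\dim I^{tH}_{2m+1}=(8m+8)+(2m+1)=10m+9$ and irreducibility is inherited from the source. Smoothness along $I^{tH*}_{2m+1}$ reduces to $\Ext^2(E,E)=0$, a standard cohomology computation on the Serre sequence, and the equality $h^0(E(1))=1$ follows from $h^0(\mathcal{I}_Y(2))=0$, which holds since $2m+2\ge 4$ disjoint lines impose independent conditions on the $10$-dimensional space of quadrics for every $m\ge1$.

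Since $MI^{tH}_{2m+1}=\pi_{2m+1}^{-1}(I^{tH}_{2m+1})$ and $\dim GL(H_{2m+1})=(2m+1)^2$, one concludes $\dim MI^{tH}_{2m+1}=(10m+9)+(2m+1)^2=4m^2+14m+10$, with $MI^{tH*}_{2m+1}$ smooth dense open in it. For (iii), an explicit reading off of the net of quadrics from the Serre monad of a t'Hooft instanton shows that each line $L_i$ contributes a rank-one summand $h_i^2\otimes w_i$, where $w_i\in\wedge^2V^\vee$ is the Pl\"ucker vector of $L_i$ (hence $w_i\wedge w_i=0$) and $h_i\in H_{2m+1}^\vee$ records the extension class. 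A parameter count in $TH_{2m+1}$ then gives
\[
(2m+2)((2m+1)+5)-(2m+2)=(2m+2)(2m+5)=4m^2+14m+10,
\]
the subtraction accounting for the $\mathbf{k}^*$-ambiguity $(h_i,w_i)\mapsto(\lambda h_i,\lambda^{-2}w_i)$ in each summand.

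The main obstacle is the converse inclusion in (iii): one must verify that a generic point of $TH_{2m+1}$ actually satisfies Barth's conditions \refeq{space of nets} and hence lies in $MI^{tH*}_{2m+1}$. I would check that when the $L_i$ are pairwise disjoint and the $h_i$ are in linear general position the explicit monad produces a locally free cohomology sheaf with no line of maximal jump; the equality of dimensions between the two smooth irreducible varieties then forces density.
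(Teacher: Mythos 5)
The paper offers no proof of this lemma at all: it is stated as a collection of well-known facts with pointers to \cite{BT}, \cite{NT} and \cite[Prop. 2.2]{Tju1}, so there is no internal argument to compare against. Your overall strategy — parameterizing t'Hooft instantons by the Serre construction from configurations of disjoint lines and transporting dimension and irreducibility through the principal $GL(H_{2m+1})/\{\pm1\}$-bundle $\pi_{2m+1}$ — is the standard route of those references, and it is also the mechanism the paper itself deploys in Section \ref{gen pos} to build the complete family $\mathbf{T}$. Your numerical bookkeeping is correct: $8m+8$ for the lines, $\Ext^1(\mathcal{I}_Y(2),\mathcal{O})\cong H^1(\mathcal{O}_Y(-2))^\vee$ of dimension $2m+2$, hence $10m+9$, and the counts for $MI^{tH}_{2m+1}$ and $TH_{2m+1}$ agree.

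Two of your intermediate claims are, however, false as stated, and they carry weight. First, removing the maximal-jump locus $\mathcal{S}_{2m+1}$ does not force $Y=(s)_0$ to be $2m+2$ distinct skew lines: $Y$ may carry multiplicity structures on fewer lines (the paper's Lemma \ref{curv} and the filtration (\ref{subschemes}) exist precisely to handle these degenerations), and the associated jumping order is far below $2m+1$, so such bundles survive in $I^{tH*}_{2m+1}$. Your parametrization therefore misses points of $I^{tH*}_{2m+1}$, which undercuts the smoothness claim there and, more seriously, leaves the irreducibility of all of $I^{tH}_{2m+1}$ open — one must still show every component meets the reduced-configuration locus, which is exactly what the curvilinear Hilbert-scheme construction of Section \ref{gen pos} accomplishes in the analogous $c_2=2m-1$ setting. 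Second, $2m+2$ disjoint lines do not always impose independent conditions on quadrics: if they lie in one ruling of a smooth quadric surface then $h^0(\mathcal{I}_Y(2))=1$ and $h^0(E(1))=2$. The correct reason (\ref{h0=1}) holds on $I^{tH*}_{2m+1}$ is that in this special case every line of the opposite ruling meets $Y$ in a length-$(2m+2)$ scheme and is therefore a jumping line of order $2m+1$, so these bundles fall into $\mathcal{S}_{2m+1}$ and are excluded — precisely the identification made in the paper's Remark \ref{special tH bdls}. Finally, you rightly flag the converse inclusion in (iii) as unproven; as written, your argument gives one inclusion together with an equality of dimensions of loci not yet shown to be dense in one another.
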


We are going to extend the statement of Theorem \ref{Theorem 4.2'} to the cases $m=1$ and 2.
To this end, for $m=1,2$ and $\xi\in{\rm Isom}_{2m+1}$ consider the sets $MI_{2m+1}(\xi)$
defined in (\ref{MI2m+1(xi)}) and set
\begin{equation}\label{MI' for m=1,2}
MI''_{2m+1}:=\underset{\xi\in{\rm Isom}_{2m+1}}\cup MI_{2m+1}(\xi),\ \ \ m=1,2.
\end{equation}
For $m=1,2$, fix an isomorphism $\xi^0\in{\rm Isom}_{2m+1}$,
$\xi^0:H_{m+1}\oplus H_m\overset{\sim}\longrightarrow H_{2m+1}$ and fix
a basis $\{h_1,...,h_{2m+1}\}$ in $H_{2m+1}^\vee$ such that
$\{h_1,...,h_m\}$ in $H_{2m+1}^\vee$ and $\{h_{m+2},...,h_{2m+1}\}$ in $H_{2m+1}^\vee$;
respectively, let $e_1,...,e_4$ be some fixed basis in $V^\vee$. Consider the nets
$A^{(m)}\in TH_{2m+1},\ \ m=1,2,$ defined as follows
\begin{equation}\label{A(1),A(2)}
A^{(1)}=h_1^2\otimes(e_1\wedge e_2+e_3\wedge e_4)+h_2^2\otimes(e_1\wedge e_3+e_4\wedge e_2),
\end{equation}
$$
A^{(2)}=h_1^2\otimes(e_1\wedge e_2+e_3\wedge e_4)+h_2^2\otimes(e_1\wedge e_3+e_4\wedge e_2)+
h_3^2\otimes(e_1\wedge e_4+e_2\wedge e_3).
$$
It is an exercise to show that, in the notation of (\ref{xi 1,2,3}), the homomorphisms
$$
A^{(m)}_1(\xi^0):H_{m+1}\otimes V\to H_{m+1}^\vee\otimes V^\vee,\ \ \ m=1,2,
$$
are invertible. On the other hand, for a given $\xi\in{\rm Isom}_{2m+1}$, the condition that a
homomorphism $A_1(\xi):H_{m+1}\otimes V\to H_{m+1}^\vee\otimes V^\vee$ is
invertible is an open condition on the net $A\in TH_{2m+1}$, respectively, on the net $A\in \mathbf{S}_{2m+1}$.
Since the sets $MI'_{2m+1},\ m=1,2,$ are irreducible (see \cite{CTT}), this together with Lemma \ref{tHooft}
yields the following corollary.

\begin{corollary}\label{tHooft2}

(i) For $m=1,2$ the set $MI''_{2m+1}$ is a dense open subset of $MI'_{2m+1}$ and of $MI_{2m+1}$,
and the statement of Theorem \ref{Theorem 4.2'} extends to the cases $m=1$ and 2, if we substitute
$MI'_{2m+1}$ by $MI''_{2m+1}$ and take for ${\rm Isom}_{2m+1}^0={\rm Isom}_{2m+1}^{00}$ any nonempty open subset of
${\rm Isom}_{2m+1}$ contained in the set $\{\xi\in{\rm Isom}_{2m+1}\ |\ MI_{2m+1}(\xi)\ne\emptyset\}$.

(ii) Let $m\ge1$. The set
$$
MI^{tH**}_{2m+1}:=\left\{\begin{matrix}
MI''_{2m+1}\cap MI^{tH*}_{2m+1},\ \ \ m=1,2,\ \cr
MI^{tH*}_{2m+1},\ \ \ \ \ \ \ \ \ \ \ \ \ m\ge3,\cr
\end{matrix}
\right.
$$
is a dense open subset of $MI^{tH*}_{2m+1}$, respectively, of $MI^{tH}_{2m+1}$.

(iii) For $m\ge1$ let
$$
MI^{tH}_{2m+1}(\xi):=MI^{tH**}_{2m+1}\cap MI_{2m+1}(\xi),\ \ \ \xi\in{\rm Isom}_{2m+1}.
$$
The set
\begin{equation}\label{MItH2m+1(xi)}
{\rm Isom}_{2m+1}^{tH}:=\{\xi\in{\rm Isom}_{2m+1}\ |\ MI^{tH}_{2m+1}(\xi)\ne\emptyset\}
\end{equation}
is a dense open subset of ${\rm Isom}_{2m+1}$ such that $MI^{tH**}_{2m+1}$ is covered by dense open subsets
$MI^{tH}_{2m+1}(\xi),\ \ \xi\in{\rm Isom}_{2m+1}^{tH}$.
\end{corollary}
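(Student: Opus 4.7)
The plan is to treat the three parts sequentially, using as ``seeds'' the explicit t'Hooft nets $A^{(1)}, A^{(2)}$ together with the irreducibility results already recorded (\cite{CTT} for $MI'_{2m+1}$ when $m=1,2$, and Lemma \ref{tHooft} for $MI^{tH}_{2m+1}$), and for (iii) an incidence argument mirroring the proof of Theorem \ref{Theorem 4.2'}.

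For part (i), first observe that for every $\xi \in {\rm Isom}_{2m+1}$, the condition ``$A_1(\xi)$ is invertible'' cuts out a Zariski open subset of $\mathbf{S}_{2m+1}$; hence each $MI_{2m+1}(\xi)$ is open in $MI_{2m+1}$, and so $MI''_{2m+1}$ is open. The exercise quoted in the text asserts that $A^{(m)}_1(\xi^0)$ is invertible; combined with the fact that $A^{(m)} \in TH_{2m+1} \subset \mathbf{S}_{2m+1}$ corresponds (by Lemma \ref{tHooft}(ii),(iii)) to an unobstructed t'Hooft bundle in $I'_{2m+1}$, one gets $A^{(m)} \in MI_{2m+1}(\xi^0) \subset MI''_{2m+1}$. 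Since $MI'_{2m+1}$ is irreducible for $m=1,2$ by \cite{CTT}, the nonempty open $MI''_{2m+1}$ is in fact dense open in $MI'_{2m+1}$, and then in $MI_{2m+1}$ via (\ref{MI'n dense}). The extension of Theorem \ref{Theorem 4.2'} follows by repeating its argument verbatim with $MI'_{2m+1}$ replaced by the irreducible $MI''_{2m+1}$: taking ${\rm Isom}_{2m+1}^0={\rm Isom}_{2m+1}^{00}$ to be any nonempty open subset of the locus where $MI_{2m+1}(\xi)$ is nonempty, each $MI_{2m+1}(\xi)$ in this family is a nonempty open, hence dense open subset of $MI''_{2m+1}$, and by construction they cover $MI''_{2m+1}$; the relation (\ref{xi3(A)}) persists because the matrix manipulation of \ref{decomp xi} only uses invertibility of $A_1(\xi)$, independently of $m$.

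For part (ii), the case $m\ge3$ is tautological. For $m=1,2$, the intersection $MI^{tH**}_{2m+1}=MI''_{2m+1}\cap MI^{tH*}_{2m+1}$ is open in $MI^{tH*}_{2m+1}$ by (i); to get density, use that $MI^{tH*}_{2m+1}$ is irreducible (being dense open in the irreducible $MI^{tH}_{2m+1}$ by Lemma \ref{tHooft}(i),(ii), or equivalently dense open in the irreducible $TH_{2m+1}$ by Lemma \ref{tHooft}(iii)), and observe that $A^{(m)}$ belongs to $MI''_{2m+1}\cap MI^{tH*}_{2m+1}$, so the intersection is a nonempty open subset of an irreducible scheme. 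Density in $MI^{tH}_{2m+1}$ follows by transitivity.

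For part (iii), I would mimic the proof of Theorem \ref{Theorem 4.2'}(i),(ii) with $MI'_{2m+1}$ replaced by the irreducible $MI^{tH**}_{2m+1}$: set
\[
U := \{(A,\xi)\in MI^{tH**}_{2m+1}\times{\rm Isom}_{2m+1}\ |\ A_1(\xi)\text{ invertible}\},
\]
with projections $p,q$. By (**) for $m\ge3$, or by the very definition of $MI''_{2m+1}$ for $m=1,2$, one has $p(U)=MI^{tH**}_{2m+1}$, so $U$ is a nonempty open subset of the irreducible product (both factors being irreducible, the second as a principal $GL(2m+1)$-homogeneous space), hence dense. Therefore $q(U)$ contains a dense open subset of ${\rm Isom}_{2m+1}$, which is exactly ${\rm Isom}_{2m+1}^{tH}$ as defined in (\ref{MItH2m+1(xi)}); and for $\xi$ in this open subset, $q^{-1}(\xi)\simeq MI^{tH}_{2m+1}(\xi)$ is a nonempty open, hence dense, subset of $MI^{tH**}_{2m+1}$. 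The main obstacle, in my view, is the only nonformal step: verifying that the explicit seeds $A^{(1)}, A^{(2)}$ actually lie in $MI'_{2m+1}\cap MI^{tH*}_{2m+1}$ (i.e., that the associated t'Hooft bundles have no line of maximal jump), which the paper relegates to an explicit check together with the invertibility of $A^{(m)}_1(\xi^0)$; everything else in the corollary is a formal consequence of irreducibility plus openness of the invertibility condition.
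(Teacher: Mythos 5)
Your overall strategy is the same as the paper's (the paper derives this corollary in one sentence from the openness of the invertibility condition, the irreducibility of $MI'_{2m+1}$ for $m=1,2$ from \cite{CTT}, and Lemma \ref{tHooft}), and parts (ii) and (iii) are handled correctly. The one place where your write-up has a gap is the claim in part (i) that $A^{(m)}$ itself lies in $MI_{2m+1}(\xi^0)$: Lemma \ref{tHooft}(iii) only says that $MI^{tH*}_{2m+1}$ is a \emph{dense open subset} of $TH_{2m+1}$, so membership of the explicit net $A^{(m)}$ in $TH_{2m+1}$ does not by itself place it in $MI^{tH*}_{2m+1}$ (nor in $MI'_{2m+1}$), and you correctly flag this as "the main obstacle." But this verification is not actually needed, and the paper does not perform it. The intended argument is: since $A^{(m)}_1(\xi^0)$ is invertible and the invertibility of $A_1(\xi^0)$ is an open condition on $TH_{2m+1}$, the locus $\{A\in TH_{2m+1}\mid A_1(\xi^0)\ \text{invertible}\}$ is a nonempty open, hence dense, subset of the irreducible variety $TH_{2m+1}$; it therefore meets the dense open subset $MI^{tH*}_{2m+1}\subset TH_{2m+1}$ of Lemma \ref{tHooft}(iii). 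Any point of this intersection lies in $MI^{tH*}_{2m+1}\subset MI'_{2m+1}$ and in $MI_{2m+1}(\xi^0)$, which gives the nonemptiness of $MI''_{2m+1}$ (and simultaneously of $MI^{tH**}_{2m+1}$ and of $MI^{tH}_{2m+1}(\xi^0)$) without ever deciding whether the particular seed $A^{(m)}$ defines an honest instanton in $I'_{2m+1}$. With that substitution your argument closes, and the rest — density from irreducibility of $MI'_{2m+1}$ and $MI^{tH*}_{2m+1}$, the verbatim repetition of the incidence-variety argument of Theorem \ref{Theorem 4.2'} for (iii), and the observation that (\ref{xi3(A)}) depends only on the invertibility of $A_1(\xi)$ — is exactly the paper's route.
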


\begin{remark}\label{isom-tH}
From the definition of the sets ${\rm Isom}_{2m+1}^0$, $MI^{tH}_{2m+1}(\xi)$ and ${\rm Isom}_{2m+1}^{tH}$ it follows
immediately that ${\rm Isom}_{2m+1}^{tH}\subset{\rm Isom}_{2m+1}^0$ and $MI^{tH}_{2m+1}(\xi)\subset MI_{2m+1}(\xi)$ for
$\xi\in{\rm Isom}_{2m+1}^{tH}$.
\end{remark}

Now (\ref{MI'n dense}), Theorem \ref{Theorem 4.2'} and Corollary \ref{tHooft2} yield
\begin{corollary}\label{density} Let $m\ge1$. Then for any $\xi\in{\rm Isom}_{2m+1}^0$
(respectively, for any $\xi\in{\rm Isom}_{2m+1}^{00}$) the scheme
$(MI_{2m+1}(\xi))_{red}$ is open (respectively, dense open) in $(MI_{2m+1})_{red}$.
In particular,
\begin{equation}\label{dim MI2m+1(xi)}
\dim_A MI_{2m+1}(\xi)=\dim_A MI_{2m+1},\ \ \ A\in MI_{2m+1}(\xi),\ \ \ \ \xi\in{\rm Isom}_{2m+1}^{00}.
\end{equation}
\end{corollary}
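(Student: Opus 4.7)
The plan is to assemble the statement from three ingredients that are already in place: the open inclusion $MI'_{2m+1}\subset MI_{2m+1}$ from (\ref{def of MI'n}), the density (\ref{MI'n dense}) of $(MI'_{2m+1})_{red}$ in $(MI_{2m+1})_{red}$, and the two conclusions of Theorem \ref{Theorem 4.2'} (extended by Corollary \ref{tHooft2}(i) in the low cases $m=1,2$) describing how the pieces $MI_{2m+1}(\xi)$ sit inside $MI'_{2m+1}$.

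First I would verify that $MI_{2m+1}(\xi)$ is open in $MI'_{2m+1}$ as a subscheme. By the definition (\ref{MI2m+1(xi)}), a net $A\in MI'_{2m+1}$ lies in $MI_{2m+1}(\xi)$ precisely when the skew-symmetric homomorphism $A_1(\xi):H_{m+1}\otimes V\to H_{m+1}^\vee\otimes V^\vee$ is invertible, equivalently when the Pfaffian of $A_1(\xi)$ is nonzero. Since both the linear projection $A\mapsto A_1(\xi)$ and the Pfaffian are regular, this is a Zariski open condition on $A\in\mathbf{S}_{2m+1}$. Hence $MI_{2m+1}(\xi)$ is open in $\mathbf{S}_{2m+1}$, and a fortiori in $MI'_{2m+1}$ and in $MI_{2m+1}$. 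Passing to reductions preserves openness, which establishes the openness claim for every $\xi\in{\rm Isom}_{2m+1}^0$.

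For the density claim I would take $\xi\in{\rm Isom}_{2m+1}^{00}$. In the range $m\ge3$ Theorem \ref{Theorem 4.2'}(ii) states that $MI_{2m+1}(\xi)$ is dense open in $MI'_{2m+1}$; for $m=1,2$ the same property is provided by Corollary \ref{tHooft2}(i), since ${\rm Isom}_{2m+1}^{00}$ was defined as a nonempty open subset of ${\rm Isom}_{2m+1}$ of nets for which $MI_{2m+1}(\xi)\ne\emptyset$ and $MI''_{2m+1}$ is already dense open in $MI'_{2m+1}$. Combining this with (\ref{MI'n dense}) and the transitivity of ``dense open'' for topological spaces, I conclude that $(MI_{2m+1}(\xi))_{red}$ is a dense open subscheme of $(MI_{2m+1})_{red}$. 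The identity (\ref{dim MI2m+1(xi)}) is then immediate: the local dimension of a scheme at a point contained in an open subscheme agrees with the local dimension of that open subscheme at the same point.

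There is no substantive obstacle here: the argument is purely formal once the earlier results are granted. The only place demanding a brief check is the compatibility of taking reductions with the openness/density operations being chained together, which is standard but worth spelling out explicitly so that the inclusion $(MI_{2m+1}(\xi))_{red}\subset(MI_{2m+1})_{red}$ is indeed an open (respectively dense open) embedding of reduced schemes and not merely of underlying topological spaces.
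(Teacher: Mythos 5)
Your proposal is correct and follows essentially the same route as the paper, which derives the corollary directly from (\ref{MI'n dense}), Theorem \ref{Theorem 4.2'} and Corollary \ref{tHooft2}; your only addition is to make explicit that invertibility of $A_1(\xi)$ is an open condition, which is exactly what underlies Theorem \ref{Theorem 4.2'}(i). One small slip: the invertibility locus is open in $\mathbf{S}_{2m+1}$, but $MI_{2m+1}(\xi)$ itself is only its intersection with the locally closed set $MI'_{2m+1}$, so it is open in $MI'_{2m+1}$ and in $MI_{2m+1}$ but not in $\mathbf{S}_{2m+1}$ --- this does not affect your conclusion.
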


\end{sub}

\vspace{1cm}

\section{Invertible nets of quadrics from $\mathbf{S}_{m+1}$ and symplectic rank-$(2m+2)$ bundles}\label{symplectic E2m+2}

\vspace{0.5cm}

\begin{sub}\label{rho}{\bf Construction of symplectic rank-$(2m+2)$ bundles from invertible nets of quadrics from
$\mathbf{S}_{m+1}$.}

\rm
\vspace{0.5cm}

In this subsection we show that each invertible net of quadrics $B\in\mathbf{S}_{m+1}$ naturally leads to a construction
of a symplectic rank-$(2m+2)$ vector bundle $E_{2m+2}(B)$ on $\mathbb{P}^3$.
Let us introduce more notation. Set
\begin{equation}\label{Nm+1}
\mathbf{S}^0_{m+1}:=\{B\in\mathbf{S}_{m+1}\ |\
B:H_{m+1}\otimes V\to H_{m+1}^\vee\otimes V^\vee
\ {\rm is\ an\ invertible\ homomorphism}\}.
\end{equation}
The set $\mathbf{S}^0_{m+1}$ is a dense open subset of the vector space $\mathbf{S}_{m+1}$,
and it is easy to see that for any $B\in\mathbf{S}^0_{m+1}$ the following conditions are satisfied.\\
(1) The morphism
$\widetilde{B}:H_{m+1}\otimes\mathcal{O}_{\mathbb{P}^3}(-1)\to
H_{m+1}^\vee\otimes\Omega_{\mathbb{P}^3}(1)$
induced by the homomorphism
$B:H_{m+1}\otimes V\to H_{m+1}^\vee\otimes V^\vee$
is a subbundle morphism, i.e.
\begin{equation}\label{E2m+2}
E_{2m+2}(B):=\coker(\widetilde{B})
\end{equation}
is a vector bundle of rank $2m+2$ on $\mathbb{P}^3$. This follows from the diagram
\begin{equation}\label{diag E2m+2}
\xymatrix{
& & 0\ar[d] & 0\ar[d] & & \\
& 0\ar[r] & H_{m+1}\otimes\mathcal{O}_{\mathbb{P}^3}(-1)
\ar[r]^{\widetilde{B}}\ar[d]^u
& H_{m+1}^\vee\otimes\Omega_{\mathbb{P}^3}(1)\ar[r]^{\ \ \ e}\ar[d]^{v^\vee} &
E_{2m+2}(B)\ar[r] &  0\\
& &H_{m+1}\otimes V\otimes\mathcal{O}_{\mathbb{P}^3}
\ar[r]^{B\ }_-{\simeq}\ar[d]^v
& H_{m+1}^\vee\otimes V^\vee\otimes\mathcal{O}_{\mathbb{P}^3}\ar[d]^{u^\vee} &  & \\
&0\to E_{2m+2}(B)^\vee\ar[r] &
H_{m+1}\otimes T_{\mathbb{P}^3}(-1)\ar[r]^-{\widetilde{B}^\vee}\ar[d] &
H_{m+1}^\vee\otimes\mathcal{O}_{\mathbb{P}^3}(1)\ar[r]\ar[d] & 0 & \\
& & 0 & 0 & &
}
\end{equation}
(2) The homomorphism
${}^\sharp B: H_{m+1}\to H_{m+1}^\vee\otimes\wedge^2V^\vee$
induced by
$B:H_{m+1}\otimes V\to H_{m+1}^\vee\otimes V^\vee$
is injective. This follows from the commutative diagram extending the upper horizontal triple in
(\ref{diag E2m+2})

\begin{equation}\label{extend}
\xymatrix{
& & 0\ar[d] & 0\ar[d] & \\
& & H_{m+1}^\vee\otimes T_{\mathbb{P}^3}(-2)\ar@{=}[r]\ar[d] &
H_{m+1}^\vee\otimes T_{\mathbb{P}^3}(-2)\ar[d] & \\
0\ar[r] & H_{m+1}\otimes\mathcal{O}_{\mathbb{P}^3}
\ar[r]^-{{}^\sharp B}\ar@{=}[d] &
H_{m+1}^\vee\otimes\wedge^2V^\vee\otimes\mathcal{O}_{\mathbb{P}^3}
\ar[r]^-{can}\ar[d]^w &
H^0(E_{2m+2}(B)(1))\otimes\mathcal{O}_{\mathbb{P}^3}\ar[r]\ar[d]^{ev} & 0 \\
0\ar[r] & H_{m+1}\otimes\mathcal{O}_{\mathbb{P}^3}\ar[r]^{\widetilde{B}} &
H_{m+1}^\vee\otimes\Omega_{\mathbb{P}^3}(2)\ar[r]^e\ar[d] &
E_{2m+2}(B)(1)\ar[r]\ar[d] & 0 \\
& & 0 & \ 0, &}
\end{equation}
where $w$ is the morphism induced by the morphism $v$ from the Euler exact sequence in
(\ref{diag E2m+2}). From this diagram we obtain an isomorphism
\begin{equation}\label{H0(E2m+2(1))}
\coker({}^\sharp B)\simeq H^0(E_{2m+2}(B)(1)).
\end{equation}

(3) Diagram (\ref{diag E2m+2}) and the Five-Lemma yield an isomorphism
\begin{equation}\label{theta}
\theta:E_{2m+2}(B)\overset{\sim}\to E_{2m+2}(B)^\vee
\end{equation}
which is in fact symplectic,
$$
\theta^\vee=-\theta,
$$
since the homomorphism
$B:H_{m+1}\otimes V\to H_{m+1}^\vee\otimes V^\vee$ is skew-symmetric.
The isomorphism $\theta$ together with the upper triple from (\ref{diag E2m+2})
and its dual fits in the commutative diagram
\begin{equation}\label{diag display E2m+2 }
\xymatrix{
& & 0\ar[d] & 0\ar[d] & \\
0\ar[r] & H_{m+1}\otimes\mathcal{O}_{\mathbb{P}^3}(-1)\ar[r]^{\widetilde{B}}
\ar@{=}[d] &
H_{m+1}^\vee\otimes\Omega_{\mathbb{P}^3}(1)\ar[r]^e\ar[d]^{v^\vee} &
E_{2m+2}(B)\ar[r]\ar[d]^{e^\vee\circ\theta} & 0 \\
0\ar[r] & H_{m+1}\otimes\mathcal{O}_{\mathbb{P}^3}(-1)\ar[r]^-{B\circ u} &
H_{m+1}^\vee\otimes V^\vee\otimes\mathcal{O}_{\mathbb{P}^3}\ar[r]
^{\ \ v\circ B^{-1}}\ar[d]^{u^\vee} &
H_{m+1}\otimes T_{\mathbb{P}^3}(-1)\ar[r]\ar[d]^{\widetilde{B}^\vee} & 0 \\
& & H_{m+1}^\vee\otimes\mathcal{O}_{\mathbb{P}^3}(1)\ar@{=}[r]\ar[d]
& \ H_{m+1}^\vee\otimes\mathcal{O}_{\mathbb{P}^3}(1)\ar[d] & \\
& & 0 & \  0. &}
\end{equation}
Note that the upper horizontal triple in (\ref{diag E2m+2}) immediately implies
\begin{equation}\label{vanish cohom of E2m+2}
h^0(E_{2m+2}(B))=h^i(E_{2m+2}(B)(-2))=0,\ \ \ i\ge0.
\end{equation}

\vspace{0.5cm}

\end{sub}

\begin{sub}\label{Rel E2 and E2m+2}{\bf Relation between instantons and rank-$(2m+2)$ symplectic bundles.}
\label{Rel inst sympl}
\rm
\vspace{0.5cm}

For $m\ge1$ let $\xi\in{\rm Isom}_{2m+1}^0$ and $A\in MI_{2m+1}(\xi)$.
In this subsection we relate an instanton vector bundle $E(A)$ to a symplectic rank-$(2m+2)$ vector bundle
$E_{2m+2}(B)$ for $B=A_1(\xi)$. We will show that $E(A)$ is a cohomology sheaf of the monad (\ref{monad for E2(xi,A)})
defined by the data $(\xi,A)$ with $E_{2m+2}(B)$ in the middle - see Lemma \ref{E(xi,A)=E(A)}.

In fact, since $\xi\in{\rm Isom}_{2m+1}^0$, the homomorphism $B:H_{m+1}\otimes V\to H_{m+1}^\vee\otimes V^\vee$
by definition lies in $ \mathbf{S}^0_{m+1}$. Hence by Lemma \ref{lemma on B,C} the
diagram (\ref{diag C,tau}) holds. This diagram together with
(\ref{diag display E2m+2 }) implies $\widetilde{B}^\vee\circ\tau_{\xi,A}=0$
(note that in (\ref{diag C,tau}) ${\rm im}(C\circ u)\subset H_{m+1}^\vee\otimes\Omega_{\mathbb{P}^3}(1)$ since
$C\in\mathbf{\Sigma}_{m+1}$), so that there exists a morphism
\begin{equation}\label{rho xi,A}
\rho_{\xi,A}:H_m\otimes\mathcal{O}(-1)\to E_{2m+2}(B)
\end{equation}
such that
$\tau_{\xi,A}=e^\vee\circ\theta\circ\rho_{\xi,A}$. Since $\tau_{\xi,A}$ is a subbundle morphism,
$\rho_{\xi,A}$ is also a subbundle morphism. Moreover, diagrams (\ref{diag C,tau}) and
(\ref{diag display E2m+2 }) yield a commutative diagram

\begin{equation}\label{diag rho}
\xymatrix{
H_{m+1}^\vee\otimes\Omega_{\mathbb{P}^3}(1)\ar[rr]^e\ar[dd]^{v^\vee} & &
E_{2m+2}(B)\ar[dd]^{e^\vee\circ\theta} \\
& H_m\otimes\mathcal{O}(-1)\ar@{>->}[ul]_{{}^\sharp C}\ar@{>->}[dl]_{\widetilde{C}}
\ar@{>->}[ur]^{\rho_{\xi,A}}\ar@{>->}[dr]^{\tau_{\xi,A}} & \\
H_{m+1}^\vee\otimes V^\vee\otimes\mathcal{O}\ar[rr]^{\ \ v\circ B^{-1}} & &
H_{m+1}\otimes T_{\mathbb{P}^3}(-1). &}
\end{equation}

Diagrams (\ref{diag display E2m+2 }) and (\ref{diag rho}) yield a commutative diagram

\begin{equation}\label{diag DC}
\xymatrix{
H_m\otimes\mathcal{O}(-1)\ar[rrr]^{\widetilde{C}}
\ar@{>->}[dr]_{\rho_{\xi,A}}\ar@{>->}[drr]^{{}^\sharp C}\ar[ddd]_{D_C}
& & & H_{m+1}^\vee\otimes V^\vee\otimes\mathcal{O}\ar[ddd]^{B^{-1}}_{\simeq} \\
& E_{2m+2}(B)\ar[d]^{\theta}_{\simeq} &
H_{m+1}^\vee\otimes \Omega_{\mathbb{P}^3}(1)\ar@{->>}[l]^-{e}\ar[d]^{e^\vee\circ\theta\circ e}
\ar@{>->}[ur]^{v^\vee} & \\
& E_{2m+2}(B)^\vee\ar@{>->}[r]^-{e^\vee}\ar@{->>}[dl]_{\rho_{\xi,A}^\vee} &
H_{m+1}\otimes T_{\mathbb{P}^3}(-1)\ar@{->>}[dll]^{{}^\sharp C^\vee} & \\
H_m^\vee\otimes\mathcal{O}(1) & & &
H_{m+1}\otimes V\otimes\mathcal{O}\ar[lll]^{\widetilde{C}^\vee}\ar@{->>}[ul]_v,
}
\end{equation}
where
$D_C:=-\widetilde{C}^\vee\circ B^{-1}\circ\widetilde{C}=
-u^\vee\circ(C^\vee\circ B^{-1}\circ C)\circ u$
is the zero map. In fact, by (\ref{xi3(A)}) and (\ref{B,C}) we have $D_C=p_2(A_3(\xi))$,
where
$p_2:\wedge^2(H_n^\vee\otimes V^\vee)\to\wedge^2H_n^\vee\otimes S^2V^\vee$
is the projection onto the second direct summand of the decomposition (\ref{can decomp}). Since
by (\ref{xi 1,2,3}) $A_3(\xi)$ lies in the first direct summand of (\ref{can decomp}) it follows
that $D_C=0$. We thus obtain a monad
\begin{equation}\label{monad for E2(xi,A)}
0\to H_m\otimes\mathcal{O}(-1)\overset{\rho_{\xi,A}}\longrightarrow E_{2m+2}(B)
\overset{\rho_{\xi,A}^\vee\circ\theta}\longrightarrow H_m^\vee\otimes\mathcal{O}(1)\to0
\end{equation}
with cohomology sheaf
\begin{equation}\label{E2(xi,A)}
E_2(\xi,A):=\ker(\rho_{\xi,A}^\vee\circ\theta)/\im\rho_{\xi,A}
\end{equation}
which is a vector bundle since $\rho_{\xi,A}$ is a subbundle morphism. Furthermore, by
(\ref{vanish cohom of E2m+2}) it follows from the monad
(\ref{monad for E2(xi,A)}) that $E_2(\xi,A)$ is a $(2m+1)$-instanton,
\begin{equation}\label{again E2(xi,A)}
[E_2(\xi,A)]\in I_{2m+1}.
\end{equation}
\begin{lemma}\label{E(xi,A)=E(A)}
$E_2(\xi,A)\simeq E(A)$, where the sheaf $E(A)$ is defined in (\ref{coho sheaf}).
\end{lemma}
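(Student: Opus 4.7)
The plan is to realise both $E(A)$ and $E_2(\xi,A)$ as subquotients of a common ambient sheaf and then identify them by diagram chase. First, using the middle row of diagram (\ref{diag with 2 monads}) and composing with the sheaf isomorphism $B$ (invertible since $B=A_1(\xi)\in\mathbf{S}^0_{m+1}$), one rewrites $\mathcal{M}_A$ as the equivalent monad
\[
\mathcal{M}_A'':\ 0\to(H_{m+1}\oplus H_m)\otimes\mathcal{O}(-1)\xrightarrow{B\circ\alpha_{\xi,A}}H_{m+1}^\vee\otimes V^\vee\otimes\mathcal{O}\xrightarrow{\alpha_{\xi,A}^\vee}(H_{m+1}\oplus H_m)^\vee\otimes\mathcal{O}(1)\to0,
\]
whose cohomology is still $E(A)$.

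Next, one observes that $E_{2m+2}(B)$ is itself the cohomology of the auxiliary monad
\[
\mathcal{M}_u:\ 0\to H_{m+1}\otimes\mathcal{O}(-1)\xrightarrow{B\circ u}H_{m+1}^\vee\otimes V^\vee\otimes\mathcal{O}\xrightarrow{u^\vee}H_{m+1}^\vee\otimes\mathcal{O}(1)\to0,
\]
since diagrams (\ref{diag E2m+2}) and (\ref{diag display E2m+2 }) give $\Ker(u^\vee)=H_{m+1}^\vee\otimes\Omega_{\mathbb{P}^3}(1)$ and the factorisation $B\circ u=v^\vee\circ\widetilde{B}$, so $\Ker(u^\vee)/\im(B\circ u)=H_{m+1}^\vee\otimes\Omega_{\mathbb{P}^3}(1)/\im(\widetilde{B})=E_{2m+2}(B)$. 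Now Lemma \ref{lemma on B,C} supplies $B\circ\alpha_{\xi,A}\circ i_{m+1}=B\circ u$ and $i_{m+1}^\vee\circ\alpha_{\xi,A}^\vee=u^\vee$, whence the filtration
\[
\im(B\circ u)\subset\im(B\circ\alpha_{\xi,A})\subset\Ker(\alpha_{\xi,A}^\vee)\subset\Ker(u^\vee)
\]
in the ambient sheaf $H_{m+1}^\vee\otimes V^\vee\otimes\mathcal{O}$. Passing to the quotient by $\im(B\circ u)$ exhibits $E(A)=\Ker(\alpha_{\xi,A}^\vee)/\im(B\circ\alpha_{\xi,A})$ as a subquotient of $E_{2m+2}(B)$.

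The concluding step is to match this subquotient with $\Ker(\rho_{\xi,A}^\vee\circ\theta)/\im(\rho_{\xi,A})$. Diagram (\ref{diag C,tau}) shows that the $H_m$-component of $\alpha_{\xi,A}$ descends to $\tau_{\xi,A}$ in $H_{m+1}\otimes T_{\mathbb{P}^3}(-1)$, so by the defining property of $\rho_{\xi,A}$ (cf.\ (\ref{rho xi,A}) and (\ref{diag rho})) the induced map $H_m\otimes\mathcal{O}(-1)\to E_{2m+2}(B)$ coincides with $\rho_{\xi,A}$; a dual argument using the symplectic self-duality encoded in diagram (\ref{diag DC}) identifies the induced map $E_{2m+2}(B)\to H_m^\vee\otimes\mathcal{O}(1)$ with $\rho_{\xi,A}^\vee\circ\theta$. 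Combining these identifications yields the required isomorphism $E(A)\simeq E_2(\xi,A)$.

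The main technical obstacle is the careful bookkeeping in the last step, where one has to verify that the induced maps on subquotients really coincide with $\rho_{\xi,A}$ and $\rho_{\xi,A}^\vee\circ\theta$. The crucial algebraic input is the identity (\ref{xi3(A)}), equivalently the vanishing $D_C=0$ from (\ref{diag DC}); this is precisely what ensures both that (\ref{monad for E2(xi,A)}) is a genuine monad and that its cohomology agrees with the subquotient description of $E(A)$ above.
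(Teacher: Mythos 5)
Your argument is correct and is exactly the diagram chase the paper has in mind: the paper's proof of this lemma is the single line ``diagram chasing using (\ref{xi3(A)}), (\ref{diag tau})--(\ref{diag with 2 monads}), (\ref{diag E2m+2})--(\ref{extend}) and (\ref{diag display E2m+2 })'', and your filtration $\im(B\circ u)\subset\im(B\circ\alpha_{\xi,A})\subset\Ker(\alpha_{\xi,A}^\vee)\subset\Ker(u^\vee)$ inside $H_{m+1}^\vee\otimes V^\vee\otimes\mathcal{O}$ is a faithful and correctly justified elaboration of that chase, using the same diagrams and the same key input (\ref{xi3(A)}).
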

\begin{proof}
Diagram chasing using (\ref{xi3(A)}), (\ref{diag tau})-(\ref{diag with 2 monads}), (\ref{diag E2m+2})-(\ref{extend}) and
(\ref{diag display E2m+2 }).
\end{proof}

\end{sub}

\vspace{1cm}

\section{Scheme $X_m$. An isomorphism between $X_m$ and an open subset of the
space $(MI_{2m+1})_{red}$}\label{Space Xm}

\vspace{1cm}

In this section we introduce a locally closed subset $X_m$ of the vector space $\mathbf{S}_{m+1}\oplus\mathbf{\Sigma}_{m+1}$ and
prove in Theorem \ref{Xm isom MI2m+1} below that this subset, considered as a reduced scheme, is isomorphic to the reduced scheme
$(MI_{2m+1}(\xi))_{red}$ for any $\xi\in{\rm Isom}_{2m+1}^0$. The set $X_m$ is defined as follows:
\begin{equation}\label{Xm}
X_m:=\left\{(B,C)\in\mathbf{S}_{m+1}^0\times\mathbf{\Sigma}_{m+1}\ \left|\
\begin{matrix}
(i)\ (C^\vee\circ B^{-1}\circ C:H_m\otimes V\to H_m^\vee\otimes V^\vee)\in
\mathbf{S}_m,\ \ \ \ \ \ \ \ \ \ \ \ \ \ \ \\ \cr
(ii)\ {\rm the\ map}\ (H_{m+1}\oplus H_m)\otimes\mathcal{O}
\overset{(B,C)\circ u}\longrightarrow
H_{m+1}^\vee\otimes V^\vee\otimes\mathcal{O}(1)\ \ \\
{\rm is\ a\ subbundle\ morphism}, \cr
(iii)\ {\rm the\ composition}\ \hat{C}:H_m\overset{{}^\sharp C}\to
H_{m+1}^\vee\otimes\wedge^2V^\vee\overset{can}\twoheadrightarrow
\ \ \ \ \ \ \ \ \ \ \ \ \ \ \cr
\ \ \ \ \ H_{m+1}^\vee\otimes\wedge^2V^\vee/
\im({}^\sharp B)
\simeq H^0(E_{2m+2}(B)(1))\ {\rm yields}\cr
{\rm a\ subbundle\ morphism}\ \ \cr
H_m\otimes\mathcal{O}_{\mathbb{P}^3}(-1)\overset{\rho_{B,C}}\to E_{2m+2}(B),\cr
{\rm i.e.}\ \rho_{B,C}^\vee\ {\rm is\ surjective\ and}\ E_2(B,C):=
\Ker({}^t\rho_{B,C})/\im(\rho_{B,C})\cr
{\rm is\ locally\ free}\ \ \ \ \ \ \ \ \ \ \ \ \ \ \ \ \cr
\end{matrix}
\right.
\right\}.
\end{equation}
By definition $X_m$ is a locally closed subset of
$\mathbf{S}_{m+1}^0\times\mathbf{\Sigma}_{m+1}$.
Hence it is naturally endowed with the structure of a reduced scheme.

Note that in the condition (iii) of the definition of $X_m$ we set
${}^t\rho_{B,C}:=\rho_{B,C}^\vee\circ\theta$,
where
$\theta:E_{2m+2}(B)\overset{\sim}\to E^\vee_{2m+2}(B)$
is the natural symplectic structure on $E_{2m+2}(B)$ defined in (\ref{theta}).

\begin{theorem}\label{Xm isom MI2m+1}
Let $m\ge1$ and let $\xi\in{\rm Isom}_{2m+1}^0$.

(i) There is an isomorphism of reduced schemes
\begin{equation}\label{isomorphism f}
f_m:\ (MI_{2m+1}(\xi))_{red}\overset{\simeq}\to X_m:
\ A\mapsto(A_1(\xi),A_2(\xi)).
\end{equation}

(ii) The inverse isomorphism is given by the formula
\begin{equation}\label{isomorphism f^-1}
g_m:\ X_m\overset{\simeq}\to (MI_{2m+1}(\xi))_{red}:
\ (B,C)\mapsto\ \widetilde{\xi}(B,\ C,\ -C^\vee\circ B^{-1}\circ C).
\footnote{Here we use the decomposition (\ref{direct sum}) fixed by the choice of $\xi$.}
\end{equation}
\end{theorem}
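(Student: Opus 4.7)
The plan is to verify that $f_m$ and the proposed inverse $g_m$ are well-defined mutually inverse morphisms of reduced schemes. The map $f_m$ is the restriction of a linear projection of $\mathbf{S}_{2m+1}$ coming from the decomposition (\ref{direct sum}), and $g_m$ is a rational map that is regular on $X_m$ since $B \in \mathbf{S}_{m+1}^0$ is invertible there; so both are morphisms of schemes automatically, and the real content lies in verifying that each lands in the prescribed target and that the two compositions are the identity.

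First I would show that $f_m$ lands in $X_m$. Given $A \in MI_{2m+1}(\xi)$ with $B := A_1(\xi)$ invertible and $C := A_2(\xi)$: condition (i) of (\ref{Xm}) is exactly the relation (\ref{xi3(A)}) from Theorem \ref{Theorem 4.2'}(iii), since $C^\vee \circ B^{-1} \circ C = -A_3(\xi) \in \mathbf{S}_m$. Conditions (ii) and (iii) translate directly from Barth's conditions (ii) and (iii) on the net $A$: the relevant subbundle morphism $(B,C) \circ u$ is identified with $B \circ \alpha_{\xi,A}$ of Lemma \ref{lemma on B,C} via diagram (\ref{diag with 2 monads}), and the bundle $E_2(B,C)$ is identified with $E(A)$ via Lemma \ref{E(xi,A)=E(A)}, so that local freeness of $E_2(B,C)$ follows from local freeness of the instanton bundle $E(A)$.

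Next I would show that $g_m$ lands in $MI_{2m+1}(\xi)$. For $(B,C) \in X_m$, set $A := \widetilde{\xi}(B, C, -C^\vee \circ B^{-1} \circ C)$; this lies in $\mathbf{S}_{2m+1}$ thanks to condition (i) of (\ref{Xm}), and by the direct sum decomposition $A_1(\xi) = B$ is invertible, so it suffices to verify that $A \in MI'_{2m+1}$. Barth's rank condition (i) of (\ref{space of nets}) follows by multiplying the block-matrix form of $A$ on the left by the invertible block matrix used in the derivation of (\ref{xi3(A)}), which reduces $\rk A$ to $\rk B = 4m+4$. Barth's surjectivity condition (ii) is a direct reformulation of condition (ii) of $X_m$ after dualization. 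For Barth's condition (iii), $h^0(E_2(A))=0$, I would reverse the diagram chase of Section \ref{Rel E2 and E2m+2}: the subbundle morphism $\rho_{B,C}$ from condition (iii) of $X_m$ together with the symplectic bundle $E_{2m+2}(B)$ of (\ref{E2m+2}) produces the monad (\ref{monad for E2(xi,A)}), whose cohomology sheaf $E_2(B,C)$ is to be identified with the cohomology sheaf $E_2(A)$ of $\mathcal{M}_A$ by threading the display (\ref{diag display E2m+2 }) through diagrams (\ref{diag with 2 monads}) and (\ref{diag rho}); the vanishings (\ref{vanish cohom of E2m+2}) then give $h^0(E_2(A))=0$.

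For the bijection, $f_m \circ g_m = \mathrm{id}_{X_m}$ is immediate from the projections (\ref{xi 1,2,3}) applied to $\widetilde{\xi}(B, C, -C^\vee B^{-1} C)$, while $g_m \circ f_m = \mathrm{id}$ is precisely the content of the relation (\ref{xi3(A)}) of Theorem \ref{Theorem 4.2'}(iii). The main obstacle I anticipate is carrying out the identification $E_2(A) \simeq E_2(B,C)$ used in verifying Barth's condition (iii) for $g_m(B,C)$: this is essentially Lemma \ref{E(xi,A)=E(A)} read in the opposite direction, and requires simultaneously fitting together the monad $\mathcal{M}_A$ of (\ref{Monad An}), the symplectic bundle $E_{2m+2}(B)$ with its display, and the morphism $\rho_{B,C}$. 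A secondary but genuine subtlety is to confirm that the image of $g_m$ sits inside $MI'_{2m+1}$ (no line of maximal jump), not merely inside $MI_{2m+1}$; this should follow by invoking the openness of the $MI'_{2m+1}$-condition together with the fact that such decompositions $\xi \in {\rm Isom}_{2m+1}^0$ arise precisely over points of $MI'_{2m+1}$.
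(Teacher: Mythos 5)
Your proposal follows essentially the same route as the paper: conditions (i)--(iii) of $X_m$ are checked for $f_m(A)$ via the relation (\ref{xi3(A)}), the identification $\rho_{B,C}=\rho_{\xi,A}$ and the diagrams of Lemma \ref{lemma on B,C}, while for $g_m$ the block-matrix reduction gives Barth's rank condition and the monads (\ref{A,D}), (\ref{monad for E2m+2}) and (\ref{monad for E(D,C)}) together with the vanishings (\ref{vanish cohom of E2m+2(D-1)}) show that $E(B,C)$ is an instanton, with the two compositions being identities by the defining formulas. The one place you go beyond the paper is in flagging that $g_m(X_m)$ must land in $MI'_{2m+1}$ rather than just $MI_{2m+1}$ --- a point the paper's own proof leaves implicit --- and your sketch of why this holds is the only part that would need to be made precise.
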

\begin{proof}
(i) We first show that the image of the map
$f_m:\ (MI_{2m+1}(\xi))_{red}\to\mathbf{S}_{m+1}^0\times\Sigma_{m,m+1}^{in}$
lies in $X_m$, i.e. satisfies the conditions (i)-(iii) in the definition of $X_m$.
Indeed, the condition (i) is automatically satisfied, since (\ref{xi 1,2,3}) and (\ref{xi3(A)})
give $-C^\vee\circ B^{-1}\circ C=-A_2(\xi)^\vee\circ A_1(\xi)^{-1}\circ A_2(\xi)=A_3(\xi)\in
S^2H_m^\vee\otimes\wedge^2V^\vee.$ Next, the morphism $\rho_{B,C}$ defined in (\ref{Xm}.iii)
above coincides by its definition with the morphism $\rho_{\xi,A}$ defined in (\ref{rho xi,A}).
In fact, the upper triangle of the diagram (\ref{diag rho}) twisted by $\mathcal{O}(1)$ and the
lower part of the diagram (\ref{extend})
fit in the diagram
\begin{equation}\label{combine}
\xymatrix{
0\to H_{m+1}\otimes\mathcal{O}\ar[r]^-{{}^\sharp B}\ar@{=}[dd] &
H_{m+1}^\vee\otimes\wedge^2V^\vee\otimes\mathcal{O}\ar[rr]^{can}\ar[dd]_w
& & H^0(E_{2m+2}(B)(1))\otimes\mathcal{O}\ar[r]\ar[dd]^{ev} &0\\
& & H_m\otimes\mathcal{O}\ar@{>->}[ul]_-{{}^\sharp C}\ar@{>->}[ur]^-{\widehat{C}}
\ar@{>->}[dr]^{\rho_{\xi,A}}\ar@{>->}[dl]_{\widetilde{C}} & & \\
0\to H_{m+1}\otimes\mathcal{O}\ar[r]^{\widetilde{B}} &
H_{m+1}^\vee\otimes\Omega(2)\ar[rr]^e & & E_{2m+2}(B)(1)\ar[r] &0,
}
\end{equation}
where the composition $\widehat{C}=can\circ C$ is defined in the condition (iii) of the
definition of $X_m$. Whence
\begin{equation}\label{rho...=rho...}
\rho_{B,C}=\rho_{\xi,A}.
\end{equation}
Since $\rho_{\xi,A}$ is a subbundle
morphism, the condition (iii) is satisfied and, moreover, $\widehat{C}$ is a subbundle
morphism as well. Thus, the lower part of the diagram (\ref{combine}) shows that the morphism
$(\widetilde{B},\widetilde{C}):(H_{m+1}\oplus H_m)\otimes\mathcal{O}\to
H_{m+1}^\vee\otimes\Omega(2)$
is  a subbundle morphism. Hence its composition with the subbundle morphism
$v^\vee:H_{m+1}^\vee\otimes\Omega(2)\hookrightarrow
H_{m+1}^\vee\otimes V\otimes\mathcal{O}(1)$
is a subbundle morphism as well. By definition, this composition coincides with
$(B,C)\circ u.$
Hence the condition (ii) in the definition of $X_m$ is satisfied.

This shows that $f_m((MI_{2m+1}(\xi))_{red})$ lies in $X_m$.
Finally, the equality $g_m\circ f_m=id$ follows directly from (\ref{xi 1,2,3}) and (\ref{xi3(A)}).

(ii) We first prove that the image of the map
\begin{equation}\label{im g}
g_m:X_m\to\mathbf{S}_{2m+1}:\ (
B,C)\mapsto(B,\ C,\ C^\vee\circ B^{-1}\circ C)\ \footnote{
We identify here the triple
$(B,\ C,\ C^\vee\circ B^{-1}\circ C)$ with a point in
$S^2H_{2m+1}^\vee\otimes\wedge^2V^\vee$
via the decomposition (\ref{direct sum}).}
\end{equation}
lies in $(MI_{2m+1}(\xi))_{red}$. In fact, the subbundle morphism
$\mathcal{A}:=(B,C)\circ u:(H_{m+1}\oplus H_m)\otimes\mathcal{O}\to
H_{m+1}^\vee\otimes V^\vee\otimes\mathcal{O}(1)$ and its dual extend to the
right and left exact sequence
\begin{equation}\label{A,D}
0\to(H_{m+1}\oplus H_m)\otimes\mathcal{O}(-1)\overset{\mathcal{A}}\to
H_{m+1}^\vee\otimes V^\vee\otimes\mathcal{O}\overset{\mathcal{A}^\vee\circ B^{-1}}\to
(H_{m+1}\oplus H_m)^\vee\otimes\mathcal{O}(1)\to0.
\end{equation}
Furthermore, by definition $\mathcal{A}^\vee\circ B^{-1}\circ\mathcal{A}=u^\vee\circ A\circ u$, where
$A$ is the matrix
$\left(\begin{array}{cc}
B & C \\
-C^\vee & -C^\vee\circ B^{-1}\circ C
\end{array}\right)$.
Since the condition (i) of (\ref{Xm}) is satisfied, under the direct sum decomposition (\ref{direct sum})
this matrix $A$ can be treated as an element of $\mathbf{S}_{2m+1}$.
Hence $u^\vee\circ A\circ u=0$, i.e. (\ref{A,D}) is a monad. We will show that its cohomology bundle
$$
E(B,C):=\ker(\mathcal{A}^\vee\circ B^{-1})/\im\mathcal{A}
$$
is an $(2m+1)$-instanton, and this will give the desired inclusion
$g(X_m)\subset (MI_{2m+1}(\xi))_{red}.$
For this, consider  the diagram (\ref{diag tau}) in
which we substitute $B\circ\alpha_{\xi,A}$ by $\mathcal{A}$,
denote $\mathcal{G}:=\coker\mathcal{A}$, and change the notation for $\tau_{\xi,A}$ and
$\epsilon_{\xi,A}$, respectively, to $\tau_{B,C}$ and $\epsilon_{B,C}$:
\begin{equation}\label{diag tau D,C}
\xymatrix{
& H_m\otimes\mathcal{O}_{\mathbb{P}^3}(-1) & & & \\
0\ar[r] & (H_{m+1}\oplus H_m)\otimes\mathcal{O}_{\mathbb{P}^3}(-1)
\ar[r]^{\ \ \mathcal{A}}\ar@{->>}[u] &
H_{m+1}^\vee\otimes V^\vee\otimes\mathcal{O}_{\mathbb{P}^3}\ar[r]^{\ \ \ can} &
\mathcal{G}\ar[r] & 0 \\
0\ar[r] & H_{m+1}\otimes\mathcal{O}_{\mathbb{P}^3}(-1)
\ar[r]^{B\circ u\ \ \ }\ar@{>->}[u]_{i_{m+1}} &
H_{m+1}^\vee\otimes V^\vee\otimes\mathcal{O}_{\mathbb{P}^3}
\ar[r]^{\ \ \ v\circ B^{-1}}\ar@{=}[u] &
H_{m+1}\otimes T_{\mathbb{P}^3}(-1) \ar[r]\ar@{->>}[u]_{\epsilon_{B,C}} & 0 \\
 & & & H_m\otimes\mathcal{O}_{\mathbb{P}^3}(-1).\ar@{>->}[u]_{\tau_{B,C}}
}
\end{equation}
In these notations the diagram (\ref{diag display E2m+2 }) becomes the display of the antiselfdual
monad
\begin{equation}\label{monad for E2m+2}
0\to H_{m+1}\otimes\mathcal{O}(-1)\overset{B\circ u}\to
H_{m+1}^\vee\otimes V^\vee\otimes\mathcal{O}\overset{u^\vee}\to
H_{m+1}^\vee\otimes\mathcal{O}(1)\to0
\end{equation}
with the symplectic cohomology sheaf $E_{2m+2}(B)$:
\begin{equation}\label{E2m+2(D-1)}
E_{2m+2}(B)=\ker(u^\vee)/\im(B\circ u).
\end{equation}
Moreover, as in (\ref{rho xi,A}) and (\ref{diag rho}) we obtain a subbundle morphism
\begin{equation}\label{rho D,C}
\rho_{B,C}:H_m\otimes\mathcal{O}(-1)\to E_{2m+2}(B)
\end{equation}
such that
\begin{equation}\label{tau,rho D,C}
\tau_{B,C}=e^\vee\circ\theta\circ\rho_{B,C},
\end{equation}
where $\theta:E_{2m+2}(B)\overset{\simeq}\to E_{2m+2}(B)$ is a symplectic structure on
$E_{2m+2}(B)$. In addition, as in (\ref{vanish cohom of E2m+2}) we have
\begin{equation}\label{vanish cohom of E2m+2(D-1)}
h^0(E_{2m+2}(B))=h^i(E_{2m+2}(B)(-2))=0,\ \ \ i\ge0.
\end{equation}
Furthermore, the antiselfdual monads (\ref{A,D}) and (\ref{monad for E2m+2}) recover the
antiselfdual monad (\ref{monad for E2(xi,A)}) which in view of (\ref{rho...=rho...}) becomes
\begin{equation}\label{monad for E(D,C)}
0\to H_m\otimes\mathcal{O}(-1)\overset{\rho_{B,C}}\longrightarrow E_{2m+2}(B)
\overset{\rho_{B,C}^\vee\circ\theta}\longrightarrow H_m^\vee\otimes\mathcal{O}(1)\to0.
\end{equation}
with the cohomology sheaf $E(B,C)$,
\begin{equation}\label{E(D,C)=coho}
E(B,C)=\ker(\rho_{B,C}^\vee\circ\theta)/\im(\rho_{B,C}).
\end{equation}
Now (\ref{vanish cohom of E2m+2(D-1)}) and (\ref{monad for E(D,C)}) yield
$h^0(E(B,C))=h^i(E(B,C)(-2))=0,\ \ \ i\ge0$, i.e. $E(B,C)$ is an $(2m+1)$-instanton.

Thus $\im g_m\subset I_{2m+1}(\xi)$. The fact that $f_m\circ g_m=id$ follows directly from
(\ref{isomorphism f}) and (\ref{isomorphism f^-1}).
\end{proof}

\begin{remark}\label{cap=0}
Note that, since the morphism $\widehat{C}$ in the diagram (\ref{combine}) is injective, it follows from this diagram that,
for any $m\ge1,\ \xi\in{\rm Isom}_{2m+1}^0$ and any $A\in MI_{2m+1}(\xi)$, the monomorphisms
$H_{m+1}\overset{{}^\sharp A_1(\xi)}\hookrightarrow H_{m+1}^\vee\otimes
\wedge^2V^\vee\overset{{}^\sharp A_2(\xi)}\hookleftarrow H_m$
satisfy the condition
$\im({}^\sharp A_1(\xi))\cap\im({}^\sharp A_2(\xi))=\{0\}$, i. e.
$\dim\Span(\im({}^\sharp A_1(\xi)),\ \im({}^\sharp A_2(\xi)))=2m+1$.
\end{remark}

\vspace{1cm}

\section{Scheme $Z_m$.
Reduction of the irreducibility of $X_m$ to the irreducibility of $Z_m$.
Proof of main theorem}\label{Scheme Zm}

\vspace{0.5cm}

\begin{sub}{\bf Scheme $\widehat{Z}_m$ and its open subset $Z_m$.}\label{Definiton of Zm}
\rm
In this subsection we introduce a new set $Z_m$ as a locally closed subset of a certain vector space (see (\ref{Zm}))
and endow it
with a natural scheme structure. We then formulate Theorem \ref{Irreducibility of Zm} on the irreducibility of $Z_m$.
This Theorem plays a key role in the proof of irreducibility of $I_{2m+1}$ which we give in subsection
\ref{Proof of main Thm}. The proof of Theorem \ref{Irreducibility of Zm} will be given in the next section.

Set
\begin{equation}\label{Sm,Lambdam}
\mathbf{\Lambda}_m:=\wedge^2H_m^\vee\otimes S^2 V^\vee,\ \ \
\mathbf{\Phi}_m:=\Hom(H_m,H_m^\vee\otimes\wedge^2 V^\vee),
\end{equation}
and
\begin{equation}\label{Sm0vee}
(\mathbf{S}_m^\vee)^0:=\{D\in\mathbf{S}_m^\vee\ |\ D: H_m^\vee\otimes V^\vee\to H_m\otimes V\ {\rm is\ invertible}\}.
\end{equation}
Note that $(\mathbf{S}_m^\vee)^0$ is a dense open subset of $\mathbf{S}_m^\vee$ and there is a canonical isomorphism
\begin{equation}\label{isom Sm0}
\mathbf{S}_m^0\overset{\simeq}\to(\mathbf{S}_m^\vee)^0:\ A\mapsto A^{-1}.
\end{equation}

Consider the sets
\begin{equation}\label{tildeZm}
\widehat{Z}_m:=\left\{(D,\phi)\in\mathbf{S}_m^\vee\times\mathbf{\Phi}_m\ \left|\
\begin{matrix}
\Theta(D,\phi):=\phi^\vee\circ D\circ \phi:H_m\otimes V\to\cr
\to H_m^\vee\otimes V^\vee\ {\rm satisfies\ the\ condition}\cr
\Theta(D,\phi)\in \mathbf{S}_m\ \cr
\end{matrix}\right.
\right\}.
\end{equation}
and
\begin{equation}\label{Zm}
Z_m:=\widehat{Z}_m\cap(\mathbf{S}_m^\vee)^0\times\mathbf{\Phi}_m
\end{equation}
(here we understand a point $D\in\mathbf{S}_m^\vee$ as a homomorphism $H_m^\vee\otimes V^\vee\to H_m\otimes V$)
and let $\overline{Z}_m$ be the closure of $Z_m$ in $\mathbf{S}_m^\vee\times\mathbf{\Phi}_m$.
By definition, $Z_m$ is an open subset of $\widehat{Z}_m$, respectively, a dense open subset of $\overline{Z}_m$.

Note that there is a standard decomposition
$$
\wedge^2(H_m^\vee\otimes V^\vee)=\mathbf{S}_m\oplus\mathbf{\Lambda}_m
$$
with induced projection onto the second summand
\begin{equation}\label{qm}
q_m:\ \wedge^2(H_m^\vee\otimes V^\vee)\rightarrow\mathbf{\Lambda}_m
\end{equation}
and the morphism
$$
h:\mathbf{S}_m^\vee\oplus\mathbf{\Phi}_m\to\mathbf{\Lambda}_m:
(D,\phi)\mapsto q_m(\Theta(D,\phi)).
$$
By the definition of $\widehat{Z}_m$ we obtain
\begin{equation}\label{hm^-1(0)}
\widehat{Z}_m=h^{-1}(0).
\end{equation}
Clearly, the point $(0,0)$ belongs to $\widehat{Z}_m$, i. e. $\widehat{Z}_m$ is nonempty.

\textit{Convention}: We endow $\widehat{Z}_m$ with the structure of a scheme-theoretic fibre $h^{-1}(0)$ of the
morphism $h$. Respectively, $Z_m$ inherits the structure of an open subscheme of $\widehat{Z}_m$.

\begin{remark}\label{Z as zeroset}
From (\ref{hm^-1(0)}) it follows that $\widehat{Z}_m$ may be considered as the zero-scheme
$(h^*\mathbf{s}_{taut})_0$ of the section $h^*\mathbf{s}_{taut}$ of the trivial vector bundle
$\mathbf{\Lambda}_m\otimes\mathcal{O}_{\mathbf{S}_m\oplus\mathbf{\Phi}_m}$,
where $\mathbf{s}_{taut}$ is the tautological section of the trivial vector bundle
$\mathbf{\Lambda}_m\otimes\mathcal{O}_{\mathbf{\Lambda}_m}$ of rank $\dim\mathbf{\Lambda}_m=5m(m-1)$ over $\mathbf{\Lambda}_m$.
We thus obtain the following estimate for the
dimension of $\widehat{Z}_m$ at each point $z\in \widehat{Z}_m$,
\begin{equation}\label{dim tildeZm}
\dim_z\widehat{Z}_m=\dim h^{-1}(0)\ge\dim(\mathbf{S}_m^\vee\times\mathbf{\Phi}_m)-
\dim\mathbf{\Lambda}_m=3m(m+1)+6m^2-5m(m-1)
\end{equation}
$$
=4m(m+2).
$$
In particular, if $Z_m$ is nonempty, then
\begin{equation}\label{dim Zm}
\dim_zZ_m\ge4m(m+2),\ \ \ \ z\in Z_m.
\end{equation}
\end{remark}
In the next subsection we will use the following result about $Z_m$.
\begin{theorem}\label{Irreducibility of Zm}
(i) $Z_m$ is an integral locally complete intersection scheme of dimension $4m(m+2)$.

(ii) The natural morphism
$p_m:Z_m\to(\mathbf{S}_m^\vee)^0: \ (D,\phi)\mapsto D$
is surjective.
\end{theorem}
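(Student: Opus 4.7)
The plan is to prove the two assertions by induction on $m$, with part (ii) coming essentially for free. Indeed, for every $D\in\mathbf{S}_m^\vee$ the pair $(D,0)$ lies in $\widehat{Z}_m$ since $\Theta(D,0)=0$ tautologically sits in $\mathbf{S}_m$; when $D\in(\mathbf{S}_m^\vee)^0$ this provides a preimage in $Z_m$, proving surjectivity of $p_m$. The substance lies in part (i). By Remark \ref{Z as zeroset}, $\widehat{Z}_m$ is the zero-scheme of a section of the trivial rank-$5m(m-1)$ bundle $\mathbf{\Lambda}_m\otimes\mathcal{O}$ on the affine space $\mathbf{S}_m^\vee\times\mathbf{\Phi}_m$, so every component has dimension at least $4m(m+2)$ by (\ref{dim tildeZm}). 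To establish (i) it therefore suffices to prove that $Z_m$ is of pure dimension exactly $4m(m+2)$ (which both forces the LCI property and, together with generic reducedness, yields reducedness) and that it is irreducible.

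The induction step is the heart of the argument, and its linear-algebraic core will be Proposition \ref{part of inductn step}. I would introduce an auxiliary direct-sum decomposition of $H_m$ analogous to the decomposition $\xi$ of $H_{2m+1}$ used in Section \ref{decomp}, block-decompose $D\in(\mathbf{S}_m^\vee)^0$ and $\phi\in\mathbf{\Phi}_m$ accordingly, and impose the constraint $\phi^\vee\circ D\circ\phi\in\mathbf{S}_m$. A Schur-complement-style elimination parallel to the derivation of (\ref{xi3(A)}) should then express the defining equations of $Z_m$ as a system with two pieces: one piece reproduces the defining equations of $Z_{m-1}$, and the other encodes a family of t'Hooft instanton nets. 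Concretely, I would stratify $Z_m$ by the rank of the pertinent block of $\phi$ and, on the top (maximal-rank) stratum, use Proposition \ref{part of inductn step} to construct an explicit fibration whose base is $Z_{m-1}$ (integral of dimension $4(m-1)(m+1)$ by induction) and whose generic fibre is irreducible of the expected dimension by the dimension and irreducibility statements of Lemma \ref{tHooft}. Adding dimensions recovers $4m(m+2)$, yielding irreducibility and the correct dimension on the top stratum. One then verifies that the complementary strata have strictly smaller dimension, which together with the lower bound (\ref{dim Zm}) rules them out as components, so $Z_m$ coincides with the closure of the top stratum; irreducibility is then immediate, and generic reducedness is inherited from the smoothness of the t'Hooft locus appearing in Lemma \ref{tHooft}(ii)--(iii). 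The base case $m=1$ is direct: since $\wedge^2H_1^\vee=0$ we have $\mathbf{\Lambda}_1=0$, so $\widehat{Z}_1$ equals the ambient $12$-dimensional affine space $\mathbf{S}_1^\vee\times\mathbf{\Phi}_1$, and $Z_1$ is its dense open locus where $D$ is invertible.

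The principal obstacle is Proposition \ref{part of inductn step} itself. The condition $q_m(\phi^\vee\circ D\circ\phi)=0$ is quadratic in $\phi$ and couples the two tensor factors in a nontrivial way, and extracting from it the precise block structure that simultaneously isolates the $Z_{m-1}$-equations and exposes the t'Hooft geometry requires the explicit, lengthy calculation that the author signals (in Remark \ref{computations}) cannot be shortened. Once that linear-algebraic extraction is in hand, the Appendix's general-position results supply the genericity needed to make the stratification argument work and to close the induction step.
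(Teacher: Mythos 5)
Your scaffolding matches the paper in several respects: the base case $m=1$, the lower bound of Remark \ref{Z as zeroset}, the reduction of (i) to ``pure dimension $4m(m+2)$ plus irreducibility plus generic reducedness'' via the Cohen--Macaulay property of local complete intersections, and the block decomposition $H_m\simeq H_{m-1}\oplus\mathbf{k}$ feeding into Proposition \ref{part of inductn step}. Your observation that $(D,0)\in Z_m$ for every $D\in(\mathbf{S}_m^\vee)^0$ does dispose of (ii) immediately, more cheaply than the paper (which deduces surjectivity from properness of the projectivized component). One inaccuracy in the middle of your argument: there is no fibration of $Z_m$ over $Z_{m-1}$. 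Equation (\ref{eq1 of Zm}) contains the cross terms $\phi_{m-1}^\vee\circ a\circ\psi-\psi^\vee\circ a^\vee\circ\phi_{m-1}+\psi^\vee\circ\alpha\circ\psi$, so the projection $(D,\phi)\mapsto(D_{m-1},\phi_{m-1})$ does not land in $Z_{m-1}$. The paper only recovers $Z_{m-1}$ inside the fibre of $\pi_m$ over the special point $(\theta^0,\alpha^0,0,D_{m-1})$ (where $a=0$), after further intersecting with $\{\chi=\psi=0\}$, and then bounds the fibre dimension by a tangent-space rank computation at one explicitly chosen point; the output of Proposition \ref{part of inductn step} is only the existence of \emph{one} component $Z$ with the stated properties, not control of all of $Z_m$.

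The fatal gap is your mechanism for excluding additional components. You propose to show that the complementary strata have dimension strictly less than $4m(m+2)$ and conclude that they carry no component. But, as you yourself invoke via (\ref{dim Zm}), \emph{every} irreducible component of $Z_m$ has dimension at least $4m(m+2)$, because $Z_m$ is cut out by only $5m(m-1)$ equations. A hypothetical extra component supported in a ``bad'' stratum is itself a closed subset of $Z_m$ of dimension $\ge4m(m+2)$, so no dimension estimate of the proposed form can rule it out: bounding the dimension of the bad locus of $Z_m$ is exactly as hard as the original irreducibility problem. This is precisely why the paper does not argue this way. Instead, it shows that each point of $Z_m$ produces a t'Hooft $(2m-1)$-instanton via the monad (\ref{E2-monad}), constructs the complete $(10m-1)$-dimensional family $\mathbf{T}$ of t'Hooft sheaves and the induced morphism $\lambda_{(j)}:Z_m\to\mathbf{S}_{2m-1}$ whose fibres are the $2m$-dimensional spaces $V(z,j)$, and then argues by contradiction: a second component would, by Hartshorne's connectedness theorem for local complete intersections, yield a component $\widetilde{Z}$ meeting $Z$ in codimension one, and Propositions \ref{hat Z(j)} and \ref{Z' in widetilde Z } would then force $Z(j)$ and $\widetilde{Z}(j)$ to dominate the same variety $\mathbf{V}_0$ with identical fibres, hence to coincide. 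Without this machinery, or a genuine substitute for it, your induction step does not close.
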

We begin the proof of this theorem in section \ref{Study Zm} and finish in section \ref{gen pos}.

\end{sub}

\vspace{0.2cm}

\begin{sub}{\bf Proof of the main theorem.}\label{Proof of main Thm}
\rm

\vspace{0.2cm}
In this subsection we give the proof of Theorem \ref{Irreducibility}. Set
\begin{equation}\label{tilde Xm}
\widetilde{X}_m:=
\{(D,C)\in (\mathbf{S}_{m+1}^\vee)^0\times\mathbf{\Sigma}_{m+1}\ |\
\ (C^\vee\circ D\circ C:H_m\otimes V\to H_m^\vee\otimes V^\vee)
\in\mathbf{S}_m\}.
\end{equation}
The set $\widetilde{X}_m$ has a natural structure of a closed subscheme of
$(\mathbf{S}_{m+1}^\vee)^0\times\mathbf{\Sigma}_{m+1}$
defined by the equations
\begin{equation}\label{eqns of tilde Xm}
C^\vee\circ D\circ C\in \mathbf{S}_m.
\end{equation}
Since the conditions (ii) and (iii) in the definition (\ref{Xm}) of $X_m$ are open and $X_m$
is nonempty (see Theorem \ref{Xm isom MI2m+1}), it follows immediately in view of (\ref{isom Sm0}) that $X_m$ is a
nonempty open subset of $(\widetilde{X}_m)_{red}$,
\begin{equation}\label{open in tilde Xm}
\emptyset\ne X_m\overset{\rm open}\hookrightarrow(\widetilde{X}_m)_{red}.
\end{equation}

Fix a direct sum decomposition
$$
H_{m+1}\overset{\sim}\to H_m\oplus\mathbf{k}.
$$
Under this isomorphism any homomorphism
\begin{equation}\label{map C}
C\in\mathbf{\Sigma}_{m+1}=\Hom(H_m,H_{m+1}^\vee)\otimes\wedge^2 V^\vee,\ \ \
C:H_m\otimes V\to H_{m+1}^\vee\otimes V^\vee,\ \ \ \
\end{equation}
can be represented as a homomorphism
\begin{equation}\label{decompn C}
C:H_m\otimes V\to
H_m^\vee\otimes V^\vee\ \oplus\ \mathbf{k}^\vee\otimes V^\vee,
\end{equation}
i.e. as a matrix of homomorphisms
\begin{equation}\label{matrix of C}
C= \left(
\begin{array}{c}
\phi \\
\psi
\end{array}
\right),
\end{equation}
where
\begin{equation}\label{phi, b}
\phi\in\Hom(H_m,
H_m^\vee)\otimes\wedge^2 V^\vee=\mathbf{\Phi}_m,\ \ \
\psi\in\mathbf{\Psi}_m:=
\Hom(H_m,(\mathbf{k})^\vee)\otimes\wedge^2 V^\vee.
\end{equation}
Respectively, any homomorphism
$D\in (\mathbf{S}^\vee_{m+1})^0\subset\mathbf{S}^\vee_{m+1}= S^2H_{m+1}\otimes\wedge^2V\subset
\Hom(H_{m+1}^\vee\otimes V^\vee,H_{m+1}\otimes V)$
can be represented as a matrix of homomorphisms

\begin{equation}\label{matrix of D}
D= \left(
\begin{array}{cc}
D_1 & \lambda \\
-\lambda^\vee & \mu
\end{array}
\right),
\end{equation}
where
\begin{equation}\label{A, lambda, mu}
D_1\in\mathbf{S}^\vee_m\subset
\Hom(H_m^\vee\otimes V^\vee,H_m\otimes V),\ \ \
\end{equation}
$$
\lambda\in\mathbf{L}_m:=\Hom(\mathbf{k}^\vee,H_m)\otimes\wedge^2 V, \ \ \
\mu\in\mathbf{M}_m:=\Hom(\mathbf{k}^\vee,\mathbf{k})\otimes\wedge^2 V.
$$
From (\ref{matrix of C}) and (\ref{matrix of D}) it follows that the homomorphism
$$
C^\vee\circ D\circ C:H_m\otimes V\to
H_m^\vee\otimes V^\vee,\ \ \ \ \
C^\vee\circ D\circ C\in\wedge^2(H_m^\vee\otimes V^\vee),
$$
can be represented as
\begin{equation}\label{CDC}
C^\vee\circ D\circ C=\phi^\vee\circ D_1\circ\phi+\phi^\vee\circ\lambda\circ\psi-
\psi^\vee\circ\lambda^\vee\circ\phi+\psi^\vee\circ\mu\circ\psi.
\end{equation}
By (\ref{matrix of C})-(\ref{A, lambda, mu}) we have
$$
\mathbf{S}^\vee_{m+1}\times\mathbf{\Sigma}_{m+1}=\mathbf{S}^\vee_m\times\mathbf{\Phi}_m
\times\mathbf{\Psi}_m\times\mathbf{L}_m\times\mathbf{M}_m,
$$
and there are well-defined morphisms
$$
\tilde{p}_m:\widetilde{X}_m\to\mathbf{L}_m\oplus\mathbf{M}_m:(D_1,\phi,\psi,\lambda,\mu)\mapsto(\lambda,\mu).
$$
and
$$
p_m:=\tilde{p}_m|\overline{X}_m:\overline{X}_m\to\mathbf{L}_m\oplus\mathbf{M}_m,
$$
where $\overline{X}_m$ is the closure of $X_m$ in $(\mathbf{S}_{m+1}^\vee)^0\times\mathbf{\Sigma}_{m+1}$.
We now invoke the following proposition, the proof of which is postponed to Section \ref{Appendix}.
\begin{proposition}\label{nondeg for general}
Let $m\ge1$. Then, for any point $D\in(\mathbf{S}^\vee_{m+1})^0$ and a general choice of the decomposition
$H_{m+1}\overset{\sim}\to H_m\oplus\mathbf{k}$, the induced  homomorphism
$D_1$ in the matrix of homomorphisms $D$ in (\ref{matrix of D}) is nondegenerate.
\end{proposition}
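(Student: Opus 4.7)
The central observation is that, given a decomposition $H_{m+1}=H_m\oplus\mathbf{k}$ with $\mathbf{k}=\mathbf{k}_h$ for some $h\in H_{m+1}\setminus\{0\}$, the block $D_1$ is precisely the restriction of the nondegenerate skew form $D\in\wedge^2(H_{m+1}\otimes V)$ to the codimension-$4$ subspace $\mathrm{ann}(h)\otimes V^\vee\subset H_{m+1}^\vee\otimes V^\vee$. Crucially, this restriction depends only on the line $\mathbf{k}_h$ and not on the choice of complement $H_m$, so the proposition reduces to showing that for a general $[h]\in\mathbb{P}(H_{m+1})=\mathbb{P}^m$, the subspace $\mathrm{ann}(h)\otimes V^\vee$ is symplectic with respect to $D$.

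To prove this, I would globalise the problem over $\mathbb{P}^m$. The subspaces $\mathrm{ann}(h)\otimes V^\vee$ fit into a rank-$4m$ subbundle $\mathcal{F}=\mathcal{Q}^\vee\otimes V^\vee$ of the trivial bundle $\mathcal{E}:=H_{m+1}^\vee\otimes V^\vee\otimes\mathcal{O}_{\mathbb{P}^m}$, where $\mathcal{Q}$ is the tautological quotient on $\mathbb{P}^m$, and a direct computation with the Euler sequence gives $\det\mathcal{F}=\mathcal{O}_{\mathbb{P}^m}(-4)$. Invertibility of $D$ then produces a complementary rank-$4$ subbundle $\mathcal{K}:=D^{-1}(\mathcal{O}_{\mathbb{P}^m}(-1)\otimes V)$ whose fibre at $[h]$ equals the $D$-orthogonal of $\mathcal{F}_{[h]}$. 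The key equivalence is: $D_1(h)$ is nondegenerate exactly when $\mathcal{F}_{[h]}\cap\mathcal{K}_{[h]}=0$, i.e.\ when the evaluation $\mathcal{F}\oplus\mathcal{K}\to\mathcal{E}$ is an isomorphism at $[h]$. Since $\mathrm{rk}\,\mathcal{F}+\mathrm{rk}\,\mathcal{K}=\mathrm{rk}\,\mathcal{E}=4(m+1)$, the determinant of this bundle map is a section of $\mathcal{O}_{\mathbb{P}^m}(8)$, and up to units it is the square $\mathrm{Pf}(D_1(h))^2$ of a degree-$4$ Pfaffian section on $\mathbb{P}^m$. The proposition is therefore equivalent to the claim that this Pfaffian section does not vanish identically on $\mathbb{P}^m$ for any $D\in(\mathbf{S}_{m+1}^\vee)^0$.

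I expect the main obstacle to be precisely this nonvanishing for \emph{every} invertible $D$, not just for generic $D$. Exhibiting a single example---e.g.\ $D=\sum_{i=1}^{m+1}h_i^2\otimes\omega_i$ with each $\omega_i\in\wedge^2V$ nondegenerate, for which $D_1=\sum_{i=1}^{m}h_i^2\otimes\omega_i$ is manifestly invertible---together with the obvious openness in $D$ of the good locus, only yields a dense open subset of $(\mathbf{S}_{m+1}^\vee)^0$, which is not yet the full claim since a proper nonempty closed subset of an irreducible variety is perfectly possible. To close the gap I would argue by contradiction: assuming $\mathrm{Pf}(D_1(h))\equiv 0$, the intersection $\mathcal{F}\cap\mathcal{K}$ has positive generic rank on $\mathbb{P}^m$, and its Chern classes are strongly constrained by those of $\mathcal{F}\cong\mathcal{Q}^\vee\otimes V^\vee$ and $\mathcal{K}\cong\mathcal{O}_{\mathbb{P}^m}(-1)\otimes V$; the goal is to push this constraint through to contradict the invertibility of $D$. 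The cleanest route is likely induction on $m$: the base case $m=1$ amounts to showing that the quadratic map $[h]\mapsto [A(h,h)]$ into $\mathbb{P}(\wedge^2 V^\vee)$, where $A\in S^2H_{m+1}^\vee\otimes\wedge^2V^\vee$ is the $\mathbf{S}_{m+1}$-component of $D^{-1}$ in the canonical decomposition $\wedge^2(H_{m+1}^\vee\otimes V^\vee)=\mathbf{S}_{m+1}\oplus(\wedge^2H_{m+1}^\vee\otimes S^2V^\vee)$, does not land in the Klein quadric; this can be read off the identity $D\circ D^{-1}=\mathrm{id}$ separated by type. Carrying out the inductive step cleanly, and in particular making crucial use of the special tensor shape $S^2H_{m+1}\otimes\wedge^2V$ of $D$, is what I expect will occupy the bulk of the appendix proof.
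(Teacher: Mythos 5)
Your reduction is sound as far as it goes: $D_1$ is indeed the restriction of the skew form $D$ to $\mathrm{ann}(h)\otimes V^\vee$, it depends only on the line $\mathbf{k}h$, and the proposition is equivalent to the non-identical vanishing on $\mathbb{P}(H_{m+1})$ of a quartic Pfaffian section for \emph{every} invertible $D$. You also correctly flag that the example-plus-openness argument only covers a dense open set of $D$'s. But the heart of the proof --- ruling out identical vanishing for every invertible $D$ --- is exactly the part you leave as a sketch, and neither of your proposed routes closes it. The Chern-class route does not by itself produce a contradiction: an intersection sheaf $\mathcal{F}\cap\mathcal{K}$ of positive generic rank is not numerically obstructed. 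The induction on $m$ is circular as stated: if $B=D^{-1}$ lies in the bad locus for $H_{m+1}$, then every hyperplane restriction $B|_{H_m\otimes V}$ lies in the bad locus for $H_m$ and is degenerate by the inductive hypothesis; but to conclude that $B$ itself is degenerate you would need that a nondegenerate $B$ has \emph{some} nondegenerate restriction to a subspace of the special form $H_m\otimes V$, which under the Schur-complement duality is precisely the proposition for $m$ that you are trying to prove.

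The step you reserve for the base case is in fact the key to the whole statement: by the Jacobi/Schur-complement identity for block matrices, $D_1$ is invertible if and only if the complementary $4\times4$ compression $\hat{B}(h\otimes h)\in\wedge^2V^\vee$ of $B=D^{-1}$ is invertible, for every $m$, not just $m=1$. This collapses the proposition, uniformly in $m$, to the single claim that the quadratic map $[h]\mapsto[\hat{B}(h\otimes h)]$ cannot land entirely in the Klein quadric when $B$ is invertible. The paper then analyses the locus $M_{m+1}$ of $B$ for which it does: writing $B$ as a $4\times4$ block matrix of quadrics $A_{ij}\in S^2H_{m+1}^\vee$, identical vanishing of the Pfaffian $A_{12}A_{34}-A_{13}A_{24}+A_{14}A_{23}$ forces, on the dense locus where $Q_{13}(B)\cap Q_{23}(B)$ is integral of codimension $2$, linear relations such as $A_{34}=\alpha A_{13}+\beta A_{23}$, and elementary block row/column operations then exhibit $B$ as visibly degenerate; a separate dimension and density argument (restriction to $\mathbf{S}_4\subset\mathbf{S}_{m+1}$ together with explicit computations for $m\le 3$) shows this good locus meets every irreducible component of $M_{m+1}$, whence $M_{m+1}\cap\mathbf{S}^0_{m+1}=\emptyset$. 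Without an argument of this kind your proposal does not yet prove the proposition.
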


According to this proposition, we fix such a decomposition $H_{m+1}\overset{\sim}\to H_m\oplus\mathbf{k}$ for which
the homomorphism $D_1:H_m^\vee\otimes V^\vee\to H_m\otimes V$ in (\ref{matrix of D})
is nondegenerate, i.e. $D_1\in(\mathbf{S}^\vee_m)^0$.

Let $\mathcal{X}$ be any irreducible component of $X_m$ and let $\overline{\mathcal{X}}$ be its closure in
$\overline{X}_m$. Fix a point
$z=(D_1,\phi,\psi,\lambda,\mu)\in \mathcal{X}$ not lying in the components of $X_m$ different from $\mathcal{X}$.
Consider the morphism
\begin{equation}\label{f^0}
f:\ \mathbb{A}^1\to\overline{\mathcal{X}}:\ t\mapsto(D_1,t^2\phi,t\psi,t\lambda,t^2\mu),\ \ \ f(1)=z.
\end{equation}
(This morphism is well-defined by (\ref{CDC}).) By definition, the point
$f(0)=(D_1,0,0,0,0)$ lies in the fibre $p_m^{-1}(0,0)$. Hence,
$p_m^{-1}(0,0)\cap\overline{\mathcal{X}}\ne\emptyset$.
In other words,
\begin{equation}\label{nonempty fibre}
\rho^{-1}(0,0)\ne\emptyset,\ \ \ \ \text{\it where}\ \ \ \rho:=p_m|\overline{\mathcal{X}}.
\end{equation}
Now from (\ref{CDC}) and the definition of $\widetilde{X}_m$ it follows that
\begin{equation}\label{zero fibre}
\tilde{p}_m^{-1}(0,0)=\{(D_1,\phi,\psi)\in(\mathbf{S}^\vee_m)^0\times\mathbf{\Phi}_m\times\mathbf{\Psi}_m\ |\
\phi^\vee\circ D_1\circ\phi\in\mathbf{S}_m\}.
\end{equation}
Comparing this with the definition (\ref{Zm}) of $Z_m$ we see that, set-theoretically,
$\tilde{p}_m^{-1}(0,0)=Z_m\times\mathbf{\Psi}_m$, so that
\begin{equation}\label{zero fibre=Zm times Psim}
\rho^{-1}(0,0)\overset{\rm sets}\subset p_m^{-1}(0,0)\overset{\rm sets}=
\tilde{p}_m^{-1}(0,0)\overset{\rm sets}=Z_m\times\mathbf{\Psi}_m.
\end{equation}
Respectively, scheme-theoretically we have embeddings of schemes
\begin{equation}\label{zero fibre subset Zm times Psim}
\rho^{-1}(0,0)\overset{\rm schemes}\subset p_m^{-1}(0,0)\overset{\rm schemes}\subset
\tilde{p}_m^{-1}(0,0)\overset{\rm schemes}=Z_m\times\mathbf{\Psi}_m.
\end{equation}
From (\ref{zero fibre=Zm times Psim}) and Theorem \ref{Irreducibility of Zm} it follows, in particular, that
\begin{equation}\label{dim fibre le dim Zm+dim Psim}
\dim\rho^{-1}(0,0)\le\dim p_m^{-1}(0,0)\le\dim Z_m+\dim\mathbf{\Psi}_m=4m(m+2)+6m=4m^2+14m.
\end{equation}
Hence in view of (\ref{nonempty fibre})
\begin{equation}\label{dim X' le}
\dim \overline{\mathcal{X}}\le\dim\rho^{-1}(0,0)+\dim\mathbf{L}_m+\dim\mathbf{M}_m\le4m^2+14m+6m+6=4m^2+20m+6.
\end{equation}
On the other hand, formula (\ref{dim MIn ge...}) for $n=2m+1$, equality (\ref{dim MI2m+1(xi)}) and
Theorem \ref{Xm isom MI2m+1}(ii) show that, for any point $x\in\mathcal{X}$ such that $A:=g_m(x)\in MI_{2m+1}(\xi)$,
\begin{equation}\label{dim X' ge}
4m^2+20m+6=(2m+1)^2+8(2m+1)-3\le\dim_A MI_{2m+1}(\xi)=\dim\overline{\mathcal{X}}.
\end{equation}
Comparing (\ref{dim X' le}) with (\ref{dim X' ge}) we see that all inequalities in
(\ref{dim fibre le dim Zm+dim Psim})-(\ref{dim X' ge}) are equalities. In particular,
\begin{equation}\label{dim fibre=dim X'-dim base}
\dim\rho^{-1}(0,0)=\dim(Z_m\times\mathbf{\Psi}_m)=\dim\overline{\mathcal{X}}-\dim(\mathbf{L}_m\times\mathbf{M}_m).
\end{equation}
Since by Theorem \ref{Irreducibility of Zm} the scheme $Z_m$ is integral and so $Z_m\times\mathbf{\Psi}_m$ is integral
as well, (\ref{zero fibre subset Zm times Psim}) and (\ref{dim fibre=dim X'-dim base}) yield isomorphisms of
integral schemes
\begin{equation}\label{zero fibre = Zm times Psim}
\rho^{-1}(0,0)\overset{\rm schemes}=p_m^{-1}(0,0)\overset{\rm schemes}=
\tilde{p}_m^{-1}(0,0)\overset{\rm schemes}=Z_m\times\mathbf{\Psi}_m.
\end{equation}

Now we formulate the following Lemma, the proof of which we leave to the reader.
\begin{lemma}\label{flat implies irred}
Let $f:X\to Y$ be a morphism of reduced schemes, where $Y$ is a smooth integral scheme. Assume that there exists a
closed point $y\in Y$ such that for any irreducible component $X'$ of $X$ the following conditions are satisfied:

(a) $\dim f^{-1}(y)=\dim X'-\dim Y$,

(b) the scheme-theoretic embedding of fibres $(f|_{X'})^{-1}(y)\subset f^{-1}(y)$ is an isomorphism of integral
schemes.\\
Then

(i) there exists an open subset $U$ of\ $Y$ containing the point $y$ such that the morphism
$f|_{f^{-1}(U)}:f^{-1}(U)\to U$ is flat,

(ii) $X$ is integral and

(iii) $X$ is smooth at any smooth point of $f^{-1}(y)$.
\end{lemma}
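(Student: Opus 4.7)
I would prove the three conclusions in the order (iii), (ii), (i), each using the previous. The key preliminary is to unpack hypothesis (b): the scheme-theoretic equality $(f|_{X'})^{-1}(y)=f^{-1}(y)$ forces the set-theoretic inclusion $f^{-1}(y)\subset X'$ for every irreducible component $X'\subset X$. Combined with (a), every component has the same dimension $d+n$, where $d=\dim f^{-1}(y)$ and $n=\dim Y$, and therefore every closed point $x_0\in f^{-1}(y)$ lies on every component, giving $\dim_{x_0}X=d+n$.

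For (iii), pick $x_0\in f^{-1}(y)$ smooth on $f^{-1}(y)$, so $\dim T_{x_0}f^{-1}(y)=d$. The closed immersion $f^{-1}(y)\hookrightarrow X$ yields a surjection of Zariski cotangent spaces $\mathfrak{m}_{X,x_0}/\mathfrak{m}_{X,x_0}^2\twoheadrightarrow \mathfrak{m}_{f^{-1}(y),x_0}/\mathfrak{m}_{f^{-1}(y),x_0}^2$ whose kernel is the image of $\mathfrak{m}_y\cdot\mathcal{O}_{X,x_0}$. Since $Y$ is regular at $y$, $\mathfrak{m}_y$ is minimally generated by $n$ elements, so this kernel has dimension at most $n$. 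Hence $\dim T_{x_0}X\leq \dim T_{x_0}f^{-1}(y)+n=d+n=\dim_{x_0}X$, and the general reverse inequality forces equality, proving that $X$ is smooth at $x_0$.

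For (ii): at the smooth point $x_0$ the local ring $\mathcal{O}_{X,x_0}$ is regular, hence an integral domain, so only one irreducible component of $X$ passes through $x_0$. Since every irreducible component contains $x_0$, $X$ has a unique irreducible component; as $X$ is reduced by hypothesis, $X$ is integral.

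For (i): at $x_0$ the local ring $\mathcal{O}_{X,x_0}$ is regular, in particular Cohen--Macaulay, and $\dim_{x_0}f^{-1}(y)=\dim_{x_0}X-\dim_y Y=d$, so miracle flatness (Matsumura, \emph{Commutative Ring Theory}, Thm.~23.1) yields flatness of $f$ at $x_0$. The hard part -- and the main obstacle -- is to propagate this pointwise flatness at a single smooth point to flatness over an entire open neighborhood of $y$ in $Y$: one must show that every $x\in f^{-1}(y)$ lies in the flat locus of $f$. I would establish this by verifying that at each $x\in f^{-1}(y)$ the $n$ generators of $\mathfrak{m}_y\mathcal{O}_{X,x}$ form a regular sequence, exploiting -- after (ii) -- that $\mathcal{O}_{X,x}$ is an integral domain and that $\mathfrak{m}_y\mathcal{O}_{X,x}$ is a prime ideal of height $n$ (since $\mathcal{O}_{f^{-1}(y),x}$ is the stalk of an integral scheme, hence a domain, of the right dimension $d$). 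Once $f^{-1}(y)$ is known to be contained in the flat locus, openness of the flat locus combined with the finite-type hypothesis (closedness, in the intended applications, of the image of the complementary non-flat locus) yields the desired open neighborhood $U$ of $y$ with $f|_{f^{-1}(U)}$ flat; in the paper's application, where $X$ is cut out as a complete intersection of the expected codimension (hence Cohen--Macaulay), this last step reduces directly to another invocation of miracle flatness.
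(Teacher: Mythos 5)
The paper itself offers no proof of this lemma (it is explicitly ``left to the reader''), so there is nothing to compare against; I can only assess your argument on its merits. Your treatment of (iii) and (ii) is correct and is surely the intended route: hypothesis (b) forces $f^{-1}(y)\subset X'$ set-theoretically for every component $X'$, so by (a) every component has dimension $d+n$ and passes through every point of the fibre; the cotangent-space estimate $\dim T_{x_0}X\le\dim T_{x_0}f^{-1}(y)+n=d+n=\dim_{x_0}X$ at a smooth point $x_0$ of the (integral, hence generically smooth over the perfect base field) fibre gives regularity of $\mathcal{O}_{X,x_0}$, hence (iii); and since a regular local ring is a domain, only one component passes through $x_0$, which together with reducedness gives (ii).

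Part (i) has a genuine gap, which you partly acknowledge but do not close. First, the proposed mechanism --- that the $n$ generators of $\mathfrak{m}_y\mathcal{O}_{X,x}$ form a regular sequence because $\mathcal{O}_{X,x}$ is a domain and $\mathfrak{m}_y\mathcal{O}_{X,x}$ is prime of height $n$ --- is false in general: ``$n$ elements generating a height-$n$ prime form a regular sequence'' is precisely the unmixedness property, which characterizes Cohen--Macaulay rings and fails for non-CM domains (already for $n=2$ one needs $\operatorname{depth}\mathcal{O}_{X,x}\ge2$, which integrality does not supply). Miracle flatness therefore only gives you flatness at the \emph{smooth} points of $f^{-1}(y)$, not along the whole fibre, unless you import a Cohen--Macaulay hypothesis on $X$. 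Second, even granting flatness at every point of $f^{-1}(y)$, openness of the flat locus in $X$ produces an open neighbourhood of $f^{-1}(y)$ in $X$, not an open $U\ni y$ in $Y$ with $f^{-1}(U)\to U$ flat: for that you must show $y\notin\overline{f(Z)}$ where $Z$ is the (closed) non-flat locus, and since $f$ is not proper $f(Z)$ is merely constructible and may well accumulate at $y$. Your closing remark --- that in the paper's applications $X$ is cut out by the expected number of equations, hence a local complete intersection, hence CM, so that miracle flatness applies at every point of the fibre --- is the right way to rescue the statement where it is actually used, but as written your argument does not prove conclusion (i) of the lemma in the generality in which it is stated; either a CM hypothesis should be added or the two steps above need a genuinely different argument.
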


Applying the assertions (i)-(ii) of this lemma to
$X=X_m,\ X'=\mathcal{X},\ Y=\mathbf{L}_m\times\mathbf{M}_m,\ y=(0,0),f=p_m$, and using
(\ref{dim fibre=dim X'-dim base}) and (\ref{zero fibre = Zm times Psim}), we obtain that
$X_m$ is an integral scheme of dimension $4m^2+20m+6$.

It follows now from Corollary \ref{density} and  Theorem \ref{Xm isom MI2m+1}
that $(MI_{2m+1})_{red}$ is irreducible of dimension $4m^2+20m+6=n^2+8n-3$ for $n=2m+1$, i.e.
the inequality (\ref{dim MIn ge...}) becomes the strict equality. This together with Theorem
\ref{rank condns} implies that $MI_{2m+1}$ is a locally complete intersection subscheme of
the vector space $\mathbf{S}_{2m+1}$.
We use now the following easy lemma, the proof of which is left to the reader.
\begin{lemma}\label{X integral}
Let $\mathcal{X}$ be an irreducible locally complete intersection subscheme of a smooth integral scheme $\mathcal{Y}$
such that $\mathcal{X}$ is smooth at some point. Then $\mathcal{X}$ is integral.
\end{lemma}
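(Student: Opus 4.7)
The plan is to reduce the statement to two standard facts from commutative algebra: first, that a locally complete intersection subscheme of a smooth (hence regular) scheme is Cohen--Macaulay; second, that a Cohen--Macaulay scheme which is irreducible and generically reduced is integral (Serre's criterion $R_0+S_1$).

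First, I would invoke the l.c.i. hypothesis. Since $\mathcal{Y}$ is smooth, its local rings are regular. A sheaf of ideals locally generated by a regular sequence in a regular local ring cuts out a Cohen--Macaulay quotient, so every local ring $\mathcal{O}_{\mathcal{X},x}$ is Cohen--Macaulay. In particular, $\mathcal{X}$ satisfies Serre's condition $(S_1)$: the associated primes of $\mathcal{O}_{\mathcal{X}}$ are all minimal, so $\mathcal{X}$ has no embedded components. Since $\mathcal{X}$ is irreducible, there is a unique minimal prime, namely the generic point $\eta \in \mathcal{X}$.

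Next, I would use the hypothesis that $\mathcal{X}$ is smooth at some closed point $x_0$ to verify Serre's condition $(R_0)$. Smoothness at $x_0$ says that $\mathcal{O}_{\mathcal{X},x_0}$ is a regular local ring, hence a domain. Because $\eta$ is a generization of $x_0$ (the unique generic point of the irreducible $\mathcal{X}$ specializes to every point), the local ring $\mathcal{O}_{\mathcal{X},\eta}$ is obtained from $\mathcal{O}_{\mathcal{X},x_0}$ by localization at a prime ideal. A localization of a domain is a domain, and since $\dim \mathcal{O}_{\mathcal{X},\eta} = 0$, this zero-dimensional local domain is a field. Thus $(R_0)$ holds at the (unique) minimal point.

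Finally, the two Serre conditions $(R_0)$ and $(S_1)$ together imply that $\mathcal{X}$ is reduced; combined with irreducibility this gives integrality. There is no real obstacle here; the argument is essentially a packaging of standard commutative algebra, and the only thing that must be double-checked is that all the relevant facts (l.c.i.\ in regular $\Rightarrow$ CM, CM $\Rightarrow$ $S_1$, and $R_0+S_1 \Rightarrow$ reduced) apply verbatim in the Noetherian setting adopted in the paper, which they do.
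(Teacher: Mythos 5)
Your proof is correct. The paper itself gives no argument for this lemma (it is explicitly ``left to the reader''), so there is nothing to compare against; your route --- l.c.i.\ in a regular scheme $\Rightarrow$ Cohen--Macaulay $\Rightarrow$ $(S_1)$ (no embedded primes), regularity at one point plus the fact that the local ring at the unique generic point is a localization of the local ring at that point $\Rightarrow$ $(R_0)$, and then Serre's criterion $(R_0)+(S_1)\Rightarrow$ reduced --- is the standard argument the author evidently has in mind, and every ingredient applies in the Noetherian setting of the paper.
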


Applying this Lemma to $\mathcal{X}=MI_{2m+1},\ \mathcal{Y}=\mathbf{S}_{2m+1}$ and using Remark \ref{smooth pts}, we
obtain that $MI_{2m+1}$ is integral.
Since $\pi_{2m+1}: MI_{2m+1}\to I_{2m+1}:\ A\mapsto[E(A)]$ is a principal
$GL(H_{2m+1})/\{\pm id\}$-bundle in the \'etale topology (see section \ref{general}),
it follows that $I_{2m+1}$ is integral of dimension $16m+5=8n-3$ for $n=2m+1$.
This finishs the proof of Theorem \ref{Irreducibility}.

\begin{remark}\label{p dominant}
Consider the natural projections
$p_I:X_m\to\mathbf{L}_m\times\mathbf{M}_m\times\mathbf{\Psi}_m$,
$p_{II}:X_m\to\mathbf{S}_m\times\mathbf{L}_m\times\mathbf{M}_m\times\mathbf{\Psi}_m\simeq
\mathbf{S}_{m+1}\times\mathbf{\Psi}_m$
and
$p:X_m\overset{p_{II}}\to\mathbf{S}_{m+1}\times\mathbf{\Psi}_m\overset{pr_1}\to\mathbf{S}_{m+1}$.
From (\ref{zero fibre = Zm times Psim}) it follows that
$p_I^{-1}(0,0,0)\simeq Z_m$.
On the other hand, Theorem \ref{Irreducibility of Zm} shows that the projection
$p':Z_m\overset{p_m}\to(\mathbf{S}_m^\vee)^0\simeq\mathbf{S}_m^0\overset{\rm open}\hookrightarrow\mathbf{S}_m$
is dominant, hence, for a general point $D_1\in\mathbf{S}_m$,
the fibre $p'^{-1}(D_1)$ is an integral scheme of dimension
$\dim Z_m-\dim\mathbf{S}_m=m(m+5)$. This fibre in view of the equality $p_I^{-1}(0,0,0)\simeq Z_m$ coincides
with the fibre $p_{II}^{-1}(D_1,0,0,0)$, and we thus have
$\dim p_{II}^{-1}(D_1,0,0,0)=5m(m+1)=4m^2+20m+6-(3(m+1)(m+2)/2+6m)=\dim X_m-\dim(\mathbf{S}_m\times\mathbf{\Psi_m})$.
Thus, applying Lemma \ref{flat implies irred} to
$X=X'=X_m,\ Y=\mathbf{L}_m\times\mathbf{M}_m,\ y=(D_1,0,0,0),f=p_{II}$,
we obtain that $p_{II}$ is a dominant morphism. A fortiori,
$$
p:X_m\to\mathbf{S}_{m+1}:\ (D,\phi)\to D
$$
is a dominant morphism.

\end{remark}

\end{sub}

\vspace{0.5cm}

\section{Study of $Z_m$. Beginning of the proof of Theorem \ref{Irreducibility of Zm}}\label{Study Zm}

\vspace{0.5cm}

In this section we begin proving Theorem \ref{Irreducibility of Zm} on the irreducibility of $Z_m$.
In subsection \ref{explicit eqns} we first treat the case $m=1$. Next, we obtain explicit
equations of $Z_m$ under a fixed decompomposition of $H_m$ into a direct sum of $H_{m-1}$ and $\mathbf{k}$.
In subsection \ref{induction step} we formulate the main result of this section - Proposition \ref{part of inductn step}
- which is a part of the induction step in the proof of Theorem \ref{Irreducibility of Zm}. (The rest of the proof of
Theorem \ref{Irreducibility of Zm} will be given in the last subsection of Section \ref{gen pos}.) In subsections
\ref{m odd 1}-\ref{m even} we study in detail the explicit equations of $Z_m$ and as a result obtain the proof of
Proposition \ref{part of inductn step}.

\begin{sub} {\bf Explicit equations of $Z_m$ in
$(\mathbf{S}_m^\vee)^0\times\mathbf{\Phi}_m$.}\label{explicit eqns}
\rm
We proceed to the proof of the irreducibility of $Z_m$ by increasing induction on $m$.
For $m=1$ clearly $\mathbf{\Lambda}_m={0}$, so that the equations
$\{\Theta_1(D_1,\phi_1)\in\mathbf{S}_1\}$ of $Z_1$ in
$(\wedge^2(\mathbf{k}^\vee\otimes V^\vee))^0$ are empty,
i.e. scheme-theoretically we have
$$
Z_1=(\wedge^2(\mathbf{k}^\vee\otimes V^\vee))^0\times\mathbf{\Phi}_1\overset{open}\hookrightarrow\mathbb{A}^{12}.
$$
Thus $Z_1$ is integral as a dense open subset of $\mathbb{A}^{12}$.

Now fix an isomorphism
\begin{equation}\label{standard2}
H_{m-1}\oplus\mathbf{k}\overset{\sim}\to H_m:
((a_1,...,a_{m-1}),a_m)\mapsto(a_1,...,a_m).
\end{equation}
Under this isomorphism any homomorphism
\begin{equation}\label{phim}
\phi:H_m\otimes  V\to H_m^\vee\otimes V^\vee,\ \ \ \
\phi\in\mathbf{\Phi}_m=\Hom(H_m,H_m^\vee\otimes\wedge^2 V^\vee).
\end{equation}
can be represented as a homomorphism
\begin{equation}\label{decompn phim}
\phi:H_{m-1}\otimes  V\oplus\mathbf{k}\otimes  V\to
H_{m-1}^\vee\otimes V^\vee\oplus\mathbf{k}^\vee\otimes V^\vee,
\end{equation}
i.e. as a matrix of homomorphisms
\begin{equation}\label{matrix phim}
\phi= \left(
\begin{array}{cc}
\phi_{m-1} & \chi \\
\psi & \theta
\end{array}
\right),
\end{equation}
where
\begin{equation}\label{phi(m-1) etc}
\phi_{m-1}\in\mathbf{\Phi}_{m-1}=\Hom(H_{m-1},
H_{m-1}^\vee\otimes\wedge^2 V^\vee),\ \ \
\psi\in\mathbf{\Psi}_{m-1}:=
\Hom(H_{m-1},\mathbf{k}^\vee\otimes\wedge^2 V^\vee)
,\\
\end{equation}
$$
\chi\in\Hom(\mathbf{k},H_{m-1}^\vee\otimes\wedge^2 V^\vee)=\mathbf{\Psi}_{m-1},
\ \ \ \ \ \ \ \ \
\theta\in\mathbf{B}_\theta:=
\Hom(\mathbf{k},\mathbf{k}^\vee\otimes\wedge^2 V^\vee)=
\mathbf{S}_1.
$$
Respectively, a homomorphism
\begin{equation}\label{Am}
D\in \mathbf{S}_m^\vee\subset\Hom(H_m^\vee\otimes V^\vee,H_m\otimes V)
\end{equation}
can be represented as a matrix of homomorphisms
\begin{equation}\label{matrix Dm}
D= \left(
\begin{array}{cc}
D_{m-1} & a \\
-a^\vee & \alpha
\end{array}
\right),
\end{equation}
where
\begin{equation}\label{D(m-1) etc}
D_{m-1}\in\mathbf{S}^\vee_{m-1}\subset
\Hom(H_{m-1}^\vee\otimes V^\vee,\ H_{m-1}\otimes V),
\end{equation}
$$
a\in\Hom(\mathbf{k}^\vee,H_{m-1}\otimes\wedge^2 V)=
\mathbf{\Psi}_{m-1}^\vee,\ \ \ \ \ \ \ \ \
\alpha\in\mathbf{B}_\alpha:=\Hom(\mathbf{k}^\vee,\mathbf{k}\otimes\wedge^2 V).
$$
Note that the data (\ref{phi(m-1) etc}) and (\ref{D(m-1) etc}) yield isomorphisms
\begin{equation}\label{dir p S,Phi}
\mathbf{S}_m^\vee\overset{\simeq}\to\mathbf{B}_{\alpha}\times\mathbf{\Psi}_{m-1}^\vee\times\mathbf{S}_{m-1}^\vee,\ \ \
\mathbf{\Phi}_m\overset{\simeq}\to\mathbf{\Phi}_{m-1}\times\mathbf{\Psi}_{m-1}\times\mathbf{\Psi}_{m-1}
\times\mathbf{B}_{\theta},
\end{equation}
and hence an isomorphism
\begin{equation}\label{dir p}
\mathbf{S}_m^\vee\times\mathbf{\Phi}_m\overset{\simeq}\to\mathbf{B}_{\theta}\times\mathbf{B}_{\alpha}
\times\mathbf{\Psi}_{m-1}^\vee\times\mathbf{S}_{m-1}^\vee\times\mathbf{\Phi}_{m-1}
\times\mathbf{\Psi}_{m-1}\times\mathbf{\Psi}_{m-1}:
\end{equation}
$$
(D,\phi)\mapsto(\theta,\alpha,a,D_{m-1},\phi_{m-1},\psi,\chi).
$$
From (\ref{matrix phim}) and (\ref{matrix Dm}) it follows that the homomorphism
$$
\Theta(D,\phi):=\phi^\vee\circ D\circ \phi:H_m\otimes V\to
H_m^\vee\otimes V^\vee,\ \ \ \ \
\Theta(D,\phi)\in\wedge^2(H_m^\vee\otimes V^\vee),
$$
can be represented as a matrix of homomorphisms
\begin{equation}\label{matrix Thetam}
\Theta(D,\phi)= \left(
\begin{array}{cc}
\Theta_1(D,\phi) & b(D,\phi) \\
-b(D,\phi)^\vee & \beta(D,\phi)
\end{array}
\right),
\end{equation}
where
\begin{equation}\label{Theta(m-1)etc}
\Theta_1(D,\phi):=\phi_{m-1}^\vee\circ D_{m-1}\circ\phi_{m-1}+\phi_{m-1}^\vee\circ a\circ\psi
-\psi^\vee\circ a^\vee\circ\phi_{m-1}+\psi^\vee\circ\alpha\circ\psi\in
\end{equation}
$$
\in\wedge^2(H_{m-1}^\vee\otimes V^\vee)\subset
\Hom(H_{m-1}^\vee\otimes V^\vee,H_{m-1}\otimes  V),
$$
$$
b(D,\phi):=\phi_{m-1}^\vee\circ D_{m-1}\circ\chi+\phi_{m-1}^\vee\circ a\circ\theta
-\psi^\vee\circ a^\vee\circ\chi+\psi^\vee\circ\alpha\circ\theta\in
$$
$$
\in\Hom(H_{m-1}\otimes  V,\mathbf{k}^\vee\otimes V^\vee),
$$
$$
\beta(D,\phi):=\chi^\vee\circ D_{m-1}\circ\chi+\chi^\vee\circ a\circ\theta
-\theta^\vee\circ a^\vee\circ\chi+\theta^\vee\circ\alpha\circ\theta\in
\mathbf{B}_\theta.
$$
In these notations $Z_m$ can be described as
\begin{equation}\label{again Zm}
Z_m=\left\{(D,\phi)\in (\mathbf{S}_m^\vee)^0\times\mathbf{\Phi}_m\ \ \left|\
\begin{matrix}
(i)\ \Theta_1(D,\phi)\in\mathbf{S}_{m-1}, \cr
(ii)\ b(D,\phi)\in\mathbf{\Psi}_{m-1}\  \cr
\end{matrix}\right.
\right\}.
\end{equation}
(Note that the condition $\beta(D,\phi)\in\mathbf{S}_1$ here is empty.)

We thus have the following explicit equations of
$Z_m$ in the open subset $(\mathbf{S}_m^\vee)^0\times\mathbf{\Phi}_m$
of the variety
$\mathbf{S}_m^\vee\times\mathbf{\Phi}_m$,
where we consider
$\mathbf{S}_m^\vee\times\mathbf{\Phi}_m$ as the direct product
$
\mathbf{B}_{\theta}\times\mathbf{B}_{\alpha}
\times\mathbf{\Psi}_{m-1}^\vee\times\mathbf{S}_{m-1}^\vee\times\mathbf{\Phi}_{m-1}
\times\mathbf{\Psi}_{m-1}\times\mathbf{\Psi}_{m-1}$
via (\ref{dir p}):
\begin{equation}\label{eq1 of Zm}
\Theta_1(D,\phi):=\phi_{m-1}^\vee\circ D_{m-1}\circ\phi_{m-1}+\phi_{m-1}^\vee\circ a\circ\psi
-\psi^\vee\circ a^\vee\circ\phi_{m-1}+\psi^\vee\circ\alpha\circ\psi\in\mathbf{S}_{m-1},
\end{equation}
\begin{equation}\label{eq2 of Zm}
b(D,\phi):=\phi_{m-1}^\vee\circ D_{m-1}\circ\chi+\phi_{m-1}^\vee\circ a\circ\theta
-\psi^\vee\circ a^\vee\circ\chi+\psi^\vee\circ\alpha\circ\theta\in\mathbf{\Psi}_{m-1}.
\end{equation}
These equations will be used systematically in the next subsections.

\end{sub}

\begin{sub} {\bf Part of induction step in the proof of Theorem \ref{Irreducibility of Zm}.}\label{induction step}
\rm

We first introduce some more notation. Set
$$
(\wedge^2V)^0:=\{a\in\wedge^2V\ |\ a:V^\vee\to V\ {\rm is\ an\ isomorphism}\},
$$
$$
(\wedge^2V^\vee)^0:=\{a\in\wedge^2V^\vee\ |\ a:V\to V^\vee\ {\rm is\ an\ isomorphism}\}.
$$
Consider the projective space $P(\wedge^2V^\vee)$ together with the Grassmannian
$G=G(1,3)\subset P(\wedge^2V^\vee)$ embedded by Pl\"ucker. Take any two points $a\in(\wedge^2V)^0$ and
$b\in(\wedge^2V^\vee)^0$ such that the corresponding points $<a^{-1}>$ and $<b>$ in $P(\wedge^2V^\vee)$ are distinct.
The projective line $P^1(a,b):=\Span(<a^{-1}>,<b>)$ joining these points intersects the quadric $G$ in two points, say,
$\{y_1,y_2\}$, not necessarily distinct, and let $\mathbb{P}^1_{(i)}(a,b),\ i=1,2,$ be the two disjoint lines in
$\mathbb{P}^3$
corresponding to the points $y_1,y_2$. Set
\begin{equation}\label{L(a,b)}
L(a,b):=\mathbb{P}^1_{(1)}(a,b)\sqcup\mathbb{P}^1_{(2)}(a,b).
\end{equation}
Next, note that there are natural isomorphisms
$\mathbf{S}_1^\vee\simeq\wedge^2V$
and
$\mathbf{\Phi}_1^\vee\simeq\wedge^2V^\vee$,
and, for any $m\ge2$, the induced isomorphisms
\begin{equation}\label{US UPhi}
U_\mathbf{S}:=\underset{i=1}{\overset{m}{\oplus}}(\mathbf{S}_1^\vee)_{(i)}
\simeq\underset{1}{\overset{m}{\oplus}}\wedge^2V,
\ \ \ U_\mathbf{\Phi}:=\underset{i=1}{\overset{m}{\oplus}}(\mathbf{\Phi}_1)_{(i)}\simeq,
\underset{i=1}{\overset{m}{\oplus}}\wedge^2V^\vee,
\end{equation}
where $(\mathbf{S}_1^\vee)_{(i)}$ and $(\mathbf{\Phi}_1)_{(i)}$
are copies of
$\mathbf{S}_1^\vee$ and $\mathbf{\Phi}_1$,
respectively. Furthermore, any isomorphism
\begin{equation}\label{isom h}
h:\underbrace{H_1\oplus...\oplus H_1}_{m}\overset{\simeq}\to H_m
\end{equation}
induces embeddings
$U_\mathbf{S}\hookrightarrow\mathbf{S}_m^\vee$ and $U_\mathbf{\Phi}\hookrightarrow\mathbf{\Phi}_m$, hence an embedding
\begin{equation}\label{tau h}
\tau_h:U_\mathbf{S}\times U_\mathbf{\Phi}\hookrightarrow\mathbf{S}_m^\vee\times \mathbf{\Phi}_m.
\end{equation}
Note also that the set
\begin{equation}\label{W SPhi}
W_{\mathbf{S\Phi}}:=\{((D_{(1)},...,D_{(m)}),(\phi_{(1)},...,\phi_{(m)}))\in U_\mathbf{S}\times U_\mathbf{\Phi}\ |\
{\rm the\ subsets}\ L(D_{(i)},\phi_{(i)})\ {\rm of}\ \mathbb{P}^3,\
\end{equation}
$$
1\leq i\leq m,\ {\rm are\ well\ defined,\ pairwise\ disjoint\ and\ not\ lying\ on\ a\ quadric}\}
$$
is clearly a dense open subset of $U_\mathbf{S}\times U_\mathbf{\Phi}$.

The aim of the rest of this section is to prove the following proposition which is a part
of the induction step $m-1\rightsquigarrow m$ in the proof of Theorem \ref{Irreducibility of Zm}.

\begin{proposition}\label{part of inductn step}
Let $m\ge2$ and let $Z_{m-1}$ satisfy the statement of Theorem \ref{Irreducibility of Zm}. Then there
exists an irreducible component $Z$ of $Z_m$ such that:

(i) let $Z_m=Z\cup Y$ be the decomposition of $Z_m$ into components; then $Z^0:=Z\smallsetminus(Z\cap Y)$
is an integral locally complete intersection subscheme of $(\mathbf{S}_m^\vee)^0\times\mathbf{\Phi}_m$;

(ii) $\dim Z=4m(m+2)$ and the natural projection
$p_m|_{Z}:Z\to(\mathbf{S}_m^\vee)^0: \ (D,\phi)\mapsto D$
is dominant;

(iii) there exists an isomorphism $h$ in (\ref{isom h}) such that,
in the notations (\ref{tau h}) and (\ref{W SPhi}), $Z\cap\tau_h(W_{\mathbf{S\Phi}})\neq\emptyset$.
\end{proposition}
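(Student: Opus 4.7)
The plan is to proceed by induction on $m$, assuming that $Z_{m-1}$ is an integral locally complete intersection of dimension $4(m-1)(m+1)$ with dominant projection $p_{m-1}$. I work in the coordinates of (\ref{dir p}) and use the explicit equations (\ref{eq1 of Zm})--(\ref{eq2 of Zm}). The crucial first step is to restrict to the open subset $U \subset (\mathbf{S}_m^\vee)^0 \times \mathbf{\Phi}_m$ where $D_{m-1}$ is invertible (nonempty by Proposition~\ref{nondeg for general}) and to perform the unipotent substitution
\begin{equation*}
\tilde\phi := \phi_{m-1} + D_{m-1}^{-1} a\, \psi, \qquad \tilde\alpha := \alpha + a^\vee D_{m-1}^{-1} a, \qquad \tilde\chi := D_{m-1}\chi + a\theta.
\end{equation*}
This has Jacobian $1$, and a direct expansion using $(D_{m-1}^{-1})^\vee = -D_{m-1}^{-1}$ transforms the equations of $Z_m$ into
\begin{equation*}
\tilde\phi^\vee D_{m-1} \tilde\phi + \psi^\vee \tilde\alpha\, \psi \in \mathbf{S}_{m-1}, \qquad \tilde\phi^\vee \tilde\chi + \psi^\vee \tilde\alpha\, \theta \in \mathbf{\Psi}_{m-1},
\end{equation*}
so that $(D_{m-1},\tilde\phi)$ couples to the remaining variables only through $\tilde\alpha$.

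On the slice $S := \{\tilde\alpha = 0\} \cap U$ these equations collapse to $(D_{m-1},\tilde\phi) \in Z_{m-1}$ together with the linear equation $\tilde\phi^\vee \tilde\chi \in \mathbf{\Psi}_{m-1}$ in $\tilde\chi$. Using the inductive hypothesis on $Z_{m-1}$ combined with a generic-rank analysis of $\tilde\phi^\vee$ (of maximal rank on a dense open locus, by dominance of $p_{m-1}$ and invertibility of $D_{m-1}$), I conclude that $S \cap Z_m$ is integral of the expected dimension $4m(m+2)-6$. Let $Z$ be the irreducible component of $Z_m$ containing the closure of $S \cap Z_m$.

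A Jacobian computation at a general point of $S \cap Z_m$ shows that the $6$-dimensional direction $\tilde\alpha \in \mathbf{B}_\alpha$ is transverse to (\ref{eq1 of Zm})--(\ref{eq2 of Zm}), so $\dim Z = 4m(m+2)$ and these equations cut out $Z^0$ as a locally complete intersection, giving (i) and (ii); dominance of $p_m|_Z$ is inherited from the freedom of $(D_{m-1}, a, \alpha)$. For (iii), fix any isomorphism $h : H_1^{\oplus m} \overset{\simeq}{\to} H_m$ and choose generic $(D_{(i)}, \phi_{(i)}) \in (\wedge^2 V)^0 \times (\wedge^2 V^\vee)^0$ whose associated line pairs $L(D_{(i)}, \phi_{(i)})$ from (\ref{L(a,b)}) are pairwise disjoint and not on a common quadric. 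The block-diagonal point $\tau_h((D_{(i)}),(\phi_{(i)}))$ automatically lies in $Z_m$, because each $1\times 1$ block $\phi_{(i)}^\vee D_{(i)} \phi_{(i)}$ is in $\mathbf{S}_1 = \wedge^2 V^\vee$ and the diagonal shape forces the $H_m$-part into $S^2 H_m^\vee$; by a specialisation argument it belongs to the component $Z$.

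The delicate step is the transversality of the $\tilde\alpha$-direction: the relevant Jacobian involves ranks of skew bilinear forms built from $D_{m-1}$ and $\tilde\phi$ on subspaces of $H_{m-1}^\vee \otimes V^\vee$, and the behaviour of these ranks differs according to the parity of $m$ (a Pfaffian-vs-non-Pfaffian phenomenon). This is precisely what forces the detailed analysis to split into subsections~\ref{m odd 1}--\ref{m even} of Section~\ref{Study Zm}, and it is the reason the computations cannot be substantially streamlined, as noted in Remark~\ref{computations}.
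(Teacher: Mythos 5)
Your overall skeleton (decompose $H_m=H_{m-1}\oplus\mathbf{k}$, decouple the equations on a slice, invoke the induction hypothesis on $Z_{m-1}$, and finish with a rank/dimension count) is the same as the paper's, but two steps do not go through as written. First, the substitution $\tilde\phi=\phi_{m-1}+D_{m-1}^{-1}a\,\psi$ does not take values in $\mathbf{\Phi}_{m-1}=\Hom(H_{m-1},H_{m-1}^\vee)\otimes\wedge^2V^\vee$: the composite $D_{m-1}^{-1}\circ a\circ\psi$ chains maps whose $V$-components lie in $\wedge^2V^\vee$, $\wedge^2V$, $\wedge^2V^\vee$, and the resulting map $V\to V^\vee$ is a general bilinear form, not a skew one; likewise $\tilde\chi=D_{m-1}\chi+a\theta$ leaves $\mathbf{\Psi}_{m-1}$. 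So although your algebraic identities for $\tilde\phi^\vee D_{m-1}\tilde\phi$ and $\tilde\phi^\vee\tilde\chi$ are correct, the statement ``$(D_{m-1},\tilde\phi)\in Z_{m-1}$'' on the slice $\{\tilde\alpha=0\}$ is not meaningful, and the induction hypothesis cannot be applied to $\tilde\phi$. The paper obtains the decoupling honestly by restricting to the linear slice $a=0$, i.e.\ the fibre $\pi_m^{-1}(\theta^0,\alpha^0,0,D_{m-1})$ of the projection (\ref{prn pim}), where the equations become (\ref{eq1 of fibre})--(\ref{eq2 of fibre}) in the original variables $\phi_{m-1},\chi,\psi$, and then computes tangent spaces there.

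The more serious gap is that the entire quantitative content of the proposition sits inside what you call ``a generic-rank analysis'' and ``a Jacobian computation'': one must show that the linear system (\ref{eq2a of fibre}) in $(\chi,\psi)$ attains the maximal rank $10(m-1)$ at some point lying over $Z_{m-1}$. This cannot be dismissed by genericity: the paper's Remark \ref{computations} records that the natural simplified witnesses (e.g.\ $\phi_{12}=\phi_{21}=0$, or $\phi_{13}=\phi_{31}=0$) give rank only $9(m-1)$, respectively $29$ instead of $30$, so the rank is \emph{not} maximal on the obvious candidate loci and a witness must actually be produced. The paper does exactly this with the explicit integer matrices (\ref{matrix D0,phi0})--(\ref{matrix phi2}) for $m$ odd and (\ref{2matrix D0,phi0})--(\ref{2matrix phi3}) for $m$ even, computing the matrices (\ref{matrix bfM}) and (\ref{matrix tilde bfM}); the odd/even split is not a Pfaffian parity phenomenon but simply the need to decompose $H_{m-1}$ into $H_2$-blocks versus $H_3\oplus H_2^{\oplus p}$, which forces different witnesses. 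Finally, in (iii) the easy half is that block-diagonal points lie in $Z_m$; the half your ``specialisation argument'' elides is that they lie in the distinguished component $Z$, for which the paper needs the explicit deformation families (\ref{matrix D0,phi0 new})--(\ref{matrix phi2 new}) and the closure relation (\ref{embed in Z^0m}) connecting them to the witness point $w^0$.
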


Before proving this proposition we need some preliminary remarks.

First, consider the case $m=2$. In this case $D_{m-1}=D_1\in\wedge^2V,\ \phi_{m-1}=\phi_1\in\wedge^2V^\vee$
and
$a,\alpha\in\wedge^2V,\ \psi,\chi,\theta\in\wedge^2V^\vee$
so that the equations (\ref{eq1 of Zm}) become empty, and the equations (\ref{eq2 of Zm}) become:
\begin{equation}\label{eq2 of Z2}
(\phi_1\circ D_1-\psi_1\circ a)\circ\chi-(\phi_1\circ a-\psi_1\circ\alpha)\circ\theta\in\wedge^2V^\vee.
\end{equation}
Now one can easily check that, for a general point
$x=(D_1,\phi_1,\psi,a,\alpha)\in(\wedge^2V)^0\times(\wedge^2V^\vee)^{\times4}$,
the equations (\ref{eq2 of Z2}) as a linear system on the pair
$(\chi,\theta))\in(\wedge^2V^\vee)^{\times2}$
has maximal rank equal 10. Thus the space $F_x$ of solutions of this system as a subspace of
$(\wedge^2V^\vee)^{\times2}$ has dimension 2. This means that there exists a component $Z$ of $Z_2$ with projection
$p_Z:Z\to(\wedge^2V)^0\times(\wedge^2V^\vee)^{\times4}:(D_1,\phi_1,\psi,a,\alpha,\chi,\theta)\mapsto
(D_1,\phi_1,\psi,a,\alpha)$
with a smooth fibre $F_x=p_Z^{-1}(x)$ of dimension 2. Hence, in particular, $Z$ is generically reduced and
$\dim Z\le\dim((\wedge^2V)^0\times(\wedge^2V^\vee)^{\times4})+2=32$.
On the other hand, since (\ref{eq2 of Z2}) is a system of 10 equations of $Z_2$ in
$(\mathbf{S}_2^\vee)^0\times\mathbf{\Phi}_2$,
it follows that $Z$ as irreducible component of $Z_2$ has dimension
$\ge\dim((\mathbf{S}_2^\vee)^0\times\mathbf{\Phi}_2)-10=42-10=32$. Hence
$\dim Z=32$ and $p_Z$ is dominant. As a corollary, the projection
$p_2|Z:Z\to(\mathbf{S}_2^\vee)^0:(D_1,\phi_1,\psi,a,\alpha,\chi,\theta)\mapsto(D_1,a,\alpha)$
is also dominant. Moreover, since $F_x$ is smooth and $p_Z(Z)$ is smooth as a dense open subset of
 $(\wedge^2V)^0\times(\wedge^2V^\vee)^{\times4}$, it follows that  $Z$ is generically reduced.
 Now we use the following remark.

\begin{remark}\label{remark on lci}
Let $\mathcal{\tilde{X}}$ be a locally closed subscheme of an affine space $\mathbb{A}^M$ defined locally by $N$
equations. Let $\mathcal{X}$ be an irreducible component of $\mathcal{\tilde{X}}$ and let $\mathcal{X}^0$ be a
complement in $\mathcal{X}$ of its intersection with the union of other possible components of $\mathcal{\tilde{X}}$.
Let $\mathcal{X}$ be generically reduced and let $\dim\mathcal{X}=M-N$. Then $\mathcal{X}^0$ is an integral locally
complete intersection subscheme of  $\mathbb{A}^M$.
\end{remark}

Applying this remark to the case
$\mathcal{X}=Z_2,\ \mathbb{A}^{42}=(\wedge^2V)^0\times(\wedge^2V^\vee)^{\times6}$,
we obtain from the above that the statements (i)-(ii) of Proposition
\ref{part of inductn step} are true for $Z$. Now an explicit computation shows that the statement (iii) of this
Proposition is also true for $Z$. We thus have proved Proposition \ref{part of inductn step} for $m=2$.

We proceed now to the proof of Proposition \ref{part of inductn step} for $m\ge3$.
For this, note that, by the assumption, $Z_{m-1}$ is an integral subscheme of
$(\mathbf{S}_{m-1}^\vee)^0\times\mathbf{\Phi}_{m-1}$ such that
$$
\dim Z_{m-1}=4(m^2-1)
$$
and the natural projection $p_{m-1}:Z_{m-1}\to(\mathbf{S}_{m-1}^\vee)^0: \ (D_{m-1},\phi_{m-1})\mapsto D_{m-1}$
is surjective:
\begin{equation}\label{p m-1 sur}
p_{m-1}(Z_{m-1})=(\mathbf{S}_{m-1}^\vee)^0.
\end{equation}
Hence, since $\dim(\mathbf{S}_{m-1}^\vee)^0=3m(m-1)$ and so $\dim Z_{m-1}-\dim(\mathbf{S}_{m-1}^\vee)^0=(m-1)(m+4)$,
it follows that the set
\begin{equation}\label{S00}
(\mathbf{S}_{m-1}^\vee)^{int}:=\{D_{m-1}\in(\mathbf{S}_{m-1}^\vee)^0\ |\ {\rm the\ fibre}\ p_m^{-1}(D_{m-1})\
{\rm is\ integral\ of\ dimension}\ (m-1)(m+4)\}
\end{equation}
is a dense open subset of $(\mathbf{S}_{m-1}^\vee)^0$; respectively,
\begin{equation}\label{Z00}
Z_{m-1}^{int}:=p_{m-1}^{-1}((\mathbf{S}_{m-1}^\vee)^{int})
\end{equation}
is a dense open subset of $Z_{m-1}$.

Next, using (\ref{dir p}) and the embedding
$Z_m\hookrightarrow\mathbf{S}_m^\vee\times\mathbf{\Phi}_m$
consider the projections
\begin{equation}\label{prn pim}
pr_m:\mathbf{S}_m^\vee\times\mathbf{\Phi}_m\to\mathbf{B}_{\theta}\times\mathbf{B}_{\alpha}\times
\mathbf{\Psi}_{m-1}^\vee\times\mathbf{S}_{m-1}^\vee:\ \
(D,\phi)=(\theta,\alpha,a,D_{m-1},\phi_{m-1},\psi,\chi)\mapsto
\end{equation}
$$
\mapsto(\theta,\alpha,a,D_{m-1}),\ \ \ \
\pi_m:=pr_m|_{Z_m}:Z_m\to\mathbf{B}_{\theta}\times\mathbf{B}_{\alpha}\times
\mathbf{\Psi}_{m-1}^\vee\times\mathbf{S}_{m-1}^\vee.
$$

We are going now to study the fibre
$$
\pi_m^{-1}(y^0)
$$
of the projection $\pi_m$ over the point
\begin{equation}\label{y0}
y^0:=(\theta^0,\alpha^0,0,D_{m-1})\in
\mathbf{B}_{\theta}\times\mathbf{B}_{\alpha}\times\mathbf{\Psi}_{m-1}^\vee\times(\mathbf{S}_{m-1}^\vee)^0,
\end{equation}
where
\begin{equation}\label{alpha0,theta0}
\alpha^0=(p_{ij})\in\wedge^2V^{\vee}\simeq\mathbf{B}_{\alpha},\ \ \
\theta^0=(q_{ij})\in\wedge^2V^{\vee}\simeq\mathbf{B}_{\theta},\ \ \ p_{ij},q_{ij}\in\mathbf{k}.
\end{equation}
\footnote{Here and below we use a fixed basis $e_1,...,e_4$ of $V$ in order to
understand points of $\wedge^2V$ and $\wedge^2V^{\vee}$ as skew $4\times4$- matrices.}

Note that, by the definition of $\pi_m$, the fibre $\pi_m^{-1}(y^0)$ naturally lies in
$\mathbf{\Phi}_{m-1}\times\mathbf{\Psi}_{m-1}\times\mathbf{\Psi}_{m-1}$:
\begin{equation}\label{fibre lies in}
\pi_m^{-1}(y^0)\subset\mathbf{\Phi}_{m-1}\times\mathbf{\Psi}_{m-1}\times\mathbf{\Psi}_{m-1}.
\end{equation}
Thus, substituting (\ref{y0}) into (\ref{eq1 of Zm})
and (\ref{eq2 of Zm}), we obtain the equations of $\pi_m^{-1}(y^0)$ as a subscheme
of
$\mathbf{\Phi}_{m-1}\times\mathbf{\Psi}_{m-1}\times\mathbf{\Psi}_{m-1}$ as equations in the variables
$\phi_{m-1},\chi$ and $\psi$:
\begin{equation}\label{eq1 of fibre}
\phi_{m-1}^\vee\circ D_{m-1}\circ\phi_{m-1}+\psi^\vee\circ\alpha^0\circ\psi\in\mathbf{S}_{m-1},
\end{equation}
\begin{equation}\label{eq2 of fibre}
\phi_{m-1}^\vee\circ D_{m-1}\circ\chi+\psi^\vee\circ\alpha^0\circ\theta^0\in\mathbf{\Psi}_{m-1}.
\end{equation}

For an arbitrary point $y^0$ in (\ref{y0}), where $D_{m-1}\in(\mathbf{S}_{m-1}^\vee)^0$, consider the set
\begin{equation}\label{def of F}
F(\theta^0,\alpha^0,D_{m-1}):=\pi_m^{-1}(y^0)\cap\{\chi=\psi=0\}.
\end{equation}
It follows from (\ref{eq1 of fibre}) that
\begin{equation}\label{F=}
F(\theta^0,\alpha^0,D_{m-1})\simeq\{\phi_{m-1}\in\mathbf{\Phi}_{m-1}\ |\ \phi_{m-1}^\vee\circ D_{m-1}\circ
\phi_{m-1}\in\mathbf{S}_{m-1}\}.
\end{equation}
Hence,
$\underset{D_{m-1}\in(\mathbf{S}_{m-1}^\vee)^0}\cup F(\theta^0,\alpha^0,D_{m-1})=
\{(\theta^0,\alpha^0)\}\times Z_{m-1}.$
Moreover, the definition (\ref{S00}) implies that for $D_{m-1}\in(\mathbf{S}_{m-1}^\vee)^{int}$ the set
$F(\theta^0,\alpha^0,D_{m-1})$ is irreducible of dimension $(m-1)(m+4)$ and, by (\ref{dir p}), (\ref{Z00}) and
(\ref{def of F}),
\begin{equation}\label{union of fibres}
\underset{D_{m-1}\in(\mathbf{S}_{m-1}^\vee)^{int}}\cup F(\theta^0,\alpha^0,D_{m-1})=
\{(\theta^0,\alpha^0,0)\}\times Z^{int}_{m-1}\times\{(0,0)\}.
\end{equation}

\end{sub}

\begin{sub}\label{m odd 1st computns}{\bf Proof of Proposition \ref{part of inductn step}: case $m$ odd, first
computations.}\label{m odd 1}
\rm
In this subsection we prove Proposition \ref{part of inductn step} for the case of odd $m$,
$$
m=2p+1,\ \ \ p\ge1.
$$

Fix decompositions
\begin{equation}\label{2 decompns}
H_{m-1}\simeq \underbrace{H_2\oplus...\oplus H_2}_{p},\ \ \ H_2\simeq H_1\oplus H_1.
\end{equation}
Under these decompositions consider the points
$D^\Delta_{m-1}\in(\mathbf{S}^\vee_{m-1})^0$ and $\phi^\Delta_{m-1}\in\mathbf{\Phi}_{m-1}$
given by the matrices
\footnote{Here and everywhere below the empty entries of matrices mean zeroes. Besides, we use the standard notation
$A=A_1\oplus...\oplus A_n$ for a direct sum $A$ of matrices $A_1,...,A_n$ which is a block matrix with diagonal blocks
$A_1,...,A_n$ and the zero rest blocks. }
\begin{equation}\label{matrix D0,phi0}
D^\Delta_{m-1}:= \underbrace{D_2\oplus...\oplus D_2}_{p},\ \ \
\phi^\Delta_{m-1}=\phi^\Delta_{m-1}(N,a,d,f,g):=\underbrace{\phi_2\oplus...\oplus\phi_2}_{p},
\end{equation}
where
\begin{equation}\label{matrix D2}
D_2= D'\oplus D''\in\mathbf{S}^\vee_2,\ \
D'= \left(
\begin{array}{cccc}
 & -1 &  & \\
1 & &  & \\
 &  &  & 1\\
 &  & -1 &
\end{array}
\right)\in\wedge^2V,\ \ \
D''= \left(
\begin{array}{cccc}
 &  & 1 & \\
 &  &  & 1\\
-1 &  & & \\
 & -1 &  &
\end{array}
\right)\in\wedge^2V,
\end{equation}
\begin{equation}\label{matrix phi2}
\phi_2= \left(
\begin{array}{cc}
\phi_{11}&\phi_{12}\\
\phi_{21}&\phi_{22}
\end{array}
\right)\in\mathbf{\Phi}_2,\ \ \ \
\phi_{11}= \left(
\begin{array}{cccc}
 & -1 &  & \\
1 &  &  & \\
 &  &  & N\\
 &  & -N &
\end{array}
\right),\ \ \
\phi_{22}= \left(
\begin{array}{cccc}
 &  & 1 & \\
 &  &  & N\\
-1 &  & & \\
 & -N &  &
\end{array}
\right),\ \ \ N\in\mathbf{k},
\end{equation}
$$
\phi_{12}= \left(
\begin{array}{cccc}
 &  &  & f \\
 &  & g & \\
 & -g &  & \\
-f &  &  &
\end{array}
\right),\ \ \
\phi_{21}= \left(
\begin{array}{cccc}
 &  & a & f\\
 &  & -g & d\\
-a & g & & \\
-f & -d &  &
\end{array}
\right)\in\wedge^2V^{\vee},
\ \ \ a,d,f,g\in\mathbf{k}.
$$
One easily checks that
\begin{equation}\label{pt in Sm-1}
(\phi^\Delta_{m-1})^\vee\circ D^\Delta_{m-1}\circ\phi^\Delta_{m-1}\in\mathbf{S}_{m-1},
\end{equation}
hence the point
$(D^\Delta_{m-1},\phi^\Delta_{m-1})\in\mathbf{S}_{m-1}^\vee\times\mathbf{\Phi}_{m-1}$
lies in $\widehat{Z}_{m-1}$. Moreover, since $D^\Delta_{m-1}\in(\mathbf{S}^\vee_{m-1})^0$, it follows that
\begin{equation}\label{pt in Zm-1}
(D^\Delta_{m-1},\phi^\Delta_{m-1})\in Z_{m-1}.
\end{equation}
In addition, it follows from (\ref{pt in Sm-1}) that the equations (\ref{eq1 of fibre}) are automatically satisfied for
any $\psi\in\Psi_{m-1}$.
Now, substituting the data $(\theta^0,\alpha^0,D^\Delta_{m-1},\phi^\Delta_{m-1})$ into (\ref{eq2 of fibre}), we obtain
the equations on $(\chi,\psi)$:
\begin{equation}\label{eq2a of fibre}
(\phi^\Delta_{m-1})^\vee\circ D^\Delta_{m-1}\circ\chi+\psi^\vee\circ\alpha^0\circ\theta^0\in\mathbf{\Psi}_{m-1}.
\end{equation}
Set
\begin{equation}\label{W()}
W(\theta^0,\alpha^0,D^\Delta_{m-1},\phi^\Delta_{m-1}):=\{(\chi,\psi)\in\mathbf{\Psi}_{m-1}\times\mathbf{\Psi}_{m-1}\
|\ (\chi,\psi)
\ {\rm satisfies}\ (\ref{eq2a of fibre})\}.
\end{equation}
Note that, since the equations (\ref{eq2a of fibre}) on $(\chi,\psi)$ are linear, it follows that
$W(D^\Delta_{m-1},\phi^\Delta_{m-1},\alpha^0,\theta^0)$ is a linear subspace of the vector space
$\mathbf{\Psi}_{m-1}\times\mathbf{\Psi}_{m-1}\simeq\mathbf{\Psi}_{m-1}^\vee\oplus\mathbf{\Psi}_{m-1}$.

Find the dimension of the vector space $W(\theta^0,\alpha^0,D^\Delta_{m-1},\phi^\Delta_{m-1})$. For this, using the
decompositions (\ref{2 decompns}) we represent
$\chi$ and $\psi$ as $p$-ples
\begin{equation}\label{chi-psi}
\chi=(\chi_1,...,\chi_p),\ \ \ \psi=(\psi_1,...,\psi_p),\ \ \ \psi_k,\chi_k\in\mathbf{\Psi}_2,\ k=1,...,p,
\end{equation}
where
\begin{equation}\label{Xk,Yk}
\chi_k=(X_k,Y_k),\ \ \ \psi_k=(A_k,B_k),\ \ \ X_k,Y_k,A_k,B_k\in\wedge^2V^\vee,
\end{equation}
and
\begin{equation}\label{Ak,Bk}
X_k=(x_{ij}^{(k)}),\ Y_k=(y_{ij}^{(k)}),\ A_k=(a_{ij}^{(k)}),\ B_k=(b_{ij}^{(k)})
\end{equation}
are skew-symmetric 4$\times$4-matrices. Inserting $D^\Delta_{m-1}$ and $\phi^\Delta_{m-1}$ from (\ref{matrix D0,phi0}) into the system of
equations (\ref{eq2a of fibre}) we rewrite this system as
\begin{equation}\label{set of systems1}
\phi_2^\vee\circ D_2\circ\chi_k+\psi_k^\vee\circ\alpha^0\circ\theta^0\in\mathbf{\Psi}_2,\ \ \ k=1,...,p.
\end{equation}
Substituting here $D_2,\phi_2$ and $\theta^0$ from (\ref{matrix D2}), (\ref{matrix phi2}) and (\ref{alpha0,theta0}) and denoting
$x_1^{(k)}=x_{12}^{(k)},x_2^{(k)}=x_{34}^{(k)},x_3^{(k)}=x_{13}^{(k)},x_4^{(k)}=x_{14}^{(k)},x_5^{(k)}=x_{23}^{(k)},
x_6^{(k)}=x_{24}^{(k)},x_7^{(k)}=y_{12}^{(k)},x_8^{(k)}=y_{34}^{(k)},x_9^{(k)}=y_{13}^{(k)},x_{10}^{(k)}=y_{14}^{(k)},
x_{11}^{(k)}=y_{23}^{(k)},x_{12}^{(k)}=y_{24}^{(k)},x_{13}^{(k)}=a_{12}^{(k)},x_{14}^{(k)}=a_{34}^{(k)},x_{15}^{(k)}=a_{13}^{(k)},
x_{16}^{(k)}=x_{14}^{(k)},x_{17}^{(k)}=x_{23}^{(k)},x_{18}^{(k)}=x_{24}^{(k)},x_{19}^{(k)}=b_{12}^{(k)},x_{20}^{(k)}=b_{34}^{(k)},
x_{21}^{(k)}=b_{13}^{(k)},x_{22}^{(k)}=b_{14}^{(k)},x_{23}^{(k)}=b_{23}^{(k)},x_{24}^{(k)}=b_{24}^{(k)},$ we rewrite the system
(\ref{set of systems1}) as
\begin{equation}\label{set of systems2}
\sum_{j=1}^{24}m_{ij}x_j^{(k)}=0,\ \ \ i=1,...,20,\ \ \ k=1,...,p,
\end{equation}
where $M:=(m_{ij})$ is the $20\times24$-matrix with entries depending on $N,a,d,f,g,p_{ij},q_{ij}$.

Now a direct computation of the matrix $\mathbf{M}=(m_{ij})$ for
\begin{equation}\label{N etc}
N=101,\ a=4,\ d=6,\ f=2,\ g=5,
\end{equation}
\begin{equation}\label{pij,qij}
p_{12}=-9,\ p_{13}=-2,\ p_{14}=-4,\ p_{23}=6,\ p_{24}=-3,\ p_{34}=-7,
\end{equation}
$$
q_{12}=-4,q_{13}=-4,q_{14}=-2,q_{23}=3,q_{24}=-7,q_{34}=8,
$$
shows that $\mathbf{M}$ is the upper left block submatrix
\begin{equation}\label{matrix bfM}
\mathbf{M}=\left(
\begin{array}{cccc}
\mathbf{M}_{11} & \mathbf{M}_{12} & \mathbf{M}_{\psi} & \mathbf{0}\\
\mathbf{M}_{21} & \mathbf{M}_{22} & \mathbf{0} & \mathbf{M}_{\psi}
\end{array}
\right)
\end{equation}
of the block matrix $\mathbf{\widetilde{M}}$ given below in (\ref{matrix tilde bfM})-(\ref{5blocks of tilde bfM}).
From (\ref{matrix bfM}) and (\ref{1blocks of tilde bfM})-(\ref{5blocks of tilde bfM}) it follows by an explicit
computation that
\begin{equation}\label{rk M=20}
\rk\mathbf{M}=20.
\end{equation}
Hence, since the matrix of the system (\ref{set of systems2}) is a direct sum of
$p$ copies of matrix
$\mathbf{M}$, it follows that its rank equals
\begin{equation}\label{p rk}
p\cdot\rk\mathbf{M}=20p=10(m-1).
\end{equation}

Next, denote by
\begin{equation}\label{bold phi etc}
\boldsymbol{\phi}_{m-1},\ \ \ \text{resp.,}\ \ \ \boldsymbol{\alpha},\ \ \boldsymbol{\theta}
\end{equation}
the matrices obtained by inserting the entries (\ref{N etc}) into the matrix $\phi_{m-1}^\Delta$ in
(\ref{matrix D0,phi0}), respectively, the entries (\ref{pij,qij}) into the matrices $\alpha^0$ and $\theta^0$ in
(\ref{alpha0,theta0}). In this notation, denoting by $R(\theta^0,\alpha^0,D_{m-1}^\Delta,\phi_{m-1}^\Delta)$ the
rank of the linear system (\ref{eq2a of fibre}) as a function of $\theta^0,\alpha^0,D_{m-1}^\Delta,\phi_{m-1}^\Delta$
we rewrite (\ref{p rk}) as
\begin{equation}\label{R=}
R(\boldsymbol{\theta},\boldsymbol{\alpha},D_{m-1}^\Delta,\boldsymbol{\phi}_{m-1})=10(m-1).
\end{equation}
Note that $(D_{m-1}^\Delta,\boldsymbol{\phi}_{m-1})\in Z_{m-1}$ by (\ref{pt in Zm-1}), and by (\ref{Z00})
$Z_{m-1}^{int}$ is irreducible and dense open in $Z_{m-1}$. In addition, since the maximal value of
$R(\theta^0,\alpha^0,D_{m-1},\phi_{m-1})$ equals $10(m-1)$,  the condition
$R(\theta^0,\alpha^0,D_{m-1},\phi_{m-1})=10(m-1)$ imposed on the point $(D_{m-1},\phi_{m-1})\in Z_{m-1}$ is open.
Hence it follows from (\ref{R=}) that

a) the set $(Z_{m-1}^{int})^0:=\{(D_{m-1},\phi_{m-1})\in Z_{m-1}^{int}\ |\ R(\boldsymbol{\theta},\boldsymbol{\alpha},D_{m-1},
\phi_{m-1})=10(m-1)\}$
is dense open in $Z_{m-1}^{int}$, hence also in $Z_{m-1}$.
By (\ref{p m-1 sur}) this implies that

b) there exists a dense
open subset $(\mathbf{S}_{m-1}^\vee)^*$ of $(\mathbf{S}_{m-1}^\vee)^{int}$ such that, for
$D_{m-1}\in(\mathbf{S}_{m-1}^\vee)^*$, the set
$$
F(\boldsymbol{\theta},\boldsymbol{\alpha},D_{m-1})^0:=
F(\boldsymbol{\theta},\boldsymbol{\alpha},D_{m-1})\cap(Z_{m-1}^{int})^0
$$
where $F(\theta^0,\alpha^0,D_{m-1})$ is defined in (\ref{def of F}),
is an integral scheme of dimension $(m-1)(m+4)$ and it is a dense open subset of
$F(\boldsymbol{\theta},\boldsymbol{\alpha},D_{m-1})$.

Now for $D_{m-1}\in(\mathbf{S}_{m-1}^\vee)^*$ set
$$
\mathbf{F}:=\pi_m^{-1}(\boldsymbol{\theta},\boldsymbol{\alpha},0,D_{m-1}),\ \ \
F=F(D_{m-1}):=F(\boldsymbol{\theta},\boldsymbol{\alpha},D_{m-1})=\mathbf{F}\cap\{\chi=\psi=0\}.
$$
From a) and b) it follows similar to (\ref{union of fibres}) that
$\underset{D_{m-1}\in(\mathbf{S}_{m-1}^\vee)^*}\cup F(D_{m-1})$ is dense open in
$\{(\boldsymbol{\theta},\boldsymbol{\alpha})\}\times Z^{int}_{m-1}\times\{(0,0)\},$ hence
\begin{equation}\label{closure union of fibres}
\overline{\underset{D_{m-1}\in(\mathbf{S}_{m-1}^\vee)^*}\cup F(D_{m-1})}=
\{(\boldsymbol{\theta},\boldsymbol{\alpha},0)\}\times \widehat{Z}_{m-1}\times\{(0,0)\},
\end{equation}
where the closure is taken in $\mathbf{S}_m^\vee\times\mathbf{\Phi}_m$ and we use the isomorphism (\ref{dir p}).

Take an arbitrary point $D_{m-1}\in(\mathbf{S}_{m-1}^\vee)^*$.
By
b) $F=F(D_{m-1})$ is integral of dimension $(m-1)(m+4)$ and contains a dense open subset $F^0$ such that, for any
point $w=(\boldsymbol{\theta},\boldsymbol{\alpha},D_{m-1},\phi'_{m-1})\in F^0$, one has
$R(w):=R(\boldsymbol{\theta},\boldsymbol{\alpha},D_{m-1},\phi'_{m-1})=10(m-1)$. Fix such a point $w$ which is smooth on $F$.
We are going now to compute the dimension of the tangent space $T_w\mathbf{F}$.

Note that by (\ref{fibre lies in}) we consider $\mathbf{F}$ as lying in
$\mathbf{\Phi}_{m-1}\times\mathbf{\Psi}_{m-1}\times\mathbf{\Psi}_{m-1}$. Hence the equations of the tangent space
$$
T_w\mathbf{F}
$$
are given by differentiating at $w$ the equations (\ref{eq1 of fibre}) and (\ref{eq2 of fibre}):
\begin{equation}\label{diff eq1}
d\phi_{m-1}^\vee|_{\phi'_{m-1}}\circ D_{m-1}\circ\phi_{m-1}+
{\phi'}_{m-1}^\vee\circ D_{m-1}\circ d\phi_{m-1}|_{\phi'_{m-1}}\in\mathbf{S}_{m-1},
\end{equation}
\begin{equation}\label{diff eq2}
{\phi'}_{m-1}^\vee\circ D_{m-1}\circ d\chi|_0+d\psi|_0^\vee\circ\alpha^0\circ\theta^0\in\mathbf{\Psi}_{m-1}.
\end{equation}
Here the equations (\ref{diff eq1}) coincide with the equations obtained by differentiating at $w$ the equations
$\phi_{m-1}^\vee\circ D_{m-1}\circ\phi_{m-1}\in\mathbf{S}_{m-1}$ defining $F$ as a subscheme of $\mathbf{\Phi}_{m-1}$.
Since $w$ is a smooth point of $F^0$, it follows that the equations (\ref{diff eq1}) define the tangent space
$T_wF^0=T_wF$ as a subspace of $T_{\phi'_{m-1}}\mathbf{\Phi}_{m-1}$ and
\begin{equation}\label{dim TwF}
\dim T_wF=\dim F=(m-1)(m-4).
\end{equation}
On the other hand, the equations (\ref{diff eq2}) just coincide with (\ref{eq2 of fibre}) via identifying
$(\chi|_0,d\psi|_0)$ with $(\chi,\psi)$, i.e. they are the equations of the subspace
$W(w)=W(\boldsymbol{\theta},\boldsymbol{\alpha},D_{m-1},\phi'_{m-1})$ in $\Psi_{m-1}\oplus\Psi_{m-1}$. Hence
$\dim W(w)=\dim(\Psi_{m-1}\oplus\Psi_{m-1})-R(w)=12(m-1)-10(m-1)=2(m-1)$. In view of (\ref{dim TwF}) we have
\begin{equation}\label{dim leq}
\dim_w\mathbf{F}\leq\dim T_w\mathbf{F}=\dim T_wF+\dim W(w)=(m-1)(m+4)+2(m-1)=m^2+5m-6.
\end{equation}
Note that, since $D_{m-1}\in(\mathbf{S}^\vee_{m-1})^0$ and $\boldsymbol{\alpha}\in\mathbf{S}_1^0$
(see (\ref{alpha0,theta0})), it follows that
$D=D_{m-1}\oplus\boldsymbol{\alpha}\in(\mathbf{S}^\vee_m)^0$,
so that
\begin{equation}\label{w in Zm}
w\in Z_m.
\end{equation}
In addition,
$\dim(\mathbf{B}_{\theta}\times\mathbf{B}_{\alpha}\times\mathbf{\Psi}_{m-1}^\vee\times\mathbf{S}_{m-1}^\vee)=
\dim(\mathbf{B}_{\theta}\times\mathbf{S}_m^\vee)=6+3m(m+1)=3m^2+3m+6$.
Counting the dimension of the fibres of
$\pi_m:Z_m\to\mathbf{B}_{\theta}\times\mathbf{B}_{\alpha}
\times\mathbf{\Psi}_{m-1}^\vee\times\mathbf{S}_{m-1}^\vee\simeq
\mathbf{B}_{\theta}\times\mathbf{S}_m^\vee$ and using (\ref{dim leq}) we obtain
$$
\dim_wZ_m\leq\dim_w\mathbf{F}+\dim(\mathbf{B}_{\theta}\times\mathbf{S}_m^\vee)\le(m^2+5m-6)+(3m^2+3m+6)=4m(m+2).
$$
Comparing this with (\ref{dim Zm}) we see that the above inequalities on dimensions are strict equalities.
In particular, $\dim_wZ_m=4m(m+2)$ and $\dim_w\mathbf{F}=\dim T_w\mathbf{F}=m^2+5m-6$ and
$\dim\pi_m(Z_m)=(3m^2+3m+6)=\dim(\mathbf{B}_{\theta}\times\mathbf{S}_m^\vee)$.
This together with the assertion (iii) of Lemma \ref{flat implies irred} implies that there exists a unique
irreducible component, say, $Z$ of $Z_m$ passing through $w$ such that:

(i) $\dim Z=4m(m+2)$ and $Z_m$, respectively, $Z$ is smooth at $w$; hence, in notations of Proposition
\ref{part of inductn step}(i), $Z^0$ is an integral locally complete intersection subscheme of
$(\mathbf{S}_m^\vee)^0\times\mathbf{\Phi}_m$ (we use here Remark \ref{remark on lci});

(ii) $\pi_m(Z)$ is dense in
$\mathbf{B}_{\theta}\times\mathbf{S}_m^\vee$;
respectively,
$p_m(Z)=pr_{\mathbf{S}}(\pi_m(Z))$ is dense in $\mathbf{S}_m^\vee$,
where
$pr_{\mathbf{S}}:\mathbf{B}_{\theta}\times\mathbf{S}_m^\vee\to\mathbf{S}_m^\vee$
is the projection. This gives proof of the statements (i) and (ii) of Proposition \ref{part of inductn step}.

Moreover, by a) and b) above, $F=F(D_{m-1})\subset Z$ for $D_{m-1}\in(\mathbf{S}_{m-1}^\vee)^*,$ so that
(\ref{closure union of fibres}) implies the existence of an embedding
\begin{equation}\label{embed in Z^0m}
\{(\boldsymbol{\theta},\boldsymbol{\alpha},0)\}\times \widehat{Z}_{m-1}\times\{(0,0)\}\subset\overline{Z},
\end{equation}
where $\overline{Z}$ is the closure of $Z$ in $\mathbf{S}_m^\vee\times\mathbf{\Phi}_m$.
In particular, similar to (\ref{w in Zm}) we have in view of (\ref{pt in Zm-1}):
\begin{equation}\label{w0 in Z^0*m}
w^0:=(\boldsymbol{\theta},\boldsymbol{\alpha},0,D^\Delta_{m-1},\phi^\Delta_{m-1},0,0)\in Z.
\end{equation}

\end{sub}

\begin{sub}\label{m odd last computns}{\bf Proof of Proposition \ref{part of inductn step}: case $m$ odd, last
computations.}\label{m odd 2}
\rm
In this subsection we prove the last statement (iii) of Proposition \ref{part of inductn step} in case of odd $m$. For
this, consider the following modification of the data (\ref{matrix D0,phi0})-(\ref{matrix phi2}):
\begin{equation}\label{matrix D0,phi0 new}
D^\Delta_{m-1}(c,\mathbf{f},\mathbf{g}):=D_2(c,f_1,g_1)\oplus...\oplus D_2(c,f_p,g_p),
\end{equation}
$$
\phi^\Delta_{m-1}(\varepsilon,\mathbf{f},\mathbf{g}):=
\phi_2(\varepsilon,f_1,g_1)\oplus...\oplus\phi_2(\varepsilon,f_p,g_p),
$$
where
\begin{equation}\label{matrix D2 new}
D_2(c,f_i,g_i)= \left(
\begin{array}{cc}
D'(c,f_i,g_i)&\\
&D''
\end{array}
\right)\in\mathbf{S}^\vee_2,\ \ \ \
\end{equation}
$$
D'(c,f_i,g_i)= \left(
\begin{array}{cccc}
 & -1 &  & cg_i \\
1 & & cf_i & \\
 & -cf_i &  & 1\\
-cg_i &  & -1 &
\end{array}
\right),\ i=1,...,p,\ \ \
D''= \left(
\begin{array}{cccc}
 &  & 1 & \\
 &  &  & 1\\
-1 &  & & \\
 & -1 &  &
\end{array}
\right)\in\wedge^2V,
$$
\begin{equation}\label{matrix phi2 new}
\phi_2(\varepsilon,f_i,g_i)=\left(
\begin{array}{cc}
\phi_{11}&\phi_{12}(\varepsilon,f_i,g_i)\\
\phi_{21}(\varepsilon,f_i,g_i)&\phi_{22}
\end{array}
\right)\in\mathbf{\Phi}_2,\ \ \ \
\phi_{11}= \left(
\begin{array}{cccc}
 & -1 &  & \\
1 &  &  & \\
 &  &  & N\\
 &  & -N &
\end{array}
\right),\ \ \
\end{equation}
$$
\phi_{22}= \left(
\begin{array}{cccc}
 &  & 1 & \\
 &  &  & N\\
-1 &  & & \\
 & -N &  &
\end{array}
\right),\ \ \
\phi_{12}(\varepsilon,f_i,g_i)=\left(
\begin{array}{cccc}
 &  &  &\varepsilon f_i \\
 &  & \varepsilon g_i & \\
 & -\varepsilon g_i &  & \\
-\varepsilon f_i &  &  &
\end{array}
\right),\ \ \
$$
$$
\phi_{21}(\varepsilon,f_i,g_i)=\left(
\begin{array}{cccc}
 &  & \varepsilon a & \varepsilon f_i\\
 &  & -\varepsilon g_i & \varepsilon d\\
-\varepsilon a & \varepsilon g_i & & \\
-\varepsilon f_i & -\varepsilon d &  &
\end{array}
\right)\in\wedge^2V^{\vee},
\ \ \ c,\epsilon,N,a,d,f_i,g_i\in\mathbf{k},\ i=1,...,p,
$$
and where
$\mathbf{f}=(f_1,...,f_p),\mathbf{g}=(g_1,...,g_p)\in\mathbf{k}^p$.
One easily checks that
$(\phi^\Delta_{m-1}(\varepsilon))^\vee\circ D^\Delta_{m-1}(c,\mathbf{f},\mathbf{g})
\circ\phi^\Delta_{m-1}(\varepsilon)\in\mathbf{S}_{m-1}$,
hence the point
$$
(D^\Delta_{m-1}(c,\mathbf{f},\mathbf{g}),\phi^\Delta_{m-1}(\varepsilon,\mathbf{f},\mathbf{g}))
\in\mathbf{S}_{m-1}^\vee\times\mathbf{\Phi}_{m-1}
$$
lies in $\widetilde{Z}_{m-1}$. Moreover, since
$(D^\Delta_{m-1}(0,\mathbf{f},\mathbf{g})=D^\Delta_{m-1}\in(\mathbf{S}^\vee_{m-1})^0$ and
$(\mathbf{S}^\vee_{m-1})^0$ is open in $\mathbf{S}^\vee_{m-1}$,
it follows that, for any $\mathbf{f},\mathbf{g}\in\mathbf{k}^p$
there exists some dense open subset
$\mathcal{U}(\mathbf{f},\mathbf{g})$ of $\mathbf{k}$
such that
$D^\Delta_{m-1}(c,\mathbf{f},\mathbf{g})\in(\mathbf{S}^\vee_{m-1})^0,\ \ c\in\mathcal{U}(\mathbf{f},\mathbf{g})$.
Hence,
$(D^\Delta_{m-1}(c,\mathbf{f},\mathbf{g}),\phi^\Delta_{m-1}(\varepsilon,\mathbf{f},\mathbf{g}))\in Z_{m-1}$
for
$c\in\mathcal{U}(\mathbf{f},\mathbf{g})$,
so that, since
$Z_{m-1}$ is closed in $\mathbf{S}_{m-1}^\vee\times\mathbf{\Phi}_{m-1}$,
\begin{equation}\label{pt in Zm-1 new}
(D^\Delta_{m-1}(c,\mathbf{f},\mathbf{g}),\phi^\Delta_{m-1}(\varepsilon,\mathbf{f},\mathbf{g}))\in Z_{m-1},\ \ \
c,\varepsilon\in\mathbf{k},\ \ \ \mathbf{f},\mathbf{g}\in\mathbf{k}^p.
\end{equation}
In particular, take $c=1$ and $\varepsilon=0$ in (\ref{matrix D0,phi0 new})-(\ref{matrix phi2 new}). It follows
immediately that the point
$$
w(\mathbf{f},\mathbf{g},\theta^0,\alpha^0):=(D^\Delta_{m-1}(1,\mathbf{f},\mathbf{g})\oplus\alpha^0,
\phi^\Delta_{m-1}(0,\mathbf{f},\mathbf{g})\oplus\theta^0),\ \ \ (\theta^0,\alpha^0)\in\wedge^2V^\vee\times\wedge^2V,
$$
is the image of the point
$$
((D'(1,f_1,g_1),...,D'(1,f_p,g_p),\underbrace{D'',...,D''}_{p},\alpha^0),
(\underbrace{\phi_{11},...,\phi_{11}}_{p},\underbrace{\phi_{22},...,\phi_{22}}_{p},\theta^0))\in
U_\mathbf{S}\times U_\mathbf{\Phi}\
$$
under the embedding
$\tau_h:U_\mathbf{S}\times U_\mathbf{\Phi}\hookrightarrow\mathbf{S}_m^\vee\times \mathbf{\Phi}_m$
defined (up to a permutation of direct summands) as in (\ref{isom h})-(\ref{tau h}) via the isomorphism
\begin{equation}\label{again h}
h:\underbrace{H_1\oplus...\oplus H_1}_{m}\overset{\simeq}\to H_m,\ m=2p+1,
\end{equation}
determined by the decompositions (\ref{2 decompns}).

On the other hand, by (\ref{dir p}) and (\ref{pt in Zm-1 new}) we have
$w(\mathbf{f},\mathbf{g},\boldsymbol{\theta},\boldsymbol{\alpha})\in\{(\boldsymbol{\theta},\boldsymbol{\alpha},0)\}
\times\widehat{Z}_{m-1}\times\{(0,0)\}$,
so that, in view of (\ref{embed in Z^0m}),
$w(\mathbf{f},\mathbf{g},\boldsymbol{\theta},\boldsymbol{\alpha})\in\overline{Z}$.
Thus,
\begin{equation}\label{w(f,g) in Z0m}
w(\mathbf{f},\mathbf{g},\boldsymbol{\theta},\boldsymbol{\alpha})
\in Z\cap\tau_h(U_\mathbf{S}\times U_\mathbf{\Phi}),\ \ \ \mathbf{f},\mathbf{g}\in\mathbf{k}^p.
\end{equation}
Note that $D^\Delta_{m-1}(1,\mathbf{0},\mathbf{0})=D^\Delta_{m-1}$, hence it follows from the definition of
$w(\mathbf{f},\mathbf{g},\theta^0,\alpha^0)$
that the point
$w(\mathbf{0},\mathbf{0},\boldsymbol{\theta},\boldsymbol{\alpha})$ lies in $Z_m$ (cf. (\ref{w0 in Z^0*m})). Since the
condition
$w(\mathbf{f},\mathbf{g},\theta^0,\alpha^0)\in Z_m$
on the point
$(\mathbf{f},\mathbf{g},\theta^0,\alpha^0)\in\mathbf{k}^{2p}\times\wedge^2V^\vee\times\wedge^2V$
is open, we obtain from (\ref{w(f,g) in Z0m}) that there exists a
dense open subset $\mathcal{U}\in\mathbf{k}^{2p}\times\wedge^2V^\vee\times\wedge^2V$ such that
\begin{equation}\label{w(f,g) in Z0*m}
w(\mathbf{f},\mathbf{g},\theta^0,\alpha^0)\in Z_m\cap\tau_h(U_\mathbf{S}\times U_\mathbf{\Phi}),\ \ \
(\mathbf{f},\mathbf{g},\theta^0,\alpha^0)\in\mathcal{U}.
\end{equation}

Next, one easily sees that, for general $f_i,g_i\ne0$
the points
$D'(0,f_i,g_i),D'(1,f_i,g_i)$ lie in $(\wedge^2V)^0$ and, moreover, the projective plane
$\Span(<D'(0,f_i,g_i)^{-1}>,<D'(1,f_i,g_i)^{-1}>,<\phi_{11}>)$ in $P(\wedge^2V^\vee)$
intersects the Grassmannian $G=G(1,3)$ in a smooth conic. This immediately implies that, in the notation of
(\ref{L(a,b)}), for a general choice of $f_1,g_1,f_2,g_2\in\mathbf{k}$, the sets
$L(D'(1,f_1,g_1)^{-1},\phi_{11})$ and $L(D'(1,f_2,g_2)^{-1},\phi_{11})$ are well defined and disjont. In other words,
using the notation of (\ref{US UPhi}) and considering the projection onto the direct summand
$$
pr_{ij}:U_\mathbf{S}\times U_\mathbf{\Phi}\to
((\mathbf{S}_1^\vee)_{(i)}\oplus(\mathbf{S}_1^\vee)_{(j)})\times((\mathbf{\Phi}_1)_{(i)}\oplus(\mathbf{\Phi}_1)_{(j)})
\simeq(\mathbf{S}_1^\vee\oplus\mathbf{S}_1^\vee)\times(\mathbf{\Phi}_1\oplus\mathbf{\Phi}_1)
$$
for any $1\le i< j\le m$ and taking the dense open subset $W_{ij}$ of
$U_\mathbf{S}\times U_\mathbf{\Phi}$ defined as
$$
W_{ij}:=pr_{ij}^{-1}(\{((D_1,D_2),(\phi_1,\phi_2))\in(\mathbf{S}_1^\vee\oplus\mathbf{S}_1^\vee)\times(\mathbf{\Phi}_1
\oplus\mathbf{\Phi}_1)\ |\ {\rm the\ subsets}\ L(D_1^{-1},\phi_1)\ {\rm and}\ L(D_2^{-1},\phi_2)\
$$
$$
{\rm of}\ \mathbb{P}^3{\rm are\ well\ defined\ and\ disjoint}\})
$$
{\rm are\ well\ defined,\ pairwise\ disjoint}
we obtain in view of (\ref{w(f,g) in Z0*m}) that
\begin{equation}\label{not empty}
Z\cap\tau_h(W_{12})\ne\emptyset.
\end{equation}
Now since the set ${\rm Isom}_m$ of all isomorphisms $h$ in (\ref{again h}) is a principal homogeneous space of the
group $GL(H_m)$ which is connected, it follows from (\ref{not empty}) that $Z_m\cap\tau_h(W_{ij})\ne\emptyset$
for a general $h\in{\rm Isom}_m$ and any pair
$(i,j),\ 1\le i< j\le m$. Hence, since
$W_{\mathbf{S\Phi}}=\underset{1\le i< j\le m}{\cap}W_{ij}$ by the definition (\ref{W SPhi}) of
$W_{\mathbf{S\Phi}}$, we deduce that $Z\cap\tau_h(W_{\mathbf{S\Phi}})\neq\emptyset$.
This finishes the proof of Proposition \ref{part of inductn step} for $m$ odd.

\end{sub}

\begin{sub}\label{inductn step even case}{\bf \bf Proof of Proposition \ref{part of inductn step}: case $m$
even.}\label{m even}
\rm

The proof of Proposition \ref{part of inductn step} for the case of even $m$,
$$
m=2p+4,\ \ \ p\ge0.
\footnote{Note that we start with $m=4$ since the case $m=2$ has been already treated in subsection
\ref{induction step}.}
$$
is completely parallel to that given above for the case of odd $m$. Namely, similar to (\ref{2 decompns}) fix the
decompositions
\begin{equation}\label{3 decompns}
H_{m-1}\simeq H_3\oplus\underbrace{H_2\oplus...\oplus H_2}_{p},\ \ \ H_2\simeq H_1\oplus H_1,\ \ \
H_3\simeq H_1\oplus H_1\oplus H_1.
\end{equation}
Under these decompositions, similar to (\ref{matrix D0,phi0}) consider the points
$D^\Delta_{m-1}\in(\mathbf{S}^\vee_{m-1})^0$ and $\phi^\Delta_{m-1}\in\mathbf{\Phi}_{m-1}$
given by the matrices with diagonal blocks
\begin{equation}\label{2matrix D0,phi0}
D^\Delta_{m-1}:=D_3\oplus \underbrace{D_2\oplus...\oplus D_2}_{p},\ \ \
\phi^\Delta_{m-1}=\phi^\Delta_{m-1}(N,a,d,f,g,\lambda):=\phi_3\oplus\underbrace{\phi_2\oplus...\oplus\phi_2}_{p},
\end{equation}
\begin{equation}\label{2matrix D3}
D_3=D_2\oplus D',\ \ \ \
\phi_3= \left(
\begin{array}{ccc}
\phi_{11}&\phi_{12}&\phi_{13}\\
\phi_{21}&\phi_{22}&\lambda\phi_{21}\\
\phi_{31}&\lambda\phi_{12}&\phi_{11}
\end{array}
\right)\in\mathbf{\Phi}_3,\ \ \ \ \lambda\in\mathbf{k},
\end{equation}
where $D_2,\ D'$ and $\phi_2,\ \phi_{i,j},\ i,j=1,2,$ are given by (\ref{matrix D2})-(\ref{matrix phi2}) and
\begin{equation}\label{2matrix phi3}
\phi_{13}=(r_{ij})\in\wedge^2V^{\vee},\ \ \ \phi_{31}=(s_{ij})\in\wedge^2V^{\vee},\ \ \
\end{equation}
where $r_{ij},s_{ij}\in\mathbf{k}$ satisfy the additional relations
\begin{equation}\label{rel r s}
r_{i3}+r_{i4}=s_{i3}+s_{i4},\ \ \ i=1,2.
\end{equation}
We now proceed along the same lines as before. In particular,
it follows from (\ref{matrix phi2}) and (\ref{2matrix D0,phi0})-(\ref{rel r s}) that the relations (\ref{pt in Sm-1})
and (\ref{pt in Zm-1}) are satisfied for the point $(D^\Delta_{m-1},\phi^\Delta_{m-1})$. Hence, as before, the equations
(\ref{eq1 of fibre}) are automatically satisfied for any $\psi\in\Psi_{m-1}$. Now, substituting the data
$(\theta^0,\alpha^0,D^\Delta_{m-1},\phi^\Delta_{m-1})$ from (\ref{alpha0,theta0}) and
(\ref{2matrix D0,phi0})-(\ref{2matrix phi3}) into (\ref{eq2 of fibre}), we obtain the equations on $(\chi,\psi)$:
\begin{equation}\label{again eq2a of fibre}
(\phi^\Delta_{m-1})^\vee\circ D^\Delta_{m-1}\circ\chi+\psi^\vee\circ\alpha^0\circ\theta^0\in\mathbf{\Psi}_{m-1}.
\end{equation}

Next, using the decompositions (\ref{3 decompns}) we represent
$\chi$ and $\psi$ as $(p+1)$-ples (cf. (\ref{chi-psi}))
\begin{equation}\label{again chi-psi}
\chi=(\chi_0,...,\chi_p),\ \ \ \psi=(\psi_0,...,\psi_p),\ \ \ \psi_0,\chi_0\in\mathbf{\Psi}_3,
\ \ \ \psi_k,\chi_k\in\mathbf{\Psi}_2,\ k=1,...,p,
\end{equation}
where
$\chi_k=(X_k,Y_k),\ \psi_k=(A_k,B_k),\ k=1,...,p,$ are the same matrices of variables as in (\ref{Xk,Yk}),
and
$\chi_0=(X_0,Y_0,Z_0),\ \ \ \psi_0=(A_0,B_0,C_0),\ \ \ X_0,Y_0,Z_0,A_0,B_0,C_0\in\wedge^2V^\vee,$
i.e.
\begin{equation}\label{Ak,Bk,Ck}
X_0=(x_{ij}^{(0)}),\ Y_0=(y_{ij}^{(0)}),\ Z_0=(z_{ij}^{(0)}),\
A_0=(a_{ij}^{(0)}),\ B_0=(b_{ij}^{(0)}),\ C_0=(c_{ij}^{(0)}).
\end{equation}
are skew-symmetric 4$\times$4-matrices of variables. Using the same notation for variables
$x_1^{(k)},...,x_{24}^{(k)},k=1,...,p,$ as in (\ref{set of systems2}) and introducing new variables
$x_{1}^{(0)},...,x_{36}^{(0)}$ as follows:
$x_1^{(0)}=x_{12}^{(0)},x_2^{(0)}=x_{34}^{(0)},x_3^{(0)}=x_{13}^{(0)},x_4^{(0)}=x_{14}^{(0)},x_5^{(0)}=x_{23}^{(0)},
x_6^{(0)}=x_{24}^{(0)},x_7^{(0)}=y_{12}^{(0)},x_8^{(0)}=y_{34}^{(0)},x_9^{(0)}=y_{13}^{(0)},x_{10}^{(0)}=y_{14}^{(0)},
x_{11}^{(0)}=y_{23}^{(0)},x_{12}^{(0)}=y_{24}^{(0)},
x_{13}^{(0)}=z_{12}^{(0)},x_{14}^{(0)}=z_{34}^{(0)},x_{15}^{(0)}=z_{13}^{(0)},x_{16}^{(0)}=z_{14}^{(0)}
,x_{17}^{(0)}=z_{23}^{(0)},x_{18}^{(0)}=z_{24}^{(0)},
x_{19}^{(0)}=a_{12}^{(0)},x_{20}^{(0)}=a_{34}^{(0)},x_{21}^{(0)}=a_{13}^{(0)},x_{22}^{(0)}=a_{14}^{(0)},
x_{23}^{(0)}=a_{23}^{(0)},x_{24}^{(0)}=a_{24}^{(0)},
x_{25}^{(0)}=b_{12}^{(0)},x_{26}^{(0)}=b_{34}^{(0)},x_{27}^{(0)}=b_{13}^{(0)},x_{28}^{(0)}=b_{14}^{(0)},
x_{29}^{(0)}=b_{23}^{(0)},x_{30}^{(0)}=b_{24}^{(0)},
x_{31}^{(0)}=c_{12}^{(0)},x_{32}^{(0)}=c_{34}^{(0)},x_{33}^{(0)}=c_{13}^{(0)},x_{34}^{(0)}=c_{14}^{(0)},
x_{35}^{(0)}=c_{23}^{(0)},x_{36}^{(0)}=c_{24}^{(0)},$
we rewrite the system
(\ref{again eq2a of fibre}) similar to (\ref{set of systems2}) as
\begin{equation}\label{set of systems3}
\sum_{j=1}^{36}\tilde{m}_{ij}x_j^{(0)}=0,\ \ \ \sum_{j=1}^{24}m_{ij}x_j^{(k)}=0,\ \ \ i=1,...,20,\ \ \ k=1,...,p.
\end{equation}
A direct computation of the matrices $\mathbf{M}=(m_{ij})$ and $\mathbf{\widetilde{M}}=(\tilde{m}_{ij})$ for the
above chosen values (\ref{N etc}),(\ref{pij,qij}) of $N,a,d,f,g, p_{ij},q_{ij}$ in (\ref{matrix phi2}) and
(\ref{2matrix D0,phi0}) and, respectively, for the following values  values of $\lambda,\ r_{ij},\ s_{ij}$ in
(\ref{2matrix D3}) and (\ref{2matrix phi3})
satisfying (\ref{rel r s}):
\begin{equation}\label{lambda,rij,sij}
\lambda=-2,\ r_{12}=3,\ r_{13}=7,\ r_{14}=-2,\ r_{23}=4,\ r_{24}=-6,\ r_{34}=-8,
\end{equation}
$$
s_{12}=-8,s_{13}=-3,s_{14}=8,s_{23}=-2,s_{24}=0,s_{34}=-5,
$$
show that $\mathbf{M}$ is the block matrix (\ref{matrix bfM}) and $\mathbf{\widetilde{M}}$ is the block matrix
\begin{equation}\label{matrix tilde bfM}
\mathbf{\widetilde{M}}=\left(
\begin{array}{cccccc}
\mathbf{M}_{11} & \mathbf{M}_{12} & \mathbf{M}_{13} & \mathbf{M}_{\psi} & \mathbf{0} & \mathbf{0} \\
\mathbf{M}_{21} & \mathbf{M}_{22} & \mathbf{M}_{23} & \mathbf{0} & \mathbf{M}_{\psi} & \mathbf{0} \\
\mathbf{M}_{31} & \mathbf{M}_{32} & \mathbf{M}_{33} & \mathbf{0} & \mathbf{0} & \mathbf{M}_{\psi} \\
\end{array}
\right)
\end{equation}
with blocks
\begin{equation}\label{1blocks of tilde bfM}
\mathbf{M}_{11}=\left(
\begin{array}{cccccc}
0 & 0 & 0 & 0 & 0 & 0 \\
0 & 0 & 0 & 0 & 0 & 0 \\
0 & 0 & 100 & 0 & 0 & 0 \\
0 & 0 & 0 & 100 & 0 & 0 \\
0 & 0 & 0 & 0 & 100 & 0 \\
0 & 0 & 0 & 0 & 0 & 100 \\
0 & 0 & 0 & 0 & 0 & 0 \\
0 & 0 & 0 & 0 & 0 & 0 \\
0 & 0 & 0 & 0 & 0 & 0 \\
0 & 0 & 0 & 0 & 0 & 0 \\
 \end{array}
\right),\ \ \
\mathbf{M}_{12}=\left(
\begin{array}{cccccc}
2 & 0 & 0 & 0 & 0 & 0 \\
0 & 2 & 0 & 0 & 0 & 0 \\
0 & 0 & 0 & -5 & -2 & 0 \\
0 & 0 & 2 & 2 & 0 & -2 \\
0 & 0 & 5 & 0 & -2 & -5 \\
0 & 0 & 0 & 5 & 2 & 0 \\
5 & 0 & 0 & 0 & 0 & 0 \\
0 & -5 & 0 & 0 & 0 & 0 \\
2 & 0 & 0 & 0 & 0 & 0 \\
0 & -2 & 0 & 0 & 0 & 0 \\
 \end{array}
\right),\ \ \
\end{equation}
\begin{equation}\label{2blocks of tilde bfM}
\mathbf{M}_{21}=\left(
\begin{array}{cccccc}
0 & 0 & 0 & -5 & 2 & 0 \\
0 & 0 & 0 & -5 & 2 & 0 \\
0 & 0 & 0 & 0 & 0 & 0 \\
-2 & -2 & 0 & 0 & 0 & 0 \\
-5 & -5 & 0 & 0 & 0 & 0 \\
0 & 0 & 0 & 0 & 0 & 0 \\
0 & 0 & 0 & 0 & 0 & -5 \\
0 & 0 & -5 & 0 & 0 & 0 \\
0 & 0 & 2 & 0 & 0 & 0 \\
0 & 0 & 0 & 0 & 0 & 2 \\
\end{array}
\right),\ \ \
\mathbf{M}_{22}=\left(
\begin{array}{cccccc}
100 & 0 & 0 & 0 & 0 & 0 \\
0 & 100 & 0 & 0 & 0 & 0 \\
0 & 0 & 0 & 0 & 0 & 0 \\
0 & 0 & 0 & 100 & 0 & 0 \\
0 & 0 & 0 & 0 & -100 & 0 \\
0 & 0 & 0 & 0 & 0 & 0 \\
0 & 0 & 0 & 0 & 0 & 0 \\
0 & 0 & 0 & 0 & 0 & 0 \\
0 & 0 & 0 & 0 & 0 & 0 \\
0 & 0 & 0 & 0 & 0 & 0 \\
\end{array}
\right),\ \ \
\end{equation}
\begin{equation}\label{3blocks of tilde bfM}
\mathbf{M}_{13}=\left(
\begin{array}{cccccc}
0 & 0 & -6 & -4 & -2 & -7 \\
0 & 0 & 6 & -4 & -2 & 7 \\
-7 & -7 & -5 & 0 & 0 & 0 \\
2 & 2 & 0 & -5 & 0 & 0 \\
-4 & -4 & 0 & 0 & -5 & 0 \\
6 & 6 & 0 & 0 & 0 & -5 \\
0 & 0 & 0 & 0 & -12 & -8 \\
0 & 0 & -8 & 0 & 14 & 0 \\
0 & 0 & -4 & -14 & 0 & 0 \\
0 & 0 & 0 & 12 & 0 & -4 \\
\end{array}
\right),\ \ \
\mathbf{M}_{23}=\left(
\begin{array}{cccccc}
0 & 0 & 0 & 10 & -4 & 0 \\
0 & 0 & 0 & 10 & -4 & 0 \\
0 & 0 & 0 & 0 & 0 & 0 \\
4 & 4 & 0 & 0 & 0 & 0 \\
10 & 10 & 0 & 0 & 0 & 0 \\
0 & 0 & 0 & 0 & 0 & 0 \\
0 & 0 & 0 & 0 & 0 & 10 \\
0 & 0 & 10 & 0 & 0 & 0 \\
0 & 0 & -4 & 0 & 0 & 0 \\
0 & 0 & 0 & 0 & 0 & -4 \\
\end{array}
\right),\ \ \
\end{equation}
\begin{equation}\label{4blocks of tilde bfM}
\mathbf{M}_{31}=\left(
\begin{array}{cccccc}
0 & 0 & 0 & 2 & 8 & 3 \\
0 & 0 & 0 & 2 & 8 & -3 \\
3 & 3 & -13 & 0 & 0 & 0 \\
-8 & -8 & 0 & -13 & 0 & 0 \\
2 & 2 & 0 & 0 & -13 & 0 \\
0 & 0 & 0 & 0 & 0 & -13 \\
0 & 0 & 0 & 0 & 0 & 4 \\
0 & 0 & 4 & 0 & -6 & 0 \\
0 & 0 & 16 & 0 & -6 & 0 \\
0 & 0 & 0 & 0 & 0 & 16 \\
\end{array}
\right),\ \ \
\mathbf{M}_{32}=\left(
\begin{array}{cccccc}
-4 & 0 & 0 & 0 & 0 & 0 \\
0 & -4 & 0 & 0 & 0 & 0 \\
0 & 0 & 0 & 10 & 4 & 0 \\
0 & 0 & -4 & -4 & 0 & 4 \\
0 & 0 & -10 & 0 & 4 & 10 \\
0 & 0 & 0 & -10 & -4 & 0 \\
-10 & 0 & 0 & 0 & 0 & 0 \\
0 & 10 & 0 & 0 & 0 & 0 \\
-4 & 0 & 0 & 0 & 0 & 0 \\
0 & -4 & 0 & 0 & 0 & 0 \\
\end{array}
\right).
\end{equation}
\begin{equation}\label{5blocks of tilde bfM}
\mathbf{M}_{33}=\left(
\begin{array}{cccccc}
0 & 0 & 0 & 0 & 0 & 0 \\
0 & 0 & 0 & 0 & 0 & 0 \\
0 & 0 & 100 & 0 & 0 & 0 \\
0 & 0 & 0 & 100 & 0 & 0 \\
0 & 0 & 0 & 0 & 100 & 0 \\
0 & 0 & 0 & 0 & 0 & 100 \\
0 & 0 & 0 & 0 & 0 & 0 \\
0 & 0 & 0 & 0 & 0 & 0 \\
0 & 0 & 0 & 0 & 0 & 0 \\
0 & 0 & 0 & 0 & 0 & 0 \\
\end{array}
\right),\ \ \
\mathbf{M}_{\psi}=\left(
\begin{array}{cccccc}
-20 & 0 & 20 & 66 & -5 & -47 \\
0 & 3 & 40 & -38 & -37 & -57 \\
57 & -47 & -82 & -38 & -22 & 0 \\
37 & 5 & 7 & -79 & 0 & -22 \\
-38 & 66 & -28 & 0 & -62 & -38 \\
40 & -20 & 0 & -28 & 7 & -59 \\
-56 & 0 & 0 & 0 & 40 & 132 \\
0 & -76 & -76 & 0 & -114 & 0 \\
44 & 0 & -10 & -94 & 0 & 0 \\
0 & -14 & 0 & 80 & 0 & -74 \\
\end{array}
\right).
\end{equation}
Now as in (\ref{rk M=20}) we have $\rk\mathbf{M}=20$. Respectively, from
(\ref{matrix tilde bfM})-(\ref{5blocks of tilde bfM}) we obtain by an explicit computation that
$\rk\mathbf{\widetilde{M}}=30$. Hence, since the matrix of the system (\ref{set of systems3}) is a direct sum of
matrix $\mathbf{\widetilde{M}}$ and $p$ copies of matrix
$\mathbf{M}$, it follows that its rank equals
\begin{equation}\label{again 10(m-1)}
\rk\mathbf{\widetilde{M}}+p\cdot\rk\mathbf{M}=30+20p=10(m-1).
\end{equation}
Denote now by $R(\theta^0,\alpha^0,D_{m-1}^\Delta,\phi_{m-1}^\Delta)$ the rank of the linear system
(\ref{again eq2a of fibre}), equivalent to (\ref{set of systems3}, as a function of
$\theta^0,\alpha^0,D_{m-1}^\Delta,\phi_{m-1}^\Delta$. It follows from (\ref{again 10(m-1)}) that, similar
to (\ref{bold phi etc}), there exist values $\boldsymbol{\phi}_{m-1},\boldsymbol{\alpha},\boldsymbol{\theta}$ of
$\phi_{m-1}^\Delta,\alpha^0,\theta^0$, respectively, such that, as in (\ref{R=}),
\begin{equation}\label{again R=}
R(\boldsymbol{\theta},\boldsymbol{\alpha},D_{m-1}^\Delta,\boldsymbol{\phi}_{m-1})=10(m-1).
\end{equation}
Repeating now the arguments from subsection \ref{m odd 1} and using (\ref{again R=}), we obtain the inclusions
(\ref{embed in Z^0m}) and (\ref{w0 in Z^0*m}) for the above chosen data
$\boldsymbol{\theta},\boldsymbol{\alpha},D_{m-1}^\Delta,\phi_{m-1}^\Delta$.

Finally, using (\ref{2matrix D0,phi0})-(\ref{2matrix phi3}), we modify appropriately the matrices
(\ref{matrix D0,phi0 new})-(\ref{matrix phi2 new}), so that, arguing as in subsection \ref{m odd 2} and using the
inclusions (\ref{embed in Z^0m}) and (\ref{w0 in Z^0*m}), we deduce that
$Z\cap\tau_h(W_{\mathbf{S\Phi}})\neq\emptyset$.
This finishes the proof of Proposition \ref{part of inductn step} for $m$ even.

\begin{remark}\label{computations}
In perfoming the above computations of the rank of the linear system (\ref{eq2 of fibre}) one might try to simplify the
shape of the matrices $\phi_2$ in (\ref{matrix phi2}). E.g., in order to do computations simultaneously for odd and
even values of $m$, one might set $\phi_{12}=\phi_{21}=0$. However, under these constraints the experiments with
computations for arbitrary values of parameters $N,p_{ij},q_{ij}$ give at best the value $9(m-1)$ for the rank of the
system (\ref{eq2 of fibre}), which is insufficient for further arguments. Respectively, in case of $m$ even one might
also try to simplify the shape of the matrix $\phi_3$ in (\ref{2matrix D3}). E.g., one might set
$\phi_{13}=\phi_{31}=0$,
and this would satisfy the equations (\ref{eq1 of fibre}). However, experiments with computations in this case for
arbitrary values of the parameters $N,p_{ij},q_{ij},a,d,f,g,\lambda$ give at best the value 29 for the rank of the
matrix $\mathbf{\widetilde{M}}$ which is also insufficient.
\end{remark}

\end{sub}

\vspace{1cm}

\section{Geometric meaning of $Z_m$. Its relation to t'Hooft instantons}

\vspace{0.5cm}

\begin{sub}{\bf One property of the component $Z$ of the scheme $Z_m$.}\label{Zm and tH}
\rm In this subsection we prove one openness property of the component $Z$ of $Z_m,\ m\ge3,$ introduced in Proposition
\ref{part of inductn step} - see Lemma \ref{Z*(j)} below.

Take an arbitrary point
$$
D\in(\mathbf{S}_m^\vee)^0.
$$
Then in the notation of (\ref{E2m+2(D-1)}) we obtain a symplectic rank-$2m$ vector bundle
$$
E_{2m}(D^{-1})
$$
(see (\ref{E2m+2}) and (\ref{theta}) where we take $2m$ instead of $2m+2$ and put  $B=D^{-1}$)
and a natural epimorphism
$$
c_D:H_m^\vee\otimes\wedge^2 V^\vee\twoheadrightarrow
W_{5m}:=H_m^\vee\otimes\wedge^2 V^\vee/{\rm im}({}^\sharp(D^{-1}))
\simeq H^0(E_{2m}(D^{-1})(1)),\ \ \ \dim W_{5m}=5m.
$$
Now take an arbitrary point
$$
z=(D,\phi)\in Z_m.
$$
Here the morphism $\phi$ understood as a homomorphism
${}^\sharp\phi:H_m\to H_m^\vee\otimes\wedge^2V^\vee$ defines the diagram
\begin{equation}\label{diag n+1}
\xymatrix{
& & H_m\ar[d]_{{}^\sharp\phi}\ar
[dr]^{s(z)}  & & \\
0\ar[r] & H_m\ar[r]^-{{}^\sharp(D^{-1})} &
H_m^\vee\otimes\wedge^2 V^\vee\ar[r]^-{c_D} & W_{5m}\ar[r] & 0.}
\end{equation}
The lower horizontal triple in (\ref{diag n+1}) yields the diagram
\begin{equation}\label{diag n+2}
\xymatrix{0\ar[r] &
H_m\otimes\mathcal{O}_{\mathbb{P}^3}\ar[r]^-{{}^\sharp(D^{-1})}\ar@{=}[d] &
H_m^\vee\otimes\wedge^2 V^\vee\otimes\mathcal{O}_{\mathbb{P}^3}
\ar[r]^-{c_D}\ar@{>>}[d]^{ev} &
W_{5m}\otimes\mathcal{O}_{\mathbb{P}^3}\ar[r]\ar@{>>}[d]^{ev}
& 0 \\
0\ar[r] & H_m\otimes\mathcal{O}_{\mathbb{P}^3}\ar[r]^-{\widetilde{D^{-1}}}
& H_m^\vee\otimes\Omega_{\mathbb{P}^3}(2)
\ar[r]^-{c_D} & E_{2m}(D^{-1})(1)\ar[r] & 0.}
\end{equation}
Moreover, the diagrams (\ref{diag n+1}) and (\ref{diag n+2}) define the composition
\begin{equation}\label{sz}
s_z:\ H_m\otimes\mathcal{O}_{\mathbb{P}^3}(-1)\overset{s(z)}\to W_{5m}
\otimes\mathcal{O}_{\mathbb{P}^3}(-1)\overset{ev}\twoheadrightarrow E_{2m}(D^{-1}).
\end{equation}

Note that the relation $\phi^\vee\circ D\circ\phi\in \mathbf{S}_m$ following from the
definition of $Z$ can be easily rewritten as
\begin{equation}\label{compn sz=0}
{}^ts_z\circ s_z=0,
\end{equation}
where
${}^ts_z:=s_z^\vee\circ\theta$ and $\theta:E_{2m}(D^{-1})\overset{\sim}\to
E_{2m}(D^{-1})^\vee$
is the symplectic structure on $E_{2m}(D^{-1})$ defined as in (\ref{theta}). We have an antiselfdual
complex
\begin{equation}\label{complx}
0\to H_m\otimes\mathcal{O}_{\mathbb{P}^3}(-1)\overset{s_z}\to E_{2m}(D^{-1})
\overset{{}^ts_z}\to H_m^\vee\otimes\mathcal{O}_{\mathbb{P}^3}(1)\to0.
\end{equation}
%({\it Warning}: this complex is not right exact.)

Now, according to statement (iii) of Proposition \ref{part of inductn step}, take a point
\begin{equation}\label{1good z}
z=(D,\phi)\in Z\cap\tau_h(W_{\mathbf{S\Phi}}),
\end{equation}
where $h$ is a fixed decomposition (\ref{isom h}), and consider the induced decompositions
\begin{equation}\label{D and Phi}
D=D_1\oplus...\oplus D_m,\ \ \ \phi:=\phi_1\oplus...\oplus\phi_m,\ \ \
(D_i,\phi_i)\in(\wedge^2V^\vee)^0\times(\wedge^2V)^0,
\end{equation}
such that
\begin{equation}\label{Di and Phii}
\mathbf{L}:=\underset{i=1}{\overset{m}\cup} L(D_i,\phi_i)=\underset{i=1}{\overset{m}\sqcup} L(D_i,\phi_i).
\end{equation}
is a disjoint union of $2m$ lines in $\mathbb{P}^3$.
Moreover, for this point $z$ we have
\begin{equation}\label{E2m(D-1)}
E_{2m}(D^{-1})=\underset{i=1}{\overset{m}\oplus}E_2(D_i^{-1}),
\end{equation}
where $E_2(D_i^{-1}),\ i=1,...,m,$ are rank-2 null-correlation bundles.

Under the decomposition (\ref{isom h}) the diagrams (\ref{diag n+1}) and  (\ref{diag n+2}) decompose into the direct
sums of $m$ diagrams
\begin{equation}\label{diag n+1 i}
\xymatrix{
& & \mathbf{k}\ar@{>->}[d]_{{}^\sharp\phi_i}\ar
%@{>->}
[dr]^{s_i(z)}  & & \\
0\ar[r] & \mathbf{k}\ar[r]^{{}^\sharp(D_i^{-1})\ \ \ \ \ \ \ } &
\wedge^2 V^\vee\ar[r]^-{c_{D_i}} & W_{5(i)}\ar[r] & 0,}
\end{equation}
\begin{equation}\label{diag n+2 i}
\xymatrix{0\ar[r] &
\mathcal{O}_{\mathbb{P}^3}\ar[r]^-{{}^\sharp(D_i^{-1})}\ar@{=}[d] &
\wedge^2 V^\vee\otimes\mathcal{O}_{\mathbb{P}^3}
\ar[r]^-{c_{D_i}}\ar@{>>}[d]^{ev} &
W_{5(i)}\otimes\mathcal{O}_{\mathbb{P}^3}\ar[r]\ar@{>>}[d]^{ev}
& 0 \\
0\ar[r] & \mathcal{O}_{\mathbb{P}^3}\ar[r]^-{\widetilde{D_i^{-1}}}
& \Omega_{\mathbb{P}^3}(2)
\ar[r]^-{c_{D_i}} & E_2(D_i^{-1})(1)\ar[r] & 0},\ \ i=1,...,m,
\end{equation}
in which we substitute $\mathbf{k}$ for $H_1$ and set
$W_{5(i)}:=\wedge^2 V^\vee/{\rm im}({}^\sharp(D_i^{-1}:\mathbf{k}\to\wedge^2V^\vee)),\ \dim W_{5(i)}=5,\ i=1,...,m.$

Note that the decomposition (\ref{isom h}) induces a decomposition of the complex (\ref{complx}) into a direct sum of
$m$ comlexes
\begin{equation}\label{complx i}
0\to\mathcal{O}_{\mathbb{P}^3}(-1)\overset{s_i}\to E_2(D_i^{-1})
\overset{{}^ts_i}\to\mathcal{O}_{\mathbb{P}^3}(1)\to0,\ \ \ i=1,...,m.
\end{equation}
Here the sections $0\ne s_i\in H^0(E_2(D_i^{-1})(1))\simeq W_{5(i)}$ understood as homomorphisms
$\mathbf{k}\to W_{5(i)}$ coincide by construction with homomorphisms
$s_i(z)$ in the diagram (\ref{diag n+1 i}). Hence the homomorphism $s(z)$ in the diagram (\ref{diag n+1}) is also
injective as  the direct sum of $s_i(z)$'s. This means that
${\rm im}({}^\sharp\phi)\cap{\rm im}({}^\sharp(D^{-1}))=\{0\}$ i.e.
\begin{equation}\label{in S0Phi*}
z\in((\mathbf{S}_m^\vee)^0\times\mathbf{\Phi}_m)^*:=\{(D,\phi)\in(\mathbf{S}_m^\vee)^0\times\mathbf{\Phi}_m\
|\  {\rm the\ homomorphism}\ {}^\sharp\phi:H_m\to H_m^\vee\otimes\wedge^2V^\vee
\end{equation}
$$
{\rm is\ injective\ and}\ {\rm im}({}^\sharp\phi)\cap{\rm im}({}^\sharp(D^{-1}))=\{0\}\ \}.
$$
Next, from the definition of $\mathbf{L}$ and the construction of the morphisms $s_z,s_i,i=1,...,m,$
(see (\ref{diag n+1})--(\ref{diag n+2 i}), (\ref{sz}) and (\ref{complx i})) it follows that these complexes are exact
except in their righthand terms and
\begin{equation}\label{cokers}
\coker({}^ts_z)=\mathcal{O}_{\mathbf{L}}(1),\ \ \
\coker({}^ts_i)=\mathcal{O}_{L(D_i,\phi_i)}(1),\ \ \ (s_i)_0=L(D_i,\phi_i),\ \ \ i=1,...,m,
\end{equation}

\begin{remark}
An arbitrary point $D\in(\mathbf{S}_m^\vee)^0$ defines a point

For an arbitrary embedding
$$
j:H_{m-1}\hookrightarrow H_m
$$
and an arbitrary point $z\in(\mathbf{S}_m^\vee)^0\times\mathbf{\Phi}_m$
there is defined an induced morphism of sheaves
\begin{equation}\label{szj}
s_z(j):H_{m-1}\otimes\mathcal{O}_{\mathbb{P}^3}(-1)\overset{j}\to
H_m\otimes\mathcal{O}_{\mathbb{P}^3}(-1)\overset{s_z}\to E_{2m}(D^{-1}).
\end{equation}
\end{remark}
\vspace{3mm}
Let $e_1,...,e_m$ be the basis of $H_m$ related to the decomposition (\ref{isom h}) and set
$$
H_{m-1}:=\Span(e_1,...,e_{m-1}).
$$
Consider the monomorphism
\begin{equation}\label{j0}
j_0:H_{m-1}\hookrightarrow H_m:\ e_i\mapsto e_i+e_{i+1},\ \ \ i=1,...,m-1.
\end{equation}
Since $\mathbf{L}$ is a disjoint union of pairs of lines
$L(D_i,\phi_i),\ i=1,...,m,$
it follows from (\ref{cokers}) and (\ref{j0}) that $s_z(j_0)$ is a subbbundle morphism, i.e.
\begin{equation}\label{coker=0}
\coker({}^ts_z(j_0))=0.
\end{equation}
Now for a given monomorphism $j:H_{m-1}\hookrightarrow H_m$ consider the following conditions on a point
$z=(D,\phi)\in Z$:

(\textbf{I}) the composition $s_{z}(j)=s_{z}\circ j:H_{m-1}\otimes\mathcal{O}_{\mathbb{P}^3}(-1)\to E_{2m}(D^{-1})$
is a subbbundle morphism;

(\textbf{II}) $s_{z}:H_m\otimes\mathcal{O}_{\mathbb{P}^3}(-1)\to E_{2m}(D^{-1})$
is an injective morphism of sheaves (but not a subbundle morphism).

Note that the conditions (\textbf{I}) and (\textbf{II}) are open conditions on the point $z\in Z_m$.
The condition (\textbf{I}) is satisfied for
the point $z$ from (\ref{1good z}) and the embedding $j_0$ by (\ref{coker=0}). The condition (\textbf{II}) is satisfied
for this point $z$ in view of (\ref{cokers}).
Thus, since the set $((\mathbf{S}_m^\vee)^0\times\mathbf{\Phi}_m)^*$ defined in (\ref{in S0Phi*}) is dense open
in $(\mathbf{S}_m^\vee)^0\times\mathbf{\Phi}_m$,
we obtain the following result.
\begin{lemma}\label{Z*(j)}
(i) There exists a monomorphism $j:H_{m-1}\hookrightarrow H_m$ such that the sets
$$
Z(j):=\{z=(D,\phi)\in Z\cap((\mathbf{S}_m^\vee)^0\times\mathbf{\Phi}_m)^*\
|\ z\ {\rm satisfies\ the\ conditions\ (i)\ and\ (\mathbf{II})\ above}\},
$$
$$
{Z}(j,\mathbf{I}):=\{z=(D,\phi)\in Z\cap((\mathbf{S}_m^\vee)^0\times\mathbf{\Phi}_m)^*\
|\ z\ {\rm satisfies\ the\ condition\ (\mathbf{I})\ above}\}
$$
are dense open subsets of $Z$, and we have open embeddings
$Z(j)\hookrightarrow {Z}(j,\mathbf{I})\hookrightarrow Z$. The same is true for a generic monomorphism
$j:H_{m-1}\hookrightarrow H_m$.

(ii) Fix a monomorphism $j:H_{m-1}\hookrightarrow H_m$. Then the sets
$$
Z_m(j):=\{z=(D,\phi)\in Z_m\cap((\mathbf{S}_m^\vee)^0\times\mathbf{\Phi}_m)^*\
|\ z\ {\rm satisfies\ the\ conditions\ (\mathbf{I})\ and\ (\mathbf{II})\ above}\}
$$
$$
Z_m(j,\mathbf{I}):=\{z=(D,\phi)\in Z_m\cap((\mathbf{S}_m^\vee)^0\times\mathbf{\Phi}_m)^*\
|\ z\ {\rm satisfies\ the\ conditions\ (\mathbf{I})\ and\ (\mathbf{II})\ above}\}
$$
are open subsets of $Z_m$. Respectively, let $\widetilde{Z}$ be an arbitrary irreducible component of $Z_m$.
Then the sets
\begin{equation}\label{tilde Z(j,I)}
\widetilde{Z}(j):=\widetilde{Z}\cap Z_m(j),\ \ \ \widetilde{Z}(j,\mathbf{I}):=
\widetilde{Z}\cap Z_m(j,\mathbf{I})
\end{equation}
are open subsets of $\widetilde{Z}$.
\end{lemma}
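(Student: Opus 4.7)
\smallskip

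\textbf{Proof plan.} The plan is as follows. Part (ii) is essentially a repackaging of the observations already recorded just above the lemma: conditions (\textbf{I}) and (\textbf{II}) cut out open loci in $Z_m$ by upper semicontinuity of the rank of a morphism of locally free sheaves on $\mathbb{P}^3 \times Z_m$, and $((\mathbf{S}_m^\vee)^0 \times \mathbf{\Phi}_m)^*$ is open in $(\mathbf{S}_m^\vee)^0 \times \mathbf{\Phi}_m$. Hence $Z_m(j)$ and $Z_m(j,\mathbf{I})$ are open in $Z_m$ for every fixed $j$, and their intersections $\widetilde{Z}(j),\ \widetilde{Z}(j,\mathbf{I})$ with any irreducible component $\widetilde{Z}$ of $Z_m$ are open in $\widetilde{Z}$ by restriction.

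For the existence claim in part (i), I would produce a single witness: the pair $(z, j_0)$, where $z \in Z \cap \tau_h(W_{\mathbf{S\Phi}})$ is supplied by Proposition \ref{part of inductn step}(iii) and $j_0$ is the monomorphism defined in (\ref{j0}). The decomposition (\ref{E2m(D-1)}) realises $s_z$ as the direct sum of sections $s_i$ of null-correlation bundles $E_2(D_i^{-1})$ whose zero loci $L(D_i,\phi_i)$ are pairwise disjoint by the defining property of $W_{\mathbf{S\Phi}}$. Condition (\textbf{II}) is immediate, since $s_z$ is then a direct sum of injective sections whose joint zero locus $\mathbf{L}$ is non-empty (cf. (\ref{cokers})); condition (\textbf{I}) for $j_0$ is precisely the vanishing (\ref{coker=0}); and the same sum decomposition makes ${}^\sharp\phi$ injective with image transverse to that of ${}^\sharp(D^{-1})$, placing $z$ in $((\mathbf{S}_m^\vee)^0 \times \mathbf{\Phi}_m)^*$. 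Thus $Z(j_0) \ni z$ is non-empty, and irreducibility of $Z$ from Proposition \ref{part of inductn step}(i) upgrades this to density. The inclusion $Z(j_0) \hookrightarrow Z(j_0,\mathbf{I})$ is tautological, so both sets are dense open in $Z$.

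To deal with the genericity assertion in (i), I would pass to the incidence locus $\mathcal{U} \subset Z \times \mathrm{Mono}(H_{m-1}, H_m)$ of pairs $(z, j)$ satisfying (\textbf{I}) and (\textbf{II}), where $\mathrm{Mono}(H_{m-1}, H_m)$ denotes the open subscheme of monomorphisms in $\Hom(H_{m-1}, H_m) \simeq \mathbb{A}^{m(m-1)}$, hence irreducible. By the openness discussion above and the previous paragraph, $\mathcal{U}$ is open and non-empty, so it is dense in the irreducible product $Z \times \mathrm{Mono}(H_{m-1}, H_m)$. The second projection is trivially flat and hence open, so its image $W \subset \mathrm{Mono}(H_{m-1}, H_m)$ is dense open; for every $j \in W$ the fibre $\mathcal{U} \cap (Z \times \{j\}) = Z(j)$ is a non-empty open subset of $Z$, therefore dense by irreducibility of $Z$, and analogously for $Z(j, \mathbf{I})$.

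The lemma is essentially a formal consequence of Proposition \ref{part of inductn step}, so I do not anticipate any substantial obstacle. The only delicate point is the verification of (\textbf{I}) and (\textbf{II}) at the explicit witness $(z, j_0)$, which reduces to pointwise surjectivity of ${}^t s_z(j_0)$ along the disjoint union $\mathbf{L}$ and pointwise non-vanishing off it; both have already been recorded in (\ref{cokers})--(\ref{coker=0}).
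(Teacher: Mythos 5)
Your proposal is correct and follows essentially the same route as the paper: openness of conditions (\textbf{I}) and (\textbf{II}) on $Z_m$, the explicit witness $z\in Z\cap\tau_h(W_{\mathbf{S\Phi}})$ from Proposition \ref{part of inductn step}(iii) paired with the monomorphism $j_0$ of (\ref{j0}), verification via (\ref{cokers}) and (\ref{coker=0}), and density from the irreducibility of $Z$. Your incidence-variety argument over $Z\times\mathrm{Mono}(H_{m-1},H_m)$ merely makes explicit the genericity claim that the paper asserts without proof.
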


\end{sub}

\vspace{0.5cm}

\begin{sub}{\bf Relation between $Z$ and t'Hooft instantons. Morphism
$\lambda_{(j)}:Z_m\to\mathbf{S}_{2m-1}$.}\label{widetilde(Z) and tHooft bdls}
\rm

\vspace{0.3cm}

In this subsection we relate the open subset $\widetilde{Z}(j)$ of $Z_m$ introduced in
Lemma \ref{Z*(j)}(ii) to t'Hooft instantons - see Lemma \ref{E2(z) is tHooft}.

In the notation of Lemma \ref{Z*(j)}, assume that $\widetilde{Z}(j)\ne\emptyset$ and take an arbitrary point
$z=(D,\phi)\in \widetilde{Z}(j)$,
so that the symplectic vector bundle $E_{2m}(D^{-1})$  satisfies the diagrams (\ref{diag n+1})-(\ref{diag n+2}).
Respectively, the morphism of sheaves
$s_z$ defined in (\ref{sz}) is injective
- see the definition of condition (ii) above.
In addition, $s_z$ satisfies the relation (\ref{compn sz=0})
which clearly implies the relation
\begin{equation}\label{complex in middle}
{}^ts_z(j)\circ s_z(j)=0
\end{equation}
for the subbundle morphism $s_z(j)$, i.e. we obtain a monad
\begin{equation}\label{E2-monad}
0\to H_{m-1}\otimes\mathcal{O}_{\mathbb{P}^3}(-1)\overset{s_z(j)}\to E_{2m}(D^{-1})
\overset{{}^ts_z(j)}\to H_{m-1}^\vee\otimes\mathcal{O}_{\mathbb{P}^3}(1)\to0,
\end{equation}
From the diagram (\ref{diag n+2}) we deduce the equalities $h^i(E_{2m}(D^{-1})(-2))=0,\ \ \ i\ge0,$
hence the cohomology sheaf of the monad (\ref{E2-monad}) is an instanton bundle
\begin{equation}\label{E2(z)}
E_2(z,j):=\Ker({}^ts_z(j))/\im(s_z(j)),\ \ \ [E_2(z,j)]\in I_{2m-1}.
\end{equation}

Now consider the subvariety $I^{tH}_{2m-1}\subset I_{2m-1}$ of t'Hooft instanton bundles
(see subsection \ref{THinstantons}),
$$
I^{tH}_{2m-1}=\{[E]\in I_{2m-1}\ |\ h^0(E(1))\ne0\}.
$$

\begin{lemma}\label{E2(z) is tHooft}
(i) In notations of Lemma \ref{Z*(j)}(i) let $Z(j)\ne\emptyset$ and let $z=(D,\phi)$ be an arbitrary point of
$Z(j)$. Then the bundle $E_2(z,j)$ is a t'Hooft instanton bundle, i.e. $[E_2(z,j)]\in I^{tH}_{2m-1}$;

(ii) In notations of Lemma \ref{Z*(j)}(iii) let $\widetilde{Z}(j)\ne\emptyset$. Take an arbitrary point
$z\in Z'(j)$. Then the monad (\ref{E2-monad}) is well defined and its cohomology bundle $E_2(z,j)$ is a t'Hooft bundle;

(iii) Fix an isomorphism
\begin{equation}\label{xi again}
\xi:H_m\oplus H_{m-1}\overset{\simeq}\to H_{2m-1},\ \ \ \xi\in{\rm Isom}_{2m-1}.
\end{equation}
Then there is a well defined morphism
\begin{equation}\label{lambda m}
\lambda_{(j)}:Z_m\to\mathbf{S}_{2m-1}:\ z=(D,\phi)\mapsto
A=\widetilde{\xi}(D^{-1},\phi\circ j,-(\phi\circ j)^\vee\circ D\circ(\phi\circ j)).
\end{equation}
such that
\begin{equation}\label{lambda m(Z*(j))}
\lambda_{(j)}(Z_m(j))\subset MI^{tH}_{2m-1}(\xi).
\end{equation}
\end{lemma}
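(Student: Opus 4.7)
My plan is to handle parts (i) and (ii) together by a direct cohomological argument producing a nonzero section of $E_2(z,j)(1)$, and then to deduce (iii) by matching the monad for $E_2(z,j)$ with the one supplied by the $X_{m-1}$-machinery of Theorem \ref{Xm isom MI2m+1} applied with $m$ replaced by $m-1$.

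For (i) and (ii), the starting point is the identity (\ref{compn sz=0}), ${}^ts_z\circ s_z=0$, which is an intrinsic reformulation of the $Z_m$-defining equation $\phi^\vee\circ D\circ\phi\in\mathbf{S}_m$. Composing on the left with $j^\vee$ yields ${}^ts_z(j)\circ s_z=0$, so the entire morphism $s_z\colon H_m\otimes\mathcal{O}_{\mathbb{P}^3}(-1)\to E_{2m}(D^{-1})$ factors through $K:=\ker({}^ts_z(j))$. Condition (\textbf{I}) makes $s_z(j)$ a subbundle morphism, so (\ref{E2-monad}) is a bona fide monad and its cohomology $E_2(z,j)$ is locally free of rank $2$ with $c_2=2m-1$. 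Condition (\textbf{II}) ensures $s_z$ is injective as a morphism of sheaves; a standard twist-and-$H^0$ argument then produces an injection $H_m\hookrightarrow H^0(E_{2m}(D^{-1})(1))$, which in view of the factoring through $K$ refines to an injection $H_m\hookrightarrow H^0(K(1))$.

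Now I would twist the short exact sequence $0\to H_{m-1}\otimes\mathcal{O}_{\mathbb{P}^3}(-1)\to K\to E_2(z,j)\to 0$ by $\mathcal{O}_{\mathbb{P}^3}(1)$ and pass to cohomology. Since $H^1(\mathcal{O}_{\mathbb{P}^3})=0$ and the image of $H_{m-1}\hookrightarrow H^0(K(1))$ coincides with the image of $j(H_{m-1})$ under the injection from the previous step, one obtains $h^0(E_2(z,j)(1))=h^0(K(1))-(m-1)\ge m-(m-1)=1$. Hence $[E_2(z,j)]\in I^{tH}_{2m-1}$, uniformly proving (i) and (ii).

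For (iii), I first verify the target of $\lambda_{(j)}$ component-wise under the decomposition (\ref{direct sum}): the $\mathbf{S}_m$-piece $D^{-1}$ lies in $\mathbf{S}_m$ because inversion preserves the symmetry type ($H_m$-symmetric, $V$-skew); the $\mathbf{\Sigma}_m$-piece $\phi\circ j$ is tautologically of the right type; and $-(\phi\circ j)^\vee\circ D\circ(\phi\circ j)\in\mathbf{S}_{m-1}$ follows by pulling back the $Z_m$-equation along $j$. Setting $(B,C):=(D^{-1},\phi\circ j)$ for $z\in Z_m(j)$, the point $A:=\lambda_{(j)}(z)$ satisfies $A_1(\xi)=B$, $A_2(\xi)=C$, $A_3(\xi)=-C^\vee\circ B^{-1}\circ C$, so the identity (\ref{xi3(A)}) holds tautologically. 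I then apply Theorem \ref{Xm isom MI2m+1} (with $m\leadsto m-1$) after checking $(B,C)\in X_{m-1}$: condition (i) of (\ref{Xm}) is the preceding $\mathbf{S}_{m-1}$-containment; condition (iii) reduces to the identification $\rho_{B,C}=s_z(j)$, via a diagram chase parallel to (\ref{combine})--(\ref{rho...=rho...}) with $\widehat C$ replaced by $c_D\circ{}^\sharp\phi\circ j$ from (\ref{diag n+1}); condition (ii) of (\ref{Xm}) then follows from (\textbf{I}). This yields $A\in MI_{2m-1}(\xi)$, and by Lemma \ref{E(xi,A)=E(A)} the instanton $E(A)$ is canonically isomorphic to the cohomology of (\ref{E2-monad}), namely $E_2(z,j)$, which by (i) is t'Hooft. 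Hence $A\in MI^{tH}_{2m-1}(\xi)$.

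The principal obstacle is the clean identification $\rho_{B,C}=s_z(j)$ used in (iii): both are constructed as the canonical lift to $E_{2m}(D^{-1})$ of the morphism $H_{m-1}\to H_m^\vee\otimes\wedge^2V^\vee/\im({}^\sharp(D^{-1}))$ induced by $\phi\circ j$, so the identification is essentially tautological but requires tracking through the commuting diagrams (\ref{diag rho}), (\ref{combine}), and (\ref{diag n+2}). A minor secondary point is upgrading $A\in MI^{tH}_{2m-1}\cap MI_{2m-1}(\xi)$ to $A\in MI^{tH**}_{2m-1}\cap MI_{2m-1}(\xi)=MI^{tH}_{2m-1}(\xi)$, which is an openness statement that reduces to the fact that $Z_m(j)$ is open in $Z_m$ and $\lambda_{(j)}$ is a morphism.
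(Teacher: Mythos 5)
Your proposal is correct and follows essentially the same route as the paper: both rest on the two facts that ${}^ts_z(j)\circ s_z=0$ forces $s_z$ to factor through $\ker({}^ts_z(j))$ while condition (\textbf{II}) keeps it injective, and both deduce (iii) by identifying the monad (\ref{E2-monad}) with (\ref{monad for E2(xi,A)}) for $B=D^{-1}$, $C=\phi\circ j$ and invoking Lemma \ref{E(xi,A)=E(A)}. The only cosmetic difference is in (i)--(ii): the paper exhibits the section $u_z\in H^0(E_2(z,j)(1))$ explicitly as the lift of the induced map $\mathcal{O}_{\mathbb{P}^3}(-1)=\mathcal{H}_m/j(\mathcal{H}_{m-1})\to\coker s_z(j)$ through the antiselfduality diagram (\ref{diagram E2 is tHooft}), whereas you obtain its existence from the count $h^0(E_2(z,j)(1))=h^0(K(1))-(m-1)\ge m-(m-1)=1$; these are the same argument in different clothing.
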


\begin{proof}
(i) Consider the complexes (\ref{complx}) and (\ref{E2-monad}) and set
$$
\mathcal{H}_{m-1}:=H_{m-1}\otimes\mathcal{O}_{\mathbb{P}^3}(-1),\ \
\mathcal{H}_m:=H_m\otimes\mathcal{O}_{\mathbb{P}^3}(-1),\ \
\mathcal{K}_{m+1}:=\coker s_z(j),\ \ \mathcal{K}_m:=\coker s_z.
$$
The complexes  (\ref{complx}) and (\ref{E2-monad}) are antiselfdual, hence they extend to a
commutative diagram
\begin{equation}\label{diagram E2 is tHooft}
\xymatrix{& & & & & E_2(z,j)\ar@{>->}[d]_{\tau} &
\mathcal{O}_{\mathbb{P}^3}(-1)\ar@{>->}[dl]_{\alpha}\ar@{.>}[l]_{u_z} \\
& & \mathcal{H}_{m-1} \ar@{>->}[r]^{s_z(j)}
\ar@{>->}[dl]_{j} & E_{2m}(D^{-1})\ar@{->>}[rr]\ar@{=}[dl]\ar@{->>}[ddd]_{{}^ts_z(j)} &&
\mathcal{K}_{m+1}\ar@{->>}[dl]_{\beta}\ar@{->>}[ddd]_{\delta} & \\
& \mathcal{H}_m \ar@{>->}[r]^{s_z}\ar@{->>}[dl]  &
E_{2m}(D^{-1})\ar@{->>}[rr]\ar[ddd]_{{}^ts_z} & & \mathcal{K}_m\ar[ddd]_{\gamma} \\
\mathcal{O}_{\mathbb{P}^3}(-1) & & & & & \\
& & & \mathcal{H}_{m-1}^\vee\ar@{=}[rr] && \mathcal{H}_{m-1}^\vee & \\
& & \mathcal{H}_m^\vee\ar@{=}[rr]\ar@{->>}[ur]^{j^\vee} && \mathcal{H}_m^\vee\ar@{->>}[ur]^{j^\vee} & & \\
& \mathcal{O}_{\mathbb{P}^3}(1) \ar@{=}[rr]\ar@{>->}[ur] & &
\mathcal{O}_{\mathbb{P}^3}(1), \ar@{>->}[ur] & &}
\end{equation}
in which $\alpha, \beta, \gamma,\delta$ and $\tau$ are the induced morphisms. In this diagram
we have $\beta\circ\alpha=0$ and $j^\vee\circ\gamma\circ\beta=\delta$. Hence
$\delta\circ\alpha=0$. This implies that $\alpha$ factors through the morphism $\tau$, i.e.
there exists an injection $u_z:\mathcal{O}_{\mathbb{P}^3}(-1)\to E_2(z,j)$ such that
$\alpha=\tau\circ u_z$. This injection $u_z$ is a nonzero section $u_z\in H^0(E_2(z,j)(1))$. Hence
$E_2(z,j)$ is a t'Hooft bundle.

(ii) Repeat the above argument.

(iii) This immediately follows from Lemma \ref{E(xi,A)=E(A)} since (\ref{E2-monad})-(\ref{E2(z)}) coincides with
(\ref{monad for E2(xi,A)})-(\ref{E2(xi,A)}) after sustituting $m-1$ for $m$ and putting $B=D^{-1}$.
\end{proof}
\begin{remark}
From the diagram (\ref{E2(z) is tHooft}) it follows that the point $z\in Z(j)$ (respectively, the point
$z\in \widetilde{Z}(j)$) defines not only a t'Hooft bundle $[E_2(z,j)]$, but also a proportionality class $<u_z>$ of a
section $0\ne u_z\in H^0(E_2(z,j))$. Moreover, the pointwise constructions (over $z\in\widetilde{Z}(j)$) of Lemma
\ref{E2(z) is tHooft}
clearly globalize to $\mathbb{P}^3\times \widetilde{Z}(j)$. In particular, the morphism
$\lambda_{(j)}:\widetilde{Z}(j)\to\mathbf{S}_{2m-1}$ defines a subbundle morphism of sheaves
\begin{equation}\label{tilde AZ}
\widetilde{\mathbf{A}}_Z:\mathcal{O}_{\widetilde{Z}(j)}\to\mathbf{S}_{2m-1}\otimes\mathcal{O}_{\widetilde{Z}(j)},
\end{equation}
i.e., equivalently, a family of instanton nets of quadrics
\begin{equation}\label{AZ}
\mathbf{A}_Z: H_{2m-1}\otimes V\otimes\mathcal{O}_{\widetilde{Z}(j)}\to
H_{2m-1}^\vee\otimes V^\vee\otimes\mathcal{O}_{\widetilde{Z}(j)}.
\end{equation}
Let $\pi_Z:\mathbb{P}^3\times \widetilde{Z}(j)\to \widetilde{Z}(j)$ be the projection.
By construction we have a rank $4m$ bundle
$\mathbf{W}_Z:={\rm im}\mathbf{A}_Z$ on $\widetilde{Z}(j)$ and the correspondig monad
$0\to H_{2m-1}\otimes\mathcal{O}_{\mathbb{P}^3}(-1)\otimes\mathcal{O}_{\widetilde{Z}(j)}\to
\mathcal{O}_{\mathbb{P}^3}\boxtimes \mathbf{W}_Z
\to H_{2m-1}^\vee\otimes\mathcal{O}_{\mathbb{P}^3}(1)\otimes\mathcal{O}_{\widetilde{Z}(j)}\to0$
with the cohomology rank 2 bundle $\mathbf{E}_Z$  such that
$\mathbf{E}_Z|_{\mathbb{P}^3\times\{z\}}=E_2(z,j),\ z\in \widetilde{Z}(j)$. This monad,
together with relative Serre duality
for the projection $\pi_Z$, defines in a standard way an isomorphism of locally free
$\mathcal{O}_{\widetilde{Z}(j)}$-sheaves
\begin{equation}\label{fZ}
f_Z:\ H_{2m-1}\otimes\mathcal{O}_{\widetilde{Z}(j)}\overset{\simeq\ }\to\mathbf{G}_Z:=
(\Ext_{\pi_Z}^1(\mathbf{E}_Z(-3),\omega_{\pi_Z}))^\vee
\end{equation}
relativizing the pointwise isomorphisms $f:H_{2m-1}\overset{\simeq\ }\to H^2(E_2(z,j)(-3))$ (cf. Section \ref{general})
and Serre duality $H^2(E_2(z,j)(-3))\overset{\simeq\ }\to(\Ext^1(E_2(z,j)(-3),\omega_{\mathbb{P}^3}))^\vee$.
(Here we set
$\mathbf{E}_Z(k):=\mathbf{E}_Z\otimes\mathcal{O}_{\mathbb{P}^3}(-1)\boxtimes
\mathcal{O}_{\widetilde{Z}(j)},\ k\in\mathbb{Z}$.)
In addition, the sections
$u_z\in H^0(E_2(z,j)),\ z\in\widetilde{Z}(j),$
glue up to a section
\begin{equation}\label{global u}
u:\ \mathcal{O}_{\mathbb{P}^3\times\widetilde{Z}(j)}\to \mathbf{E}_Z(1).
\end{equation}
\end{remark}

\end{sub}

\vspace{0.5cm}

\begin{sub}{\bf Description of the fibers of the morphism $\lambda_{(j)}:Z_m(j)\to\mathbf{S}_{2m-1}$.}
\label{Pointwise descrn of Zm at widetilde Z}
\rm

\vspace{0.3cm}
In this subsection we will give a description of the fibres of the morphism
$\lambda_{(j)}:Z_m(j)\to\mathbf{S}_{2m-1}$
and of its restriction onto $Z$,
$\lambda_j:=\lambda_{(j)}|_{Z}:Z\to\mathbf{S}_{2m-1}$.
The precise statement is given in Lemma \ref{fibre of lambda} below.

To formulate the result on the fibres, note that the point $z=(D,\phi)\in Z_m(j)$ defines
the monad (\ref{E2-monad}) with the cohomology bundle $E_2(z,j)$ with $[E_2(z,j)]\in I_{2m-1}^{tH}$
(see Lemma \ref{E2(z) is tHooft}). The display of this monad twisted by $\mathcal{O}_{\mathbb{P}^3}(1)$ is
\begin{equation}\label{display tH}
\xymatrix{& & E_2(z,j)(1)\ar@{>->}[d] \\
H_{m-1}\otimes\mathcal{O}_{\mathbb{P}^3}\ \ar@{>->}[r]^{s_z(j)} &
E_{2m}(D^{-1})(1)\ar@{->>}[r]^{\epsilon}\ar@{->>}[dr]^{{}^ts_z(j)} &
\coker(s_z(j))\ar@{->>}[d] \\
& & H_{m-1}^\vee\otimes\mathcal{O}_{\mathbb{P}^3}(2).}
\end{equation}

Note that from (\ref{h0=1}) and the definition of $MI^{tH}_{2m-1}$ it follows that
$h^0(E_2(z,j)(1))\le2$. Hence, passing to sections in the diagram (\ref{display tH}) we obtain a
well defined epimorphism
\begin{equation}\label{b(z,j)}
\xymatrix{b(z,j):=h^0({}^ts_z(j)):H^0(E_{2m}(D^{-1})(1))\ar@{->>}[r]^-{h^0(\epsilon)} &
H^0(\coker(s_z(j)))\ar@{->>}[r]^-{can} & }
\end{equation}
$$
\twoheadrightarrow
H^0(\coker(s_z(j)))/H^0(E_2(z,j)(1))
\simeq\left\{\begin{array}{cc}
%\mathbf{k}^{4m-1}, & {\rm if}\ h^0(E_2(z,j)(1))=0\\
\mathbf{k}^{4m}, & {\rm if}\ h^0(E_2(z,j)(1))=1\\
\mathbf{k}^{4m-1}, & {\rm if}\ h^0(E_2(z,j)(1))=2
\end{array}\right\}\hookrightarrow H_{m-1}^\vee\otimes S^2V^\vee.
$$
(Note that $h^0(E_2(1))\le2$ for any $[E_2]\in I_{2m-1}^{tH}$.)
In addition, as in Remark \ref{cap=0}, where we take $m-1$ instead of $m$, it follows that
\begin{equation}\label{again cap=0}
{\rm im}({}^\sharp D^{-1})\cap{\rm im}({}^\sharp\phi\circ j)=\{0\},\ \ \ \dim\Span({\rm im}({}^\sharp D^{-1}),\
{\rm im}({}^\sharp\phi\circ j))=2m-1.
\end{equation}

Consider the epimorphism
$c_D:H_m^\vee\otimes\wedge^2 V^\vee\twoheadrightarrow H^0(E_{2m}(D^{-1})(1))$
in this triple (see the diagram (\ref{diag n+2})) and set
\begin{equation}\label{V(z,j)}
V(z,j):=c_D^{-1}(\ker b(z,j)).
\end{equation}
From (\ref{b(z,j)}) it follows immediately that
\begin{equation}\label{V(z,j)=}
V(z,j)\simeq
\left\{\begin{array}{cc}
\mathbf{k}^{2m}, & {\rm if}\ h^0(E_2(z,j)(1))=1,\\
\mathbf{k}^{2m+1}, & {\rm if}\ h^0(E_2(z,j)(1))=2.
\end{array}\right.
\end{equation}

Now observe that the complex (\ref{E2-monad}) is well defined for any $z\in Z_m$ and any
$j:H_{m-1}\hookrightarrow H_m$ since the condition
(\ref{complex in middle}) is a closed condition satisfied for any $z\in Z_m$ (this complex now might be apriori
not left- and right-exact). Hence the homomorphisms
$b(z,j)=h^0({}^ts_z(j)):H^0(E_{2m}(D^{-1})(1))\to H_{m-1}^\vee\otimes S^2V^\vee$
and
$c_D:H_m^\vee\otimes\wedge^2 V^\vee\twoheadrightarrow H^0(E_{2m}(D^{-1})(1))$
are  well defined, and we define the set $V(z,j)$ by the same formula (\ref{V(z,j)}).
Since $Z$ is irreducible, from (\ref{V(z,j)}) it follows by semicontinuity that
\begin{equation}\label{dimV(z,j)}
\dim V(z,j)\ge 2m,\ \ \ z\in Z.
\end{equation}

\begin{lemma}\label{fibre of lambda}
Let $j$ be as in Lemma \ref{Z*(j)}.

(i) For any point $z\in Z_m$ the fibre of the morphism $\lambda_{(j)}:Z_m\to\mathbf{S}_{2m-1}$
through the point $z$ is a reduced scheme naturally identified with $V(z,j)$:
\begin{equation}\label{1fibre=V(z,j)}
\lambda_{(j)}^{-1}(\lambda_{(j)}(z))\overset{\simeq}\to V(z,j),
\end{equation}
where $V(z,j)$ is defined in (\ref{V(z,j)}). Hence, in particular, for any $z\in Z$,
$\dim\lambda_{(j)}^{-1}(\lambda_{(j)}(z))\ge2m$.

(ii) Let $Z_1$ be the union of all possible irreducible components of $Z_m$ distinct from $Z$ and let
$Z_0(j):=Z(j)\smallsetminus Z_1$.
Consider the morphism $\lambda_j:=\lambda_{(j)}|_{Z}:Z\to\mathbf{S}_{2m-1}$.
Then for any $z\in Z_0(j)$ one has a natural isomorphism
\begin{equation}\label{2fibre=V(z,j)}
\lambda_j^{-1}(\lambda_j(z))\overset{\simeq}\to V(z,j),
\end{equation}
where the dimension of $V(z,j)$ is given by (\ref{V(z,j)=}),
and, for an arbitrary $z\in Z$,
\begin{equation}\label{3fibre=V(z,j)}
\lambda_j^{-1}(\lambda_j(z))\subset\lambda_{(j)}^{-1}(\lambda_j(z))=V(z,j),\ \ \
\dim\lambda_j^{-1}(\lambda_j(z))\ge2m.
\end{equation}
If $z\in Z(j,\mathbf{I})$, then the dimension of $V(z,j)$ in (\ref{3fibre=V(z,j)}) is given by
(\ref{V(z,j)=}).

(iii) Let $\widetilde{Z}$ be an arbitrary irreducible component of $Z_m$, let $Z_1$ be the union
of all possible irreducible components of $Z_m$ distinct from $\widetilde{Z}$ and let
$\widetilde{Z}_0(j):=\widetilde{Z}(j)\smallsetminus Z_1$.
Consider the morphism $\tilde{\lambda}_j:=\lambda_{(j)}|_{\widetilde{Z}}:\widetilde{Z}\to\mathbf{S}_{2m-1}$.
Then for any $z\in \widetilde{Z}_0(j)$ one has the natural isomorphism
\begin{equation}\label{4fibre=V(z,j)}
\tilde{\lambda}_j^{-1}(\tilde{\lambda}_j(z))\overset{\simeq}\to V(z,j),
\end{equation}
where the dimension of $V(z,j)$ is given by (\ref{V(z,j)=}), and, for an arbitrary $z\in \widetilde{Z}$,
\begin{equation}\label{5fibre=V(z,j)}
\tilde{\lambda}_j^{-1}(\tilde{\lambda}_j(z))\subset\lambda_{(j)}^{-1}(\tilde{\lambda}_j(z))=V(z,j),\ \ \
\dim\tilde{\lambda}_j^{-1}(\tilde{\lambda}_j(z))\ge2m.
\end{equation}

\end{lemma}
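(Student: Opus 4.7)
The plan is to analyze the fibre $\lambda_{(j)}^{-1}(\lambda_{(j)}(z))$ by direct linear algebra and identify it scheme-theoretically with the affine subspace $V(z,j)\subset H_m^\vee\otimes\wedge^2V^\vee$. By the formula (\ref{lambda m}), two points $z=(D,\phi)$ and $z'=(D',\phi')$ of $Z_m$ have the same image under $\lambda_{(j)}$ exactly when $D=D'$ and $\phi\circ j=\phi'\circ j$ (the third coordinate then matches automatically), so the fibre through $z$ consists of points $(D,\phi+\delta)$ with $\delta\in\mathbf{\Phi}_m$, $\delta\circ j=0$ and $(\phi+\delta)^\vee\circ D\circ(\phi+\delta)\in\mathbf{S}_m$. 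First I would fix a splitting $H_m\simeq H_{m-1}\oplus\mathbf{k}\cdot e$ extending $j$, with associated projection $\pi:H_m\twoheadrightarrow\mathbf{k}$ and dual element $e^m\in H_m^\vee$; then $\delta\circ j=0$ forces $\delta=\delta_v$ for a unique $v\in H_m^\vee\otimes\wedge^2V^\vee$, given by $\delta_v(h)=\pi(h)\cdot v$. Hence the fibre sits inside the $6m$-dimensional affine space of such $v$'s.

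Next I would linearize the defining condition. Expanding and using $\phi^\vee\circ D\circ\phi\in\mathbf{S}_m$, it becomes
\begin{equation*}
q_m\bigl(\phi^\vee\circ D\circ\delta_v+\delta_v^\vee\circ D\circ\phi\bigr)+q_m\bigl(\delta_v^\vee\circ D\circ\delta_v\bigr)=0
\end{equation*}
with $q_m$ as in (\ref{qm}). The key observation is that the quadratic term vanishes identically: writing $\delta_v=e^m\otimes v\in H_m^\vee\otimes(H_m^\vee\otimes\wedge^2V^\vee)$, the $H_m^\vee\otimes H_m^\vee$-factor of $\delta_v^\vee\circ D\circ\delta_v$ equals $e^m\otimes e^m\in S^2H_m^\vee$, and since the full tensor lies in $\wedge^2(H_m^\vee\otimes V^\vee)$ its $V^\vee\otimes V^\vee$-factor must be antisymmetric, so $\delta_v^\vee\circ D\circ\delta_v\in\mathbf{S}_m$. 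Moreover, the same rank-one structure forces $\phi^\vee\circ D\circ\delta_v+\delta_v^\vee\circ D\circ\phi$ to have $(h,h')$-antisymmetric $H_m^\vee\otimes H_m^\vee$-component lying in the subspace $e^m\wedge H_m^\vee\subset\wedge^2H_m^\vee$, which under contraction with $e$ is canonically isomorphic to $H_{m-1}^\vee$. The fibre condition therefore collapses to a single linear equation
\begin{equation*}
L_{z,j}(v):=q_m\bigl(\phi^\vee\circ D\circ\delta_v+\delta_v^\vee\circ D\circ\phi\bigr)=0
\end{equation*}
with values in $H_{m-1}^\vee\otimes S^2V^\vee$; in particular the fibre is set- and scheme-theoretically an affine subspace of $H_m^\vee\otimes\wedge^2V^\vee$, hence reduced.

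The technical heart of the argument will be to identify $\ker L_{z,j}$ with $V(z,j)=c_D^{-1}(\ker b(z,j))$. My plan is to interpret both sides through the monadic description of diagrams (\ref{diag n+1})--(\ref{diag n+2}) and the symplectic pairing on $E_{2m}(D^{-1})$ from (\ref{theta}). Setting $B:=h^0({}^ts_z(1)):H^0(E_{2m}(D^{-1})(1))\to H_m^\vee\otimes S^2V^\vee$, one has $b(z,j)=j^\vee\circ B$, and by the construction of $s_z$ in (\ref{sz}) together with the self-dual monad (\ref{monad for E2m+2}), the value $B(s)(h)$ equals, up to a universal sign, the symplectic pairing $\theta(c_D({}^\sharp\phi(h)),s)\in S^2V^\vee$. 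Taking $s=c_D(v)$ and applying $j^\vee$ to restrict to $h\in H_{m-1}$, a direct computation using the explicit form of $\theta$ from (\ref{diag display E2m+2 }) shows that $j^\vee(B(c_D(v)))$ equals $\pm L_{z,j}(v)$ under the identification $e^m\wedge H_m^\vee\otimes S^2V^\vee\simeq H_{m-1}^\vee\otimes S^2V^\vee$. This yields the canonical scheme-isomorphism (\ref{1fibre=V(z,j)}), and the dimension formula (\ref{V(z,j)=}) follows from the count in (\ref{b(z,j)}) whenever condition $(\mathbf{I})$ ensures that (\ref{E2-monad}) is a genuine monad with $E_2(z,j)$ a t'Hooft instanton (Lemma \ref{E2(z) is tHooft}).

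Parts (ii) and (iii) will then follow formally. At a point $z\in Z_0(j)$ (respectively $z\in\widetilde Z_0(j)$) the component $Z$ (resp.\ $\widetilde Z$) agrees with $Z_m$ in a Zariski neighborhood, so restricting $\lambda_{(j)}$ does not change the scheme-theoretic fibre, giving (\ref{2fibre=V(z,j)}) and (\ref{4fibre=V(z,j)}). For an arbitrary $z\in Z$ (resp.\ $\widetilde Z$) the restricted fibre is only a closed subscheme of the full fibre $V(z,j)$; the lower bound $\dim\geq 2m$ in (\ref{3fibre=V(z,j)}) and (\ref{5fibre=V(z,j)}) comes from the fibre-dimension theorem applied to $\lambda_j$ (resp.\ $\tilde\lambda_j$) combined with the bound $\dim\overline{\lambda_j(Z)}\leq\dim MI^{tH}_{2m-1}=4m^2+6m$ of Lemma \ref{tHooft}(i) (using (\ref{lambda m(Z*(j))}) and $\dim Z=4m(m+2)$, respectively $\dim\widetilde Z\geq 4m(m+2)$ by (\ref{dim Zm})); condition $(\mathbf{I})$ then upgrades this to the precise formula (\ref{V(z,j)=}) via Lemma \ref{E2(z) is tHooft}. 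The main obstacle throughout is the scalar-precise matching in the third paragraph: one must verify that the explicit projection $q_m$ of the concrete tensor built from $\phi$, $D$ and $v$ coincides, with a scalar uniform in $(D,\phi,v)$, with the symplectic-pairing expression built from the monad cohomology, and this requires careful bookkeeping of the (anti)symmetrization and duality conventions encoded in the antiselfdual monad (\ref{monad for E2m+2}) for $E_{2m}(D^{-1})$.
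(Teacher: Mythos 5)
Your proposal is correct and follows essentially the same route as the paper: you split $H_m\simeq H_{m-1}\oplus\mathbf{k}$, observe that the residual equations of $Z_m$ along the fibre are linear in the $\mathbf{k}$-component of ${}^\sharp\phi$ (your vanishing of the quadratic term is exactly the paper's splitting of $\mathbf{\Lambda}_m$ into the $pr'$- and $pr''$-summands, with the $\wedge^2\mathbf{k}^\vee$-part trivially zero), and then identify that linear condition with $b(z,j)\circ c_D$ via the monad for $E_{2m}(D^{-1})$, which is precisely the paper's reduction of $pr''(\mathcal{A})=0$ to ${}^\sharp\phi|_{\mathbf{k}}\subset V(z,j)$ through diagrams (\ref{diag DC}) and (\ref{display tH}). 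The deduction of (ii)--(iii), including the bound $\dim\ge 2m$ via upper semicontinuity of fibre dimension, matches what the paper leaves implicit in ``(ii)--(iii) follow from (i)''.
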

\begin{proof} (i) Consider the spaces
$\mathbf{\Lambda}_m=\wedge^2H_m^\vee\otimes S^2 V^\vee$
and
$\mathbf{\Lambda}_{m-1}=\wedge^2H_{m-1}^\vee\otimes S^2 V^\vee$
together with projections
$q_m:\wedge^2(H_m^\vee\otimes V^\vee)\rightarrow\mathbf{\Lambda}_m$
and
$q_{m-1}:\wedge^2(H_{m-1}^\vee\otimes V^\vee)\rightarrow\mathbf{\Lambda}_{m-1}$,
respectively (cf. (\ref{Sm,Lambdam}) and (\ref{qm})). Fix a monomorphism
$j_{\mathbf{k}}:\mathbf{k}\hookrightarrow H_m$ such that
$j(H_{m-1})\cap\mathbf{k}=\{0\}$,
i. e. we have a direct sum decomposition of $H_m$ together with embeddings of summands
\begin{equation}\label{Hm decomp}
H_m=H_{m-1}\oplus\mathbf{k},\ \ \
H_{m-1}\overset{j}\hookrightarrow H_m\overset{j_{\mathbf{k}}}\hookleftarrow\mathbf{k}.
\end{equation}
This decomposition induces a direct sum decomposition of $\mathbf{\Lambda}$ together with projections
\begin{equation}\label{decomp of Lambda m}
\mathbf{\Lambda}_m=\mathbf{\Lambda}_{m-1}\oplus\Hom(\mathbf{k},H^\vee_{m-1}\otimes S^2V^\vee),\ \ \
\mathbf{\Lambda}_{m-1}\overset{pr'}\leftarrow\mathbf{\Lambda}_m\overset{pr''}\to
\Hom(\mathbf{k},H^\vee_{m-1}\otimes S^2V^\vee).
\end{equation}
Now the equations of $Z_m$ in $(\mathbf{S}_m^\vee)^0\times\mathbf{\Phi}_m$ are
\begin{equation}\label{eqns on mathcal A}
\mathcal{A}:=q_m(\phi^\vee\circ D\circ\phi)=0.
\end{equation}
Next, consider the diagram (\ref{diag DC}) twisted by $\mathcal{O}_{\mathbb{P}^3}(1)$, in which we substitute $m-1$ for
$m$, set $B=D^{-1}$ and put $s_z(j)$ instead of $\rho_{\xi,A}$ and $\phi\circ j$ instead of
$\widetilde{C}$, respectively.
Proceeding to sections in this diagram and, respectively, to sections in the diagram  (\ref{display tH}) we see that
the condition
\begin{equation}\label{prns of A}
0=pr'(\mathcal{A}):=q_{m-1}((\phi\circ j)^\vee\circ D\circ(\phi\circ j))=b(z,j)\circ e(z)
\end{equation}
is automatically satisfied,
where $e(z)$ is a homomorphism $e(z)=h^0(s_z(j)):H_{m-1}\to H^0(E_{2m}(B)(1))$. (Clearly, the vanishing of
$pr'(\mathcal{A})$ can be equivalently rewritten as the condition that ${}^\sharp\phi\circ j$ embeds $H_{m-1}$ in
$V(z,j)$.)
Hence the equations
(\ref{eqns on mathcal A}) are equivalent to the equations
\begin{equation}\label{2prns of A}
pr''(\mathcal{A})=b(z,j)\circ c(z)\circ{}^\sharp\phi\circ j_{\mathbf{k}}=0,
\end{equation}
which in view of the definition (\ref{V(z,j)}) mean that
$$
{}^\sharp\phi|_{\mathbf{k}}\subset V(z,j)
$$
Thus, since the point $\lambda_{(j)}(z)$ is given, so that the points $D$ and $\phi\circ j$ are determined by
$\lambda_{(j)}(z)$ (see (\ref{lambda m})), it follows that the point $(D,\phi)\in\lambda_{(j)}(z)^{-1}(\lambda_{(j)}(z))$
is determined by the data ${}^\sharp\phi|_{\mathbf{k}}$.
Hence, the above inclusion implies that $\lambda_{(j)}(z)^{-1}(\lambda_{(j)}(z))\simeq V(z,j)$.

(ii)-(iii) follow from (i).

Note that the above argument can be illustrateded by the diagram
\begin{equation}\label{diag V(z,j)}
\xymatrix{ & & H_m\ar@{=}[rr]\ar@{>->}[dd] & & H_m\ar@{>->}[dd]^{{}^\sharp B} &  \\
H_{m-1}\ar@{^(->}[r]^-{j}\ar@{^(->}[drr]_-{{}^\sharp\phi\circ j}\ar@{=}[dd] & H_m\ar[dr]^-{{}^\sharp\phi} & &
\ \mathbf{k}\ar@{_(->}[ll]_-{j_{\mathbf{k}}\ \ \ \ \ }\ar[dl]^{{}^\sharp\phi|_{\mathbf{k}}} & & \\
& 0\ar[r] & V(z,j)\ar[rr]\ar@{>>}[dd] & & H_m^\vee\otimes\wedge^2V^\vee\ar[r]\ar@{>>}[dd]^{c_D} &
H_{m-1}^\vee\otimes S^2V^\vee\ar@{=}[dd]\\
H_{m-1}\ar@{^(->}[drr]\ar@{^(->}[drrrr]^{\ \ \ \ \ e(z)} & & & & & \\
& 0\ar[r] & \ker b(z,j)\ar[rr] & & H^0(E_{2m}(D^{-1})(1))\ar[r]^{b(z,j)} & H_{m-1}^\vee\otimes S^2V^\vee.}
\end{equation}
\end{proof}

\begin{remark}\label{fibre in Phi m}
Note here that, as it follows from the proof of this Lemma, for $z=(D,\phi)\in Z$
the fiber
$V(z,j)=\lambda_{(j)}^{-1}(\lambda_{(j)}(z))\subset H_m^\vee\otimes\wedge^2V^\vee$
of the morphism $\lambda_{(j)}$
naturally lies in
$\{D\}\times\mathbf{\Phi}_m$
via the embedding
$j^*_\mathbf{k}:H_m^\vee\otimes\wedge^2V^\vee\hookrightarrow\Hom(H_m,H_m^\vee)\otimes\wedge^2V^\vee=\mathbf{\Phi}_m
=\{D\}\times\mathbf{\Phi}_m$
induced by the embedding
$j_\mathbf{k}:\mathbf{k}\hookrightarrow H_m$.
\end{remark}

\begin{lemma}\label{codim RZ*}
Consider the set $R_{Z}=\{z=(D,\phi)\in Z\ |\ {\rm rank}\ s(z)\le m-2\}$
where the homomorphism
$s(z)=c_D\circ{}^\sharp\phi:H_m\to W_{5m}$
is defined for $z=(D,\phi)$ in (\ref{diag n+1}). Then
$$
{\rm codim}_{Z}R_{Z}\ge2.
$$
\end{lemma}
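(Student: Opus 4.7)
The plan is to bound $\dim R_Z$ via an incidence correspondence over the Grassmannian $G(2, H_m)$. First, I verify that the generic rank of $s(z) = c_D \circ {}^\sharp\phi$ on $Z$ equals $m$, so that $R_Z$ is a proper closed subscheme. By Proposition~\ref{part of inductn step}(iii) there exists a point $z \in Z \cap \tau_h(W_{\mathbf{S\Phi}})$ at which $D = \bigoplus_i D_i$, $\phi = \bigoplus_i \phi_i$, and $E_{2m}(D^{-1}) = \bigoplus_i E_2(D_i^{-1})$; the morphism $s_z$ decomposes as $\bigoplus_i s_i$, where each $s_i$ is a nonzero section of $E_2(D_i^{-1})(1)$ vanishing on $L(D_i,\phi_i)$ by (\ref{cokers}). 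Consequently $s(z) = h^0(s_z(1))$ is injective, so $\mathrm{rank}\, s(z) = m$ and $R_Z \subsetneq Z$.

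Second, I introduce
\[
\widetilde R := \bigl\{\, (z, U) \in Z \times G(2, H_m) \ \bigm|\ {}^\sharp\phi(U) \subset \mathrm{im}({}^\sharp D^{-1})\,\bigr\}
\]
with projections $p_1 : \widetilde R \to Z$ and $p_2 : \widetilde R \to G(2, H_m)$. Since $U \subset \ker s(z)$ iff ${}^\sharp\phi(U) \subset \ker c_D = \mathrm{im}({}^\sharp D^{-1})$, the image of $p_1$ equals $R_Z$ with fibres $G(2, \ker s(z))$ of nonnegative dimension, so $\dim R_Z \le \dim \widetilde R$. For fixed $U$ with basis $u_1, u_2$, the fibre $p_2^{-1}(U)$ is cut out of $Z$ by the two vector-valued conditions $c_D({}^\sharp\phi(u_i)) = 0$ in $W_{5m}$. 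I would show that for a generic $U$ these conditions cut $Z$ in codimension at least $2m-2$, yielding $\dim p_2^{-1}(U) \le \dim Z - (2m-2)$; together with $\dim G(2, H_m) = 2(m-2)$ this would give $\dim \widetilde R \le \dim Z - 2$, and hence $\mathrm{codim}_Z R_Z \ge 2$.

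The hard part will be the codimension estimate $\dim p_2^{-1}(U) \le \dim Z - (2m-2)$. For fixed generic $D$ the conditions $c_D({}^\sharp\phi(u_i)) = 0$ amount to $10m$ linear equations on $\phi$, but they depend on $D$ through $D^{-1}$, and must be intersected with the quadratic equations (\ref{eq1 of Zm})--(\ref{eq2 of Zm}) defining $Z$ in $(\mathbf{S}_m^\vee)^0 \times \mathbf{\Phi}_m$. A natural route is to fix a generic $D$ in the dominant image of $p_m|_Z$ given by Proposition~\ref{part of inductn step}(ii), use the integrality and expected dimension $m^2 + 5m$ of the fibre $Z_D$, verify that the $10m$ linear conditions impose codimension at least $2m-2$ on $Z_D$ by exhibiting at least $2m-2$ of them that are independent at a suitable test point (e.g.\ one in the t'Hooft stratum constructed in Section~\ref{Zm and tH}), and then transfer the estimate to all of $Z$ via the local complete intersection property of $Z^0$ from Proposition~\ref{part of inductn step}(i).
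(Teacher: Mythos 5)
Your architecture — the incidence set $\widetilde R\subset Z\times G(2,H_m)$, the equivalence $U\subset\Ker s(z)\Leftrightarrow{}^\sharp\phi(U)\subset\im({}^\sharp(D^{-1}))$, and the count $\bigl(\dim Z-(2m-2)\bigr)+2(m-2)=\dim Z-2$ — is internally consistent, and your opening verification that $s(z)$ is injective at a point of $Z\cap\tau_h(W_{\mathbf{S\Phi}})$ is exactly the computation the paper performs around (\ref{in S0Phi*}) (though it is not needed for the codimension bound). Two remarks on the frame before the main objection: the fibre estimate must hold for \emph{every} $U$, not a generic one, to bound $\dim\widetilde R$; this is repairable, since $GL(H_m)$ acts on $Z$ and transitively on $G(2,H_m)$, so each component of $\widetilde R$ is either empty or dominates $G(2,H_m)$. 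Similarly, bounding the fibre of $p_2^{-1}(U)$ over a \emph{generic} $D$ does not bound $\dim p_2^{-1}(U)$ itself, since components lying over a proper subvariety of $\mathbf{S}_m^\vee$ escape that estimate.

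The genuine gap is in what you yourself flag as the hard part, and the method you propose for it does not work. Exhibiting $2m-2$ of the linear forms $c_D({}^\sharp\phi(u_i))$ whose differentials are independent at one test point of $Z_D$ controls only the component of $Z_D\cap L$ passing through that test point; it says nothing about other components, which can have codimension $1$ in $Z_D$ even when the forms are independent elsewhere (an irreducible variety may contain a divisor on which arbitrarily many globally independent linear forms all vanish). Krull's theorem bounds the codimension of the intersection from \emph{above} by the number of equations, never from below, so a pointwise transversality check cannot yield the inequality you need for the whole locus. To close this you would need independence of enough differentials at every point of the intersection, i.e.\ precisely the global geometric control that is the content of the lemma. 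The ingredient you are missing is the fibration the paper uses: by Lemma \ref{fibre of lambda} the morphism $\lambda_j:Z\to\mathbf{S}_{2m-1}$ has fibres identified with the explicit linear spaces $V(z,j)\subset\{D\}\times\mathbf{\Phi}_m$ of dimension $\ge2m$ (parametrizing the single column ${}^\sharp\phi|_{\mathbf{k}}$, see (\ref{V(z,j)}) and Remark \ref{fibre in Phi m}), and inside such a fibre the rank-drop locus is contained in the linear subspace $c_D^{-1}(\im s(z))$ of dimension at most $\rk s(z)+\dim\Ker c_D\le(m-2)+m=2m-2$; the codimension-$2$ bound then holds fibrewise with no transversality to verify. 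Your incidence correspondence is compatible with this picture (its expected fibre dimensions match), but without the $V(z,j)$ description the key estimate $\dim p_2^{-1}(U)\le\dim Z-(2m-2)$ remains unproved.
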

\begin{proof}
Fix a monomorphism $j:H_{m-1}\hookrightarrow H_m$ satisfying the conditions of Lemma \ref{Z*(j)}, so that
$Z(j)$ is nonempty, hence dense in $Z$.
and take any point $z=(D,\phi)\in Z$.
From the definition of the set $V(z,j)$ (see (\ref{V(z,j)})) it follows that, for $z\in R_{Z}$, one has a natural
inclusion $c_D^{-1}({\rm im}\ s(z))\subset \lambda_j^{-1}(\lambda_j(z))\subset V(z,j)$
(cf. the diagram (\ref{diag V(z,j)}), so that the diagram
(\ref{diag n+1}) and the definition of $R_{Z}$ imply
$\dim c_D^{-1}({\rm im}\ s(z))\le{\rm rank}\ s(z)+m\le2m-2$. Hence by Lemma \ref{fibre of lambda}(ii)
${\rm codim}_{\lambda_j^{-1}(\lambda_j(z))}c_D^{-1}({\rm im}\ s(z))\ge2$.
Thus we have an inclusion
$R_{Z}\simeq\underset{z\in Z}\cup c_D^{-1}({\rm im}\ s(z))\subset
\underset{z\in Z}\cup\lambda_j^{-1}(\lambda_j(z))=Z$,
which together with the last inequality yields the Lemma.
\end{proof}

\end{sub}

\vspace{0.5cm}

\section{Complete family of t'Hooft sheaves with $c_2=2m-1$. End of the proof of Theorem
\ref{Irreducibility of Zm}}\label{gen pos}

\vspace{0.5cm}

In this section we construct a complete $(10m-1)$-dimensional family $T$ of t'Hooft $(2m-1)$-instsantons and their
degenerations (we call these degenerations {\it t'Hooft sheaves}). The family $T$ will be used to prove that the
variety $Z$ studied in the previous two sections coincides with $Z_m$. This finishes the proof of Theorem
\ref{Irreducibility of Zm}.

\begin{sub}{\bf Construction of a complete family $\mathbf{E}\to\mathbf{T}$ of $(2m-1)$-t'Hooft
sheaves.}\label{Definiton of T}
\rm
\vspace{0.3cm}

Consider the subvariety $I^{tH}_{2m-1}\subset I_{2m-1}$ of t'Hooft $(2m-1)$-instantons.
We first recall the following two properties of an arbitrary t'Hooft instanton $[E]\in I^{tH}_{2m-1},\ m\ge1,$ -
see \cite{BT} and \cite{NT}:

(i) $h^0(E(1))\le2$;

(ii) for any section $0\ne s\in H^0(E(1))$ the zero scheme $Z_s=(s)_0$ is locally contained in a smooth surface;

(iii)$(Z_s)_{red}$ is a disjoint union of lines $l_1,...,l_r,\ 1\le r\le2m,$ and
$\mathcal{O}_{Z_s}=\underset{i=1}{\overset{r}\oplus}\mathcal{O}_{Z_i}$, where for each $i,\ 1\le i\le r,$ the scheme
$Z_i$ has a filtration by subschemes
$l_i=Z_{1i}\subset Z_{1i}\subset...\subset Z_{m_i,i}=Z_i$
for some $m_i\ge1,$ with $\Supp(Z_{ji})=l_i$ such that,
if $m_i\ge2$, then
\begin{equation}\label{mult str}
\mathcal{O}_{Z_{j-1,i}}=\mathcal{O}_{Z_{ji}}/\mathcal{O}_{l_i},\ 2\ge j\ge m_i;
\end{equation}

For a given integer $d\ge1$ consider the Hilbert scheme $\mathcal{H}_d:={\rm Hilb}^dG$ of 0-dimensional subschemes of
length $d$ of the Grassmannian $G=G(1,3)$ of lines in $\mathbb{P}^3$, and let
$\Gamma_{\mathcal{H}_d}\subset G\times\mathcal{H}_d$ be the universal family with projections
$G\overset{p_d}\leftarrow\Gamma_{\mathcal{H}_d}\overset{q_d}\to\mathcal{H}_d$. For a given point $x\in\mathcal{H}_d$
we denote by $Y_x$ the corresponding 0-dimensional subscheme $p_d(q_d^{-1}(x))$ of $G$.
We call a point $x\in\mathcal{H}_d$ {\it curvilinear} if there exists an integer $b\ge1$, a partition
$d=d_1+...+d_b,\ d_i\ge1$, and points $x_i\in\mathcal{H}_{d_i},\ 1\le i\le b,$ such that\\
(a) for each $i,\ 1\le i\le b,$ the subscheme $Y_{x_i}\subset G$ is isomorphic to
${\rm Spec}(\mathbf{k}[t]/(t^{d_i+1}))$, and\\
(b) $Y_x$ is a disjoint union $Y_x=Y_{x_1}\sqcup...\sqcup Y_{x_b}$.

Set $\mathcal{H}_d^{curv}:=\{x\in\mathcal{H}_d\ |\ x\ \text{is curvilinear}\}$. It is well known (and easily seen)
that $\mathcal{H}_d^{curv}$ is an open smooth $4d$-dimensional subscheme of $\mathcal{H}_d$.
Next, let $\Gamma\subset\mathbb{P}^3\times G$ be the graph of incidence, together with projections
$\mathbb{P}^3\overset{p}\leftarrow\Gamma\overset{q}\to G$.
From the above properties (i)-(iii) we deduce now the following lemma.
\begin{lemma}\label{curv}
For each $[E]\in I^{tH}_{2m-1} $ and $0\ne s\in H^0(E(1))$, there exists a curvilinear point
$x=x([E],s)\in\mathcal{H}_{2m}^{curv}$ such that $Z_s\overset{sets}=p(q^{-1}(Y_x))$
and the scheme structure of $Z_s$
%as of the zero-scheme of $s$
coincides with that given by formula
\begin{equation}\label{scheme str of Zs}
\mathcal{O}_{Z_s}=p_*q^*\mathcal{O}_{Y_x}.
\end{equation}
\end{lemma}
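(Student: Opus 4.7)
The plan is to decompose $Z_s=\sqcup_{i=1}^r Z_i$ as in property (iii) and to construct $x$ as a disjoint union $\sqcup_i x_i$ of curvilinear length-$m_i$ points of $G$ supported at the distinct points $[l_i]$. Since $c_1(E(1))=2$ and $c_2(E(1))=2m$, the total length of $Z_s$ is $2m=\sum_i m_i$, and as the lines $l_i$ are pairwise disjoint the resulting $Y_x$ is a genuine disjoint union of curvilinear pieces, hence a point of $\mathcal{H}_{2m}^{curv}$. The set-theoretic identification $p(q^{-1}(Y_x))=\cup_i l_i=(Z_s)_{\rm red}$ is then automatic, so the real content lies in constructing $Y_{x_i}$ with the correct multiplicity and in verifying the scheme-theoretic equality $\mathcal{O}_{Z_s}=p_*q^*\mathcal{O}_{Y_x}$.

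To construct $Y_{x_i}$, use property (ii) to fix a germ of a smooth surface $S_i\supset Z_i$. The fact that the graded pieces of the filtration (\ref{mult str}) are \emph{untwisted} copies of $\mathcal{O}_{l_i}$, compared with the general formula $\mathcal{I}^{j-1}_{l_i,S_i}/\mathcal{I}^j_{l_i,S_i}\simeq(N^*_{l_i/S_i})^{\otimes(j-1)}$, forces the normal bundle $N_{l_i/S_i}$ to be the trivial line bundle $\mathcal{O}_{l_i}$. Hence $H^0(N_{l_i/S_i})=\mathbf{k}$ and $H^1(N_{l_i/S_i})=0$, so deformations of $l_i$ inside $S_i$ are unobstructed and form a smooth one-dimensional germ $C_i\subset G$ through $[l_i]$. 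Moreover, since $N_{l_i/S_i}\neq\mathcal{O}_{l_i}(1)$, the surface $S_i$ is not a plane near $l_i$, and one checks that the universal family of lines over $C_i$ foliates an analytic neighborhood of $l_i$ in $S_i$. Take $Y_{x_i}$ to be the length-$m_i$ subscheme of the smooth curve $C_i$ supported at $[l_i]$; smoothness of $C_i$ gives $\mathcal{O}_{Y_{x_i}}\simeq\mathbf{k}[t]/(t^{m_i})$, which is exactly the required curvilinear form.

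For the scheme-theoretic equality $\mathcal{O}_{Z_s}=p_*q^*\mathcal{O}_{Y_x}$, observe that locally above $l_i$ the restriction of $p:\Gamma\to\mathbb{P}^3$ to $q^{-1}(C_i)$ is an isomorphism onto a neighborhood of $l_i$ in $S_i$ by the foliation property of $C_i$. Consequently $q^{-1}(Y_{x_i})$ is identified under this isomorphism with the $m_i$-th infinitesimal neighborhood of $l_i$ in $S_i$, and its structure sheaf pushes forward to $\mathcal{O}_{Z_i}$ by property (iii). Summing over $i$ yields the claim. The main obstacle is precisely this last identification of schemes: one must verify that the local pullback-pushforward reconstructs the \emph{correct} multiplicity structure on $Z_i$, rather than a thickening of some other order; this relies decisively on the coincidence between the trivialization of $N_{l_i/S_i}$ and the smoothness of the moduli germ $C_i$, which together pin down $q^{-1}(Y_{x_i})$ as the canonical $m_i$-th neighborhood of $l_i$ in $S_i$.
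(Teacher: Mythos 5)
Your reduction to a single multiple line, the count $\sum_i m_i=2m$, and the deduction that the untwisted graded pieces force $N_{l_i/S_i}\simeq\mathcal{O}_{l_i}$ are all fine and match the opening of the paper's argument. The gap is in the step that actually produces a point of $\mathcal{H}_{2m}^{curv}$. The curve $C_i$ you construct parametrizes deformations of $l_i$ \emph{as a subscheme of} $S_i$; its members are divisors on the surface germ $S_i$, and there is no reason for them to be lines in $\mathbb{P}^3$. Property (ii) only supplies \emph{some} germ of a smooth surface containing $Z_i$, and a smooth surface germ along $l_i$ with $N_{l_i/S_i}\simeq\mathcal{O}_{l_i}$ need not contain any line other than $l_i$ (triviality of the normal bundle rules out the plane but does not make $S_i$ ruled; only the first-order part of a deformation in $S_i$ is guaranteed to be a tangent vector to $G$). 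Consequently $C_i\not\subset G$ in general, the ``length-$m_i$ subscheme of $C_i$'' is not a subscheme of the Grassmannian, and the assertion that ``the universal family of lines over $C_i$ foliates a neighborhood of $l_i$ in $S_i$'' — on which your final scheme-theoretic identification decisively relies — is exactly the unproved content of the lemma.

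The paper closes this gap with two inputs you do not use. First, the Ferrand construction applied to the triple $0\to\mathcal{O}_l\to\mathcal{O}_{Z_2}\to\mathcal{O}_l\to0$ shows that the surjection $N^\vee_{l/\mathbb{P}^3}\twoheadrightarrow\mathcal{O}_l$ forces the \emph{double} structure $Z_2$ onto a smooth quadric $Q$; the ruling of $Q$ is what supplies genuine lines, and the two affine equations of $Q$ give a single well-defined function ($x_3/x_0=x_2/x_1$ on $Q$) whose powers generate $\mathcal{I}_{Z_s}$ on every local smooth surface $S_k$. Second, rather than deforming $l$ inside a surface, the paper builds a morphism $\pi_Z:Z_s\to\mathrm{Spec}(\mathbf{k}[t]/(t^{2m+1}))$ from this function and verifies directly, via the flatness criterion of \cite[Lemma 2.13]{HL} applied to the filtration (\ref{triples for Zs}), that $\pi_Z$ is a flat family of subschemes with Hilbert polynomial $t+1$; the classifying map of that family is the desired embedding $\tau_{2m}\hookrightarrow G$, and $p:q^{-1}(Y_x)\to Z_s$ is an isomorphism by construction. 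To repair your argument you would need to replace $C_i$ by an honest arc of lines — e.g.\ the ruling of the quadric through $Z_{2i}$ — and then still verify that the $m_i$-th neighborhood of $[l_i]$ along that arc pushes forward to $\mathcal{O}_{Z_i}$ and not to some other thickening; that verification is precisely the flatness computation the paper performs.
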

\begin{proof}
Since by (ii) the support of $Z_s$ is a disjoint union of lines; hence from the definition of
curviliear schemes we deduce that it is enough to consider the case when $Z_s$ is a single line, say, $l$ with a
nonreduced structure, i.e. there is a filtration of $Z_s$ by subschemes
\begin{equation}\label{subschemes}
l=Z_1\subset Z_2\subset...\subset Z_{2m}=Z_s,\ m\ge2,
\end{equation}
such that the following triples are exact
(see (\ref{mult str})):
\begin{equation}\label{triples for Zs}
0\to\mathcal{O}_l\to
\mathcal{O}_{Z_2}\to\mathcal{O}_l\to0\ ,\ .\ .\ . \ ,\ \
0\to\mathcal{O}_l\to\mathcal{O}_{Z_{2m}}\to\mathcal{O}_{Z_{2m-1}}\to0.
\end{equation}
From the first triple in (\ref{triples for Zs}), (ii) and the Ferrand construction \cite[\S 1]{BF} it follows that
$\mathcal{O}_l$ is a factor-sheaf of the conormal sheaf $N_{l/\mathbb{P}^3}\simeq2\mathcal{O}_{\mathbb{P}^3}$ and that
the surjection $N_{l/\mathbb{P}^3}\twoheadrightarrow\mathcal{O}_l$ gives a double structure on $l$ coinciding with
the scheme structure of $Z_2$. This surjection implies that $Z_2$ lies as a scheme on a smooth quadric, say, $Q$
passing through $l$. Choose homogeneous coordinates $(x_0:x_1:x_2:x_3)$ on $\mathbb{P}^3$ such that\\
(1) $l=\{x_2=x_3=0\},\ Q=\{x_0x_2-x_1x_3=0\}$, and\\
(2) let $\mathbb{P}^3=U_0\cup U_1$ be the open cover of $\mathbb{P}^3$ by the sets $U_i=\{x_i\ne0\},\ i=0,1;$ then the
ideal of $Z_2\cap U_i$ in $\mathbf{k}[U_i]$ is generated by $x_2/x_0$ and $(x_3/x_0)^2$ for $i=0$ and, respectively, by
 $x_3/x_1$ and $(x_2/x_1)^2$ for $i=1$.

Let $S_1,..,S_c$ be quasiprojective smooth surfaces in $\mathbb{P}^3$ such that the sets
$Z_{(k)}:=Z_s\cap S_k,\ k=1,...,c,$ constitute an open cover of $Z_s$. (Such surfaces exist because of (ii).) Set
$Z_{(ik)}:=Z_{(k)}\cap U_i,\ i=0,1,\ k=1,...,c.$
From (1)-(iii) and (1)-(2) follows the property\\
(3) for $k=1,...,c$ the ideal $I_{Z_{(ik)}}$ of $Z_{(ik)}$ in $\mathcal{O}[U_i\cap S_k]$ is generated by $(x_3/x_0)^{2m+1}$ for $i=0$ and,
respectively, by $(x_2/x_1)^{2m+1}$ for $i=1$.\\
Since by (1) the elements $x_3/x_0\in\mathcal{O}[Z_{(0k)}]$ and $x_2/x_1\in\mathcal{O}[Z_{(1k)}]$
coincide in $\mathcal{O}[Z_{(0k)}\cap Z_{(1k)}],\ k=1,...,c,$ it follows that there are well defined homomorphisms
$\mathbf{k}[t]/(t^{2m+1})\to\mathcal{O}[Z_{(ik)}]:\ 1$mod$(t^{2m+1})\mapsto 1$mod$I_{Z_{(ik)}}$ and
$t$mod$(t^{2m+1})\mapsto(x_3/x_0)$mod$I_{Z_{(0k)}}$ for $i=0$, respectively,
$t$mod$(t^{2m+1})\mapsto(x_2/x_1)$mod$I_{Z_{(1k)}}$ for $i=1$, which are compatible on $Z_{(0k)}\cap Z_{(1k)}$.
This defines a morphism
$\pi_Z:Z_s\to{\rm Spec}(\mathbf{k}[t]/(t^{2m+1}))$. Set $\tau_i:={\rm Spec}(\mathbf{k}[t]/(t^{i+1})),\ i=0,...,2m$.
From the definition of the morphism $\pi_Z$ and exact triples
(\ref{triples for Zs}) it follows that, for $i=2,..,2m,$ the (nilpotent) ideal sheaf
$\mathcal{I}_i:=\mathcal{I}_{\tau_{i-1},\tau_i}\subset\mathcal{O}_{\tau_i}$
satisfies the isomorphism
$mult:\mathcal{I}_i\otimes_{\mathcal{O}_{\tau_i}}\mathcal{O}_{Z_i}\overset{\simeq}\to
\mathcal{I}_{Z_i}:a\otimes\bar{1}\mapsto\pi_Z^*(a)$.
Hence, by \cite[Lemma 2.13]{HL} the morphism $\pi_Z$ is a flat family of lines over
$\tau_{2m}$,
so that it defines an embedding
$\tau_{2m}={\rm Spec}(\mathbf{k}[t]/(t^{2m+1}))\hookrightarrow G$,
i.e. a curvilinear point $x\in\mathcal{H}_{2m}$ such that
$p:\ q^{-1}(Y_x)\overset{\simeq}\to Z_s$ is an isomorphism. Lemma is proved.
\end{proof}

\begin{remark}\label{tH-curv}
One easily sees that $\mathcal{H}_{2m}^{tH-curv}:=\{x\in\mathcal{H}_{2m}^{curv}\ |\ x=x([E],s)$ for some
$[E]\in I^{tH}_{2m-1} $ and $0\ne s\in H^0(E(1))\}$ is a dense open subset of $\mathcal{H}_{2m}^{curv}$.
We thus consider its closure
$\overline{\mathcal{H}_{2m}^{tH-curv}}=\overline{\mathcal{H}_{2m}^{curv}}$
in ${\Hilb}^{2m}G$. Fix a desingularization $\mathcal{H}$ of $\overline{\mathcal{H}_{2m}^{tH-curv}}$.
$\mathcal{H}$ is a smooth integral scheme, and there is thea graph of incidence
$\Gamma_{\mathcal{H}}\subset G\times\mathcal{H}$
with projections
$G\overset{p_\mathcal{H}}\leftarrow\Gamma_{\mathcal{H}}\overset{q_\mathcal{H}}\to\mathcal{H}$.
\end{remark}

Consider the subcheme
$\widetilde{\mathbf{L}}_\mathcal{H}=\Gamma_\mathcal{H}\times_{G\times\mathcal{H}}\Gamma\times\mathcal{H}$
of
$\Gamma\times\mathcal{H}$ and set
$$
\mathbf{L}_\mathcal{H}:=pr_1(\widetilde{\mathbf{L}}_\mathcal{H}),
$$
where $pr_1:\Gamma\times\mathcal{H}\to\mathbb{P}^3\times\mathcal{H}$
is the projection. We endow $\mathbf{L}_\mathcal{H}$
with the structure of a subscheme of $\mathbb{P}^3\times\mathcal{H}$ via setting
$$
\mathcal{O}_{\mathbf{L}_\mathcal{H}}:=pr_{1*}(\mathcal{O}_{\widetilde{\mathbf{L}}_\mathcal{H}}).
$$
Since the sheaf $pr_{1*}(\mathcal{O}_{\widetilde{\mathbf{L}}_\mathcal{H}})$ is clearly flat over $\mathcal{H}$,
in order to prove that the above definition is consistent, one has to check it fibrewise with respect to the projection
$p_L:\mathbf{L}_\mathcal{H}\to\mathcal{H}$.
Thus, taking any point $y\in\mathcal{H}$ and the corresponding 0-dimensional
scheme $Z=Z_y$ of $G$, respectively, the subscheme $\widetilde{L}_y=q^{-1}(Z_y)$ of $P^3\times G$, we have to check
that the sheaf $p_*\mathcal{O}_{\Gamma_y}$ is the structure sheaf of a certain subscheme $L_y$ of $\mathbb{P}^3$
supported at $p(\widetilde{L}_y)$. Take any closed point $z\in Z_y$ and set $\tilde{l}=q^{-1}(z)$, respectively,
$l=p(\tilde{l})$.
Also, take an arbitrary point $\tilde{x}\in\tilde{l}$, respectively, $x=p(\tilde{x})\in l$.
Applying the functor $p_*$ to the composition of surjections
$\mathcal{O}_\Gamma\twoheadrightarrow\mathcal{O}_{L_y}\twoheadrightarrow
\mathcal{O}_{\tilde{l}}\twoheadrightarrow\mathbf{k}_{\tilde{x}}$
we obtain a surjection
$\mathcal{O}_{\mathbb{P}^3}=p_*\mathcal{O}_\Gamma\twoheadrightarrow p_*\mathbf{k}_{\tilde{x}}=\mathbf{k}_x$
as the composition
$\mathcal{O}_{\mathbb{P}^3}\overset{\epsilon}\to
p_*\mathcal{O}_{\tilde{L}_y}\twoheadrightarrow\mathbf{k}_x$.
Hence, by Nakayama's lemma $\epsilon$ is an epimorphism, as stated. Note that, by construction, the scheme
$L_y$ has a filtration by subschemes as in (\ref{subschemes})-(\ref{triples for Zs}):
\begin{equation}\label{subschemes2}
\emptyset=L_0=L_1\subset L_2\subset...\subset L_{2m}=L_y,\ \ \
\mathcal{O}_{L_{i-1}}=\mathcal{O}_{L_i}/ \mathcal{O}_{l_i},\ \ 1\le i\le2m,
\end{equation}
where $l_1,...,l_{2m}$ are lines in $\mathbb{P}^3$, not necessarily distinct, corresponding to closed points of the
scheme $Z_y$.

\begin{remark}\label{special tH bdls}
Consider the set $\mathcal{H}_s:=\{x\in\mathcal{H}_{2m}^{tH-curv}\ |\ x=x([E],s)$ for some
$[E]\in I^{tH}_{2m-1} $ with $h^0(E(1))\ge2\}$. $\mathcal{H}_s$ is a closed subset of $\mathcal{H}_{2m}^{tH-curv}$ and
it is well known (see, e.g., \cite{BT}) that the condition
$x([E],s)\in\mathcal{H}_s$ is equivalent to the condition that the scheme $Z_s=(s)_0$ lies on a smooth quadric in
$\mathbb{P}^3$. This is, in turn, equivalent to saying that the 0-dimensional subscheme $Y_x$ of $G$ lies one a
projective plane $\mathbb{P}^2$ in $\mathbb{P}^5=\Span(G)$ intersecting $G$ in a smooth conic (i.e. a general plane in
$\mathbb{P}^5$. Whence it follows that $\dim\mathcal{H}_s={\rm length}(Y_x)+\dim G(2,\mathbb{P}^5)=2m+9$. Respectively,
\begin{equation}\label{codim Hs}
\codim_{\mathcal{H}}\mathcal{H}_s=8m-(2m+9)=6m-9>2,\ \ \ \ m\ge2.
\end{equation}

\end{remark}

Now let $pr_2:\mathbb{P}^3\times\mathcal{H}$ be the projection and consider the flat over $\mathcal{H}$ sheaf
$\mathcal{I}_\mathbf{L}(1):=\mathcal{I}_{\mathbf{L},\mathbb{P}^3\times\mathcal{H}}\otimes
\mathcal{O}_{\mathbb{P}^3}(1)\boxtimes\mathcal{O}_\mathcal{H}$
and the relative $\Ext$-sheaf
$$
\mathbf{F}=\Ext^1_{pr_2}(\mathcal{I}_\mathbf{L}(1),\mathcal{O}_{\mathbb{P}^3}(-1)\boxtimes\mathcal{O}_\mathcal{H}).
$$
A standard computation using (\ref{subschemes2}) shows that the sheaf $\mathbf{F}$ satisfies the base change isomorphism
\begin{equation}\label{base-change y}
b_y:\mathbf{F}\otimes\mathbf{k}_y\overset{\simeq}\to
\Ext^1(\mathcal{I}_{L_y,P^3}(1),\mathcal{O}_{\mathbb{P}^3}(-1))\simeq\mathbf{k}^{2m},\ \ \ y\in\mathcal{H}.
\end{equation}
Hence $\mathbf{F}$ is a locally free $\mathcal{O}_\mathcal{H}$-sheaf of rank $2m$. We thus have a smooth integral
$(10m-1)$-dimensional scheme
$\mathbf{T}=\mathbf{Proj}(\mathbf{F}^\vee)$ with structure morphism
$p_\mathbf{T}:\mathbf{T}\to\mathcal{H}$ and the Grothendieck sheaf $\mathcal{O}_{\mathbf{T}/\mathcal{H}}(1)$.
In particular, $\mathbf{T}$ is a smooth variety of dimension
\begin{equation}\label{dim T}
\dim\mathbf{T}=\dim\mathcal{H}+\rk\mathbf{F}^\vee-1=8m+2m-1=10m-1.
\end{equation}
Moreover, let
$\mathbf{p_T}=id_{\mathbb{P}^3}\times p_\mathbf{T}:\mathbb{P}^3\times\mathbf{T}\to\mathbb{P}^3\times\mathcal{H}$
be the projection and set $\mathbf{L}_\mathbf{T}:=\mathbf{p_T}^{-1}(\mathbf{L})$.
On $\mathbb{P}^3\times\mathbf{T}$ there is a universal family of (classes of) extensions of sheaves
- see, e.g., \cite[Cor. 4.5]{L}:
\begin{equation}\label{univ ext}
0\to\mathcal{O}_{\mathbb{P}^3}(-1)\boxtimes\mathcal{O}_{\mathbf{T}/\mathcal{H}}(1)
\to\mathbf{E}\to\mathcal{I}_{\mathbf{L}_\mathbf{T}}(1)\to0,
\end{equation}
where $\mathcal{I}_{\mathbf{L}_\mathbf{T}}:=\mathcal{I}_{\mathbf{L}_\mathbf{T},\mathbb{P}^3\times\mathbf{T}}$.
By construction, for any closed point $t\in\mathbf{T}$ the sheaf $E_t=E|_{\mathbb{P}^3\times\{t\}}$
is a nontrivial extension of the form
\begin{equation}\label{ext Et}
0\to\mathcal{O}_{\mathbb{P}^3}(-1)\to E_t\to\mathcal{I}_{L_y}(1)\to0,\ y=p_T(t),
\end{equation}
hence

(i) $E_t$ is a stable rank-2 sheaf (i.e. $[E_t]\in M_{\mathbb{P}^3}(2;0,2,0)$), which  satisfies the condition
$h^0(E_y(1))>0$; furthermore, from (\ref{ext Et}) and (\ref{subschemes2}) it follows easily that

(ii) $h^0(E_t(-2))=0$;

(iii) there exists a dense open subset $\mathbf{T}'$ of $p_\mathbf{T}^{-1}(\mathcal{H}_{2m}^{tH-curv})$,
hence also of $\mathbf{T}$ such that, for $t\in\mathbf{T}'$, $E_t$ is locally free, i.e. $E_t$ is a t'Hooft bundle;

(iv) there exists a dense open subset  $\mathbf{T}''$ of  $\mathbf{T}'$ such that, for $t\in\mathbf{T}''$,
$h^0(E_t(1))=1$; furthermore, for any two distinct points $t,t'\in\mathbf{T}''$ one has $E_t\not\simeq E_{t'}$.

The properties (i)-(iv) mean that there is a well defined modular morphism
$\mathbf{f}:\mathbf{T}\to M_{\mathbb{P}^3}(2;0,2,0):t\mapsto[E_t]$ such that
\begin{equation}\label{im f}
\mathbf{f}(\mathbf{T})=\overline{I^{tH}_{2m-1}}
\end{equation}
is the closure of $I^{tH}_{2m-1}$ in $M_{\mathbb{P}^3}(2;0,2,0)$. Moreover, $f|_{\mathbf{T}^0}$ is injective. We thus
call the family $\mathbf{E}\to\mathbf{T}$ {\it the complete $(10m-1)$-dimensional family of t'Hooft sheaves}.

Note also that the property (iii) above implies that
\begin{equation}\label{dim supp}
\Supp\mathcal{E}xt^1_{\mathcal{O}_{\mathbb{P}^3\times\mathbf{T}}}(\mathbf{E},\mathcal{O}_{\mathbb{P}^3\times\mathbf{T}})
\subset\mathbb{P}^3\times\partial\mathbf{T},\ \ \ \partial\mathbf{T}:=\mathbf{T}\smallsetminus\mathbf{T}'.
\end{equation}
\begin{remark}
Assume that we are given a vector bundle $\mathbf{E}_B$ on $\mathbb{P}^3\times B$ such that,
(i) for each $b\in B$, $E_b=\mathbf{E}_B|_{\mathbb{P}^3\times\{b\}}$ is a t'Hooft bundle,
(ii) there is given a morphism
$u_B:\mathcal{O}_{\mathbb{P}^3}(-1)\otimes\mathcal{N}_B\to\mathbf{E}_B$
nonvanishing for any $b\in B$, where $\mathcal{N}_B$ is some invertible sheaf on $B$.  Then
$\coker u_B=\mathcal{O}_{\mathbb{P}^3}(1)\boxtimes\mathcal{O}_B\otimes\mathcal{I}_{\mathbf{L}_B,\mathbb{P}^3\times B}$
where $\mathbf{L}_B=\underset{b\in B}\cup Z_b$ is a union of subschemes $Z_b$ of $\mathbb{P}^3$ described in
Lemma \ref{curv}. We thus have an extension
$0\to\mathcal{O}_{\mathbb{P}^3}(-1)\boxtimes\mathcal{O}_B\overset{u_B}\to\mathbf{E}_B\to
\mathcal{O}_{\mathbb{P}^3}(1)\boxtimes\mathcal{O}_B\otimes\mathcal{I}_{\mathbf{L}_B,\mathbb{P}^3\times B}\to0$.
It follows in a standard way from \cite{L} that there exists a morphism $r:B\to\mathbf{T}'$ such that the last
extension is obtained via applying the functor $(id_{\mathbb{P}^3}\times r)^*$ to the triple (\ref{univ ext}).
In particular, applying this remark to the bundle $\mathbf{E}_Z$ on $\mathbb{P}^3\times Z(j)$ and the morphism
$u$ in (\ref{global u}), i.e. taking $B=Z(j)$ and $u_B=u$, we obtain the morphism
$r=r_\mathbf{T}:Z(j)\to\mathbf{T}'$ such that
\begin{equation}\label{rZ}
(id_{\mathbb{P}^3}\times r_\mathbf{T})^*\mathbf{E}=\mathbf{E}_Z,\ \ \ \ \
r_\mathbf{T}^*\mathcal{O}_{\mathbf{T}/\mathcal{H}}(1)=\mathcal{O}_{Z(j)}.
\end{equation}

\end{remark}

\end{sub}

\vspace{5mm}

\begin{sub}{\bf A family of nets of quadrics $\mathbf{A}$ associated to the family $\mathbf{E}\to\mathbf{T}$.}
\label{Nets for T}
\rm

\vspace{5mm}

In this subsection we construct associated to $\mathbf{E}\to\mathbf{T}$ a family of nets of quadrics which will be used
below. For this we first note that, by (\ref{subschemes2}) and (\ref{ext Et}), we obtain the following equalities for
a sheaf $E_t$ in (\ref{ext Et}):\\
$
\dim\Ext^1(E_t(-4),\omega_{\mathbb{P}^3})=\dim\Ext^2(E_t,\omega_{\mathbb{P}^3})=4m-4,\ \
$
$
\dim\Ext^1(E_t(-3),\omega_{\mathbb{P}^3})=\dim\Ext^2(E_t(-1),\omega_{\mathbb{P}^3})=2m-1,\ \
$
$
\Ext^i(E_t,\omega_{\mathbb{P}^3})=\Ext^i(E_t(-1),\omega_{\mathbb{P}^3})=
$
$
\Ext^{3-i}(E_t(-3),\omega_{\mathbb{P}^3})=\Ext^{3-i}(E_t(-4),\omega_{\mathbb{P}^3})=0,\ i\ne2,
$
and
$
\Ext^i(E_t(-2),\omega_{\mathbb{P}^3})=0,\ i\ge0,\\
$
where $t\in\mathbf{T}$ is an arbitrary point and $\omega_{\mathbb{P}^3}=\mathcal{O}_{\mathbb{P}^3}(-4)$.
Therefore, applying the functor
$\Ext^i_\pi(-,\omega_\pi)$ to the sheaves
$\mathbf{E}(-j):=\mathbf{E}\otimes\mathcal{O}_{\mathbb{P}^3}(-j)\boxtimes\mathcal{O}_\mathbf{T},\ 0\le j\le4$, where
$\pi:\mathbb{P}^3\times\mathbf{T}\to\mathbf{T}$ is the projection,
the sheaf $\mathbf{E}$ is defined in (\ref{univ ext}) and
$\omega_\pi=\omega_{\mathbb{P}^3}\boxtimes\mathcal{O}_\mathbf{T}$,
and using base change for relative $\Ext$-sheaves we obtain that the sheaves
\begin{equation}\label{Def Fi}
\mathbb{F}_i:=\Ext^2_\pi(\mathbf{E}(-i),\omega_\pi),\ \ \
\mathbb{G}_i:=\Ext^1_\pi(\mathbf{E}(i-4),\omega_\pi),\ \ \ i=0,1,\ \ \
\end{equation}
are locally free $\mathcal{O}_\mathbf{T}$-sheaves of ranks, respectively,
\begin{equation}\label{rk Fi}
\rk\mathbb{F}_0=\rk\mathbb{G}_0=4m-4,\ \ \ \rk\mathbb{F}_1=\rk\mathbb{G}_1=2m-1,
\end{equation}and
\begin{equation}\label{Ext-sheaves vanish}
\Ext^i_\pi(\mathbf{E},\omega_\pi)=\Ext^i_\pi(\mathbf{E}(-1),\omega_\pi)=
\Ext^{3-i}_\pi(\mathbf{E}(-3),\omega_\pi)=\Ext^{3-i}_\pi(\mathbf{E}(-4),\omega_\pi)=0,\ i\ne2,
\end{equation}
$$
\Ext^i_\pi(\mathbf{E}(-2),\omega_\pi)=0,\ i\ge0,
$$
Similarly, we obtain that $\mathbb{H}:=R^1\pi_*(\mathbf{E}(-1))$ is a locally free $\mathcal{O}_\mathbf{T}$-sheaf
of rank
\begin{equation}\label{rk H}
\rk\mathbb{H}=2m-1.
\end{equation}
Using (\ref{ext Et}) we also see that the sheaf $\mathbb{H}$ duality commutes with the base change. Hence, there is a
relative Serre-Grothendieck duality isomorphism (see, e.g., \cite{K})
\begin{equation}\label{rel duality}
SD: \mathbb{F}_1\overset{\simeq}\to\mathbb{H}^\vee.
\end{equation}
Next, the local-to-relative spectral sequence $E^{p,q}_2=
R^p\pi_*\mathcal{E}xt^q_{\mathcal{O}_{\mathbb{P}^3\times\mathbf{T}}}(\mathbf{E}(-3),\omega_\pi)
\Rightarrow\Ext^{p+q}_\pi(\mathbf{E}(-3),\omega_\pi)$
gives an exact sequence
$0\to R^1\pi_*(\mathbf{E}^\vee(-1))\to\mathbb{G}_1\to
\pi_*\mathcal{E}xt^1_{\mathcal{O}_{\mathbb{P}^3\times\mathbf{T}}}(\mathbf{E}(-3),\omega_\pi)$,
where by (\ref{dim supp})
$\Supp\pi_*\mathcal{E}xt^1_{\mathcal{O}_{\mathbb{P}^3\times\mathbf{T}}}(\mathbf{E}(-3),\omega_\pi)
\subset\partial\mathbf{T}$. Since $\codim_\mathbf{T}\partial\mathbf{T}\ge1$, dualizing this sequence we obtain an
injective morphism of $\mathcal{O}_\mathbf{T}$-sheaves
\begin{equation}\label{morphism alpha}
0\to\mathbb{G}_1^\vee\overset{\alpha}\to(R^1\pi_*(\mathbf{E}^\vee(-1)))^\vee
\end{equation}
Next, dualizing the triple (\ref{univ ext}) and using the fact that
$\codim_{\mathbb{P}^3\times\mathbf{T}}\mathbf{L}_\mathbf{T}=2$
we obtain an exact sequence
\begin{equation}\label{dual to univ ext}
0\to\mathcal{O}_{\mathbb{P}^3}(-1)\boxtimes\mathcal{O}_{\mathbf{T}}
\to\mathbf{E}^\vee\to
\mathcal{O}_{\mathbb{P}^3}(1)\boxtimes\mathcal{O}_{\mathbf{T}/\mathcal{H}}(-1)\to
\mathcal{E}xt^2_{\mathcal{O}_{\mathbb{P}^3\times\mathbf{T}}}
(\mathcal{O}_{\mathbf{L}_\mathbf{T}}(1),\mathcal{O}_{\mathbb{P}^3\times\mathbf{T}})\to
\end{equation}
$$
\to\mathcal{E}xt^1_{\mathcal{O}_{\mathbb{P}^3\times\mathbf{T}}}(\mathbf{E},\mathcal{O}_{\mathbb{P}^3\times\mathbf{T}})\to0,
$$
so that $\det\mathbf{E}^\vee=\pi^*\mathcal{O}_{\mathbf{T}/\mathcal{H}}(-1)$. Hence, as $\mathbf{T}$ is a smooth integral
scheme, it follows by \cite[Prop. 1.10]{H1} that
$$
\mathbf{E}^{\vee\vee}\simeq\mathbf{E}^\vee\otimes(\det\mathbf{E}^\vee)^{-1}=
\mathbf{E}^\vee\otimes\pi^*\mathcal{O}_{\mathbf{T}/\mathcal{H}}(1).
$$
Dualizing (\ref{univ ext}) twice we see that the canonical morphism
$can:\mathbf{E}\to\mathbf{E}^{\vee\vee}\simeq\mathbf{E}^\vee\otimes\pi^*\mathcal{O}_{\mathbf{T}/\mathcal{H}}(1)$
is injective, and we obtain an exact sequence
$0\to\mathbf{E}(-1)\overset{can}\to\mathbf{E}(-1)^\vee\otimes\pi^*\mathcal{O}_{\mathbf{T}/\mathcal{H}}(1)\to\coker(can)\to0$,
where $\Supp\coker(can)\subset\mathbb{P}^3\times\partial\mathbf{T}$. Applying to this triple the functor
$R^i\pi_*$ and using the fact that $\mathbb{H}$ is locally free on $\mathbf{T}$,
we thus obtain an exact sequence
$0\to\mathbb{H}\overset{g}\to
R^1\pi_*(\mathbf{E}^\vee(-1))\otimes\mathcal{O}_{\mathbf{T}/\mathcal{H}}(1)\to\coker(g)\to0$,
where
$\Supp\coker(g)\subset\partial\mathbf{T}$. Dualizing this sequence we obtain an injective morphism of
$\mathcal{O}_\mathbf{T}$-sheaves
$\beta:(R^1\pi_*(\mathbf{E}^\vee(-1)))^\vee\to\mathbb{H}^\vee\otimes\mathcal{O}_{\mathbf{T}/\mathcal{H}}(1)$.
Composing it with the morphism $\alpha$ from (\ref{morphism alpha}) and the inverse of the relative duality
isomorphism $SD$ from (\ref{rel duality}) we obtain an injective morphism of locally free
$\mathcal{O}_\mathbf{T}$-sheaves
\begin{equation}\label{morphism gamma}
\gamma=SD^{-1}\circ\beta\circ\alpha:\mathbb{G}_1^\vee\to\mathbb{F}_1\otimes\mathcal{O}_{\mathbf{T}/\mathcal{H}}(1).
\end{equation}
In view of the property (iii) above (\ref{im f}) one easily sees that $\gamma$ is an isomorphism when restricted
onto $\mathbf{T}'$:
$$
\gamma|_{\mathbf{T}'}:\mathbb{G}_1^\vee|_{\mathbf{T}'}\overset{\simeq}\to
\mathbb{F}_1\otimes\mathcal{O}_{\mathbf{T}/\mathcal{H}}(1)|_{\mathbf{T}'}.
$$
(In fact, the restriction of $\gamma$ onto an arbitrary point $t\in\mathbf{T}'$ is just the Serre duality isomorphism
$H^2(E_t(-3))\overset{\simeq}\to H^1(E_t(-1))^\vee$ for a t'Hooft instanton $E_t$.)

Next, the resolution of the diagonal
$\Delta$ on $\mathbb{P}^3\times\mathbb{P}^3$ extends to a diagram of sheaves
\begin{equation}\label{res diag}
\xymatrix{
& & 0\ar[d]& 0\ar[d]& \\
& 0 \ar[d] & \mathcal{O}_{\mathbb{P}^3}(-4)\boxtimes \mathcal{O}_{\mathbb{P}^3}\ar@{=}[r]
\ar[d] & \mathcal{O}_{\mathbb{P}^3}(-4)\boxtimes \mathcal{O}_{\mathbb{P}^3} \ar[d] & \\
0\ar[r] & \mathcal{O}_{\mathbb{P}^3}(-3)\boxtimes
\mathcal{O}_{\mathbb{P}^3}(1)\ar[r]\ar[d] &
\mathcal{O}_{\mathbb{P}^3}(-3)\boxtimes \wedge^3V^\vee
\otimes \mathcal{O}_{\mathbb{P}^3}\ar[r]\ar[d] &
\mathcal{O}_{\mathbb{P}^3}(-3)\boxtimes T_{\mathbb{P}^3}(-1)\ar[r]\ar[d] & 0 \\
0\ar[r] & \mathcal{O}_{\mathbb{P}^3}(-2)\boxtimes T_{\mathbb{P}^3}(-2)
\ar[r]\ar[d] &
\mathcal{O}_{\mathbb{P}^3}(-2)\boxtimes\wedge^2V^\vee
\otimes \mathcal{O}_{\mathbb{P}^3}\ar[r]\ar[d] & \mathcal{O}_{\mathbb{P}^3}(-2)\boxtimes
\Omega_{\mathbb{P}^3}(2)\ar[r]\ar[d] & 0 \\
0\ar[r] & \mathcal{O}_{\mathbb{P}^3}(-1)\boxtimes
\Omega_{\mathbb{P}^3}(1)\ar[r]\ar[d] &
\mathcal{O}_{\mathbb{P}^3}(-1)\boxtimes V^\vee\otimes \mathcal{O}_{\mathbb{P}^3}\ar[r]\ar[d] &
\mathcal{O}_{\mathbb{P}^3}(-1)\boxtimes \mathcal{O}_{\mathbb{P}^3}(1)\ar[r]\ar[d] & 0 \\
0\ar[r] & \mathcal{I}_{\Delta,\mathbb{P}^3\times\mathbb{P}^3}\ar[r]\ar[d] &
\mathcal{O}_{\mathbb{P}^3}\boxtimes \mathcal{O}_{\mathbb{P}^3}\ar[r]\ar[d] &
\mathcal{O}_{\Delta}\ar[r]\ar[d] & 0 \\
&0&0&0&}
\end{equation}
Let $\boldsymbol{\rho}:\mathbb{P}^3\times\mathbf{T}\times\mathbb{P}^3\to\mathbb{P}^3\times\mathbb{P}^3$ and
$\boldsymbol{\pi}=\pi\times id_{\mathbb{P}^3}:
\mathbb{P}^3\times\mathbf{T}\times\mathbb{P}^3\to\mathbf{T}\times\mathbb{P}^3$
be the projections and denote
$\omega_{\boldsymbol{\pi}}=\omega_\pi\boxtimes\mathcal{O}_{\mathbb{P}^3}$.
Applying the functor $\Ext^i_{\boldsymbol{\pi}}(-,\omega_{\boldsymbol{\pi}})$ to the diagram
$\boldsymbol{\rho}^*$(\ref{res diag})$\otimes\mathbf{E}\boxtimes\mathcal{O}_{\mathbb{P}^3}$
and using (\ref{Def Fi}), (\ref{Ext-sheaves vanish}) and base change we obtain the commutative diagram of sheaves
on $\mathbb{P}^3\times\mathbf{T}\simeq\mathbf{T}\times\mathbb{P}^3$:
\begin{equation}\label{diag of monads}
\xymatrix{
&& & 0 & \ \ 0 & \\
&& 0 & \mathcal{O}_{\mathbb{P}^3}\boxtimes \mathbb{G}_0\ar[l]
\ar[u] & \mathbb{M} \ar[l]\ar[u] & \ \mathbf{E}^\vee\ar[l] & 0\ar[l] \\
&0 & \mathcal{O}_{\mathbb{P}^3}(1)\boxtimes
\mathbb{G}_1\ar[l]\ar[u] &
\mathcal{O}_{\mathbb{P}^3}\boxtimes V^\vee\otimes\mathbb{G}_1\ar[l]_-{\mathbf{e}}\ar[u] &
\Omega_{\mathbb{P}^3}(1)\boxtimes \mathbb{G}_1\ar[l]\ar[u] & 0\ar[l] \\
&0 & T_{\mathbb{P}^3}(-1)\boxtimes
\mathbb{F}_1\ar[l]\ar[u] &
\mathcal{O}_{\mathbb{P}^3}\boxtimes V\otimes \mathbb{F}_1\ar[l]\ar[u]_{id_\mathcal{O}\boxtimes\mathbb{A}'} &
\mathcal{O}_{\mathbb{P}^3}(-1)\boxtimes\mathbb{F}_1
\ar[l]_-{\mathbf{i}}\ar[u]_-{\mathbf{a}} & 0\ar[l]\\
0 &\mathbf{E}^\vee\ar[l] & \mathbb{K}\ar[l]\ar[u] &
\mathcal{O}_{\mathbb{P}^3}\boxtimes\mathbb{F}_0\ar[l]\ar[u] & 0\ar[l]\ar[u] \\
&&0\ar[u] & 0\ar[u]}
\end{equation}
where we denote
$\mathbb{K}=\Ext^2_{\boldsymbol{\pi}}(\boldsymbol{\rho}^*\mathcal{I}_{\Delta,\mathbb{P}^3\times\mathbb{P}^3}
\otimes\mathbf{E}\boxtimes\mathcal{O}_{\mathbb{P}^3},\omega_{\boldsymbol{\pi}})$,
$\mathbb{M}=\coker\mathbf{a}$ and where $\mathbb{A}'$ is a morphism $V\otimes\mathbb{F}_1\to
V^\vee\otimes\mathbb{G}_1$ given by this diagram.

Now set $\mathbb{W}:={\rm im}\mathbb{A}'$ and let
$\epsilon_{\mathbb{A}'}:V\otimes\mathbb{F}_1\twoheadrightarrow\mathbb{W}$,\
$i_{\mathbb{A}'}:\mathbb{W}\rightarrowtail V^\vee\otimes\mathbb{G}_1$,\
$\mathbf{g}:\mathcal{O}_{\mathbb{P}^3}\boxtimes \mathbb{W}\overset{id_\mathcal{O}\boxtimes i_{\mathbb{A}'}}
\longrightarrow
\mathcal{O}_{\mathbb{P}^3}\boxtimes V^\vee\otimes\mathbb{G}_1
\overset{\mathbf{e}}\longrightarrow\mathcal{O}_{\mathbb{P}^3}(1)\boxtimes\mathbb{G}_1$
and
$\mathbf{f}:\mathcal{O}_{\mathbb{P}^3}(-1)\boxtimes\mathbb{F}_1\overset{\mathbf{i}}
\longrightarrow\mathcal{O}_{\mathbb{P}^3}\boxtimes V\otimes\mathbb{F}_1
\overset{id_\mathcal{O}\boxtimes\epsilon_{\mathbb{A}'}}
\longrightarrow\mathcal{O}_{\mathbb{P}^3}\boxtimes \mathbb{W}$
be the induced morphisms. From (\ref{rk Fi}) and the middle vertical sequence in (\ref{diag of monads}) it follows that
$\mathbb{W}$ is a locally free $\mathcal{O}_\mathbf{T}$-sheaf of rank $4m$:
\begin{equation}\label{rk W}
\rk\mathbb{W}=4m.
\end{equation}
Moreover, the diagram (\ref{diag of monads})
gives the monad with the cohomology sheaf $\mathbf{E}^\vee$:
\begin{equation}\label{monad for bfEdual}
0\to\mathcal{O}_{\mathbb{P}^3}(-1)\boxtimes\mathbb{F}_1\overset{\mathbf{f}}\to
\mathcal{O}_{\mathbb{P}^3}\boxtimes \mathbb{W}\overset{\mathbf{g}}
\longrightarrow\mathcal{O}_{\mathbb{P}^3}(1)\boxtimes\mathbb{G}_1\to0,\ \ \
\mathbf{E}^\vee=\ker\mathbf{g}/{\rm im}\mathbf{f}.
\end{equation}
\begin{remark}
One can, of course, obtain the monad (\ref{monad for bfEdual}) from the Beilinson spectral sequence with $E_1$-term
$E^{p,q}_1=\Ext^{3-q}_\pi(\mathbf{E}\otimes\Omega^{-p}_{\mathbb{P}^3}(-p)\boxtimes\mathcal{O}_{\mathbf{T}},\omega_\pi)$
(cf. \cite[Ch. II, 3.1.4]{OSS}). However, we use here the diagram (\ref{diag of monads}) because it will be also used
below in producing the monad (\ref{complex for E dual}) and Lemma \ref{T0=T'}.
\end{remark}

Next, from the definition of the morphisms $\mathbf{f},\mathbf{g}$ and $\gamma$ follows the diagram
\begin{equation}\label{diag f,g,gamma}
\xymatrix{
\mathcal{O}_{\mathbb{P}^3}(-1)\boxtimes\mathbb{G}^\vee_1\otimes\mathcal{O}_{\mathbf{T}/\mathcal{H}}(-1)
\ar[r]^{\mathbf{e}^\vee}\ar[d]_\gamma &
\mathcal{O}_{\mathbb{P}^3}\boxtimes V\otimes\mathbb{G}^\vee_1\otimes\mathcal{O}_{\mathbf{T}/\mathcal{H}}(-1)
\ar[d]_\gamma \\
\mathcal{O}_{\mathbb{P}^3}(-1)\boxtimes\mathbb{F}_1\ar[r]^{\mathbf{i}} &
\mathcal{O}_{\mathbb{P}^3}\boxtimes V\otimes\mathbb{F}_1}.
\end{equation}
is commutative. Thus, the composition
$\mathbb{A}:V\otimes\mathbb{G}^\vee_1\otimes\mathcal{O}_{\mathbf{T}/\mathcal{H}}(-1)
\overset{\gamma}\to V\otimes\mathbb{F}_1\overset{\mathbb{A}'}\to V^\vee\otimes\mathbb{G}_1$
fits in the (left- and right-exact) complex of sheaves
\begin{equation}\label{complex A}
0\to\mathcal{O}_{\mathbb{P}^3}(-1)\boxtimes\mathbb{G}^\vee_1\otimes\mathcal{O}_{\mathbf{T}/\mathcal{H}}(-1)
\overset{\mathbf{e}^\vee}\to
\mathcal{O}_{\mathbb{P}^3}\boxtimes V\otimes\mathbb{G}^\vee_1\otimes\mathcal{O}_{\mathbf{T}/\mathcal{H}}(-1)
\overset{id_\mathcal{O}\boxtimes\mathbb{A}}\to
\end{equation}
$$
\to\mathcal{O}_{\mathbb{P}^3}\boxtimes V^\vee\otimes\mathbb{G}_1
\overset{\mathbf{e}}\to\mathcal{O}_{\mathbb{P}^3}(1)\boxtimes\mathbb{G}_1\to0
$$
and ${\rm im\mathbb{A}}\subset\mathbb{W}$. In addition, by construction for any $t\in \mathbf{T}'$ the homomorphism
$\mathbb{A}\otimes\mathbf{k}_t$ in view of Serre duality $H:=H^2(E_t(-3))\overset{\simeq}\to H^1(E_t(-1))$
coincides with the skew-symmetric middle vertical homomorphism $A:V\otimes H\to V^\vee\otimes H^\vee$ in
(\ref{qA}) for $E=E_t$ and $n=2m-1$. Hence, $\mathbb{A}$ is skew-symmetric,
$\mathbb{A}\in H^0(\wedge^2(V^\vee\otimes \mathbb{G}_1)\otimes\mathcal{O}_{\mathbf{T}/\mathcal{H}}(1))$.
We thus obtain
the induced skew-symmetric morphism
$\mathbf{q}:\mathbb{W}^\vee\otimes\mathcal{O}_{\mathbf{T}/\mathcal{H}}(-1))\to\mathbb{W}$
which yields a decomposition of $\mathbb{A}$
as
$\mathbb{A}=i_{\mathbb{A}'}\circ\mathbf{q}\circ i_{\mathbb{A}'}^\vee$. This decomposition, being restricted onto an
arbitrary point $t\in\mathbf{T}'$,
gives the rightmost square in (\ref{qA}). In particular, it follows that
\begin{equation}\label{family bbA}
\mathbb{A}\in H^0(\wedge^2V^\vee\otimes S^2\mathbb{G}_1)\otimes\mathcal{O}_{\mathbf{T}/\mathcal{H}}(1)),
\end{equation}
and that $\mathbf{q}|_{\mathbf{T}'}$ is an isomorphism. We thus consider the dense open subset
$\mathbf{T}_0$ of $\mathbf{T}$ containing $\mathbf{T}'$ which is defined as
\begin{equation}\label{T0}
\mathbf{T}_0:=\{t\in\mathbf{T}\ |\ {\mathbf{q}}|_{\mathbb{P}^3\times\{t\}}:
\mathbb{W}^\vee\otimes\mathcal{O}_{\mathbf{T}/\mathcal{H}}(-1)\otimes\mathbf{k}_t\to\mathbb{W}\otimes\mathbf{k}_t\
{\rm is\ an\ isomorphism}\},
\end{equation}
$$
\mathbf{T}_0\supset \mathbf{T}'.
$$
Denote
\begin{equation}\label{E0vee}
\mathbf{W}:=\mathbb{W}^\vee,\ \ \mathbf{W}_0:=\mathbf{W}|_{\mathbf{T}_0},\ \ \mathbf{q}_0:=\mathbf{q}|_{\mathbf{T}_0},
\ \ \boldsymbol{\mathcal{L}}:=\mathcal{O}_{\mathbf{T}/\mathcal{H}}(-1),\ \
\boldsymbol{\mathcal{L}}_0:=\boldsymbol{\mathcal{L}}|_{\mathbf{T}_0},\ \
\mathbf{E}_0=\mathbf{E}|_{\mathbf{T}_0},\ \
\end{equation}
$$
\mathbf{g}_0:=\mathbf{g}^\vee|_{\mathbf{T}_0},\ \
\mathbf{G}:=\mathbb{G}_1^\vee,\ \ \mathbf{G}_0=\mathbf{G}|_{\mathbf{T}_0}.
$$
In this notation the complex (\ref{complex A}) induces the following right- and left-exact complex
\begin{equation}\label{last complex}
0\to\mathcal{O}_{\mathbb{P}^3}(-1)\boxtimes\mathbf{G}\otimes\boldsymbol{\mathcal{L}}
\overset{\mathbf{g}}\to
\mathcal{O}_{\mathbb{P}^3}\boxtimes\mathbf{W}\otimes\boldsymbol{\mathcal{L}}
\overset{\mathbf{q}}\to\mathcal{O}_{\mathbb{P}^3}\boxtimes\mathbf{W}^\vee
\overset{\mathbf{g}^\vee}\to\mathcal{O}_{\mathbb{P}^3}(1)\boxtimes\mathbf{G}^\vee\to0,
\end{equation}
Standard diagram chasing with (\ref{diag of monads})-(\ref{complex A}) shows that the restriction of the monad
(\ref{monad for bfEdual}) onto $\mathbb{P}^3\times\mathbf{T}_0$
coincides with the restriction onto $\mathbb{P}^3\times\mathbf{T}_0$ of the complex (\ref{last complex})
and is isomorphic to a (antiselfdual) monad
\begin{equation}\label{complex for E dual}
0\to\mathcal{O}_{\mathbb{P}^3}(-1)\boxtimes\mathbf{G}_0\otimes\boldsymbol{\mathcal{L}}_0
\overset{\mathbf{g}_0}\to
\mathcal{O}_{\mathbb{P}^3}\boxtimes\mathbf{W}_0\otimes\boldsymbol{\mathcal{L}}_0
\overset{\mathbf{q}_0}{\underset{\simeq}\to}\mathcal{O}_{\mathbb{P}^3}\boxtimes\mathbf{W}_0^\vee
\overset{\mathbf{g}_0^\vee}\to\mathcal{O}_{\mathbb{P}^3}(1)\boxtimes\mathbf{G}_0^\vee\to0,
\end{equation}
$$
\mathbf{E}_0^\vee=\ker\mathbf{g}_0^\vee/{\rm im}(\mathbf{q}_0\circ\mathbf{g}_0).
$$

From this monad and (\ref{T0}) immediately follows

\begin{lemma}\label{T0=T'}
$\mathbf{E}_0$ is a locally free
$\mathcal{O}_{\mathbb{P}^3\times\mathbf{T}_0}$-sheaf, i.e. $\mathbf{T}_0=\mathbf{T}'$.
\end{lemma}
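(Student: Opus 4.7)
The inclusion $\mathbf{T}_0\supset\mathbf{T}'$ is already recorded right after (\ref{T0}), so the task reduces to proving $\mathbf{T}_0\subseteq\mathbf{T}'$, equivalently that $\mathbf{E}_0$ is locally free on $\mathbb{P}^3\times\mathbf{T}_0$. My plan is to read this directly off the monad (\ref{complex for E dual}) using the defining invertibility of $\mathbf{q}_0$.

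First, I would record that by the statement preceding (\ref{last complex}), the complex (\ref{last complex}) is both right- and left-exact, and this property descends to its restriction to $\mathbb{P}^3\times\mathbf{T}_0$, which is the antiselfdual monad (\ref{complex for E dual}). In particular, $\mathbf{g}_0^\vee$ is a surjective morphism of locally free $\mathcal{O}_{\mathbb{P}^3\times\mathbf{T}_0}$-sheaves. Since surjectivity of a morphism between locally free sheaves is checked pointwise at every closed point, $\mathbf{g}_0^\vee$ is in fact a quotient bundle map, so $\ker\mathbf{g}_0^\vee$ is locally free of rank $4m-(2m-1)=2m+1$ on $\mathbb{P}^3\times\mathbf{T}_0$.

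Second, I would invoke the antiselfdual structure together with the crucial hypothesis $\mathbf{q}_0$ is an isomorphism on all of $\mathbf{T}_0$ (this is the very definition (\ref{T0})). Fiberwise at any $(x,t)\in\mathbb{P}^3\times\mathbf{T}_0$, the symplectic pairing $\mathbf{q}_0|_t$ identifies the linear map $\mathbf{g}_0^\vee|_{(x,t)}$ with the transpose of $(\mathbf{q}_0\circ\mathbf{g}_0)|_{(x,t)}$, so pointwise surjectivity of the former is equivalent to pointwise injectivity of the latter. Therefore $\mathbf{q}_0\circ\mathbf{g}_0$ is a subbundle morphism, and $\im(\mathbf{q}_0\circ\mathbf{g}_0)$ sits as a rank-$(2m-1)$ subbundle inside $\ker\mathbf{g}_0^\vee$. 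The quotient
\[
\mathbf{E}_0^\vee=\ker\mathbf{g}_0^\vee/\im(\mathbf{q}_0\circ\mathbf{g}_0)
\]
is then locally free of rank $2$ on $\mathbb{P}^3\times\mathbf{T}_0$, and dualizing (using that the antiselfdual monad identifies $\mathbf{E}_0$ with its double dual up to the twist by $\boldsymbol{\mathcal{L}}_0$) yields that $\mathbf{E}_0$ itself is locally free. By the very definition of $\mathbf{T}'$ as the locus over which $\mathbf{E}$ is locally free, this gives $\mathbf{T}_0\subseteq\mathbf{T}'$.

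The only delicate point is the antiselfduality step in the second paragraph, where one converts sheaf-theoretic surjectivity of $\mathbf{g}_0^\vee$ into pointwise injectivity of $\mathbf{g}_0$ via the nondegenerate symplectic form $\mathbf{q}_0$. Precisely because $\mathbf{q}_0$ is an isomorphism of \emph{vector bundles} (not merely generically) on $\mathbf{T}_0$, the two subbundle conditions for the outer maps of (\ref{complex for E dual}) are duals of one another and are therefore forced to hold simultaneously — this is the sense in which the conclusion ``immediately follows'' from the monad (\ref{complex for E dual}) and the condition (\ref{T0}).
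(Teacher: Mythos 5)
Your argument is correct and is exactly the expansion of the paper's one-line justification (the Lemma is asserted to follow ``immediately'' from the monad (\ref{complex for E dual}) and the definition (\ref{T0})): right-exactness of (\ref{last complex}) gives fibrewise surjectivity of $\mathbf{g}_0^\vee$, the pointwise invertibility of $\mathbf{q}_0$ on $\mathbf{T}_0$ converts this into fibrewise injectivity of $\mathbf{q}_0\circ\mathbf{g}_0$, and a monad whose outer maps have constant rank has locally free cohomology. The one step you (like the paper) pass over quickly --- deducing local freeness of $\mathbf{E}_0$ from that of $\mathbf{E}_0^\vee$ --- is secured by the isomorphism $\mathbf{E}^{\vee\vee}\simeq\mathbf{E}^\vee\otimes\pi^*\mathcal{O}_{\mathbf{T}/\mathcal{H}}(1)$ and the injectivity of $can:\mathbf{E}\to\mathbf{E}^{\vee\vee}$ established before (\ref{morphism gamma}), since $\mathbf{E}_0^{\vee\vee}$ locally free of rank $2$ forces $c_3=0$ on each fibre and hence $\coker(can)=0$ over $\mathbf{T}_0$.
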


Consider the variety
$\mathbf{Y}:=\mathbf{P}{\rm roj}(\mathcal{H}om(\mathbf{G},H_{2m-1}\otimes\mathcal{O}_\mathbf{T}))$
with the projection $p_\mathbf{Y}:\mathbf{Y}\to\mathbf{T}$
and set
$\mathbf{G}_\mathbf{Y}:=p_\mathbf{Y}^*\mathbf{G},\ \ \ \boldsymbol{\mathcal{L}}_\mathbf{Y}:=
p_\mathbf{Y}^*\boldsymbol{\mathcal{L}}\otimes\mathcal{O}_{\mathbf{Y}/\mathbf{T}}(-1)$.
The universal morphism
\begin{equation}\label{tau}
\tau:\ H_{2m-1}\otimes\mathcal{O}_\mathbf{Y}\otimes\mathcal{O}_{\mathbf{Y}/\mathbf{T}}(-1)
\to\mathbf{G}_\mathbf{Y}
\end{equation}
on $\mathbf{Y}$ together with the family
$p_\mathbf{Y}^*\mathbb{A}:\mathbf{G}_\mathbf{Y}\otimes V\otimes\boldsymbol{\mathcal{L}}_\mathbf{Y}\to
\mathbf{G}_\mathbf{Y}^\vee\otimes V^\vee$
yields a family of nets of quadrics
$\mathbf{A}:H_{2m-1}\otimes V\otimes\boldsymbol{\mathcal{L}}_\mathbf{Y}\to
H_{2m-1}^\vee\otimes V^\vee\otimes\mathcal{O}_\mathbf{Y}$, i.e., equivalently, the morphism
\begin{equation}\label{family bfA}
\mathbf{A}:\boldsymbol{\mathcal{L}}_\mathbf{Y}\to S^2H_{2m-1}^\vee\otimes\wedge^2V^\vee\otimes\mathcal{O}_\mathbf{Y}=
\mathbf{S}_{2m-1}\otimes\mathcal{O}_\mathbf{Y}.
\end{equation}
We call {\it $\mathbf{A}$ the family of nets of quadrics associated to the family} $\mathbf{E}\to\mathbf{T}$.

Now consider the principal $PGL(H_{2m-1})$-bundle
$p_{\mathbf{Y}_0}:\mathbf{Y}_0:=
P(\mathcal{I}som(H_{2m-1}\otimes\mathcal{O}_{\mathbf{T}_0},\mathbf{G}_0))\to\mathbf{T}_0$
together with the natural open embedding
$\mathbf{Y}_0\overset{i_0}\hookrightarrow\mathbf{Y}$ such that $p_{\mathbf{Y}_0}=p_\mathbf{Y}\circ i_0$
and set
$\mathbf{A}_0:=\mathbf{A}|_{\mathbf{Y}_0},\
\boldsymbol{\mathcal{L}}_{\mathbf{Y}_0}:=\boldsymbol{\mathcal{L}}_\mathbf{Y}|_{\mathbf{Y}_0},\
\mathbf{W}_{\mathbf{Y}_0}:=p_{\mathbf{Y}_0}^*\mathbf{W}_0$.
The monad $p_{\mathbf{Y}_0}^*$(\ref{complex for E dual}):
\begin{equation}\label{lifted monad}
0\to\mathcal{O}_{\mathbb{P}^3}(-1)\boxtimes H_{2m-1}\otimes\boldsymbol{\mathcal{L}}_{\mathbf{Y}_0}\to
\mathcal{O}_{\mathbb{P}^3}\boxtimes\mathbf{W}_{\mathbf{Y}_0}\otimes\boldsymbol{\mathcal{L}}_{\mathbf{Y}_0}
{\overset{\simeq}\to}\mathcal{O}_{\mathbb{P}^3}\boxtimes\mathbf{W}_{\mathbf{Y}_0}^\vee
\to\mathcal{O}_{\mathbb{P}^3}(1)\boxtimes H_{2m-1}^\vee\otimes\mathcal{O}_{\mathbf{Y}_0}\to0,
\end{equation}

Now pick a monomorphism $j:H_{m-1}\hookrightarrow H_m$ and let $\widetilde{Z}$ be any irreducible component of
$Z_m$. Assume that $\widetilde{Z}(j)$ is nonempty, hence dense in $\widetilde{Z}$
according to Lemma \ref{Z*(j)} (in particular, such $j$ exists for $\widetilde{Z}=Z$ by the same Lemma). Consider
the morphism
$r_\mathbf{T}:\widetilde{Z}(j)\to\mathbf{T}'$ defined in (\ref{rZ}).
Note that from the definition (\ref{Def Fi}) of the locally free $\mathcal{O}_\mathbf{T}$-sheaf
$\mathbb{G}_1=\Ext^1_\pi(\mathbf{E}(-3),\omega_\pi)$ it
follows that the formation of $\mathbb{G}_1^\vee$ commutes with the base change. In particular, the definition
(\ref{rZ}) of the morphism $r_Z$ and the definition (\ref{fZ}) of the sheaf $\mathbf{G}_Z$ imply that
$\mathbf{G}_Z=r_\mathbf{T}^*\mathbb{G}_1^\vee$.
Hence the isomorphism (\ref{fZ}) gives a subbundle morphism
\begin{equation}\label{iZ}
i_Z:\mathcal{O}_{\widetilde{Z}(j)}\to\mathcal{H}om(H_{2m-1}\otimes\mathcal{O}_{\widetilde{Z}^*(j)},\mathbf{G}_Z)=
r_\mathbf{T}^*\mathcal{H}om(H_{2m-1}\otimes\mathcal{O}_{\mathbf{T}},\mathbb{G}_1^\vee),\ \ \
\end{equation}
$$
{\rm im}i_Z\subset\mathcal{I}som(H_{2m-1}\otimes\mathcal{O}_{\mathbf{T}_0},\mathbf{G}_0).
$$
Now the well known universal property of $\mathbf{P}$roj (see \cite[Ch. III, Prop. 7.12]{H}) and the last inclusion in
(\ref{iZ}) show that the morphism
$r_\mathbf{T}:\widetilde{Z}(j)\to\mathbf{T}'=\mathbf{T}_0$ (here we use Lemma \ref{T0=T'}) lifts to the morphism
$r_\mathbf{Y}:\widetilde{Z}(j)\to\mathbf{Y}_0$ giving
the factorization of $r_\mathbf{T}$:
\begin{equation}\label{rT=}
r_\mathbf{T}:\widetilde{Z}(j)\overset{r_\mathbf{Y}}\to\mathbf{Y}_0\overset{p_{\mathbf{Y}_0}}\to\mathbf{T}_0
\end{equation}
such that
\begin{equation}\label{tilde bfA=}
\widetilde{\mathbf{A}}_Z=r_\mathbf{Y}^*\mathbf{A},
\end{equation}
where
$\widetilde{\mathbf{A}}_Z:\mathcal{O}_{\widetilde{Z}(j)}\to\mathbf{S}_{2m-1}\otimes\mathcal{O}_{\widetilde{Z}(j)}$
is the family of nets of quadrics (\ref{tilde AZ}) and $\mathbf{A}$ is the net (\ref{family bfA}).
Moreover, consider the total space
$\mathbf{V}={\rm Spec}(S^\cdot_{\mathcal{O}_\mathbf{Y}}\boldsymbol{\mathcal{L}}_\mathbf{Y}^{-1})$
of the vector bundle $\boldsymbol{\mathcal{L}}_\mathbf{Y}$ and let
$\mathbf{V}_0=V\smallsetminus\{0$-section$\}$ be the complement of the 0-section in $\mathbf{V}$,
with the projection $\rho:\mathbf{V}_0\to\mathbf{Y}$. The morphism
$r_\mathbf{Y}:\widetilde{Z}(j)\to\mathbf{Y}_0$ naturally lifts to a morphism
$r_\mathbf{V}:\widetilde{Z}(j)\to\mathbf{V}_0$, i.e. $r_\mathbf{Y}$ factorizes as
$r_\mathbf{Y}=\rho\circ r_\mathbf{V}$:
\begin{equation}\label{rY=}
\xymatrix{
\widetilde{Z}(j)\ar[d]_-{r_\mathbf{Y}}\ar[r]^{r_\mathbf{V}} & \mathbf{V}_0\ar[d]^{\rho}\\
\mathbf{Y}_0\ar@{^(->}[r] & \mathbf{Y}.}
\end{equation}
so that, by (\ref{tilde bfA=}),
\begin{equation}\label{2tilde bfA=}
\widetilde{\mathbf{A}}_Z=r_\mathbf{V}^*\rho^*\mathbf{A}.
\end{equation}
Next, there is a well defined morphism
$\mu:\mathbf{V}_0\to\mathbf{S}_{2m-1}:v\mapsto(\rho^*\mathbf{A})(\mathbf{s}(v))$ where $\mathbf{s}$ is the canonical
section of
$\rho^*\boldsymbol{\mathcal{L}}_\mathbf{Y}\simeq\mathcal{O}_{\mathbf{V}_0}$.
Now (\ref{2tilde bfA=}) means that $\tilde{\lambda}_j=\mu\circ r_\mathbf{V}$:
\begin{equation}\label{lambda'j=}
\tilde{\lambda}_j:\widetilde{Z}(j)\overset{r_\mathbf{V}}\to\mathbf{V}_0\overset{\mu}\to\mathbf{S}_{2m-1}
\end{equation}
where the morphism $\tilde{\lambda}_j:\widetilde{Z}\to\mathbf{S}_{2m-1}$ is defined in Lemma
\ref{fibre of lambda}(ii).
\begin{remark}\label{rV}
By definition, the morphism $r_\mathbf{V}$ considered in the diagram above is well defined as the morphism
$r_\mathbf{V}:Z_m(j)\to\mathbf{V}$.
\end{remark}

\end{sub}

\vspace{0.2cm}

\begin{sub}{\bf Irreducibility of $Z_m$.}\label{final arguments}
\rm

\vspace{2mm}

Take an arbitrary point $z_0=(D_0,\phi_0)\in Z$ with $\phi_0\ne0$.
According to Lemma \ref{Z*(j)}(i) there exists a monomorphism $j:H_{m-1}\hookrightarrow H_m$ such that $Z(j)$ is
a dense open subset of $Z$.
Hence there exists a smooth affine curve $C$ with a marked point $0\in C$ and a morphism $g:C\to Z$ such that
$g(0)=z_0$ and $g(C^*)\subset Z(j)$ where $C^*:=C\smallsetminus\{0\}$. For any $x\in C$ set $(D_x,\phi_x):=g(x)$.
Here, for all $x\in C$, by definition $A_1(x):=D_x^{-1}$ is an isomorphism
$H_m\otimes V\overset{\simeq}\to H_m^\vee\otimes V^\vee$ and also $A_2(x):=\phi_x\circ j$ is a homomorphism
$H_{m-1}\otimes V\overset{\simeq}\to H_m^\vee\otimes V^\vee$.
Hence, picking an isomorphism $\xi:H_m\oplus H_{m-1}\overset{\simeq}\to H_{2m-1}$, we may consider the matrix
$A(x)=\left(
\begin{array}{cc}
A_1(x) & A_2(x) \\
-A_2(x)^\vee & A_3(x)
\end{array}
\right)$
with $A_3(x)=-A_2(x)^\vee\circ A_1(x)^{-1}\circ A_2(x)$ as a homomorhism (net of quadrics)
$A(x): H_{2m-1}\otimes V\to H_{2m-1}^\vee\otimes V^\vee$
of rank
\begin{equation}\label{rkA=4m}
\rk A(x)=\rk A_1(x)=4m,\ \ \ x\in C.
\end{equation}
We thus have a family of nets of quadrics
$\mathbf{A}_C=\{A(x)\}_{x\in C}$
and its restriction
$\mathbf{A}_{C^*}=\{A(x)\}_{x\in C}|{C^*}$.
\footnote{Equivalently,  using Lemma \ref{E2(z) is tHooft}(iii),
one can define $A(x)$ as
$\lambda_j(g(x)),\ x\in C$.}

Consider the composition
$r_\mathbf{Y}\circ g:C^*\to\mathbf{Y}_0\hookrightarrow\mathbf{Y}$.
Since $Y$ is projective, this morphism extends to the morphism
$\psi_\mathbf{Y}:C\to\mathbf{Y}$ such that
$\mathbf{A}_C=\psi_\mathbf{Y}^*\mathbf{A}$.
As $A(0)\ne0$, it follows that $\psi_\mathbf{Y}$ lifts to the morphism
$\psi_\mathbf{V}:C\to\mathbf{V}_0$
such that $\psi_\mathbf{Y}=\rho\circ\psi_\mathbf{V}$.
We also have the composition
$\psi_\mathbf{T}=p_\mathbf{Y}\circ\psi_\mathbf{Y}:C\to\mathbf{T}$ and the commutative diagram
\begin{equation}\label{AC}
\xymatrix{
H_{2m-1}\otimes V\otimes\mathcal{O}_C\ar[d]_-{\tau_C}\ar[r]^-{\mathbf{A}_C} &
H_{2m-1}^\vee\otimes V^\vee\otimes\mathcal{O}_C\\
\mathbf{G}_C\otimes V\ar[r]^-{\psi_\mathbf{Y}^*\mathbb{A}} &
\mathbf{G}_C^\vee\otimes V^\vee\ar[u]_-{\tau_C^\vee}}
\end{equation}
where $\mathbf{G}_C:=\psi_\mathbf{Y}^*\mathbf{G}_\mathbf{Y}$,
$\tau_C:=\psi_{\mathbf{Y}}^*\tau$ and $\tau$ is the universal morphism (\ref{tau}).
Consider the $\mathcal{O}_C$-sheaves
$\mathbf{W}_C=H_{2m-1}\otimes V\otimes\mathcal{O}_C/\ker\mathbf{A}_C$
and
$\mathbb{W}_C=\mathbf{G}_C\otimes V/\ker\mathbf{A}_C$
and the morphisms
$\mathbf{e}_C:H_{2m-1}\otimes V\otimes\mathcal{O}_C\twoheadrightarrow\mathbf{W}_C$,
$e_C:\mathbf{G}_C\otimes V\twoheadrightarrow\mathbb{W}_C$,
$\mathbf{q}_C:\mathbf{W}_C\to\mathbf{W}_C^\vee$,
$q_C:\mathbb{W}_C\to\mathbb{W}_C^\vee$
and
$\epsilon:\mathbf{W}_C\to\mathbb{W}_C$
induced by the diagram (\ref{AC}), so that
\begin{equation}\label{relation1 on C}
\mathbf{q}_C=\epsilon^\vee\circ q_C\circ\epsilon,
\end{equation}
\begin{equation}\label{relation2 on C}
\epsilon\circ\mathbf{e}_C=e_C\circ\tau_C.
\end{equation}
The condition (\ref{rkA=4m}) means that $\mathbf{W}_C$ is a locally free rank-$4m$  $\mathcal{O}_C$-sheaf and
$\mathbf{q}_C$ is an isomorphism. Hence (\ref{relation1 on C}) implies that  $\mathbb{W}_C$ is a locally free
rank-$4m$  $\mathcal{O}_C$-sheaf and $q_C$ is an isomorphism. This together with Lemma \ref{T0=T'}
precisely means that
\begin{equation}\label{psi(C) in T0}
\psi_\mathbf{Y}(C)\subset\mathbf{Y}_0,\ \ \ {\rm resp.,}\ \ \ \psi_\mathbf{T}(C)\subset\mathbf{T}_0.
\end{equation}
Consider the compositions
$\mathbf{a}_C:\mathcal{O}_{\mathbb{P}^3}(-1)\boxtimes H_{2m-1}\otimes\mathcal{O}_C\to
V\otimes\mathcal{O}_{\mathbb{P}^3}\boxtimes H_{2m-1}\otimes\mathcal{O}_C
\overset{\mathbf{e}_C}\to\mathcal{O}_{\mathbb{P}^3}\boxtimes\mathbf{W}_C$
and
$a_C:\mathcal{O}_{\mathbb{P}^3}(-1)\boxtimes\mathbf{G}_C\to
V\otimes\mathcal{O}_{\mathbb{P}^3}\boxtimes\mathbf{G}_C
\overset{e_C}\to\mathcal{O}_{\mathbb{P}^3}\boxtimes\mathbb{W}_C$
and the diagram of induced complexes
\begin{equation}\label{2complexes}
\xymatrix{0\ar[r] &
\mathcal{O}_{\mathbb{P}^3}(-1)\boxtimes H_{2m-1}\otimes\mathcal{O}_C\ar[r]^-{\mathbf{a}_C}\ar[d]_-{\tau_C} &
\mathcal{O}_{\mathbb{P}^3}\boxtimes\mathbf{W}_C
\ar[r]^-{{}^t\mathbf{a}_C}\ar[d]^-{\epsilon}_-{\simeq} &
\mathcal{O}_{\mathbb{P}^3}(1)\boxtimes H_{2m-1}^\vee\otimes\mathcal{O}_C\ar[r]& 0 \\
0\ar[r] & \mathcal{O}_{\mathbb{P}^3}(-1)\boxtimes\mathbf{G}_C\ar[r]^-{a_C}
& \mathcal{O}_{\mathbb{P}^3}\boxtimes\mathbb{W}_C\ar[r]^-{{}^ta_C} &
\mathcal{O}_{\mathbb{P}^3}(1)\boxtimes\mathbf{G}_C^\vee\ar[r]\ar[u]_-{\tau_C^\vee} & 0.}
\end{equation}
From (\ref{psi(C) in T0}) it follows now that the lower complex in this diagram is a genuine monad which is by
construction obtained by applying the functor $(id_{\mathbb{P}^3}\times\psi_\mathbf{T})^*$ to the monad
(\ref{complex for E dual}). In particular, its cohomology sheaf $\mathbb{E}_C$ is a rank-2 bundle. Also,
by construction, these two complexes are
isomorphic over $C^*$. However, the upper complex is apriori not right- and left-exact when restricted to
$\mathbb{P}^3\times\{0\}$. We are going to show that, in fact, it is isomorphic to the lower monad, hence it is
left- and right-exact, i.e. it is a monad.

For this, consider the monomorphism $\mathbf{i}_m:H_m\rightarrowtail H_{2m-1}$ given by the isomorphism $\xi$ above,
let
$\boldsymbol{\alpha}:
H_m\otimes V\otimes\mathcal{O}_C\rightarrowtail H_{2m-1}\otimes V\otimes\mathcal{O}_C\twoheadrightarrow\mathbf{W}_C$,
$\beta:H_m\otimes\mathcal{O}_C\rightarrowtail H_{2m-1}\otimes\mathcal{O}_C\overset{\tau_C}\to\mathbf{G}_C$
be the induced morphisms and set $\mathbf{G}_m:={\rm im}\beta$, $\mathbf{G}_{m-1}:=\mathbf{G}_C/\mathbf{G}_m$.
From (\ref{rkA=4m}) it follows that $\alpha$ is an isomorphism and, respectively, the induced morphism
$\alpha:\mathbf{G}_m\otimes V\to\mathbb{W}_C$ is an isomorphism.
Hence by (\ref{AC})-(\ref{relation2 on C})
$\beta$ is injective, $\mathbf{G}_m$
is a locally free rank-$m$ $\mathcal{O}_C$-sheaf, the morphism
$\mathbf{G}_m\rightarrowtail\mathbf{G}_C$ is a subbundle morphism,
hence $\mathbf{G}_{m-1}$
is a locally free rank-$(m-1)$ $\mathcal{O}_C$-sheaf.
We now have the induced diagram of isomorphic monads obtained similar to (\ref{2complexes}):
\begin{equation}\label{2subcomplexes}
\xymatrix{0\ar[r] &
\mathcal{O}_{\mathbb{P}^3}(-1)\boxtimes H_m\otimes\mathcal{O}_C
\ar[r]^-{\boldsymbol{\alpha}_C}\ar[d]_-{\simeq}^-{\beta_C} &
\mathcal{O}_{\mathbb{P}^3}\boxtimes\mathbf{W}_C
\ar[r]^-{{}^t\boldsymbol{\alpha}_C}\ar[d]^-{\epsilon}_-{\simeq} &
\mathcal{O}_{\mathbb{P}^3}(1)\boxtimes H_{2m-1}^\vee\otimes\mathcal{O}_C\ar[r]& 0 \\
0\ar[r] & \mathcal{O}_{\mathbb{P}^3}(-1)\boxtimes\mathbf{G}_m\ar[r]^-{\alpha_C}
& \mathcal{O}_{\mathbb{P}^3}\boxtimes\mathbb{W}_C\ar[r]^-{{}^t\alpha_C} &
\mathcal{O}_{\mathbb{P}^3}(1)\boxtimes\mathbf{G}_m^\vee\ar[r]\ar[u]^-{\simeq}_-{\beta_C^\vee} & 0.}
\end{equation}
with the isomorphism $\delta:\mathbf{E}_{2m}\overset{\sim}\to\mathbb{E}_{2m}$ of the rank-$2m$ cohomology sheaves of
these monads. (Note that, by construction, $\mathbf{E}_{2m}=\underset{x\in C}\cup E_{2m}(D_x^{-1})$.)
In addition, the diagram of natural morphisms
$$
\xymatrix{0\ar[r] &
H_m\otimes V\otimes\mathcal{O}_C
\ar[r]^-{\mathbf{i}_m}\ar[d]_-{\simeq}^-{\beta} &
H_{2m-1}\otimes V\otimes\mathcal{O}_C
\ar[r]\ar[d]^-{\tau_C} &
H_{m-1}\otimes V\otimes\mathcal{O}_C\ar[r]\ar[d]^\gamma & 0 \\
0\ar[r] & \mathbf{G}_m\otimes V\ar[r]^-{i_m}
& \mathbf{G}_{2m-1}\otimes V\ar[r] &
\mathbf{G}_{m-1}\otimes V\ar[r] & 0.}\\
$$%\end{equation}
satisfying the relations $\boldsymbol{\alpha}=\mathbf{e}_C\circ \mathbf{i}_m,\ \alpha=e_C\circ i_m,\
\alpha\circ\beta=\epsilon\circ\boldsymbol{\alpha}$, together with the diagrams (\ref{2complexes})-(\ref{2subcomplexes}),
yields a diagram of factor-complexes
\begin{equation}\label{2factorcomplexes}
\xymatrix{0\ar[r] &
\mathcal{O}_{\mathbb{P}^3}(-1)\boxtimes H_{m-1}\otimes\mathcal{O}_C
\ar[r]^-{\bar{\boldsymbol{\alpha}}_C}\ar[d]_-{\gamma_C} &
\mathbf{E}_{2m}
\ar[r]\ar[d]^-{\delta}_-{\simeq} &
\mathcal{O}_{\mathbb{P}^3}(1)\boxtimes H_{m-1}^\vee\otimes\mathcal{O}_C\ar[r]& 0 \\
0\ar[r] & \mathcal{O}_{\mathbb{P}^3}(-1)\boxtimes\mathbf{G}_{m-1}\ar[r]
&\mathbb{E}_{2m}\ar[r] &
\mathcal{O}_{\mathbb{P}^3}(1)\boxtimes\mathbf{G}_{m-1}^\vee\ar[r]\ar[u]_-{\gamma_C^\vee} & 0}
\end{equation}
where $\bar{\boldsymbol{\alpha}}_C$ is the induced morphism.
By the above, this diagram becomes an isomorphism of monads when restricted onto $\mathbb{P}^3\times C^*$.
To show that it is an isomorphism everywhere, it is enough to show that
$\gamma_C\otimes\mathbf{k}(0):H_{m-1}\to\mathbf{G}_{m-1}\otimes\mathbf{k}(0)$ is an isomorphism. Passing to sections in
the left square of the diagram
(\ref{2factorcomplexes})$\otimes\mathcal{O}_{\mathbb{P}^3}(-1)\boxtimes\mathcal{O}_C$, we see that this condition is
equivalent to the injectivity of homomorphism of sections
$h^0(\bar{\boldsymbol{\alpha}}_C\otimes\mathbf{k}(0)):H_{m-1}\to H^0(E_{2m}(D_0^{-1})(1))$. But this homomorphism exactly
coincides with the composition
$$
s_{z_0}(j):H_{m-1}\overset{j}\hookrightarrow H_m\overset{s(z_0)={}^\sharp\phi_0}\longrightarrow
H^0(E_{2m}(D_0^{-1})(1)).
$$
Now from the definition of the subset $R_{Z}$ of $Z$ defined in Lemma \ref{codim RZ*} it follows that the
injectivity of the map $s_{z_0}(j)$ is true for any point $z_0\in Z\smallsetminus R_{Z}$ and a generic
monomorphism $j:H_{m-1}\hookrightarrow H_m$. Hence, for such point
$z_0=(D_0,\phi_0)$ the restriction of the upper complex in (\ref{2factorcomplexes}) onto $\mathbb{P}^3\times\{0\}$ is a
monad:
$
0\to H_{m-1}\otimes\mathcal{O}_{\mathbb{P}^3}(-1)\overset{s_{z_0}(j)}\to E_{2m}(D_0^{-1})
\overset{{}^ts_{z_0}(j)}\to H_{m-1}^\vee\otimes\mathcal{O}_{\mathbb{P}^3}(1)\to0,
$
which by definition coincides with the monad (\ref{E2-monad}) for $z=z_0$. (As a corollary we obtain that the diagrams
(\ref{2complexes}) and (\ref{2factorcomplexes}) are the diagrams of isomorphisms of monads for this $z_0$.)
In other words, $z_0\in\widehat{Z}(j)$ where the set $\widehat{Z}(j)$ was defined in Lemma \ref{Z*(j)}(i).

We thus have proved the following statement.
\begin{proposition}\label{hat Z(j)}
For any point $z\in Z\smallsetminus R_{Z}$ there exists a monomorphism $j:H_{m-1}\hookrightarrow H_m$ such that
$z\in Z(j,\mathbf{I})$.
\end{proposition}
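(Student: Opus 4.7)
The plan is to reduce the statement to a tractable monad-theoretic condition by degenerating to $z_0$ along a curve inside the well-understood locus $Z(j)\subset Z$, and then to exploit the rank hypothesis $z_0\notin R_Z$ at the final step. First, pick $z_0=(D_0,\phi_0)\in Z\smallsetminus R_Z$ (necessarily with $\phi_0\ne 0$, since $\rk s(z_0)\ge m-1$). By Lemma~\ref{Z*(j)}(i), for a generic monomorphism $j:H_{m-1}\hookrightarrow H_m$ the open set $Z(j)$ is dense in $Z$, so one can choose a smooth affine curve $C$, a distinguished point $0\in C$, and a morphism $g:C\to Z$ with $g(0)=z_0$ and $g(C^*)\subset Z(j)$, where $C^*=C\smallsetminus\{0\}$.

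Over $C$ the data $g(x)=(D_x,\phi_x)$ produce a family of nets of quadrics $\mathbf{A}_C=\{A(x)\}_{x\in C}$ defined by the block formula with $A_1(x)=D_x^{-1}$, $A_2(x)=\phi_x\circ j$, $A_3(x)=-A_2(x)^\vee\circ A_1(x)^{-1}\circ A_2(x)$; here Barth's rank condition $\rk A(x)=\rk A_1(x)=4m$ holds for all $x\in C$. On $C^*$ the construction of Section~\ref{widetilde(Z) and tHooft bdls} yields a classifying morphism $C^*\to\mathbf{Y}_0\subset\mathbf{Y}$ compatible with the universal family of t'Hooft nets $\mathbf{A}$ on $\mathbf{Y}$. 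Since $\mathbf{Y}$ is projective over $\mathbf{T}$, this morphism extends to $\psi_\mathbf{Y}:C\to\mathbf{Y}$; and because $A(0)\ne 0$ it lifts further to $\psi_\mathbf{V}:C\to\mathbf{V}_0$ with $\mathbf{A}_C=\psi_\mathbf{V}^*\mu^*\mathbf{A}$.

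Next, pull back the universal monad \eqref{complex for E dual} along $\psi_\mathbf{T}:=p_\mathbf{Y}\circ\psi_\mathbf{Y}$. The constant-rank condition on $\mathbf{A}_C$ forces $\mathbf{q}_C$ and $q_C$ to be isomorphisms, so by Lemma~\ref{T0=T'} one has $\psi_\mathbf{T}(C)\subset\mathbf{T}_0=\mathbf{T}'$, and the pullback is a genuine (antiselfdual) monad over $\mathbb{P}^3\times C$ with locally free rank-2 cohomology sheaf $\mathbb{E}_C$. Moreover, the natural maps $\mathbf{i}_m:H_m\hookrightarrow H_{2m-1}$ and the subsheaf $\mathbf{G}_m\subset\mathbf{G}_C$ fit into an isomorphism of subcomplexes whose cohomology identifies $\mathbf{E}_{2m}\overset{\simeq}\to\mathbb{E}_{2m}$, producing via quotient complexes a diagram relating the candidate monad
\[
0\to\mathcal{O}(-1)\boxtimes H_{m-1}\otimes\mathcal{O}_C\xrightarrow{\bar{\boldsymbol{\alpha}}_C}\mathbf{E}_{2m}\to\mathcal{O}(1)\boxtimes H_{m-1}^\vee\otimes\mathcal{O}_C\to 0
\]
to the genuine monad built over $\mathbf{T}_0$. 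Over $C^*$ this is an isomorphism of monads; hence the left arrow is known to be a subbundle morphism there.

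The main obstacle — and the whole point of excluding $R_Z$ — is to verify that the upper complex is also a genuine monad when restricted to $\mathbb{P}^3\times\{0\}$, equivalently that the comparison $\gamma_C\otimes\mathbf{k}(0):H_{m-1}\to\mathbf{G}_{m-1}\otimes\mathbf{k}(0)$ is an isomorphism. Passing to global sections in the leftmost square of the comparison diagram reduces this to injectivity of
\[
s_{z_0}(j)=s(z_0)\circ j:H_{m-1}\to H^0(E_{2m}(D_0^{-1})(1)).
\]
But $z_0\notin R_Z$ means $\rk s(z_0)\ge m-1$, so for a generic monomorphism $j$ the composition $s(z_0)\circ j$ is injective. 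Choosing such a $j$ at the outset (intersecting with the generic $j$ already used for $Z(j)$ density), we conclude that $s_{z_0}(j)$ is a subbundle morphism. Combined with the fact that $z_0\in((\mathbf{S}_m^\vee)^0\times\mathbf{\Phi}_m)^*$ (which follows from the open condition used to define $Z(j)$, extended to $z_0$ via the curve degeneration), this gives $z_0\in Z(j,\mathbf{I})$, completing the proof.
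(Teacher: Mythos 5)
Your proposal is correct and follows essentially the same route as the paper: degenerate from $Z(j)$ to $z_0$ along a curve, extend the classifying map to the projective universal family $\mathbf{Y}$, compare the pulled-back monad with the candidate complex via the quotient-complex diagram, and reduce the monad condition at $0$ to the injectivity of $s_{z_0}(j)$, which the hypothesis $z_0\notin R_Z$ guarantees for generic $j$. The only point handled slightly more loosely than it deserves is the membership $z_0\in((\mathbf{S}_m^\vee)^0\times\mathbf{\Phi}_m)^*$, but the paper itself leaves this at the same level of detail.
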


Consider the morphism $r_\mathbf{V}:\widetilde{Z}(j)\to \mathbf{V}_0$ defined in diagram (\ref{rY=}). By
(\ref{lambda'j=})
we have
\begin{equation}\label{mu circ rV}
\tilde{\lambda}_j|_{\widetilde{Z}(j)}=\mu\circ r_\mathbf{V}.
\end{equation}
We now prove the following proposition.
\begin{proposition}\label{Z' in widetilde Z }
Take any irreducible component $\widetilde{Z}$ of $Z_m$ and any monomorphism
$j:H_{m-1}\hookrightarrow H_m$ such that
$\widetilde{Z}(j)$ is nonempty. Then the morphism
$r_\mathbf{V}:\widetilde{Z}(j,\mathbf{I})\to \mathbf{V}_0$
\footnote{See the definition of the sets $\widetilde{Z}(j,\mathbf{I})$ in (\ref{tilde Z(j,I)}).}
is dominating and, for a general point
$z\in\widetilde{Z}(j,\mathbf{I})$, the fibre $r_\mathbf{V}^{-1}(r_\mathbf{V}(z))$ coincides
with $V(z,j)$ where $V(z,j)$ is defined in (\ref{V(z,j)}).
Moreover, $\dim\widetilde{Z}(j,\mathbf{I})=4m(m+2)$, and there exists a dense open subset $Z'$ of
$\widetilde{Z}(j,\mathbf{I})$
such that
\begin{equation}\label{dimV(z,j)=2m}
\dim V(z,j)=2m,\ \ \ z\in Z',
\end{equation}
\begin{equation}\label{fibres coincide}
r_\mathbf{V}^{-1}(r_\mathbf{V}(z))=\tilde{\lambda}_j^{-1}(\tilde{\lambda}_j(z))=
\lambda_{(j)}^{-1}(\lambda_j(z))=V(z,j),\ \ \ z\in Z',
\end{equation}
\begin{equation}\label{codim>1}
{\rm codim}_{\widetilde{Z}(j,\mathbf{I})}(\widetilde{Z}(j,\mathbf{I})\smallsetminus Z')\ge2.
\end{equation}

\end{proposition}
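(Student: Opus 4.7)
The plan is to exploit the factorisation $\tilde{\lambda}_j=\mu\circ r_\mathbf{V}$ established in (\ref{lambda'j=}), the fibre description of Lemma \ref{fibre of lambda}, and the codimension estimate of Remark \ref{special tH bdls}. The first observation is that every scheme-theoretic fibre of $r_\mathbf{V}$ is contained in the corresponding fibre of $\tilde{\lambda}_j$; by (\ref{5fibre=V(z,j)}) such a fibre therefore sits inside the affine subspace $V(z,j)\subset H_m^\vee\otimes\wedge^2V^\vee$, which by (\ref{V(z,j)=}) has dimension $2m$ or $2m+1$, with the value $2m+1$ occurring precisely on the closed subset $\widetilde{Z}^{\mathrm{sp}}\subset\widetilde{Z}(j,\mathbf{I})$ obtained by pulling back, through $r_\mathbf{V}$ and $p_\mathbf{Y}\colon\mathbf{Y}\to\mathcal{H}$, the special locus $\mathcal{H}_s$ of Remark \ref{special tH bdls}.

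Using the projective bundle structure of $\mathbf{Y}/\mathbf{T}$ and (\ref{dim T}), one computes
\[
\dim\mathbf{V}_0=(10m-1)+((2m-1)^2-1)+1=4m^2+6m.
\]
By Remark \ref{special tH bdls}, $\codim_{\mathcal{H}}\mathcal{H}_s\geq 6m-9\geq 3$ for $m\geq 2$, and the same bound transfers to the preimage of $\mathcal{H}_s$ in $\mathbf{V}_0$. Combined with the pointwise fibre bound $\dim r_\mathbf{V}^{-1}(\cdot)\leq 2m+1$ this gives
\[
\dim\widetilde{Z}^{\mathrm{sp}}\leq(4m^2+6m-3)+(2m+1)=4m^2+8m-2.
\]
Since $\dim\widetilde{Z}(j,\mathbf{I})=\dim\widetilde{Z}\geq 4m(m+2)=4m^2+8m$ by (\ref{dim Zm}), the maximal dimension must be realised on the complementary open subset $\widetilde{Z}^{\mathrm{gen}}:=\widetilde{Z}(j,\mathbf{I})\smallsetminus\widetilde{Z}^{\mathrm{sp}}$, which forces $r_\mathbf{V}$ to be dominant onto $\mathbf{V}_0$, the generic fibre to have dimension exactly $2m$, and $\dim\widetilde{Z}(j,\mathbf{I})=4m(m+2)$.

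Setting $Z':=\widetilde{Z}^{\mathrm{gen}}$, statement (\ref{dimV(z,j)=2m}) is immediate from (\ref{V(z,j)=}) and (\ref{codim>1}) is the codimension bound just established. For (\ref{fibres coincide}), note that on $\widetilde{Z}^{\mathrm{gen}}$ each fibre of $r_\mathbf{V}$ lies inside the irreducible affine subspace $V(z,j)$ of dimension $2m$; upper semi-continuity of fibre dimension, combined with the ambient bound $\leq 2m$ and the generic value $2m$, forces every fibre over $\widetilde{Z}^{\mathrm{gen}}$ to have dimension $2m$, and hence to coincide with $V(z,j)$. The triple equality in (\ref{fibres coincide}) then follows from the chain $r_\mathbf{V}^{-1}(r_\mathbf{V}(z))\subset\tilde{\lambda}_j^{-1}(\tilde{\lambda}_j(z))\subset\lambda_{(j)}^{-1}(\lambda_{(j)}(z))=V(z,j)$. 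The most delicate point is the inequality $\codim_{\mathcal{H}}\mathcal{H}_s\geq 3$: it just barely absorbs the single extra dimension of the special fibres of $V(\cdot,j)$, and it is here that the standing assumption $m\geq 2$ (with $m=1$ having been handled separately at the beginning of section \ref{Study Zm}) becomes essential.
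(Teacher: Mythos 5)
Your proposal is correct and follows essentially the same route as the paper's own proof: the same computation $\dim\mathbf{V}_0=4m^2+6m$, the same use of $\codim_{\mathcal H}\mathcal H_s\ge 3$ from Remark \ref{special tH bdls} to bound the locus where $\dim V(z,j)=2m+1$ by $4m(m+2)-2$, and the same sandwich $2m\le\dim(\text{generic fibre of }r_\mathbf{V})\le\dim V(z,j)=2m$ forcing dominance, the fibre identification via Lemma \ref{fibre of lambda}, and the codimension-$2$ bound for the complement of $Z'$. No substantive difference from the argument in the text.
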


\begin{proof}
First, since by definition $\widetilde{Z}(j,\mathbf{I})$ is an open subset of $Z_m$, we have by (\ref{dim Zm})
$\dim\widetilde{Z}=\dim\widetilde{Z}(j,\mathbf{I})\ge4m(m+2)$.

Next, set $\mathbf{V}_{00}:=\rho^{-1}(\mathbf{Y}_0)$. According to the diagram (\ref{rY=}) we have
$r_\mathbf{V}(\widetilde{Z}(j,\mathbf{I}))\subset\mathbf{V}_{00}$. Consider the composition of projections
$$
\mathbf{p}:\mathbf{V}_{00}\overset{\rho}\to\mathbf{Y}_0\overset{p_{\mathbf{Y}_0}}\to
\mathbf{T}_0\overset{p_\mathbf{T}}\to\mathcal{H}^0:=\mathcal{H}_{2m}^{tH-curv},
$$

$$
\mathbf{p}_j:\widetilde{Z}(j,\mathbf{I})\overset{r_\mathbf{V}}\to\mathbf{V}_{00}
\overset{\mathbf{p}}\to\mathcal{H}^0.
$$
Since the projections $\rho,\ p_{\mathbf{Y}_0}$ and $p_\mathbf{T}$ are smooth fibrations with fibers of dimensions,
respectively, 1,
$(2m-1)^2-1$ and $(2m-1)$, and $\dim\mathcal{H}^0=\dim\mathcal{H}=8m$ (cf. (\ref{dim T})), it follows that
\begin{equation}\label{dim V00}
\dim\mathbf{V}_{00}=\dim({\rm fibre\ of}\ \mathbf{p})+\dim\mathcal{H}^0=2m(2m-1)+8m=4m^2+6m.
\end{equation}
Whence,
\begin{equation}\label{dim fibre of rV}
\dim\{{\rm generic\ fibre\ of}\ r_\mathbf{V}:\widetilde{Z}(j,\mathbf{I})\to\mathbf{V}_{00}\}\ge
\dim\widetilde{Z}(j,\mathbf{I})-\dim\mathbf{V}_{00}\ge
\end{equation}
$$
\ge4m(m+2)-(4m^2+6m)=2m.
$$
Now take an arbitrary point $z\in\widetilde{Z}(j,\mathbf{I})$ and set $v:=r_\mathbf{V}(z),\ A:=\tilde{\lambda}_j(z)$. From
(\ref{mu circ rV}) it follows that $A=\mu(v)$ and so by Lemma \ref{fibre of lambda}(ii)
\begin{equation}\label{contained}
r_\mathbf{V}^{-1}(v)\subset\tilde{\lambda}_j^{-1}(A)=V(z,j),
\end{equation}
where $V(z,j)$ is described in (\ref{V(z,j)}). Using Remark \ref{special tH bdls}, we rewrite (\ref{V(z,j)}) as:
\begin{equation}\label{dim V(z,j)}
\dim V(z,j)=
\left\{\begin{array}{cc}
2m, & {\rm if}\ \mathbf{p}(z)\in\mathcal{H}^*,\\
2m+1, & {\rm if}\ \mathbf{p}(z)\in\mathcal{H}_s,
\end{array}\right.
\end{equation}
where we set
$$
\mathcal{H}^*:=\mathcal{H}^0\smallsetminus\mathcal{H}_s.
$$
As $\mathbf{p}:\mathbf{V}_{00}\to\mathcal{H}^0$ is a smooth fibration with fibres
of dimension $2m(2m-1)$ (see (\ref{dim V00})), formulas (\ref{codim Hs}), (\ref{dim V00}), (\ref{contained}) and
(\ref{dim V(z,j)}) yield
\begin{equation}\label{dim of ...}
\dim \mathbf{p}_j^{-1}(\mathcal{H}_s)\le4m^2+6m-3+(2m+1)=4m(m+2)-2<\dim\widetilde{Z}(j,\mathbf{I}).
\end{equation}
Thus, $\mathbf{p}_j(\widetilde{Z}(j,\mathbf{I}))\not\subset\mathcal{H}_s$, i.e. there is a dense open subset $Z'$ of
$\widetilde{Z}(j,\mathbf{I})$ such that $\mathbf{p}_j(Z')\subset\mathcal{H}^*$. In particular,
(\ref{contained}) and (\ref{dim V(z,j)}) imply
\begin{equation}\label{again dim fibre of rV}
\dim r_\mathbf{V}^{-1}(r_\mathbf{V}(z))\le\dim V(z,j)=2m,\ \ \ z\in Z'.
\end{equation}
On the other hand, since $Z'$ is dense in $\widetilde{Z}(j,\mathbf{I})$, (\ref{dim fibre of rV}) yields
$\dim r_\mathbf{V}^{-1}(r_\mathbf{V}(z))\ge2m,\ z\in Z'$.
Comparing this with (\ref{again dim fibre of rV}), (\ref{dim V00}) and the inequality $\dim Z'\ge4m(m+2)$,
we obtain that
\begin{equation}\label{1dims}
\dim r_\mathbf{V}(Z')=\dim\mathbf{V}_{00}=4m^2+6m,\ \ \
\end{equation}
\begin{equation}\label{2dims}
r_\mathbf{V}^{-1}(r_\mathbf{V}(z))=V(z,j),\ \ \ \dim V(z,j)=2m,\ \ \ z\in Z'.
\end{equation}
Moreover, (\ref{codim>1}) follows from (\ref{dim of ...}). Now since the minimal possible dimension of $V(z,j)$ is $2m$,
the equality (\ref{fibres coincide}) follows from Lemma \ref{fibre of lambda}(ii-iii) (see (\ref{3fibre=V(z,j)}) and
(\ref{5fibre=V(z,j)})) by the
semicontinuity of dimension of fibres of a morphism of irreducible varieties.
This together with (\ref{1dims}) and (\ref{2dims}) yields Proposition.
\end{proof}

\vspace{0.2cm}
Now we are ready to finish the proof of Theorem \ref{Irreducibility of Zm}.

\vspace{0.3cm}
\noindent
{\it End of the proof of Theorem \ref{Irreducibility of Zm}.}

(i) We prove the irreducibility of $Z_m$, and the surjectivity of the projection
$p_m:Z\to(\mathbf{S}_m^\vee)^0:(D,\phi)\mapsto D$ will be a by-product of this proof.
First, $Z_m$ contains an irreducible component $Z$ introduced in Proposition
\ref{part of inductn step}. Assume that there exists
another irreducible component $Z'$ of $Z_m$. Let
$b:\mathbf{\Phi}_m\smallsetminus\{0\}\to P(\mathbf{\Phi}_m)$
be the canonical projection and
$\mathbf{b}:=id\times b:(\mathbf{S}_m^\vee)^0\times(\mathbf{\Phi}_m\smallsetminus\{0\})\to
(\mathbf{S}_m^\vee)^0\times P(\mathbf{\Phi}_m)$
be the induced projection. The equations of $Z_m$ in $(\mathbf{S}_m^\vee)^0\times\mathbf{\Phi}_m$
(see (\ref{tildeZm})-(\ref{Zm})) are homogeneous with respect to affine coordinates in $\mathbf{\Phi}_m$,
hence there exist irreducible closed subsets
$\underline Z$ and ${\underline Z}'$ and the closed subset ${\underline Z}_m$
in $(\mathbf{S}_m^\vee)^0\times P(\mathbf{\Phi}_m)$
such that $Z=\mathbf{b}^{-1}(\underline Z)\cup\{0\}$, respectively, $Z'=\mathbf{b}^{-1}({\underline Z}')\cup\{0\}$,
respectively, $Z_m=\mathbf{b}^{-1}({\underline Z}_m)\cup\{0\}$.
%is the closure in $(\mathbf{S}_m^\vee)^0\times\mathbf{\Phi}_m$ of $$,of $$, respectively, respectively, of $$.
Moreover, by construction $\underline Z$ and ${\underline Z}'$
are irreducible components of ${\underline Z}_m$.

Take any point
\begin{equation}\label{take y}
y=(D_0,<\phi>)\in{\underline Z}'\smallsetminus
{\underline Z}'\cap{\underline Z}
\end{equation}
and consider the projective space $\mathbb{P}=\{D_0\}\times P(\mathbf{\Phi}_m)$,
$\dim \mathbb{P}=6m^2-1$. By definition, the sets
${\underline Z}_m(D_0)={\underline Z}'\cap P_D$ and  ${\underline Z}'(D_0)={\underline Z}'\cap P_D$
are closed subsets of $\mathbb{P}$ such that
\begin{equation}\label{Z'(D0) in Zm(D0)}
y\in{\underline Z}'(D_0)\subset{\underline Z}_m(D_0)
\end{equation}
and by Remark\ref{Z as zeroset} we have
$\codim_{\mathbb{P}}{\underline Z}'(D_0)\le5m(m-1)$
\begin{equation}\label{codim Zm(D0)}
\dim_{\mathbb{P}}{\underline Z}_m(D_0)\ge m^2+5m-1\ge1,\ \ \ m\ge1.
\end{equation}
By definition, ${\underline Z}_m(D_0)$ is given in $\mathbb{P}$ by $5m(m-1)$ global equations of the form
$\phi^\vee\circ D_0\circ\phi\in\mathbf{S}_m$. Hence, in view of (\ref{codim Zm(D0)}) ${\underline Z}_m(D_0)$
is connected.

Next, by Proposition \ref{part of inductn step}(ii) the morphism $pr_1:Z\to(\mathbf{S}_m^\vee)^0:(D,\phi)\mapsto D$
is dominant, so that the induced projective morphism
${\underline Z}\to(\mathbf{S}_m^\vee)^0:(D,<\phi>)\mapsto D$
is also dominant, hence surjective,\footnote{This clearly implies the surjectivity of projection
$p_m=pr_1:Z\to(\mathbf{S}_m^\vee)^0$.}
 since ${\underline Z}$ is closed in $(\mathbf{S}_m^\vee)^0\times P(\mathbf{\Phi}_m)$.
In particular, the set
${\underline Z}(D_0)={\underline Z}\cap\mathbb{P}$
is a nonempty closed subset of
${\underline Z}_m(D_0)$.
In addition, by (\ref{take y}) $y\in{\underline Z}_m(D_0)\smallsetminus{\underline Z}(D_0)$. Hence, since
${\underline Z}_m(D_0)$ is connected, it contains an irreducible component, say, ${\underline Z}''(D_0)$
distinct from ${\underline Z}(D_0)$ and intersecting ${\underline Z}(D_0)$. Let
${\underline Z}''$ be an irreducible
component of ${\underline Z}_m$ containing ${\underline Z}''(D_0)$, hence distinct from ${\underline Z}(D_0)$.
We thus have
\begin{equation}\label{nonempty}
{\underline Z}\cap{\underline Z}''\ne\emptyset.
\end{equation}
Let $Z''=\mathbf{b}^{-1}({\underline Z}'')\cup\{0\}$.
By construction $Z''$ is an irreducible component of $Z_m$ such that, in view of (\ref{nonempty}),
there exists a point
\begin{equation}\label{exists point z}
z=(D,\phi)\in Z\cap Z'',\ \ \ \ \phi\ne0.
\end{equation}

Since $Z_m$ is given in $(\mathbf{S}_m^\vee)^0\times\mathbf{\Phi}_m$ by $5m(m-1)$ equations (see (\ref{hm^-1(0)}))
and $Z$ has dimension $4m(m+2)$ (Proposition \ref{part of inductn step}). Hence, outside of its intersection with
other irreducible components of $Z_m$, $Z$ is a locally complete intersection of codimension $5m(m-1)$ in
$(\mathbf{S}_m^\vee)^0\times\mathbf{\Phi}_m$. Now it follows easily from the connectedness in codimension 1 of
locally complete intersections (see \cite{H2}) that through any point of intersection of $Z$ with other components
of $Z_m$ (e.g., through the point $z$ in (\ref{exists point z})) there passes a component, say,
$\widetilde{Z}$ of $Z_m$, distinct from $Z$, such that
${\rm codim}_{Z}Z\cap\widetilde{Z}=1$.

Take any irreducible component $F$ of $Z\cap\widetilde{Z}$ having codimension 1 in $Z$. From Lemma \ref{codim RZ*}
it follows now that the set $F':=F\smallsetminus(R_{Z}\cap\{$union of all possible components of
$Z\cap\widetilde{Z}$ distinct from $F\})$ is dense open in $F$. Take any point $z\in F'$. By
Proposition \ref{hat Z(j)} there exists a monomorphism $j:H_{m-1}\hookrightarrow H_m$ such that
$z\in Z(j,\mathbf{I})$. Then by Proposition \ref{Z' in widetilde Z }, in which we take $Z$ for $\widetilde{Z}$,
it follows that:

1) there exists a dense open subset
$Z'$ of $Z(j,\mathbf{I})$ such that $F^*:=F'\cap Z'$ is dense open in $F$ (see
(\ref{codim>1})),

2) for any point $z\in F^*$,
$\lambda_j^{-1}(\lambda_j(z))=\tilde{\lambda}_j^{-1}(\tilde{\lambda}_j(z))=V(z,j)\simeq\mathbf{k}^{2m}$.
(In fact, apply formula (\ref{fibres coincide}) to $Z(j,\mathbf{I})$ and to $\widetilde{Z}(j,\mathbf{I})$,
respectively). The last equality means that
\begin{equation}\label{V in Zcap...}
z=(D,\phi)\in V(z,j)\subset Z\cap\widetilde{Z},\ \ \ \dim V(z,j)=2m.
\end{equation}
Now we obtain from (\ref{V in Zcap...}) and diagram (\ref{diag V(z,j)}) that there
exists a monomorphism
$j'_\mathbf{k}:\mathbf{k}\hookrightarrow V(z,j)$ for which the induced homomorphism
${}^\sharp\phi':=({}^\sharp\phi\circ j,j'_\mathbf{k}):H_m=H_{m-1}\oplus\mathbf{k}\to V(z,j)\hookrightarrow
H_m^\vee\otimes\wedge^2V^\vee$ is such that, in notations of (\ref{diag n+1}), the point
$z'=(D,\phi')\in V(z,j)$ satisfies the condition:
$$
the\ composition\ s(z'):H_m\to H_m^\vee\otimes\wedge^2V^\vee\overset{c_D}\twoheadrightarrow H^0(E_{2m}(D^{-1})(1))\
is\ injective.
$$
Az $z\in Z(j,\mathbf{I})$, the composition
$s(z')\circ j:H_{m-1}\to H^0(E_{2m}(D^{-1})(1))$
is also injective. This together with the above condition and exactly means that the point $z'\in V(z,j)\subset$
satisfies both conditions (\textbf{I}) and (\textbf{II}) in the definition of $\widetilde{Z}(j)$ in Lemma
\ref{Z*(j)}. It follows now from (\ref{V in Zcap...}) that $\widetilde{Z}(j)$ is nonempty.

We are now in conditions of Proposition \ref{Z' in widetilde Z } which we apply to the irreducible sets
$Z(j)$ and $\widetilde{Z}(j)$. Consider the morphism $r_\mathbf{V}:Z_m(j)\to\mathbf{V}^0$
and its restrictions $r:=r_\mathbf{V}|_{Z(j)}$ and $\tilde{r}:=r_\mathbf{V}|_{\widetilde{Z}(j)}$.
Then according to Proposition \ref{Z' in widetilde Z } there exist dense open subsets $Z'$ of $Z(j)$ and,
respectively, $\widetilde{Z}'$ of $\widetilde{Z}(j)$, such that
$\mathbf{V}':=r(Z')=\tilde{r}(\widetilde{Z}')$.
Now, for a general point $v\in\mathbf{V}'$ and an arbitrary point $z\in r^{-1}(v)\cap Z'$, one has by
(\ref{fibres coincide}):
$$
r^{-1}(v)=V(z,j)=\lambda_{(j)}^{-1}(v)=\tilde{r}^{-1}(v).
$$
This is clearly a contradiction, since, by assumption, $Z(j)$ and $\widetilde{Z}(j)$ are distinct varieties.
Hence $Z_m$ is irreducible.

The surjectivity of the morphism $p_m:Z_m\to(\mathbf{S}_m^\vee)^0$ was already mentioned in the footnote 7 above.
Theorem \ref{Irreducibility of Zm} is proved.

\end{sub}

\vspace{0.5cm}

\section{Appendix: two results of general position }\label{Appendix}

\vspace{0.5cm}

In this Appendix we prove Theorem \ref{generic Vm} and Proposition \ref{nondeg for general}.

\begin{sub} {\bf Proof of Theorem \ref{generic Vm}.}
\rm

We first need to recall some definitions
and standard facts from theory of determinantal varieties.

\begin{definition}\label{rk(x)}
Let $U$ and $U'$ be two vector spaces of dimensions respectively $m$ and $n$, where $m\ge n$.
Consider the projective space $P(U\otimes U')$.
We say that {\it a point $x\in P(U\otimes U')$ has rank} $r$ (and denote this as
$\rk(x)=r$), if

(i) there exist unique subspaces $U_r(x)\subset U$ and $U'_r(x)\subset U'$ of dimensions
$\dim U_r(x)=\dim U'_r(x)=r$ such that $x\in P(U_r(x)\otimes U'_r(x))$, and

(ii) there do not exist subspaces $\tilde{U}\subset U$ and $\tilde{U}'\subset U'$ of dimension
$\dim \tilde{U}=\dim \tilde{U}'<r$ such that $x\in P(\tilde{U}\otimes \tilde{U}')$.
\end{definition}

The following Lemma is a well known fact from the theory of determinantal varieties (see, e. g., \cite{R}).
\begin{lemma}\label{Determ}
Each point $x\in P(U\otimes U')$ has a uniquely defined rank ${\rm rk}(x)$,
$1\le{\rm rk}(x)\le n$. Moreover, for a given point $x\in P(U\otimes U')$ of rank ${\rm rk}(x)=r$
such that $x\in W\otimes W'$ for some subspaces $W\subset U$ and $W'\subset U'$, the
subspaces $U_r(x)\subset U$ and $U'_r(x)\subset U'$ of dimensions $\dim U_k(x)=\dim U'_k(x)=r$
defined in (i) above are such that $U_r(x)\subset W$ and $U'_r(x)\subset W'$.
\end{lemma}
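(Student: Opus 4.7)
The plan is to reduce everything to the matrix/bilinear-form picture for tensors. Given $x\in U\otimes U'$, I would first introduce the two contraction maps $\phi_x\colon U'^\vee\to U$ and $\psi_x\colon U^\vee\to U'$; these are mutually transpose and hence share a common rank, which I will call $r_0$. The goal is then to show that $r_0$ is precisely $\rk(x)$ in the sense of Definition \ref{rk(x)}, and that $U_{r_0}(x)=\im\phi_x$, $U'_{r_0}(x)=\im\psi_x$, after which both assertions of the lemma follow at once.

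The central technical step is the equivalence
$$
x\in W\otimes W'\ \Longleftrightarrow\ \im\phi_x\subset W\ \text{and}\ \im\psi_x\subset W',
$$
for arbitrary subspaces $W\subset U$, $W'\subset U'$. The $(\Rightarrow)$ direction is immediate by writing $x=\sum w_i\otimes w'_i$ and computing $\phi_x(u'^*)=\sum u'^*(w'_i)w_i\in W$ (symmetrically for $\psi_x$). For $(\Leftarrow)$ I would first establish the separate statements $x\in W\otimes U'\Leftrightarrow\im\phi_x\subset W$ (and symmetrically for $W'$) by noting that the projection $U\otimes U'\twoheadrightarrow (U/W)\otimes U'$ sends $x$ to zero iff the induced map $U'^\vee\to U/W$ vanishes, and then combine them using $W\otimes W'=(W\otimes U')\cap(U\otimes W')$ inside $U\otimes U'$, which is a standard exactness property of tensor products over a field.

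With the equivalence in hand the two assertions fall out simultaneously. Applying it to $W=\im\phi_x$ and $W'=\im\psi_x$, both of dimension $r_0$, shows that $x$ does lie in $\im\phi_x\otimes\im\psi_x$. Conversely, for any realization $x\in\tilde U\otimes\tilde U'$ with $\dim\tilde U=\dim\tilde U'=r$, the equivalence forces $\im\phi_x\subset\tilde U$ and $\im\psi_x\subset\tilde U'$, whence $r\ge r_0$; so the infimum in (ii) of Definition \ref{rk(x)} is attained precisely at $r_0$, giving $\rk(x)=r_0$. When $r=r_0$ these inclusions are equalities, which simultaneously yields the uniqueness of $U_{r_0}(x),U'_{r_0}(x)$ required in (i) and the containment $U_r(x)\subset W$, $U'_r(x)\subset W'$ claimed in the final sentence of the lemma. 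I do not anticipate a serious obstacle; the only point deserving care is the clean coordinate-free formulation of the key equivalence via passage to the quotient $(U/W)\otimes(U'/W')$, where one must invoke flatness of modules over the base field to ensure that $W\otimes W'$ really is the intersection $(W\otimes U')\cap(U\otimes W')$.
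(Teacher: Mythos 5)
Your proposal is correct, but there is nothing in the paper to compare it against: the author states Lemma \ref{Determ} as ``a well known fact from the theory of determinantal varieties'' with a citation to Room's book \cite{R}, and the \verb|proof| environment that follows the lemma in the source is the proof of Theorem \ref{generic Vm}, not of the lemma itself. Your argument is the standard one and it does close the gap. The reduction to the two contraction maps $\phi_x\colon U'^\vee\to U$ and $\psi_x\colon U^\vee\to U'$ is sound: they are mutually transpose, hence of equal rank $r_0\ge1$ (as $x\ne0$) and $r_0\le n$; the key equivalence $x\in W\otimes W'\Longleftrightarrow\im\phi_x\subset W$ and $\im\psi_x\subset W'$ is proved correctly, with the only points needing care being exactly the two you flag, namely exactness of $-\otimes U'$ over a field (so that $x\in W\otimes U'$ iff $x$ dies in $(U/W)\otimes U'$, iff $\pi\circ\phi_x=0$) and the identity $W\otimes W'=(W\otimes U')\cap(U\otimes W')$ inside $U\otimes U'$, both of which are immediate for finite-dimensional vector spaces by choosing adapted bases. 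From the equivalence, taking $W=\im\phi_x$, $W'=\im\psi_x$ gives existence of a rank-$r_0$ realization; any realization in $\tilde U\otimes\tilde U'$ forces $\im\phi_x\subset\tilde U$, $\im\psi_x\subset\tilde U'$, which gives minimality, uniqueness at the minimum (equality of subspaces of equal dimension), and the containment $U_r(x)\subset W$, $U'_r(x)\subset W'$ of the last sentence all at once. In short: the paper outsources the lemma to a reference, while your proof makes the Appendix self-contained at essentially no cost.
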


\begin{proof}
According to Definition \ref{rk(x)} in which we put $U=H^\vee_{2m+1},\ U'=V^\vee$, each
point $x\in P(H^\vee_{2m+1}\otimes V^\vee)$
has rank
$1\le\rk(x)\le\dim V^\vee=4$
\footnote{Everywhere in this proof by the rank of a point $x$ of a given subspace of $P(H^\vee_{2m+1}\otimes V^\vee)$ we understand
its rank as of a point in $P(H^\vee_{2m+1}\otimes V^\vee)$.}
. Thus
\begin{equation}\label{P(W)}
P(W^\vee_{4m+4})=\underset{r=1}{\overset{4}\cup}Z_r,
\end{equation}
where
$$
Z_r:=\{x\in P(W^\vee_{4m+4})\ |\ rk(x)=r\},\ \ 1\le r\le4,
$$
are locally closed subsets of $P(W^\vee_{4m+4})$. Consider the Grassmannian
$$
G:=G(m,H^\vee_{2m+1})
$$
and its locally closed subsets
\begin{equation}\label{def of Sigma r}
\Sigma_r:=\{V_m\in G\ |\ V_m\supset U_r(x)\ {\rm for\ some\ point}\ x\in Z_r\},\ \ \ 1\le r\le4.
\end{equation}
In view of Lemma \ref{Determ} the condition $x\in Z_r\cap P(V_m\otimes V^\vee)$ means that
$x\in Z_r\cap P(U_r\otimes V^\vee)$ for some $r$-dimensional subspace $U_r=U_r(x)\subset V_m$. This
together with (\ref{P(W)}) and (\ref{def of Sigma r}) shows that
$$
\{V_m\in G\ |\ P(V_m\otimes V^\vee)\cap P(W^\vee_{4m+4})\ne\emptyset\}=
\underset{r=1}{\overset{4}\cup}\Sigma_r.
$$
Now the theorem says that $\underset{r=1}{\overset{4}\cup}\Sigma_r\underset{\ne}\subset G$.
Thus, to prove the theorem, it is enough to show that
\begin{equation}\label{dim Sigma r}
\dim\Sigma_r<\dim G, \ \ \ 1\le r\le4.
\end{equation}
We are starting now the proof of (\ref{dim Sigma r}) for $r=4,3,2,1$.

(i) {\bf Case $\mathbf{r=4}$}. Set
$\Gamma_4:=\{(x,U)\in P(W_{4m+4}^\vee)\times G(4,H^\vee_{2m+1})\ |\ \rk(x)=4\ {\rm and}
\ U=U_4(x)\}$
and let
$P(W_{4m+4}^\vee)\overset{p_4}\leftarrow\Gamma_4\overset{q_4}\to G(4,H^\vee_{2m+1})$
be the projections. By construction, $p_4(\Gamma_4)=Z_4$, and by the definition \ref{rk(x)}(i) the projection
$p_4:\Gamma_4\to Z_4$ is a bijection. Hence
$$
\dim q_4(\Gamma_4)\le\dim\Gamma_4=\dim Z_4\le\dim P(W_{4m+4}^\vee)=4m+3.
$$
By construction we have the graph of incidence
$$
\Pi_4=\{(U,V_m)\in q_4(\Gamma_4)\times\Sigma_4\ |\ U\subset V_m\}
$$
with surjective projections
$q_4(\Gamma_4)\overset{pr_1}\leftarrow\Pi_4\overset{pr_2}\to\Sigma_4$
and a fibre
\begin{equation}\label{Grass4}
pr_1^{-1}(U)\simeq G(m-4,H^\vee_{2m+1}/U)
\end{equation}
over an arbitrary point $U\in q_4(\Gamma_4)$.
(In fact, the condition
$U\subset V_m\subset H^\vee_{2m+1}$
means that
$V_m/U\in G(m-4,H^\vee_{2m+1}/U)$.)
Hence
$$
\dim\Sigma_4\le\dim\Pi_4=\dim q_4(\Gamma_4)+\dim G(m-4,H^\vee_{2m+1}/U)\le4m+3+(m-4)(m+1)=
m(m+1)-1=
$$
$=\dim G-1<\dim G$, i.e. (\ref{dim Sigma r}) is true for $r=4$.

\vspace{0.4cm}
(ii) {\bf Case $\mathbf{r=3}$}. Consider the projection
$f_3:Z_3\to P( V^\vee)^\vee=\mathbb{P}^3: x\mapsto V_3(x)$,
where the pair of 3-dimensional spaces
$(U_3(x),V_3(x)), \ \ \ U_3(x)\subset H^\vee_{2m+1}$
and
$V_3(x)\subset V^\vee$,
is determined uniquely by the point $x$ via the condition $x\in P(U_3(x)\otimes V_3(x))$,
since $\rk(x)=3$ (see Definition \ref{rk(x)} and Lemma \ref{Determ}).
Now for a given 3-dimensional subspace $V_3\subset V^\vee$ set
\begin{equation}\label{Sigma3(V3)}
\Sigma_3(V_3)=\{V_m\in G\ |\ V_m\supset U_3(x)\ {\rm for\ some\ point}\ x\in f_3^{-1}(V_3)\}.
\end{equation}
Comparing this with (\ref{def of Sigma r}) for $r=3$ we obtain
\begin{equation}\label{Sigma3=}
\Sigma_3=\underset{V_3\subset V^\vee}\cup\Sigma_3(V_3).
\end{equation}
Note that a priori $f_3$ is not necessarily surjective. Hence,
\begin{equation}\label{dim Sigma3}
\dim\Sigma_3\le\dim\Sigma_3(V_3)+3.
\end{equation}
We are going to obtain an estimate for the dimension of $\Sigma_3(V_3)$ for an arbitrary
3-dimensional subspace $V_3$ of $ V^\vee$. This subspace defines a commutative diagram
\begin{equation}\label{Di1}
\xymatrix{
& 0\ar[d] & 0\ar[d] & 0\ar[d] & \\
0\ar[r] & F\ar[r]\ar[d] & \Omega_{\mathbb{P}^3}\ar[r]\ar[d] & \mathcal{I}_z(-1)\ar[r]\ar[d] &
0 \\
0\ar[r] & V_3\otimes\mathcal{O}_{\mathbb{P}^3}(-1)\ar[r]\ar[d] &
 V^\vee\otimes\mathcal{O}_{\mathbb{P}^3}(-1)\ar[r]\ar[d] &
\mathcal{O}_{\mathbb{P}^3}(-1)\ar[r]\ar[d] & 0 \\
0\ar[r] &  \mathcal{I}_z\ar[r]\ar[d] & \mathcal{O}_{\mathbb{P}^3}\ar[r]\ar[d] &
\mathbf{k}_z\ar[r]\ar[d] & 0 \\
& 0 & 0 & 0, &}
\end{equation}
where $z=P(\ker: V\twoheadrightarrow V_3^\vee)$ is a point in $\mathbb{P}^3$
and the sheaf $F$ has an $\mathcal{O}_{\mathbb{P}^3}$-resolution
$0\to\mathcal{O}_{\mathbb{P}^3}(-3)\to3\mathcal{O}_{\mathbb{P}^3}(-2)\to F\to0$.
Twisting this resolution by the vector bundle $E$ and passing to cohomology we obtain
the equalities $H^1(F\otimes E)\simeq H^2(E(-3))=H_{2m+1},\ H^2(F\otimes E)=0$.  Respectively,
passing to cohomology in diagram (\ref{Di1}) twisted by $E$ and using the above equalities
and evident relations
$H^0(E\otimes\mathbf{k}_z)\simeq\mathbf{k}^2,\ \ H^1(E\otimes\mathbf{k}_z)=0$
implies the diagram

\begin{equation}\label{Di2}
\xymatrix{
& 0\ar[d] & 0\ar[d] & \mathbf{k}^2\ar@{>->}[d] & \\
0\ar[r] & H_{2m+1}\ar[r]\ar[d] & W_{4m+4}^\vee\ar[r]\ar[d]
& H^1(E\otimes\mathcal{I}_z(-1))\ar[d]\ar[r] & 0 \\
0\ar[r] & H^\vee_{2m+1}\otimes V_3\ar[r]^\lambda\ar[d] &
H^\vee_{2m+1}\otimes V^\vee\ar[r]\ar[d]^{mult} & H^\vee_{2m+1}\ar[d]\ar[r] & 0 \\
\mathbf{k}^2\ \ar@{>->}[r] & H^1(E\otimes\mathcal{I}_z)\ar[r]\ar[d] &
H^\vee_{4m}\ar[r]\ar[d] & 0 \\
& 0 & 0. &}
\end{equation}
In this diagram the composition $\epsilon:=mult\circ\lambda$ is surjective. Hence, setting
$W_{2m+3}(V_3):=\ker\epsilon$, where $\dim W_{2m+3}(V_3)=2m+3$, we obtain a commutative
diagram
%\begin{equation}
$$\xymatrix{
& 0\ar[d] & 0\ar[d] & \\
0\ar[r] & W_{2m+3}(V_3)\ar[r]\ar[d] & W_{4m+4}^\vee\ar[r]\ar[d]
& H^\vee_{2m+1}\ar@{=}[d] \\
0\ar[r] & H^\vee_{2m+1}\otimes V_3\ar[r]^\lambda\ar[d]^\epsilon &
H^\vee_{2m+1}\otimes V^\vee\ar[r]\ar[d]^{mult} & H^\vee_{2m+1} \\
& H^\vee_{4m}\ar@{=}[r]\ar[d] & H^\vee_{4m}\ar[d] \\
& 0 & \ 0 &}
%\end{equation}
$$
which yields the relation
\begin{equation}\label{Di3}
W_{2m+3}(V_3)=H^\vee_{2m+1}\otimes V_3\cap W_{4m+4}^\vee,
\end{equation}
where the intersection is taken in $H^\vee_{2m+1}\otimes V^\vee$.
Set
$$
Z_3(V_3):=\{x\in P(W_{2m+3}(V_3))\ |\ \rk(x)=3\}.
$$
The relation (\ref{Di3}) and Lemma \ref{Determ} imply the bijection
\begin{equation}\label{bijectn0}
Z_3(V_3)\overset{\simeq}\to f_3^{-1}(V_3).
\end{equation}
Consider the graph of incidence
$\Gamma_3(V_3):=\{(x,U)\in Z_3(V_3)\times G(3,H^\vee_{2m+1})\ |U=U_3(x)\}$
with projections
$Z_3(V_3)\overset{p_3}\leftarrow\Gamma_3(V_3)\overset{q_3}\to G(3,H^\vee_{2m+1})$.
By Lemma \ref{Determ}, $p_3(\Gamma_3(V_3))=Z_3(V_3)$ and the projection
$p_3:\Gamma_3(V_3)\to Z_3(V_3)$ is a bijection. Hence
\begin{equation}\label{dim q3(...)<}
\dim q_3(\Gamma_3(V_3))\le\dim\Gamma_3(V_3)=\dim Z_3(V_3)\le\dim P(W_{2m+3}(V_3))=2m+2.
\end{equation}
Consider the graph of incidence
$$
\Pi_3(V_3)=\{(U,V_m)\in q_3(\Gamma_3(V_3))\times\Sigma_3(V_3)\ |\ U\subset V_m\}
$$
with projections
$q_3(\Gamma_3(V_3))\overset{pr_1}\leftarrow\Pi_3(V_3)\overset{pr_2}\to\Sigma_3(V_3)$
and a fibre
\begin{equation}\label{Grass3}
pr_1^{-1}(U)\simeq G(m-3,H^\vee_{2m+1}/U)
\end{equation}
over an arbitrary point $U\in q_3(\Gamma_3(V_3))$ (cf. (\ref{Grass4})).
The projection $\Pi_3(V_3)\overset{pr_2}\to\Sigma_3(V_3)$ is surjective in view of
(\ref{bijectn0}). Hence, using (\ref{dim q3(...)<}), we obtain
$$
\dim\Sigma_3(V_3)\le\dim\Pi_3(V_3)=\dim q_3(\Gamma_3(V_3))+\dim G(m-3,H^\vee_{2m+1}/U)\le2m+2+
(m-3)(m+1)=
$$
$=m^2-1$. This together with (\ref{dim Sigma3}) and the assumption $m\ge3$ yields
$\dim\Sigma_3\le m^2+2=\dim G+2-m<\dim G$, i.e. (\ref{dim Sigma r}) holds for $r=3$.

\vspace{0.4cm}
Before proceeding to the case $r=2$ we need to make a small digression on jumping lines of $E$.
Introduce some more notation. For a given line $l\subset\mathbb{P}^3$ we have
$E|l\simeq\mathcal{O}_{\mathbb{P}^1}(d)\oplus\mathcal{O}_{\mathbb{P}^1}(-d)$
for a well-defined nonnegative integer $d$ called the {\it jump of} $E|l$ and denoted also by
$d_E(l)$; respectively, the line $l$ is called a {\it jumping line of jump $d$ of} $E$.
Set $G_{2,4}:=G(2, V^\vee)$ and $J_k(E):=\{l\in G_{2,4}\ |\ d_E(l)\ge k\}$,
$J^*_k(E):=J_k(E)\smallsetminus J_{k+1}(E),\  0\le k$.
From the semicontinuity of $E|l,\ l\in G_{2,4}$, it follows that $J _k(E)$ (resp., $J^*_k(E))$
is a closed (resp., locally closed) subset of $G_{2,4},\ k\ge0$. Moreover, by a well-known theorem of
Grauert-M\"ulich, $J^*_0(E)$ is a dense open subset of $G_{2,4}$. Next, since $E\in I'_{2m+1}$,
it follows that
\begin{equation}\label{emptyjump}
J_{2m+1}(E)=\emptyset,
\end{equation}
so that
\begin{equation}\label{jump2m-1}
J_{2m-1}(E)=J^*_{2m-1}(E)\sqcup J^*_{2m}(E).
\end{equation}
We will use below the following lemma.
\begin{lemma}\label{jumps}
Let $E\in I'_{2m+1}$. Then

$(1)\ \dim J_{2m-1}(E)\le1$.

$(2)\ \dim J^*_k(E)\le3$ for $1\le k\le 2m-2$.
\end{lemma}
\begin{proof}
(1) Suppose the contrary, i.e. $\dim J_{2m-1}(E)\ge2$. Take any irreducible surface
$S\subset J_{2m-1}(E)$ and let $D$ be the degree of $S$ with respect to the sheaf
$\mathcal{O}_{G_{2,4}}(1)$. Fix an integer $r\ge 5$ and take any irreducible curve $C$
belonging to the linear series $\big|\mathcal{O}_{G_{2,4}}(r)|_S\big|$. Then the degree
$\deg C$ w.r.t. $\mathcal{O}_{G_{2,4}}(1)$ equals to $Dr$, hence $\deg C\ge5$. Hence by
\cite[Lemma 6]{C} there exist two distinct lines, say, $l_1,l_2\in C$, which intersect in
$\mathbb{P}^3$. Let the plane
$\mathbb{P}^2$ be the span of $l_1$ and $l_2$ in $\mathbb{P}^3$. Now the exact triple
$0\to E(-2)|_{\mathbb{P}^2}\to E|_{\mathbb{P}^2}\to E|_{l_1\cup l_2}\to0$ implies
\begin{equation}\label{tripleA}
H^0(E|_{\mathbb{P}^2})\to H^0(E|_{l_1\cup l_2})\to H^1(E(-2)|_{\mathbb{P}^2}).
\end{equation}
Next, as $[E]\in I_{2m+1}$, we have $h^0(E(-1))=h^1(E(-2))=0$, hence the exact
triple $0\to E(-2)\to E(-1)\to E(-1)|_{\mathbb{P}^2}\to0$ implies
\begin{equation}\label{tripleB}
H^0(E(-1)|_{\mathbb{P}^2})=0.
\end{equation}
Now assume $h^0(E|_{\mathbb{P}^2})>0$. Then a section
$0\ne s\in H^0(E|_{\mathbb{P}^2})$
defines an injection
$\mathcal{O}_{\mathbb{P}^2}\overset{s}\hookrightarrow E|_{\mathbb{P}^2}$. This injection and
(\ref{tripleB}) show that the zero-set $Z$ of the section $s$ is 0-dimensional and the injection
$s$ extends to a triple
$0\to\mathcal{O}_{\mathbb{P}^2}\overset{s}\to E|_{\mathbb{P}^2}\to
\mathcal{I}_{Z,\mathbb{P}^2}\to0$.
Whence
\begin{equation}\label{tripleC}
h^0(E|_{\mathbb{P}^2})\le1.
\end{equation}
Furthermore, equality (\ref{tripleB}) together with Riemann-Roch and Serre duality for the vector bundle
$E(-1)|_{\mathbb{P}^2}$ shows that $h^1(E(-2)|_{\mathbb{P}^2})=2m+1$. Whence in view of
(\ref{tripleA}) and (\ref{tripleB}) we obtain
\begin{equation}\label{tripleD}
h^0(E|_{l_1\cup l_2})\le2m+2.
\end{equation}
On the other hand, let $x:=l_1\cap l_2$. Since by construction $l_1,l_2\in J_{2m-1}(E)$, it
follows from (\ref{jump2m-1}) that either
$E|_{l_i}\simeq\mathcal{O}_{\mathbb{P}^2}(2m-1)\oplus\mathcal{O}_{\mathbb{P}^2}(1-2m)$,
or $E|_{l_i}\simeq\mathcal{O}_{\mathbb{P}^2}(2m)\oplus\mathcal{O}_{\mathbb{P}^2}(-2m)$,
hence $h^0(E\otimes\mathcal{I}_{x,l_i})\ge2m-1,\ i=1,2$. This clearly implies
$h^0(E|_{l_1\cup l_2})\ge h^0(E\otimes\mathcal{I}_{x,l_1\cup l_2})\ge
h^0(E\otimes\mathcal{I}_{x,l_1})+h^0(E\otimes\mathcal{I}_{x,l_2})=4m-2$. Comparing this with
(\ref{tripleD}) we obtain the inequality $2m+2\ge4m-2$, i.e. $m\le2$. This contradicts to the
assumption $m\ge3$. Hence, the assertion (1) follows.

(2) This is an immediate corollary of the theorem of Grauert-M\"ulich.
The lemma is proved.
%~\hfill $\Box$
\end{proof}

\vspace{0.4cm}
(iii) {\bf Case $\mathbf{r=2}$}. Here our notation and argument are completely parallel to those in the case $r=3$ above.
Consider a morphism $f_2:Z_2\to G_{2,4}: x\mapsto V_2(x)$,
where the pair of 2-dimensional spaces
$(U_2(x),V_2(x)), \ \ \ U_2(x)\subset H^\vee_{2m+1}$
and
$V_2(x)\subset V^\vee$,
is determined uniquely by the point $x$ via the condition $x\in P(U_2(x)\otimes V_2(x))$,
since $\rk(x)=2$ (see Lemma \ref{Determ}).

According to
(\ref{emptyjump}) we may assume that
$l\in J^*_k(E)$ for some $0\le k\le2m$, i.e.
$$
h^0(E|l)=2,\ \ \ h^1(E|l)=0, \ \ \ {\rm if}\ \ \ l\in J^*_0(E),
$$
respectively,
\begin{equation}\label{h0(E|l)}
h^0(E|l)=k+1,\ \ \ h^1(E|l)=k-1, \ \ \ {\rm if}\ \ \ l\in J^*_k(E),\ \ \ 1\le k\le2m.
\end{equation}
Now, for $1\le k\le2m$ and a given subspace $V_2\in J^*_k$, set
\begin{equation}\label{Sigma2(V2)}
\Sigma_{2,k}(V_2)=\{V_m\in G\ |\ V_m\supset U_2(x)\ {\rm for\ some\ point}\ x\in f_2^{-1}(V_2)\}.
\end{equation}
Then similarly to (\ref{Sigma3=}) we have
$$
\Sigma_2=\underset{k=0}{\overset{2m}\cup}\underset{V_2\in J^*_k}\cup\Sigma_{2,k}(V_2).
$$
Hence, in view of Lemma \ref{jumps}
\begin{equation}\label{dim Sigma2}
\dim\Sigma_2\le\underset{\overset{V_2\in J^*_k}{0\le k\le2m}}
\max(\dim\Sigma_{2,k}(V_2)+\dim J^*_k).
\end{equation}
We are going to obtain an estimate for the dimension of $\Sigma_{2,k}(V_2)$ for an arbitrary
2-dimensional subspace $V_2$ in $J^*_k,\ 0\le k\le2m$. This subspace defines a commutative diagram
\begin{equation}\label{Di4}
\xymatrix{
& 0\ar[d] & 0\ar[d] & 0\ar[d] & \\
0\ar[r] & \mathcal{O}_{\mathbb{P}^3}(-2)\ar[r]^s\ar[d] & \Omega_{\mathbb{P}^3}\ar[r]\ar[d] &
F\ar[r]\ar[d] & 0 \\
0\ar[r] & V_2\otimes\mathcal{O}_{\mathbb{P}^3}(-1)\ar[r]\ar[d] &
 V^\vee\otimes\mathcal{O}_{\mathbb{P}^3}(-1)\ar[r]\ar[d] &
V'_2\otimes\mathcal{O}_{\mathbb{P}^3}(-1)\ar[r]\ar[d] & 0 \\
0\ar[r] &  \mathcal{I}_l\ar[r]\ar[d] & \mathcal{O}_{\mathbb{P}^3}\ar[r]\ar[d] &
\mathcal{O}_l\ar[r]\ar[d] & 0 \\
& 0 & 0 &\ 0, &}
\end{equation}
where $V'_2:= V^\vee/V_2$, $l=P((V'_2)^\vee)$ is a line in $\mathbb{P}^3$,
and $F:=\coker s$.
Passing to cohomology in the diagram (\ref{Di4}) twisted by $E$, we obtain the diagram

\begin{equation}\label{Di5}
\xymatrix{
& & 0\ar[d] & H^0(E|l)\ar@{>->}[d] & \\
& & W_{4m+4}^\vee\ar@{=}[r]\ar[d]
& H^1(E\otimes F)\ar[d] & \\
0\ar[r] & H^\vee_{2m+1}\otimes V_2\ar[r]\ar@{=}[d] &
H^\vee_{2m+1}\otimes V^\vee\ar[r]\ar[d]^{mult} & H^\vee_{2m+1}\otimes V'_2\ar[d]^{\epsilon_2}\ar[r] & 0 \\
H^0(E|l)\ \ar@{>->}[r] & H^1(E\otimes\mathcal{I}_l)\ar[r] &
H^\vee_{4m}\ar[r]^{\epsilon_1}\ar[d] & H^1(E|l)\ar[d]\ar[r] & 0 \\
& & 0 & 0. &}
\end{equation}

Assume first that $1\le k\le2m$. (The case $k=0$ is treated below.)
In this case (\ref{h0(E|l)}) and the diagram (\ref{Di5}) lead to the diagram
$$
\xymatrix{
& 0\ar[d] & 0\ar[d] & 0\ar[d] & \\
0\ar[r] & W_{k+1}(V_2)\ar[r]\ar[d] & W_{4m+4}^\vee\ar[r]\ar[d] & \ker{\epsilon_2}\ar[d]\ar[r] & 0 \\
0\ar[r] & H^\vee_{2m+1}\otimes V_2\ar[r]\ar[d] &
H^\vee_{2m+1}\otimes V^\vee\ar[r]\ar[d]^{mult} & H^\vee_{2m+1}\otimes V'_2\ar[d]^{\epsilon_2}\ar[r] & 0 \\
0\ar[r] & \ker{\epsilon_1}\ar[r]\ar[d] & H^\vee_{4m}\ar[d]\ar[r]^{\epsilon_1} & H^1(E|l)\ar[d]\ar[r] & 0 \\
& 0 & 0 &\ 0, & }
$$
where we set
$W_{k+1}(V_2):=H^0(E|l)$. Here according to (\ref{h0(E|l)}) we have $\dim W_{k+1}(V_2)=k+1,\ \
\dim\ker{\epsilon_1}=4m-k+1,\ \ \dim\ker{\epsilon_2}=4m-k+3$. This diagram yields the relation (cf. (\ref{Di3}))
\begin{equation}\label{Di6}
W_{k+1}(V_2)=H^\vee_{2m+1}\otimes V_2\cap W_{4m+4}^\vee,
\end{equation}
where the intersection is taken in $H^\vee_{2m+1}\otimes V^\vee$.
Set
$$
Z_{2,k}(V_2):=\{x\in P(W_{k+1}(V_2))\ |\ \rk(x)=2\}.
$$
The relation (\ref{Di6}) and Lemma \ref{Determ} imply the bijection
\begin{equation}\label{bijectn2}
Z_{2,k}(V_2)\overset{\simeq}\to f_2^{-1}(V_2).
\end{equation}
Consider the graph of incidence
$\Gamma_{2,k}(V_2):=\{(x,U)\in Z_{2,k}(V_2)\times G(2,H^\vee_{2m+1})\ |\ U=U_2(x)\}$
with projections
$Z_{2,k}(V_2)\overset{p_2}\leftarrow\Gamma_{2,k}(V_2)\overset{q_2}\to G(2,H^\vee_{2m+1})$.
By construction, $p_2(\Gamma_{2,k}(V_2))=Z_{2,k}(V_2)$ and the projection
$p_2:\Gamma_{2,k}(V_2)\to Z_{2,k}(V_2)$ is a bijection. Hence
\begin{equation}\label{dim q2(...)<}
\dim q_2(\Gamma_{2,k}(V_2))\le\dim\Gamma_{2,k}(V_2)=\dim Z_{2,k}(V_2)
\le\dim P(W_{k+1}(V_2))=k.
\end{equation}
Consider the graph of incidence
$$
\Pi_{2,k}(V_2)=\{(U,V_m)\in q_2(\Gamma_{2,k}(V_2))\times\Sigma_{2,k}(V_2)\ |\ U\subset V_m\}
$$
with projections
$q_2(\Gamma_{2,k}(V_2))\overset{pr_1}\leftarrow\Pi_{2,k}(V_2)\overset{pr_2}\to\Sigma_{2,k}(V_2)$
and a fibre
$$
pr_1^{-1}(U)\simeq G(m-2,H^\vee_{2m+1}/U)
$$
over an arbitrary point $U\in q_2(\Gamma_{2,k}(V_2))$ (cf. (\ref{Grass4}) and (\ref{Grass3})).
The projection $\Pi_{2,k}(V_2)\overset{pr_2}\to\Sigma_{2,k}(V_2)$ is surjective in view of
(\ref{bijectn2}). Hence using (\ref{dim q2(...)<}) we obtain
\begin{equation}\label{dim Sigma2k}
\dim\Sigma_{2,k}(V_2)\le\dim\Pi_{2,k}(V_2)=\dim q_2(\Gamma_{2,k}(V_2))+
\dim G(m-2,H^\vee_{2m+1}/U)\le
\end{equation}
$$
\le k+(m-2)(m+1)=m^2-m-2+k=\dim G-(2m-k+2),\ \ \ \ 1\le k\le 2m.
$$
Now consider the case $k=0$. In this case one has $h^0(E|l)=2$ and, respectively,
$\dim q_2(\Gamma_{2,0}(V_2))\le\dim\Gamma_{2,0}(V_2)=\dim Z_{2,0}(V_2)
\le\dim P(W_1(V_2))=1$, instead of (\ref{dim q2(...)<}).
Hence, similar to the above we obtain for $k=0$:
$$
\dim\Sigma_{2,0}(V_2)\le1+(m-2)(m+1)=m^2-m-1=\dim G-(2m+1).
$$
The last inequality together with (\ref{dim Sigma2k}), (\ref{dim Sigma2}), Lemma \ref{jumps} and the assumption
$m\ge3$ yields $\dim\Sigma_2<\dim G$, i.e. (\ref{dim Sigma r}) is true for $r=2$.

\vspace{0.4cm}
(iv)  {\bf Case $\mathbf{r=1}$}. Again the notation and argument goes along the same lines as in cases $r=4,3$ and 2 above.
Consider the projection
$f_1:Z_1\to P( V^\vee)=(\mathbb{P}^3)^\vee: x\mapsto V_1(x)$,
where the pair of 1-dimensional spaces
$(U_1(x),V_1(x)), \ \ \ U_1(x)\subset H^\vee_{2m+1}$
and
$V_1(x)\subset V^\vee$,
is determined uniquely by the point $x$ via the condition $x\in P(U_1(x)\otimes V_1(x))$,
since $\rk(x)=1$ (see Lemma \ref{Determ}).
Now for a given subspace $V_1\in(\mathbb{P}^3)^\vee$ set
$$
\Sigma_1(V_1):=\{V_m\in G\ |\ V_m\supset U_1(x)\ {\rm for\ some\ point}\ x\in f_1^{-1}(V_1)\}.
$$
Then similar to (\ref{Sigma3=}) we have
\begin{equation}\label{Di}
\Sigma_1=\underset{V_1\in(\mathbb{P}^3)^\vee}\cup\Sigma_1(V_1).
\end{equation}
Hence,
\begin{equation}\label{dim Sigma1}
\dim\Sigma_1\le\dim\Sigma_1(V_1)+3.
\end{equation}
We are going to obtain an estimate for the dimension of $\Sigma_1(V_1)$ for an arbitrary
1-dimensional subspace $V_1$ of $ V^\vee$. This subspace $V_1$ defines a commutative diagram
\begin{equation}\label{Di7}
\xymatrix{
& & 0\ar[d] & 0\ar[d] & \\
& & \Omega_{\mathbb{P}^3}\ar@{=}[r]\ar[d] & \Omega_{\mathbb{P}^3}\ar[d] & \\
0\ar[r] & V_1\otimes\mathcal{O}_{\mathbb{P}^3}(-1)\ar[r]\ar@{=}[d] &
 V^\vee\otimes\mathcal{O}_{\mathbb{P}^3}(-1)\ar[r]\ar[d] &
V_3\otimes\mathcal{O}_{\mathbb{P}^3}(-1)\ar[r]\ar[d] & 0 \\
0\ar[r] & \mathcal{O}_{\mathbb{P}^3}(-1)\ar[r] & \mathcal{O}_{\mathbb{P}^3}\ar[r]\ar[d] &
\mathcal{O}_{\mathbb{P}^2}\ar[r]\ar[d] & 0 \\
& & 0 & \ 0. &}
\end{equation}
Note that to the point $V_1\in(\mathbb{P}^3)^\vee$ there corresponds a projective
plane $P(V_1)$ in $\mathbb{P}^3$ and
set $B(E):=\{V_1\in(\mathbb{P}^3)^\vee\ |\ h^0(E|_{P(V_1)})\ne0\}$.
It is known that, for $m\ge1,$
$\dim B(E)\le2$
(see \cite{B1}). Moreover, in view of (\ref{tripleC}),
\begin{equation}\label{h0(E|PV1)}
h^0(E|_{P(V_1)})=1,\ \ \ V_1\in B(E).
\end{equation}
Passing to cohomology in diagram (\ref{Di7}) twisted by $E$ and using the equality
$h^0(E)=0$ for $[E]\in I_{2m+1}$ we obtain the diagram

\begin{equation}\label{Di8}
\xymatrix{
& & 0\ar[d] & H^0(E|_{P(V_1)})\ar@{>->}[d] & \\
& & W_{4m+4}^\vee\ar@{=}[r]\ar[d] & W_{4m+4}^\vee\ar[d] & \\
0\ar[r] & H^\vee_{2m+1}\otimes V_1\ar[r]^\lambda\ar@{=}[d] &
H^\vee_{2m+1}\otimes V^\vee\ar[r]\ar[d]^{mult} & H^\vee_{2m+1}\otimes V_3\ar[d]\ar[r] & 0 \\
H^0(E|_{P(V_1)})\ \ar@{>->}[r] & H^\vee_{2m+1}\ar[r] &
H^\vee_{4m}\ar[r]\ar[d] & H^1(E|_{P(V_1)})\ar[d]\ar[r] & 0 \\
& & 0 & \ 0. & }
\end{equation}
Let $V_1\in B(E)$.
Setting $W_1(V_1):=\ker(mult\circ\lambda)=H^0(E|_{P(V_1)})$,
where by (\ref{h0(E|PV1)}) $\dim W_1(V_1)=1$, we obtain from (\ref{Di8}) a commutative diagram
$$
\xymatrix{
& 0\ar[d] & 0\ar[d] & 0\ar[d] & \\
0\ar[r] & W_1(V_1)\ar[r]\ar[d] & W_{4m+4}^\vee\ar[r]\ar[d]
& W_{4m+4}^\vee/W_1(V_1) \ar[d]\ar[r] & 0 \\
0\ar[r] & H^\vee_{2m+1}\otimes V_1\ar[r]^\lambda\ar[d]^\epsilon &
H^\vee_{2m+1}\otimes V^\vee\ar[r]\ar[d]^{mult} & H^\vee_{2m+1}\otimes V_3\ar[d]\ar[r] & 0 \\
0\ar[r] & H^\vee_{2m+1}/W_1(V_1)\ar[r]\ar[d] & H^\vee_{4m}\ar[r]\ar[d] &
H^1(E|_{\mathbb{P}^2(V_1)})\ar[d]\ar[r] & 0 \\
& 0 & 0 & \ 0, & }
$$
hence a relation
\begin{equation}\label{Di9}
W_1(V_1)=H^\vee_{2m+1}\otimes V_1\cap W_{4m+4}^\vee,
\end{equation}
where the intersection is taken in $H^\vee_{2m+1}\otimes V^\vee$.
Set
$$
Z_1(V_1):=\emptyset \ \ {\rm if} \ \ V_1\ne B(E),\ \ \ {\rm respectively,}\ \
Z_1(V_1):=P(W_1(V_1))=\{pt\} \ \ {\rm if} \ \ V_1\in B(E).
$$

The relation (\ref{Di9}) and Lemma \ref{Determ} imply the bijection
\begin{equation}\label{bijectn1}
Z_1(V_1)\overset{\simeq}\to f_1^{-1}(V_1),\ \ \ V_1\in(\mathbb{P}^3)^\vee,
\end{equation}
Consider the graph of incidence
$\Gamma_1(V_1):=\{(x,U)\in Z_1(V_1)\times P(H^\vee_{2m+1})\ |\ U=U_1(x)\}$
with projections
$Z_1(V_1)\overset{p_1}\leftarrow\Gamma_1(V_1)\overset{q_1}\to P(H^\vee_{2m+1})$.
By construction, $p_1(\Gamma_1(V_1))=Z_1(V_1)$ and the projection
$p_4:\Gamma_1(V_1)\to Z_1(V_1)$ is a bijection. Hence
\begin{equation}\label{dim q1(...)<}
\dim q_1(\Gamma_1(V_1))\le\dim\Gamma_1(V_1)=\dim Z_1(V_1)\le0.
\end{equation}
Consider the graph of incidence
$$
\Pi_1(V_1)=\{(U,V_m)\in q_1(\Gamma_1(V_1))\times\Sigma_1(V_1)\ |\ U\subset V_m\}
$$
with projections
$q_1(\Gamma_1(V_1))\overset{pr_1}\leftarrow\Pi_1(V_1)\overset{pr_2}\to\Sigma_1(V_1)$
and a fibre
$$
pr_1^{-1}(U)\simeq G(m-1,H^\vee_{2m+1}/U)
$$
over an arbitrary point $U\in q_1(\Gamma_1(V_1))$.
The projection $\Pi_1(V_1)\overset{pr_2}\to\Sigma_1(V_1)$ is surjective in view of
(\ref{bijectn1}). Hence using (\ref{dim q1(...)<}) we have
$$
\dim\Sigma_1(V_1)\le\dim\Pi_1(V_1)=\dim q_1(\Gamma_1(V_1))+\dim G(m-1,H^\vee_{2m+1}/U)\le0+
(m-1)(m+1)=
$$
$=m^2-1$. This together with (\ref{dim Sigma1}) and the assumption $m\ge3$ yields
$\dim\Sigma_1\le m^2+2=\dim G+2-m<\dim G$, i.e. (\ref{dim Sigma r}) holds for $r=1$.
Theorem is proved.
\end{proof}

\end{sub}

\begin{sub} {\bf Proof of Proposition \ref{nondeg for general}.}
\rm

Before giving the proof of this Proposition, we need some preliminary arguments.
For any point $B\in\mathbf{S}_{m+1}$ let $\hat{B}:S^2H_{m+1}\to\wedge^2V^\vee$ denote the induced homomorphism.
We have a morphism of affine varieties
\begin{equation}\label{bold b}
\mathbf{b}:\ H_{m+1}\times\mathbf{S}_{m+1}\to\wedge^2V^\vee:\ (h,B)\mapsto\hat{B}(h\otimes h).
\end{equation}
Fix a basis $e_1,e_2,e_3,e_4$ in $V$.
Then the point $B\in\mathbf{S}_{m+1}$ considered as a homomorphism $B:H_{m+1}\otimes V\to H_{m+1}^\vee\otimes V^\vee$
can be represented by a skew-symmetric block matrix
\begin{equation}\label{matrix B}
B=\left(
\begin{array}{cccc}
0 & A_{12} & A_{13} & A_{14}\\
-A_{12} & 0 & A_{23} & A_{24}\\
-A_{13} & -A_{23} & 0 & A_{34} \\
-A_{14} & -A_{24} & -A_{34} & 0
\end{array}
\right)\\
\end{equation}
where $A_{ij}\in S^2H_{m+1}^\vee,\ 1\le i<j\le4$. Here we consider $A_{ij}$ as the quadratic forms
\begin{equation}\label{quadr forms}
H_{m+1}\to\mathbf{k}:x\mapsto A_{ij}(x),\ 1\le i<j\le4,
\end{equation}
on $H_{m+1}$. Respectively, in the projective space
$P(H_{m+1})\simeq\mathbb{P}^m$ there are defined quadrics
\begin{equation}\label{Q ij}
Q_{ij}(B):=\{<x>\in P(H_{m+1})\ |\ A_{ij}(x)=0\},\ \ \ \ 1\le i<j\le4.
\end{equation}

Let $K\subset\wedge^2V^\vee$ be the cone of decomposable vectors, $K=\{w\in\wedge^2V^\vee|\ \rk(w:V\to V^\vee)\le2\}$,
and, for $m\ge1$, set
\begin{equation}\label{M m+1}
M_{m+1}:=\{B\in\mathbf{S}_{m+1}|\ \mathbf{b}(H_{m+1}\times\{B\})\subset K\}.
\end{equation}
By construction, $M_{m+1}$ is a closed subset of $\mathbf{S}_{m+1}$, and we consider it as a reduced subscheme of
$\mathbf{S}_{m+1}$.

Consider first the cases $m=0,1$ and 2. An explicit computation shows that

(i) $M_1,M_2$ and $M_3$ are irreducible and, moreover,
\begin{equation}\label{M1,M2,M3}
M_1=K,\ \ \ M_{m+1}\subset\mathbf{S}_{m+1}\smallsetminus(\mathbf{S}_{m+1})^0,\ \ \
\codim_{\mathbf{S}_{m+1}}M_{m+1}=2, \ \ \ m=1,2;
\end{equation}

(ii) $M_3^*:=\{B\in M_3|\ Y_3(B):=Q_{13}(B)\cap Q_{23}(B)$ is a 4-ple of distinct points in the projective plane
$P(H_3)\}$ is a dense open subset of $M_3$.

Now proceed to the case $m\ge3$. In this case, set
\begin{equation}\label{S*m+1}
\mathbf{S}^*_{m+1}:=\{B\in\mathbf{S}_{m+1}|\ Y_{m+1}(B):=Q_{13}(B)\cap Q_{23}(B)\ {\rm is\ an\ integral\ codimension\ 2}
\end{equation}
$$
{\rm subscheme\ of\ the\ projective\ space}\ P(H_{m+1})\}.
$$
Since $m\ge3$, $\mathbf{S}^*_{m+1}$ is a dense open subset of $\mathbf{S}_{m+1}$.

\begin{lemma}\label{1 B in S*}
For $m\ge3$ let $B\in\mathbf{S}^*_{m+1}\cap M_{m+1}$. Then $B\not\in\mathbf{S}^0_{m+1}$.
\end{lemma}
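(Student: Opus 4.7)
The plan is to read $B\in M_{m+1}$ as a single quartic identity on $H_{m+1}$, use the codimension-$2$ integrality of $Y$ to extract a linear relation among the quadrics $A_{ij}$, and then, after an explicit linear substitution in $V^\vee$, exhibit a large piece of $\Ker B$ by inspection.

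First I would rewrite $B$ as in~\eqref{matrix B}. Since $\hat B(h\otimes h)\in K$ is equivalent to $\hat B(h\otimes h)\wedge\hat B(h\otimes h)=0$, the condition $B\in M_{m+1}$ amounts to the Pl\"ucker identity
$$A_{12}A_{34}-A_{13}A_{24}+A_{14}A_{23}=0\quad\text{in }S^4H_{m+1}^\vee.$$
Restricting this identity to $Y=Q_{13}(B)\cap Q_{23}(B)$, where $A_{13}$ and $A_{23}$ vanish, yields $(A_{12}A_{34})|_Y=0$; integrality of $Y$ then forces $A_{12}|_Y=0$ (\emph{Case~1}) or $A_{34}|_Y=0$ (\emph{Case~2}). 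Because $Y$ is integral of codimension $2$ and cut out by the two quadrics $A_{13},A_{23}$, it is a complete intersection, so $I(Y)=(A_{13},A_{23})$ with degree-$2$ piece $\mathbf{k}A_{13}\oplus\mathbf{k}A_{23}$. Hence the vanishing quadric is a constant $\mathbf{k}$-linear combination of $A_{13}$ and $A_{23}$, which an explicit linear substitution inside $V^\vee$ (preserving $A_{13}$ and $A_{23}$) absorbs. In the new basis one has $\tilde A_{12}=0$ in Case~1 and $\tilde A_{34}=0$ in Case~2, and the Pl\"ucker identity collapses to
$$\tilde A_{13}\tilde A_{24}=\tilde A_{14}\tilde A_{23}.$$

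Next I would run a UFD argument in $\mathbf{k}[H_{m+1}]$. Setting $d=\gcd(\tilde A_{13},\tilde A_{14})$ and writing $\tilde A_{13}=dP$, $\tilde A_{14}=dQ$ with $\gcd(P,Q)=1$, the displayed equality forces $\tilde A_{23}=PR$, $\tilde A_{24}=QR$ for some $R$. A degree count restricts $(\deg d,\deg P,\deg Q,\deg R)$ to the three possibilities $(2,0,0,2)$, $(1,1,1,1)$ or $(0,2,2,0)$; in the first two $P$ would be a common non-constant factor of $\tilde A_{13}$ and $\tilde A_{23}$, producing a codimension-$1$ component $\{P=0\}$ of $Y$ and contradicting $B\in\mathbf{S}^*_{m+1}$. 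Only $(0,2,2,0)$ survives, and yields
$$\tilde A_{14}=\gamma\tilde A_{13},\qquad \tilde A_{24}=\gamma\tilde A_{23},\qquad \gamma=Q/P\in\mathbf{k}.$$
A second substitution $\hat e_3^\vee:=\tilde e_3^\vee+\gamma e_4^\vee$ now rewrites $B$ in the form
$$B=\tilde A_{12}\,\tilde e_1^\vee\wedge\tilde e_2^\vee+(\tilde A_{13}\tilde e_1^\vee+\tilde A_{23}\tilde e_2^\vee-\tilde A_{34}e_4^\vee)\wedge\hat e_3^\vee,$$
where the first summand vanishes in Case~1 and the coefficient of $e_4^\vee$ vanishes in Case~2. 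In Case~2 the whole form $B$ then lies in $S^2H_{m+1}^\vee\otimes\wedge^2W$ for the three-dimensional subspace $W=\Span(\tilde e_1^\vee,\tilde e_2^\vee,\hat e_3^\vee)\subset V^\vee$, so any $0\ne v_0\in W^\perp\subset V$ satisfies $B(h\otimes v_0)=0$ for every $h\in H_{m+1}$, giving $H_{m+1}\otimes v_0\subset\Ker B$. In Case~1 one has $B=\tilde\omega\wedge\hat e_3^\vee$ with $\tilde\omega\in S^2H_{m+1}^\vee\otimes V^\vee$; a direct computation gives $B(h\otimes v_0)=\tilde\omega(h,v_0)\otimes\hat e_3^\vee$ for every $v_0\in\Ker\hat e_3^\vee$, so the induced linear map
$$\Omega\colon H_{m+1}\otimes\Ker\hat e_3^\vee\longrightarrow H_{m+1}^\vee,\qquad h\otimes v_0\mapsto\tilde\omega(h,v_0),$$
has source of dimension $3(m+1)$ and target of dimension $m+1$, hence $\dim\Ker\Omega\ge 2(m+1)>0$ and $\Ker\Omega\subset\Ker B$. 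In either case $\Ker B\ne 0$, so $B\notin\mathbf{S}^0_{m+1}$.

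The main obstacle is the UFD case analysis: one must verify that the non-constant common factor $P$ which would arise in the cases $\deg d\in\{1,2\}$ really produces a codimension-$1$ component of the \emph{scheme} $Y$, not merely of its support, so that the hypothesis $B\in\mathbf{S}^*_{m+1}$ indeed excludes it. Once the proportionalities $\tilde A_{14}=\gamma\tilde A_{13}$, $\tilde A_{24}=\gamma\tilde A_{23}$ are in hand, the basis changes, the factorization of $B$ and the dimension count exhibiting $\Ker B\ne 0$ are routine linear algebra.
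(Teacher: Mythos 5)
Your proposal follows the paper's proof in all essentials: the Pfaffian identity $A_{12}A_{34}-A_{13}A_{24}+A_{14}A_{23}\equiv 0$, restriction to the integral complete intersection $Y=Q_{13}(B)\cap Q_{23}(B)$ to force $A_{12}$ or $A_{34}$ into the pencil $\mathbf{k}A_{13}+\mathbf{k}A_{23}$ via the Koszul resolution, and then a divisibility argument in $\mathbf{k}[H_{m+1}]$ yielding proportionalities that degenerate $B$. The only real difference is at the very end and is cosmetic: the paper performs block row/column operations on the matrix of $B$ to reach a visibly degenerate form, whereas you normalize by unipotent substitutions in $V^\vee$ and exhibit an explicit positive-dimensional piece of $\Ker B$ (either $H_{m+1}\otimes v_0$ with $v_0\in W^\perp$, or $\Ker\Omega$); both are fine, and your kernel computations check out.

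One slip must be corrected: in the gcd bookkeeping the labels of the three degree patterns are interchanged. With $\tilde A_{13}=dP$, $\tilde A_{14}=dQ$, $\tilde A_{23}=PR$, $\tilde A_{24}=QR$, the case that survives is $(\deg d,\deg P,\deg Q,\deg R)=(2,0,0,2)$: there $P$ and $Q$ are scalars, so $\gamma=Q/P\in\mathbf{k}$ and $\tilde A_{14}=\gamma\tilde A_{13}$, $\tilde A_{24}=\gamma\tilde A_{23}$ as you want. The excluded cases are $(1,1,1,1)$ and $(0,2,2,0)$: in the first, $P$ is a common linear factor of $\tilde A_{13}$ and $\tilde A_{23}$, so the hyperplane $\{P=0\}$ is contained in the scheme $Y$ (its ideal contains $(A_{13},A_{23})$, so there is no support-versus-scheme subtlety); in the second, $P$ is proportional to $\tilde A_{13}$ and divides $\tilde A_{23}$, so $Y=Q_{13}$. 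Either way $\codim Y\le 1$, contradicting $B\in\mathbf{S}^*_{m+1}$. As written, your text excludes $(2,0,0,2)$ (where $\deg P=0$, so $P$ cannot be a non-constant common factor) and retains $(0,2,2,0)$ (where $Q/P$ is a ratio of two quadrics, not an element of $\mathbf{k}$). You should also dispose separately of the degenerate possibility $\tilde A_{14}=0$, which forces $\tilde A_{24}=0$ and $\gamma=0$. With these corrections the argument is complete and matches the paper's.
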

\begin{proof}
We represent a given point $B\in\mathbf{S}^*_{m+1}\cap M_{m+1}$ by matrix (\ref{matrix B}). Then, under the notation
(\ref{quadr forms}), for
$x\in H_{m+1}$, we obtain a skew-symmetric $(4\times4)$-matrix with entries in $\mathbf{k}$
\begin{equation}\label{B(x)}
B(x)=\left(
\begin{array}{cccc}
0 & A_{12}(x) & A_{13}(x) & A_{14}(x)\\
-A_{12}(x) & 0 & A_{23}(x) & A_{24}(x)\\
-A_{13}(x) & -A_{23}(x) & 0 & A_{34}(x) \\
-A_{14}(x) & -A_{24}(x) & -A_{34}(x) & 0
\end{array}
\right).\\
\end{equation}
The condition $B\in M_{m+1}$ by definition means that the matrix $B(x)$
is degenerate, i.e. its Pfaffian vanishes identically as a polynomial function on $H_{m+1}$:
\begin{equation}\label{Pf vanishes}
A_{12}(x)A_{34}(x)-A_{13}(x)A_{24}(x)+A_{14}(x)A_{23}(x)\equiv0,\ \ \ x\in H_{m+1}.
\end{equation}
Since $B\in\mathbf{S}^*_{m+1}$, from (\ref{Q ij}) and (\ref{S*m+1}) it follows that the quadrics $Q_{13}(B)$ and
$Q_{23}(B)$ are integral and their intersection $Y:=Y_{m+1}(B)$
is integral of codimension 2 in $P(H_{m+1})$. In this case (\ref{Pf vanishes}) implies that either
$Q_{12}(B)\supset Y$, or $Q_{34}(B)\supset Y$. Let, say, $Q_{34}(B)\supset Y_{m+1}(B)$. This means
that $A_{34}(x)\in H^0(\mathcal{I}_{Y,\mathbb{P}^m}(2))$. Now, passing to sections of the exact triple\\
$0\to\mathcal{O}_{\mathbb{P}^m}(-2)\to2\mathcal{O}_{\mathbb{P}^m}\overset{A_{13}(x),A_{23}(x)}\longrightarrow
\mathcal{I}_{Y,\mathbb{P}^m}(2)\to0$,
we obtain that $A_{34}(x)=\alpha A_{13}(x)+\beta A_{23}(x)$ for some $\alpha,\beta\in\mathbf{k}$.
Substituting this relation into (\ref{Pf vanishes}) we obtain a relation
$
A_{13}(x)(\alpha A_{12}(x)-A_{24}(x))+A_{23}(x)(\beta A_{12}(x)+A_{14}(x))\equiv0.
$
Since $Q_{13}$ and $Q_{23}$ are integral, the last relation implies that either

(i) $A_{23}=\lambda A_{13},\ A_{24}-\alpha A_{12}=\lambda(\beta A_{12}+A_{14})$ for some $\lambda\in\mathbf{k}$, or

(ii) $\beta A_{12}+A_{14}=\mu A_{13},\ A_{24}-\alpha A_{12}=\mu A_{23}$ for some $\mu\in\mathbf{k}$.

Substituting the relations (i)
%, respectively, (ii)
into (\ref{matrix B}) and denoting $\gamma=\alpha+\lambda\beta$, we obtain
\begin{equation}\label{new matrix B}
B=\left(
\begin{array}{cccc}
0 & A_{12} & A_{13} & A_{14}\\
-A_{12} & 0 & \lambda A_{13} & \gamma A_{12}+\lambda A_{14}\\
-A_{13} & -\lambda A_{23} & 0 & \gamma A_{13} \\
-A_{14} & -\gamma A_{12}-\lambda A_{14} & -\gamma A_{13} & 0
\end{array}
\right).\\
\end{equation}
Adding the multiplied by $\lambda$ first block column of this matrix to its
fourth block column,
and then performing a similar operation with block rows, we obtain the matrix
\begin{equation}\label{matrix B'}
B'=\left(
\begin{array}{cccc}
0 & A_{12} & A_{13} & A_{14}\\
-A_{12} & 0 & \lambda A_{13} & \lambda A_{14}\\
-A_{13} & -\lambda A_{13} & 0 & 0 \\
-A_{14} & -\lambda A_{14} & 0 & 0
\end{array}
\right)\\
\end{equation}
which is degenerate. Hence $B$ is also degenerate. A similar computation with relations (ii) also gives the
degenerateness of $B$. Lemma is proved.
\end{proof}

From Lemma \ref{1 B in S*} it follows that, for any irreducible component $M'_{m+1}$ of $M_{m+1}$,
\begin{equation}\label{codim le2}
1\le\codim_{\mathbf{S}_{m+1}}M'_{m+1}\le2,\ \ \ m\ge3.
\end{equation}
Indeed, from this Lemma we obtain that
$\mathbf{S}^*_{m+1}\cap\mathbf{S}^0_{m+1}\cap  M_{m+1}=\emptyset$.
Since
$\mathbf{S}^*_{m+1}\cap\mathbf{S}^0_{m+1}$
is a dense open subset of $\mathbf{S}_{m+1}$, it follows that $M_{m+1}\ne\mathbf{S}_{m+1}$, i.e.
$1\le\codim_{\mathbf{S}_{m+1}}M_{m+1}$.
On the other hand, $K$ is a nonempty divisor in $\wedge^2V^\vee$, hence $\mathbf{b}_{m+1}^{-1}(K)$ is a nonempty
divisor of $H_{m+1}\times\mathbf{S}_{m+1}$. Since $M_{m+1}$ is nonempty (in fact, $\{0\}\in M_{m+1}$), counting of
dimensions of the fibres of the natural projection $\mathbf{b}_{m+1}^{-1}(K)\to\mathbf{S}_{m+1}$ shows that,
for any irreducible component $M'_{m+1}$ of $M_{m+1}$, $\codim_{\mathbf{S}_{m+1}}M'_{m+1}\le2$, and (\ref{codim le2})
follows.

\begin{lemma}\label{2 B in S*}
For $m\ge3$ let $M'_{m+1}$ be any irreducible component of $M_{m+1}$. Then
$\mathbf{S}^*_{m+1}\cap M'_{m+1}\ne\emptyset$. Hence $\mathbf{S}^*_{m+1}\cap M'_{m+1}$ is a dense open subset of
$M'_{m+1}$.
\end{lemma}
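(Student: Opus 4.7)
The plan is to exploit the $GL(V)$-action on $\mathbf{S}_{m+1} = S^2H_{m+1}^\vee \otimes \wedge^2V^\vee$ through its standard action on the second factor. This action preserves $M_{m+1}$, since the cone $K \subset \wedge^2V^\vee$ of decomposable bivectors is $GL(V)$-stable. Since $GL(V)$ is connected, every irreducible component $M'_{m+1}$ of $M_{m+1}$ is itself $GL(V)$-invariant. Form the saturation
\[
\widetilde{\mathbf{S}}^*_{m+1} := \bigcup_{g\in GL(V)} g\cdot \mathbf{S}^*_{m+1},
\]
a $GL(V)$-invariant dense open subset of $\mathbf{S}_{m+1}$. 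Any point in $M'_{m+1} \cap \widetilde{\mathbf{S}}^*_{m+1}$ can be $GL(V)$-translated into $\mathbf{S}^*_{m+1}$ while remaining in $M'_{m+1}$; hence the lemma reduces to verifying $M'_{m+1} \not\subset \mathcal{B}$, where $\mathcal{B} := \mathbf{S}_{m+1} \setminus \widetilde{\mathbf{S}}^*_{m+1}$ is the closed $GL(V)$-invariant bad locus.

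A point $B$ lies in $\mathcal{B}$ exactly when the linear map $\tilde B : \wedge^2 V \to S^2H_{m+1}^\vee$ associated with $B$ satisfies the following condition: for every pair of linearly independent decomposable bivectors $\omega_1 = u \wedge v_1$ and $\omega_2 = u \wedge v_2$ sharing a common factor $u \in V$, the images $\tilde B(\omega_1), \tilde B(\omega_2)$ either fail to be linearly independent or fail to cut out an integral codimension-$2$ subscheme of $P(H_{m+1}) \simeq \mathbb{P}^m$. Because $m \ge 3$ ensures that a generic pair of quadrics in $\mathbb{P}^m$ does define an integral complete intersection, membership of $B$ in $\mathcal{B}$ is a strongly non-generic condition on $\tilde B$; one expects the key estimate
\[
\codim_{\mathbf{S}_{m+1}} \mathcal{B} \;\ge\; 3.
\]

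To establish this estimate one stratifies $\mathcal{B}$ according to the rank of $\tilde B$ and the linear-algebraic type of its image. The strata with $\rk \tilde B \le 1$ (in particular $\tilde B \equiv 0$) have codimension of order $(m+1)(m+2)$ in $\mathbf{S}_{m+1}$. For $\tilde B$ of larger rank, membership in $\mathcal{B}$ forces the image $\tilde B(\wedge^2 V) \subset S^2H_{m+1}^\vee$ to be special (for instance, every two-dimensional subspace of the image has non-complete-intersection base locus, or the image consists of quadrics sharing a common linear factor, or lies in $S^2 L$ for a proper subspace $L \subset H_{m+1}^\vee$), each of which imposes at least three independent conditions on $B$ when $m \ge 3$. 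Combined with the bound $\codim_{\mathbf{S}_{m+1}} M'_{m+1} \le 2$ from \eqref{codim le2}, the dimension count yields $\dim M'_{m+1} > \dim \mathcal{B}$, ruling out $M'_{m+1} \subset \mathcal{B}$; density of $\mathbf{S}^*_{m+1} \cap M'_{m+1}$ in $M'_{m+1}$ is then immediate from irreducibility of the latter and openness of $\mathbf{S}^*_{m+1}$. The main obstacle is the codimension estimate itself: one must carefully enumerate the degeneracy strata of $\tilde B$ and verify that each contributes codimension $\ge 3$, with the hypothesis $m \ge 3$ used essentially to rule out small-$m$ pathologies of pairs of quadrics in $\mathbb{P}^m$.
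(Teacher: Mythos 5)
Your opening reduction is correct and clean: $K$ is $GL(V)$-stable, hence so is $M_{m+1}$; connectedness of $GL(V)$ makes each component $M'_{m+1}$ invariant; and $GL(V)$ acts transitively on pairs of distinct intersecting lines, so the saturation $\widetilde{\mathbf{S}}^*_{m+1}$ and the characterization of its complement $\mathcal{B}$ are as you describe. The problem is that after this reduction the \emph{entire} content of the lemma sits in the estimate $\codim_{\mathbf{S}_{m+1}}\mathcal{B}\ge3$, and you do not prove it — you write that ``one expects'' it and that the degeneracy strata ``must be carefully enumerated.'' That enumeration is the proof, and it is not a routine exercise: membership of $B$ in $\mathcal{B}$ is not a condition on the image of $\tilde B$ alone, as your proposed list of types suggests. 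Whether the pencil $\langle\tilde B(\omega_1),\tilde B(\omega_2)\rangle$ has integral base locus depends on which pencils inside $\im\tilde B$ are actually realized by pairs of intersecting lines, and this is governed by the kernel of $\tilde B$ and by how $\tilde B$ maps the $7$-dimensional incidence family into the Grassmannian of pencils — data invisible to $\im\tilde B$. You would need to show both that your list of strata is exhaustive and that each stratum (including the high-rank ones, which are the delicate cases) has codimension at least $3$; neither is done, and since $\mathcal{D}=\{$pairs of quadrics with non-integral intersection$\}$ already has codimension only $2$ in the space of pairs for $m=3$ (e.g.\ pairs containing a common line), the bound is not self-evidently true. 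Note also that your route would prove the much stronger assertion that \emph{every} codimension-$\le2$ closed irreducible subset of $\mathbf{S}_{m+1}$ meets $\mathbf{S}^*_{m+1}$, which should make you suspicious that the needed input is substantial.

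The paper avoids all of this by exploiting the structure of $M_{m+1}$ that your argument never uses. It restricts to the coordinate subspace $\mathbf{S}_4\hookrightarrow\mathbf{S}_{m+1}$, where $M_4=\mathbf{S}_4\cap M_{m+1}$, uses the codimension bounds \eqref{codim le2}, \eqref{codim S4} to see that every component $M'_{m+1}$ meets $\mathbf{S}_4$ in something containing a component of $M_4$, and then for $m=3$ exhibits explicitly, at a general point $(B',B'')$ of any component of $M_4$ with $B'\in M_3^*$, that $Y_4(B)$ is a double cover of an integral conic branched at the four distinct points $Y_3(B')$, hence an integral elliptic quartic; for $m\ge4$ the scheme $Y_{m+1}(B)$ is the cone over such a quartic. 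If you want to salvage your approach, you must supply a complete proof of $\codim\mathcal{B}\ge3$; as written, the argument has a genuine gap at its central step.
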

\begin{proof}

1) Consider first the case $m=3$. Choose coordinates $x_1,...,x_4$ in $H_4$ and let
$H_1$ and $H_3$ be the subspaces of $H_4$ given by
the equations $x_1=x_2=x_3=0$ and $x_4=0$, respectively. The direct sum decomposition $H_4=H_1\oplus H_3$ induces
the iclusion of a direct summand $\mathbf{S}_1\oplus\mathbf{S}_3\hookrightarrow\mathbf{S}_4$.
Considering this inclusion as an embedding of an affine subspace
$\mathbf{S}_1\times\mathbf{S}_3\hookrightarrow\mathbf{S}_4$,
we obtain from (\ref{M1,M2,M3}) and from the definition (\ref{M m+1}) that
\begin{equation}\label{M3 in M4}
M_3=(\{0\}\times\mathbf{S}_3)\cap M_4,\ \ \ K=(\mathbf{S}_1\times\{0\})\cap M_4.
\end{equation}
This together with (\ref{codim le2}) and the irreducibility of $M_3$ (see property (i) above) implies that,
for an arbitrary irreducible component $M'_4$ of $M_4$,
\begin{equation}\label{M3 in M'4}
M_3=(\{0\}\times\mathbf{S}_3)\cap M'_4,\ \ \ K=(\mathbf{S}_1\times\{0\})\cap M'_4.
\end{equation}
Note that (\ref{M1,M2,M3}) and (\ref{codim le2}) imply that
\begin{equation}\label{codim S4}
\codim_{\mathbf{S}_4}M'_4=2.
\end{equation}

Take any point $B'\in M^*_3$, and let $A_{i3}(B')(x_1,x_2,x_3)$ be the quadratic forms on $H_3$ corresponding to the
entries $A_{i3}(B'),\ i=1,2,$ of the matrix $B'$. Then the set $Y_3(B')$ is given in the projective space $P(H_3)$ by
the equations
$\{A_{i3}(B')(x_1,x_2,x_3)=0,\ i=1,2\}$. Now take an arbitrary point $B''\in\mathbf{S}_1\simeq\wedge^2V^\vee$
and, according to (\ref{matrix B}), consider $B''$ as a skew-symmetric matrix $(a_{ij}(B''))$. Then the point
$B:=(B',B'')\in\mathbf{S}_1\times\mathbf{S}_3$ determines the scheme $Y_4(B)$ (see (\ref{S*m+1})) which is given
in the projective space $P(H_4)$ by the equations
\begin{equation}\label{Y4(B)}
A_{i3}(B')(x_1,x_2,x_3)-a_{i3}(B'')x_4^2=0,\ \ \ i=1,2.
\end{equation}
Consider the sets
$U'=\{(B',B'')\in\mathbf{S}_1\times\mathbf{S}_3|Y_3(B')=Q_{13}(B')\cap Q_{23}(B')$
is a 4-ple of distinct points in the plane $P(H_3)\}$ and
$U''=\{(B',B'')\in\mathbf{S}_1\times\mathbf{S}_3|a_{i3}(B'')\ne0,\ i=1,2\}$.
These sets dense open subsets of $\mathbf{S}_1\times\mathbf{S}_3$, and from
(\ref{M3 in M'4}) and the property (ii) above it follows that $M_4'':=M'_4\cap U'\cap U''$ is a dense open subset of
$M'_4$. Now for any point $B=(B',B'')\in M_4''$ the equations (\ref{Y4(B)}) can be rewritten as follows
\begin{equation}\label{new Y4(B)}
A(x_1,x_2,x_3):=A_{13}(B')(x_1,x_2,x_3)a_{23}(B'')-A_{23}(B')(x_1,x_2,x_3)a_{13}(B'')=0,
\end{equation}
$$
A_{13}(B')(x_1,x_2,x_3)-a_{13}(B'')x_4^2=0.
$$
Consider the conic $C(B)=\{A(x_1,x_2,x_3)=0\}$ in $\mathbb{P}^2$. Then $M_4'''=\{B\in M_4''\ |\ C(B)$ is
integral$\}$ is a dense open subset of $M_4''$.
By construction, the set $\{A_{13}(B')(x_1,x_2,x_3)=0\}\cap C(B)$ coincides with the set $Y_3(B')$ which by
definition is a 4-ple of distinct points in $\mathbb{P}^2$. Therefore the equations (\ref{new Y4(B)}) defining $Y_4(B)$
show that $Y_4(B)$ is a double cover of $C(B)$ ramified in $Y_3(B')$, hence it is an integral elliptic quartic curve
in $\mathbb{P}^3$. In other words, $B\in\mathbf{S}^*_4\cap M'_4$. This means that
$M_4'''\subset\mathbf{S}^*_4\cap M'_4$, so that $\mathbf{S}^*_4\cap M'_4$ is dense open in $M'_4$.

2) The argument in the case $m\ge4$ is similar to the above. Choose coordinates $x_1,...,x_{m+1}$ in $H_{m+1}$ and let
$H_{m-3}$ and $H_4$ be the subspaces of $H_{m+1}$ given by
the equations $x_1=...=x_4=0$ and $x_5=...=x_{m+1}=0$, respectively. The direct sum decomposition
$H_{m+1}=H_{m-3}\oplus H_4$
induces the iclusion of a direct summand $\mathbf{S}_4\hookrightarrow\mathbf{S}_{m+1}$.
Considering this inclusion as an embedding of an affine subspace
$\mathbf{S}_4\hookrightarrow\mathbf{S}_{m+1}$,
we obtain from the definition (\ref{M m+1}) that, similar to (\ref{M3 in M'4}),
\begin{equation}\label{M4 in Mm+1}
M_4=\mathbf{S}_4\cap M_{m+1}.
\end{equation}
Now let $M'_{m+1}$ be any irreducible component of $M_{m+1}$. From (\ref{codim le2}), (\ref{codim S4}) and
(\ref{M4 in Mm+1}) it follows that, for any irreducible component $M'_4$ of $\mathbf{S}_4\cap M'_{m+1}$,
the set $M'^*_4=\mathbf{S}^*_4\cap M'_4$ is a dense open subset of $M'_4$. By definition, an arbitrary point
$B\in M'^*_4$ is such that $Y_4(B)$ is an integral quartic curve in $\mathbb{P}^3$. From the construction of
the embedding $\mathbf{S}_4\hookrightarrow\mathbf{S}_{m+1}$ it follows now that, for this point $B$ considered
as a point in $M'_{m+1}$, the scheme $Y_{m+1}(B)$ is a cone in $P(H_{m+1})$ over $Y_4(B)$. Hence $Y_{m+1}(B)$
is an integral codimension 2 subscheme of $P(H_{m+1})$, i.e. $B\in\mathbf{S}_{m+1}^*$. This means that
$\mathbf{S}^*_{m+1}\cap M'_{m+1}$ is a dense open subset of $M'_{m+1}$.
\end{proof}

\begin{corollary}\label{M m+1 degenerate}
For any $m\ge0$, $M_{m+1}\subset\mathbf{S}_{m+1}\smallsetminus\mathbf{S}_{m+1}^0$.
\end{corollary}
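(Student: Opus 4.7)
The plan is to split into two ranges according to whether $m$ is small or large, and in each case reduce to statements already established in this subsection. The point is that $\mathbf{S}_{m+1}\smallsetminus\mathbf{S}_{m+1}^0$ is a Zariski closed subset of $\mathbf{S}_{m+1}$ (it is the degeneracy locus of the square matrix defining the homomorphism $B:H_{m+1}\otimes V\to H_{m+1}^\vee\otimes V^\vee$), so it suffices to check that $M_{m+1}$ meets this closed subset in a set which is dense in every irreducible component of $M_{m+1}$.

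For $m=0,1,2$ the assertion is immediate: equality $M_1=K$ shows $M_1\subset\mathbf{S}_1\smallsetminus\mathbf{S}_1^0$ (since every nonzero vector of $K$ is the matrix of a degenerate skew form on $V$), while the second line of (\ref{M1,M2,M3}) records precisely the inclusion $M_{m+1}\subset\mathbf{S}_{m+1}\smallsetminus\mathbf{S}_{m+1}^0$ for $m=1,2$. So nothing further is needed in this range.

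For $m\ge3$ the plan is as follows. First I would fix an arbitrary irreducible component $M'_{m+1}$ of $M_{m+1}$ and appeal to Lemma \ref{2 B in S*} to conclude that $\mathbf{S}^*_{m+1}\cap M'_{m+1}$ is a dense open subset of $M'_{m+1}$. Second I would apply Lemma \ref{1 B in S*} to each point of this dense open subset: it gives
\[
\mathbf{S}^*_{m+1}\cap M'_{m+1}\ \subset\ \mathbf{S}_{m+1}\smallsetminus\mathbf{S}_{m+1}^0.
\]
Since the right-hand side is closed in $\mathbf{S}_{m+1}$ and contains a dense subset of $M'_{m+1}$, it must contain all of $M'_{m+1}$. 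Taking the union over the (finitely many) irreducible components of $M_{m+1}$ then yields $M_{m+1}\subset\mathbf{S}_{m+1}\smallsetminus\mathbf{S}_{m+1}^0$, completing the proof.

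There is no real obstacle here: the entire content of the corollary has already been placed into the two preceding lemmas. The only thing to watch is the logical glue, namely that closedness of the degeneracy locus allows one to upgrade the pointwise conclusion of Lemma \ref{1 B in S*} (valid on a dense open subset of each component) to a conclusion valid on all of $M_{m+1}$. This closedness, together with Lemma \ref{2 B in S*} guaranteeing that the dense open subset in question is actually nonempty in every irreducible component, is exactly what allows the two lemmas to combine into the stated corollary.
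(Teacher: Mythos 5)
Your proposal is correct and follows essentially the same route as the paper: the small cases $m\le2$ are read off from (\ref{M1,M2,M3}), and for $m\ge3$ one combines Lemma \ref{2 B in S*} (density of $\mathbf{S}^*_{m+1}\cap M'_{m+1}$ in each component) with Lemma \ref{1 B in S*} and the closedness of the degeneracy locus $\mathbf{S}_{m+1}\smallsetminus\mathbf{S}_{m+1}^0$. No gaps.
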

\begin{proof}
For $m\le2$ this statement follows from (\ref{M1,M2,M3}). Let $m\ge3$ and
let $M'_{m+1}$ be any irreducible component of $M_{m+1}$. By Lemma \ref{1 B in S*}
$\mathbf{S}^*_{m+1}\cap M'_{m+1}\subset\mathbf{S}_{m+1}\smallsetminus\mathbf{S}_{m+1}^0$.
Since $\mathbf{S}_{m+1}\smallsetminus\mathbf{S}_{m+1}^0$ is a closed subset of $\mathbf{S}_{m+1}$
and by Lemma \ref{2 B in S*} the set $\mathbf{S}^*_{m+1}\cap M'_{m+1}$ is a dense open subset of an irreducible
set $M'_{m+1}$, it follows that $M'_{m+1}\subset\mathbf{S}_{m+1}\smallsetminus\mathbf{S}_{m+1}^0$.
\end{proof}

We are now ready to prove Proposition \ref{nondeg for general}.

{\it Proof of Proposition \ref{nondeg for general}}.
Let  $D\in(\mathbf{S}^\vee_{m+1})^0$, i.e. $D$ is a
nondegenerate homomorphism $D:H_{m+1}^\vee\otimes V^\vee\to
H_{m+1}\otimes V$. Assume that, for any monomorphism
$j:H_m^\vee\hookrightarrow H_{m+1}^\vee$, the composition
$j_D:=j^\vee\circ D\circ j:H_m^\vee\otimes V^\vee\to H_m\otimes V$
is degenerate. We will show that this leads to a contradiction.
For this, represent $j$ dually as a monomorphism
$j_\mathbf{k}:\mathbf{k}\hookrightarrow H_{m+1}$. Consider the
nondegenerate homomorphism $B:=D^{-1}:H_{m+1}\otimes V\to
H_{m+1}^\vee\otimes V^\vee$ and the induced skew-symmetric
homomorphism $j_B:=j_\mathbf{k}^\vee\circ B\circ
j_\mathbf{k}:V\simeq\mathbf{k}\otimes V\to \mathbf{k}^\vee\otimes
V^\vee\simeq V^\vee$. Then the degenerateness of $j_D$ is equivalent
to the degenerateness of $j_B$. As above, the homomorphism $B$ can be
represented by a skew-symmetric matrix (\ref{matrix B}).
In this notation, the degenerateness of the
homomorphism $j_B$ for any $j_\mathbf{k}:\mathbf{k}\hookrightarrow H_{m+1}$ just means that, for any vector
$x\in H_{m+1}$, the skew-symmetric $(4\times4)$-matrix $B(x)$ in (\ref{B(x)})
is degenerate, i.e., by definition, $B\in M_{m+1}$. Then by Corollary \ref{M m+1 degenerate} $B$ is degenerate.
This contradiction proves Proposition.

\end{sub}

\end{document}